\tikzset{curve/.style={settings={#1},to path={(\tikztostart)
    .. controls ($(\tikztostart)!\pv{pos}!(\tikztotarget)!\pv{height}!270:(\tikztotarget)$)
    and ($(\tikztostart)!1-\pv{pos}!(\tikztotarget)!\pv{height}!270:(\tikztotarget)$)
    .. (\tikztotarget)\tikztonodes}},
    settings/.code={\tikzset{quiver/.cd,#1}
        \def\pv##1{\pgfkeysvalueof{/tikz/quiver/##1}}},
    quiver/.cd,pos/.initial=0.35,height/.initial=0}
\tikzset{tail reversed/.code={\pgfsetarrowsstart{tikzcd to}}}
\tikzset{2tail/.code={\pgfsetarrowsstart{Implies[reversed]}}}
\tikzset{2tail reversed/.code={\pgfsetarrowsstart{Implies}}}
\tikzset{no body/.style={/tikz/dash pattern=on 0 off 1mm}}
\numberwithin{equation}{subsection}
\newtheorem{thm}{Theorem}[subsubsection]
\newtheorem*{thm*}{Theorem}
\newtheorem{cor}[thm]{Corollary}
\newtheorem*{cor*}{Corollary}
\newtheorem{lem}[thm]{Lemma}
\newtheorem{prop}[thm]{Proposition}
\newtheorem{prop-const}[thm]{Proposition-Construction}
\newtheorem*{conjecture*}{Conjecture}
\newtheorem*{princ*}{Principle}
\theoremstyle{remark}
\newtheorem{rem}[thm]{Remark}
\newcommand{\into}{\hookrightarrow}\raggedbottom
\newcommand{\onto}{\twoheadrightarrow}
\newcommand{\bB}{{\mathbb B}}
\newcommand{\bD}{{\mathbb D}}
\newcommand{\bL}{{\mathbb L}}
\newcommand{\bZ}{{\mathbb Z}}
\newcommand{\cF}{{\mathcal F}}
\newcommand{\cG}{{\mathcal G}}
\newcommand{\cN}{{\mathcal N}}
\newcommand{\cO}{{\mathcal O}}
\newcommand{\cX}{{\mathcal X}}
\newcommand{\cY}{{\mathcal Y}}
\newcommand{\sA}{{\EuScript A}}
\newcommand{\sB}{{\EuScript B}}
\newcommand{\sC}{{\EuScript C}}
\newcommand{\sD}{{\EuScript D}}
\newcommand{\sH}{{\EuScript H}}
\newcommand{\sJ}{{\EuScript J}}
\newcommand{\sL}{{\EuScript L}}
\newcommand{\sP}{{\EuScript P}}
\newcommand{\sS}{{\EuScript S}}
\newcommand{\fL}{{\mathfrak L}}
\newcommand{\fU}{{\mathfrak U}}
\newcommand{\fb}{{\mathfrak b}}
\newcommand{\fg}{{\mathfrak g}}
\newcommand{\fm}{{\mathfrak m}}
\newcommand{\fn}{{\mathfrak n}}
\newcommand{\fp}{{\mathfrak p}}
\newcommand{\ft}{{\mathfrak t}}
\newcommand{\on}{\operatorname}
\newcommand{\ol}[1]{\overline{#1}{}}
\newcommand{\ot}[1]{\widetilde{#1}{}}
\newcommand{\Hom}{\on{Hom}}
\newcommand{\Vect}{\mathsf{Vect}}
\newcommand{\Res}{\on{Res}}
\newcommand{\Gr}{\on{Gr}}
\newcommand{\Bun}{\on{Bun}}
\newcommand{\IC}{\on{IC}}
\renewcommand{\lim}{\on{lim}}
\newcommand{\Oblv}{\on{Oblv}}
\newcommand{\Av}{\on{Av}}
\newcommand{\Hecke}{\on{Hecke}}
\newcommand{\sIC}{\on{IC}_{P,\on{Ran}}^{\frac{\infty}{2}}}
\newcommand{\Ln}{\on{\Lambda}_{G,P}^{\on{neg}}}
\newcommand{\Lp}{\on{\Lambda}_{G,P}^{\on{pos}}}
\newcommand{\tildeP}{\ot{\on{Bun}}_P}
\newcommand{\Ploc}{S_{P,\on{Ran}}^0}
\newcommand{\tildePloc}{\ot{S}_{P,\on{Ran}}^0}
\renewcommand{\subset}{\subseteq}
\newcommand{\Ran}{\on{Ran}}
\newcommand{\cnp}{\check{\fn}_P}
\newcommand{\chM}{\check{M}}
\newcommand{\chP}{\check{P}}
\newcommand{\Mran}{\on{Rep}(\check{M})_{\mathrm{Ran}}}
\newcommand{\biggg}{\bBigg@{4}}
\newcommand{\Biggg}{\bBigg@{5}}
\begin{document}

\frenchspacing

\setlength{\epigraphwidth}{0.4\textwidth}
\renewcommand{\epigraphsize}{\footnotesize}

\title{Parabolic geometric Eisenstein series and constant term functors}

\author{Joakim F\ae rgeman and Andreas Hayash}

\begin{abstract}

We prove a compatibility between parabolic restriction of Whittaker sheaves and restriction of representations under the geometric Casselman-Shalika equivalence. To do this, we establish various Hecke structures on geometric Eisenstein series functors, generalizing results of Braverman-Gaitsgory in the case of a principal parabolic. Moreover, we relate compactified and non-compactified geometric Eisenstein series functors via Koszul duality.

We sketch a proof that the spectral-to-automorphic geometric Langlands functor commutes with constant term functors.
    
\end{abstract}

\maketitle

\tableofcontents

\section{Introduction}

\subsection{Framework} Let $G$ be a reductive connected algebraic group over a field $k$ of characteristic zero. Fix a parabolic subgroup $P \subseteq G$ with Levi quotient $M$ and unipotent radical $N_P$. Let $\check{G}$ be the Langlands dual reductive group of $G$ with corresponding parabolic subgroup $\check{P} = \check{M} \check{N}_P$. We fix a smooth projective curve $X$ over $k$.

\subsubsection{} This paper is concerned with geometric Eisenstein series, which we presently recall. By induction of princial bundles along the correspondence $M \leftarrow P \rightarrow G$, we obtain a commutative diagram 
\begin{equation}\label{eq: fundamental diagram}\begin{tikzcd}
	& {\Bun_P} \\
	& {\ot{\Bun}_P} \\
	{\Bun_M} && {\Bun_G,}
	\arrow["j", from=1-2, to=2-2]
	\arrow["q"', curve={height=12pt}, from=1-2, to=3-1]
	\arrow["p", curve={height=-12pt}, from=1-2, to=3-3]
	\arrow["{\ot{q}}"', from=2-2, to=3-1]
	\arrow["{\ot{p}}", from=2-2, to=3-3]
\end{tikzcd}\end{equation}

\noindent where for an algebraic group $H$, $\Bun_H$ is the moduli stack of principal $H$-bundles over $X$. The algebraic stack $\ot{\Bun}_P$ denotes Drinfeld's relative compactification of $p$. 

\subsubsection{} For or a prestack $\cY$, let $D(\cY)$ be the category of D-modules on $\cY$. Pull-push along the diagram \eqref{eq: fundamental diagram} provides us with Eisenstein series functors, the protagonists of this paper. More precisely, we have the functors:
\[
\on{Eis}_!: D(\on{Bun}_M)\to D(\on{Bun}_G),\;\; \cF\mapsto p_*\circ q^!(\cF);
\]
\[
\on{Eis}_{!*}: D(\on{Bun}_M)\to D(\on{Bun}_G),\;\; \cF\mapsto \ot{p}_*(\on{IC}_{\ot{\on{Bun}}_P}\overset{!}{\otimes}\ot{q}^!(\cF)),
\]

\noindent which we call (geometric) \emph{Eisenstein series} and \emph{compactified Eisenstein series}, respectively.

\subsubsection{}\label{s:theorems} We will also consider several local variations of the functors $\on{Eis}_!$ and $\on{Eis}_{!*}$ as well as their dual counterparts, the \emph{constant term functors}.\footnote{The name \emph{constant term functor} is sometimes used in the literature to denote an adjoint functor to Eisenstein series and sometimes to denote its dual functor. We allow ourselves to be vague for now on which of the two meanings we are using.}

The goal of this paper is to establish some fundamental properties of geometric Eisenstein series and constant term functors. More precisely:
\begin{enumerate}\label{list: things we did}

    \item We establish a certain compatibility between geometric Eisenstein series and Hecke functors. This generalizes a result of Braverman-Gaitsgory \cite{braverman1999geometric}, \cite{braverman2006deformations} in the case of a principal parabolic.

    \item We prove that restriction of representations along $\check{M}\to \check{G}$ goes to a suitable Jacquet functor on the spherical Whittaker category under the geometric Casselman-Shalika equivalence.

    \item As a corollary to the second point above, we characterize the Koszul dual Jacquet functor as invariants with respect to the Lie algebra $\check{\fn}_P$ of the unipotent radical of $\check{P}$. This generalizes a result of Raskin \cite{raskin2021chiral} in the principal case. 
\end{enumerate}

\subsubsection{Relation to the proof of the geometric Langlands conjecture}\label{s:proofofGLC} A fundamental step in the proof of the geometric Langlands conjecture is to establish that the Langlands functor
\[
\bL_G: D(\on{Bun}_G)\to \on{IndCoh}_{\cN}(\on{LocSys}_{\check{G}})
\]

\noindent interchanges geometric and spectral Eisenstein series, suitably understood. This was proven in \cite{campbell2024proof}. The arguments use in an essential way results of Campbell-Raskin \cite{campbell2023langlands} who establish a semi-infinite geometric Satake theorem. However, we wish to emphasize that the contents of the paper \cite{campbell2023langlands} use our results stated in point $(3)$ above in an essential way, and in particular, neither \cite{campbell2023langlands} nor \cite{campbell2024proof} should be viewed as implying the theorems established in the present text.

\subsection{Some context}\label{s:somecontext}

To set the stage of our main results, let us review the results of Braverman-Gaitsgory \cite{braverman1999geometric}, \cite{braverman2006deformations} on the compatibility between Eisenstein series and Hecke functors mentioned in point $(1)$ of §\ref{s:theorems}. They work under the assumption that the parabolic $P$ is a Borel subgroup $B$. In this case, we denote Drinfeld's compactification by $\ol{\Bun}_B$.

\subsubsection{}\label{ss:basicstructure} The basic results can be summarized as follows, although we refer to \cite{finkelberg1997semiinfinite},\cite{feigin1997semiinfinite}, \cite{braverman2002intersection},\cite{braverman1999geometric},\cite{braverman2006deformations} for more details.

Let $\on{Conf}$ be the space of divisors on $X$ valued in the monoid $\Lambda^{\on{pos}}$ of positive linear combinations of positive roots in $G$. One has two factorization algebras $\cO(\check{N})_{\on{Conf}}, \Omega(\check{\fn})_{\on{Conf}}\in D(\on{Conf})$ obtained from the coalgebra $\cO(\check{N})$ of functions on $\check{N}$ and the algebra $C^{\bullet}(\check{\fn})$ given by the cohomological Chevalley complex of $\check{\fn}_P$, respectively. We have actions:
\[
\cO(\check{N})_{\on{Conf}}\curvearrowright \on{IC}_{\overline{\on{Bun}}_B},\;\;\Omega(\check{\fn})_{\on{Conf}}\curvearrowright j_!(\omega_{\on{Bun}_B}).
\]

\noindent The usual Koszul duality between $\cO(\check{N})$ and $C^{\bullet}(\check{\fn})$ provides a 'Koszul duality' between $\on{IC}_{\overline{\on{Bun}}_B}$ and $j_!(\omega_{\on{Bun}_B})$. That is, taking invariants of $\on{IC}_{\overline{\on{Bun}}_B}$ as a module sheaf for $\cO(\check{N})_{\on{Conf}}$ yields $j_!(\omega_{\on{Bun}_B})$, and vice versa. This similarly shows that the functors given by the kernels $\on{IC}_{\overline{\on{Bun}}_B}, j_!(\omega_{\on{Bun}_B})$, namely $\on{Eis}_{!*}$ and $\on{Eis}_{!}$, respectively, are 'Koszul dual'.

We have an equivalence:
\[
\Omega(\check{\fn})_{\on{Conf}}\on{-mod}(D(\on{Bun}_T))\simeq D(\on{Bun}_T)\underset{\on{QCoh}(\on{LocSys}_{\check{T}})}{\otimes}\on{QCoh}(\on{LocSys}_{\check{B}}).
\]

\noindent This shows that the functor $\on{Eis}_!$ factors through a functor:
\[
\on{Eis}_!^{\on{enh}}: D(\on{Bun}_T)\underset{\on{QCoh}(\on{LocSys}_{\check{T}})}{\otimes}\on{QCoh}(\on{LocSys}_{\check{B}})\to D(\on{Bun}_G).
\]

\noindent In the language introduced in this paper, this reflects the fact that $j_!(\omega_{\on{Bun}_B})$ is equipped with an \emph{enhanced Drinfeld-Plücker structure}, see §\ref{ss:heckestructures} below.

\subsection{Why are we writing this paper?}

\subsubsection{} The proofs of the results stated in §\ref{ss:basicstructure} required a tremendous amount of control of the singularities of $\on{IC}_{\overline{\on{Bun}}_B}$. The above actions ultimately come from describing how $\on{IC}_{\overline{\on{Bun}}_B}$ decomposes when restricted to a natural stratification of $\overline{\on{Bun}}_B$, and verifying that the action maps are compatible with the (co)algebra structures is a very subtle affair.

While one of the main motivations for writing this paper is to generalize the results stated in §\ref{ss:basicstructure} from the principal case to a general parabolic, another major motivation is to provide a short, self-contained proof of these results that completely avoids the just-mentioned subtleties.

\subsubsection{} Our starting point is a certain local analogue of $\IC_{\ot{\Bun}_P}$ known as the \emph{semi-infinite intersection cohomology sheaf} $\IC^{\frac{\infty}{2}}_{P,\Ran}$. Its definition, fundamental properties, and local-to-global compatibilities in the case when $P=B$ is a Borel were first established by Gaitsgory in \cite{gaitsgory2018semi}, \cite{gaitsgory2021semi}. 

\begin{rem} While writing this paper, a definition of the fiber $\IC^{\frac{\infty}{2}}_{P,x}$ of $\IC^{\frac{\infty}{2}}_{P,\Ran}$ at a point $x$ of $X$ appeared in \cite{dhillon2025semiinfiniteic}. It is our understanding that Dhillon-Lysenko plan to release a second paper studying $\IC^{\frac{\infty}{2}}_{P,\Ran}$ factorizably.

Another definition of $\IC^{\frac{\infty}{2}}_{P,\Ran}$ as a factorization algebra also appears in the paper \cite{hayash2025zastava} of the second author, using Zastava spaces. It should be noted that the methods used in the present paper differ from the above references in that we use Hecke structures to define $\IC^{\frac{\infty}{2}}_{P,\Ran}$ rather than employing colimit constructions or t-structures directly. 
\end{rem}

In more concrete terms, $\on{IC}_{P,\on{Ran}}^{\frac{\infty}{2}}$ is a sheaf on a suitable prestack
\begin{equation}\label{eq:ltog}
\widetilde{\on{Gr}}_{P,\on{Bun}_M}\to \ot{\Bun}_P,
\end{equation}

\noindent see Section \ref{S:LOCTOGLOB} for its definition. The semi-infinite IC-sheaf is designed to satisfy many of the properties required of $\on{IC}_{\widetilde{\on{Bun}}_P}$ to establish the results in §\ref{ss:basicstructure}. As such, instead of starting with $\on{IC}_{\ot{\Bun}_P}$ and proving its desired properties, one starts with $\on{IC}_{P,\on{Ran}}^{\frac{\infty}{2}}$, where these properties are much more evident, and then identifies the latter with $\on{IC}_{\ot{\on{Bun}}_P}$ under the map (\ref{eq:ltog}). The upshot is that it is often easy to identify a given sheaf with an IC-sheaf: one simply has to control the perverse degrees of the $!$ and $*$-restrictions to the singular locus as well as its restriction to the smooth locus.

\subsubsection{} In fact, one can replace $\on{IC}_{\widetilde{\on{Bun}}_P}$ with the pushforward of $\on{IC}_{P,\on{Ran}}^{\frac{\infty}{2}}$ along the map (\ref{eq:ltog}) in the definition of $\on{Eis}_{!*}$ and not lose any information.\footnote{For example, in principle one never needs to consider $\on{IC}_{\widetilde{\on{Bun}}_P}$ to prove that $\bL_G$ interchanges the two Eisenstein series functors.} It is our understanding that this approach will be taken by Hamann-Hansen-Scholze in their work\footnote{The work we allude to here is the second of their series of papers on geometric Eisenstein series and is unavailable at the time of writing. See, however, the first paper in the series: \cite{hamann2024geometric}.} on geometric Eisenstein series on stacks of bundles on the Fargues-Fontaine curve; the point being that in this setting, perverse t-structures and operations of intermediate extensions do not obviously have any good properties in general.

\subsubsection{} The other main purpose of this paper is to prove a certain compatibility between geometric and spectral parabolic restriction under the geometric Casselman-Shalika equivalence (see §\ref{S:JACquet} below for a precise statement). This generalizes a result of Raskin in the principal case \cite{raskin2021chiral}.

Given this result, we sketch how to prove that the spectral-to-automorphic Langlands functor
\[
\bL_G^{\on{spec}}: \on{QCoh}(\on{LocSys}_{\check{G}})\to D(\on{Bun}_G)
\]

\noindent commutes with constant term functors. One should consider this equivalent to the assertion that the automorphic-to-spectral Langlands functor $\bL_G$ commutes with (suitable) Eisenstein series functors, see §\ref{s:interchangeEIS} below for more details.

\subsubsection{} Finally, we note that the results in this paper have been cited in the literature already. Besides the relation to the work of Campbell-Raskin mentioned in §\ref{s:proofofGLC}, the Hecke structure on geometric Eisenstein series for an arbitrary parabolic (Theorem \ref{t:glob_Hecke} below) played a key role in many important results in geometric Langlands, such as:
\begin{itemize}
    \item The spectral decomposition of $D(\on{Bun}_G)$ over $\on{QCoh}(\on{LocSys}_{\check{G}})$ \cite{gaitsgory2010generalized}.

    \item The existence of Hecke eigensheaves associated to irreducible local systems (a folklore result written up in \cite[§11]{faergeman2022non}).

    \item (A more recent application:) combining the characterization of nilpotent singular support in terms of Hecke functors given in \cite{arinkin2020stack} with the Hecke structure on geometric Eisenstein series proves that geometric Eisenstein series preserves nilpotent singular support.\footnote{In forthcoming joint work with Marius Kjærsgaard, the first author establishes a purely geometric proof that geometric Eisenstein series and constant term functors preserve nilpotent singular support, independent of the current paper and \cite{arinkin2020stack}.} In the $\ell$-adic setting, this is a crucial input if one wants to study pseudo-Eisenstein series of automorphic forms via geometric Eisenstein series through categorical trace of Frobenius (see \cite{raskin2024arithmetic} for one such example).
\end{itemize}

\subsection{Jacquet Functors}\label{S:JACquet}

In the next two subsections, we describe the main results in this paper in more detail.

\subsubsection{Semi-infinite IC-sheaf}\label{s:psic}

The main player in the local story is the parabolic semi-infinite IC-sheaf:
\[
\on{IC}_{P,\on{Ran}}^{\frac{\infty}{2}}\in D(\on{Gr}_{G,\on{Ran}}).
\]

The first basic properties of $\sIC$ are:
\begin{itemize}

\item $\sIC$ naturally belongs to the \emph{semi-infinite category} $\on{SI}_{P,\on{Ran}}:=D(\fL_{\on{Ran}}N_P\fL^+_{\on{Ran}}M\backslash \on{Gr}_{G,\on{Ran}})$.

\item Denote by $\widetilde{S}^0_{P,\on{Ran}}\subset\on{Gr}_{G,\on{Ran}}$ the closure of the 0'th semi-infinite $\fL_{\on{Ran}}N_P$-orbit (see §\ref{s:tildeploc} for a precise definition). Then $\sIC$ is supported on $\widetilde{S}^0_{P,\on{Ran}}$.

\end{itemize}

\subsubsection{} Define
\[
\ot{\Gr}_{P,\Ran} \coloneqq \Gr_{M,\Ran}\underset{\mathbb{B}\mathfrak{L}^+_{\Ran}M}{\times} \mathfrak{L}^+_{\Ran}M\backslash  \ot{S}^0_{P,\Ran};
\]
\[
\Gr_{P,\Ran} \coloneqq \Gr_{M,\Ran}\underset{\mathbb{B}\mathfrak{L}^+_{\Ran}M}{\times} \mathfrak{L}^+_{\Ran}M\backslash  S^0_{P,\Ran}.
\]

\noindent We have an open embedding:
\[
j_{\on{Ran}}: \Gr_{P,\Ran}\into \ot{\Gr}_{P,\Ran}.
\]

\noindent For convenience, write $j_{\on{Ran},!}:=j_{\on{Ran},!}(\omega_{\on{Gr}_{P,\on{Ran}}})$.

Consider the diagram:
\[\begin{tikzcd}
	& {\widetilde{\mathrm{Gr}}_{P,\mathrm{Ran}}} & {\mathrm{Gr}_{G,\mathrm{Ran}}} \\
	{\Gr_{M,\Ran}} && { \mathfrak{L}^+_{\Ran}M\backslash  \ot{S}^0_{P,\Ran}.}
	\arrow["{\widetilde{p}_{\mathrm{Ran}}}", from=1-2, to=1-3]
	\arrow["{\widetilde{q}_{\mathrm{Ran}}}"', from=1-2, to=2-1]
	\arrow["\pi", from=1-2, to=2-3]
\end{tikzcd}\]

\noindent From here, we may consider the functors:
\[
\on{Jac}_!^M: D(\on{Gr}_{G,\on{Ran}})\to D(\on{Gr}_{M,\on{Ran}}),\;\; \cF \mapsto  \widetilde{q}_{\on{Ran},*}(j_{\on{Ran},!}\overset{!}{\otimes} \widetilde{p}_{\on{Ran}}^!(\cF));
\]
\[
\on{Jac}_{!*}^M: D(\on{Gr}_{G,\on{Ran}})\to D(\on{Gr}_{M,\on{Ran}}),\;\; \cF\mapsto \widetilde{q}_{\on{Ran},*}(\pi^!(\sIC)\overset{!}{\otimes} \widetilde{p}_{\on{Ran}}^!(\cF)).
\]

\noindent These functors descend to functors on the Whittaker categories that we similarly denote by $\on{Jac}_!^M$ and $\on{Jac}_{!*}^M$:
\[
\on{Jac}_!^M: \on{Whit}(D(\on{Gr}_{G,\on{Ran}})\to \on{Whit}(D(\on{Gr}_{M,\on{Ran}}));
\]
\[
\on{Jac}_{!*}^M: \on{Whit}(D(\on{Gr}_{G,\on{Ran}}))\to \on{Whit}(D(\on{Gr}_{M,\on{Ran}})).
\]

\noindent Here, the Whittaker conditions for $G$ and $M$ are understood with respect to a non-degenerate character of $\fL_{\on{Ran}}N$ and its restriction to $\fL_{\on{Ran}}N_M$, respectively, see Section \ref{S:Whittakerness}.

\subsubsection{} Recall that the (factorizable) geometric Casselman-Shalika formula provides an equivalence
\[
\on{CS}_G: \on{Whit}(D(\on{Gr}_{G,\on{Ran}}))\simeq \on{Rep}(\check{G})_{\on{Ran}}
\]

\noindent of factorization categories. On the Langlands dual side, we may consider the functors
\[
C^{\bullet}(\check{\fn}_P,-): \on{Rep}(\check{G})_{\on{Ran}}\to \on{Rep}(\check{M})_{\on{Ran}};
\]
\[
\on{Res}^{\check{G}}_{\check{M}}: \on{Rep}(\check{G})_{\on{Ran}}\to \on{Rep}(\check{M})_{\on{Ran}}
\]

\noindent induced by taking Lie algebra cohomology along $\check{\fn}_P=\on{Lie}(\check{N}_P)$ and restricting along $\check{M}\to \check{G}$, respectively.

Our main result is the following:
\begin{thm}[Corollary \ref{cor: Koszul dual functor}, Theorem \ref{thm: restriction and !* Jacquet are the same}]\label{t:main}
    We have commutative diagrams:

    \[\begin{tikzcd}
	{\on{Whit}(D(\Gr_{G,\Ran}))} & {\on{Whit}(D(\Gr_{M,\Ran})} \\
	{\on{Rep}(\check{G})_{\Ran}} & {\on{Rep}(\check{M})_{\Ran},}
	\arrow["{\on{Jac}^M_!}", from=1-1, to=1-2]
	\arrow["{\on{CS}_G}"', from=1-1, to=2-1]
	\arrow["{\on{CS}_M}", from=1-2, to=2-2]
	\arrow["{C^{\bullet}(\check{\mathfrak{n}}_P,-)}"', from=2-1, to=2-2]
\end{tikzcd}\]

    \[\begin{tikzcd}
	{\on{Whit}(D(\Gr_{G,\Ran}))} & {\on{Whit}(D(\Gr_{M,\Ran}))} \\
	{\on{Rep}(\check{G})_{\Ran}} & {\on{Rep}(\check{M})_{\Ran}.}
	\arrow["{\on{Jac}^M_{!*}}", from=1-1, to=1-2]
	\arrow["{\on{CS}_G}"', from=1-1, to=2-1]
	\arrow["{\on{CS}_M}", from=1-2, to=2-2]
	\arrow["{\on{Res}^{\check{G}}_{\check{M}}}"', from=2-1, to=2-2]
\end{tikzcd}\]

\end{thm}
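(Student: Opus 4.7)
The plan is to prove both squares simultaneously by placing compatible Hecke-type structures on the two kernels $\pi^!(\sIC)$ and $j_{\Ran,!}$ and then transporting them across the Casselman-Shalika equivalence. Both Jacquet functors are kernel-type convolution functors from $D(\Gr_{G,\Ran})$ to $D(\Gr_{M,\Ran})$, so to identify them with their purported spectral counterparts it suffices to match the kernels after descent to the Whittaker quotients. The equivalences $\on{CS}_G$ and $\on{CS}_M$ intertwine the spherical Hecke actions via geometric Satake for $G$ and $M$, so any $\on{Rep}(\check{G})_{\Ran}$- or $\on{Rep}(\check{M})_{\Ran}$-linearity present on the sheaf-theoretic side transports to the spectral side.

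I would first handle the compactified case. In the approach taken in this paper, $\sIC$ is defined so as to carry a Hecke structure encoding restriction: under the Satake action of $\on{Rep}(\check{G})_{\Ran}$ on $D(\Gr_{G,\Ran})$, acting on $\sIC$ by a representation $V$ agrees with acting via $\on{Res}^{\check{G}}_{\check{M}}(V)$ through Satake for $M$, once one pulls back to $\ot{\Gr}_{P,\Ran}$ and pushes forward to $\Gr_{M,\Ran}$. The Whittaker vacuum on $\Gr_G$ corresponds under $\on{CS}_G$ to the trivial representation of $\check{G}$, and similarly for $M$; combining these with the Hecke structure and the fact that both functors in the second square are $\on{Rep}(\check{G})_{\Ran}$-linear then forces $\on{Jac}^M_{!*}$ to match $\on{Res}^{\check{G}}_{\check{M}}$ on the spectral side.

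For the non-compactified case, the kernel $j_{\Ran,!}$ does not carry a Hecke structure of the same form; instead it has what the paper terms an \emph{enhanced Drinfeld-Pl\"ucker structure}, the local analogue of the Braverman-Gaitsgory enhancement recalled in \S\ref{ss:basicstructure}. Factorizable Koszul duality between the coalgebra $\cO(\check{N}_P)$ and the Chevalley-Eilenberg algebra $C^{\bullet}(\check{\fn}_P)$ on the configuration space then identifies $j_{\Ran,!}$ with the image of $\sIC$ under the appropriate invariants functor for the Hecke/enhanced-Pl\"ucker structures. Combined with the first square, this realizes $\on{Jac}^M_!$ on the spectral side as first restricting along $\check{M}\to\check{G}$ and then taking derived $\check{\fn}_P$-invariants, which is exactly $C^{\bullet}(\check{\fn}_P,-)$.

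The main obstacle, as I expect, is establishing the compatibility of these factorization (co)algebra structures with the Casselman-Shalika equivalence and with the kernel-type convolutions defining the two Jacquet functors. Once the Hecke structure on $\sIC$ is available (essentially tautological in the definitional approach taken here), descent to the Whittaker quotients and the vacuum-object argument for the $!*$ square are formal. The delicate points are (i) checking that the enhanced Drinfeld-Pl\"ucker structure on $j_{\Ran,!}$ genuinely upgrades to the expected module structure under Koszul duality and (ii) verifying that this upgrade matches the $C^{\bullet}(\check{\fn}_P,-)$-functoriality on the spectral side without losing information. This is precisely the issue that made the Braverman-Gaitsgory arguments so technical globally, but working with $\sIC$ on the Ran space bypasses the global singularity analysis of $\on{IC}_{\ot{\Bun}_P}$.
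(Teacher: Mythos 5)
Your overall architecture matches the paper's: establish $\on{Rep}(\check{G})_{\Ran}$-linearity of $\on{Jac}^M_{!*}$ from the Hecke structure on $\sIC$, use the Casselman--Shalika equivalence to reduce everything to the value on the vacuum object, prove the $!*$ square first, and then deduce the $!$ square by Koszul duality using $\on{Inv}_{\cO(\check{N}_P)_{\Ran}}(\sIC)\simeq \mathbf{j}_!$ together with $\on{Inv}_{\cO(\check{N}_P)_{\Ran}}\circ \on{Res}^{\check{G}}_{\check{M}}\simeq C^{\bullet}(\check{\fn}_P,-)$. That last deduction is indeed as clean as you suggest, and is exactly Corollary \ref{cor: Koszul dual functor} in the paper.

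However, there is a genuine gap at the step you dismiss as formal. $\on{Rep}(\check{G})_{\Ran}$-linearity only tells you that $\on{Jac}^M_{!*}$ is determined by $\on{Jac}^M_{!*}(\psi_{G,\Ran})$; it does not compute that value, and the identification $\on{Jac}^M_{!*}(\psi_{G,\Ran})\simeq\psi_{M,\Ran}$ is the actual content of the theorem --- the fact that $\psi_{G,\Ran}$ corresponds to the trivial representation under $\on{CS}_G$ gives no information about where $\on{Jac}^M_{!*}$ sends it. The paper's proof of this identification has three nontrivial ingredients, none of which appear in your proposal: (i) a geometric analysis of the fiber product $S^{-,0}_{B,\Ran}\underset{\Gr_{G,\Ran}}{\times}\Gr_{P,\Ran}\underset{\Gr_{M,\Ran}}{\times}S^{-,0}_{B_M,\Ran}$, showing it projects isomorphically to $S^{-,0}_{B_M,\Ran}$, which identifies the restriction of $\on{Jac}^M_{!*}(\psi_{G,\Ran})$ to the open $\fL_{\Ran}N^-_M$-orbit and produces a counit map $\psi_{M,\Ran}\to\on{Jac}^M_{!*}(\psi_{G,\Ran})$; (ii) a composition theorem $\on{Jac}^{T_M}_{!*}\circ\on{Jac}^M_{!*}\simeq\on{Jac}^T_{!*}$ (Theorem \ref{thm: composing eisenstein}), proved by matching Drinfeld--Pl\"ucker structures on convolved delta sheaves, which reduces the isomorphy of that counit to the principal case $P=B$; and (iii) in the principal case, the vanishing of $H^*_{dR}(\overline{S}^{\lambda}_x\cap S^{-,0}_x,\, t^{\lambda}\IC^{\frac{\infty}{2}}_{B,x}\overset{!}{\otimes}\psi_{G,x})$ for $\lambda\neq 0$, which the paper obtains by comparing $\IC^{\frac{\infty}{2}}_{B,x}$ with the IC sheaf of the Zastava space and invoking Raskin's theorem. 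This vanishing is where input beyond geometric Satake and formal Hecke-linearity genuinely enters; it cannot be obtained by descent to Whittaker quotients alone. By contrast, your stated worries (the Koszul-duality upgrade of the enhanced Drinfeld--Pl\"ucker structure on $\mathbf{j}_!$ and its match with $C^{\bullet}(\check{\fn}_P,-)$) are handled in the paper by Proposition \ref{p:restostrata!} and \S\ref{s:comp1} and are comparatively soft.
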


\subsubsection{} The commutativity of the first diagram was proved when $P=B$ is a Borel subgroup by Raskin in \cite{raskin2021chiral}.\footnote{In truth, Raskin proves that the functor $\on{Jac}_*^M: \on{Whit}(D(\on{Gr}_{G,\on{Ran}}))\to \on{Whit}(D(\on{Gr}_{M,\on{Ran}}))$ defined by replacing $j_{\on{Ran},!}$ with $j_{\on{Ran},*}$ in the definition of $\on{Jac}_!^M$ corresponds to taking Lie algebra \emph{ho}mology on the Langlands dual side.} The proof in \emph{loc.cit} relies on results of \cite{braverman2006deformations}. Our proof of Theorem \ref{t:main} is self-contained except for using a certain vanishing result of \cite[§3]{raskin2021chiral}, the latter result being independent from the rest of \cite{raskin2021chiral}. We also sketch an argument for how to circumvent the vanishing result of Raskin, see §\ref{s:!*jac}.

\subsubsection{$\bL_G$ interchanges Eisenstein series}\label{s:interchangeEIS}

Finally, let us illustrate the importance of Theorem \ref{t:main} by explaining its relevance to the proof in \cite{campbell2024proof} that the geometric Langlands functor
\[
\bL_G: D(\on{Bun}_G)\to \on{IndCoh}_{\cN}(\on{LocSys}_{\check{G}})
\]

\noindent interchanges Eisenstein series.\footnote{We remark that \emph{loc.cit} also establishes that $\bL_G$ commutes with constant term functors, an assertion we do not address at all in this paper.} We do this to orient the reader about how the overall logic of the proof in \cite{campbell2024proof} goes and in particular to highlight that Theorem \ref{t:main} is \emph{the} place one has to deal with Langlands duality.

For simplicity, we ignore issues about temperedness, twists, translations, Cartan involution and shifts in what follows.

There is a natural localization functor
\[\on{Loc}_{\check{G}}: \on{Rep}(\check{G})_{\Ran} \longrightarrow \on{QCoh}(\on{LocSys}_{\check{G}})\]

\noindent with a fully faithful right adjoint $\on{coLoc}_{\check{G}}$. By the vanishing result of \cite{gaitsgory2010generalized}, the action of $\on{Rep}(\check{G})_{\Ran}$ on $D(\Bun_G)$ by Hecke functors factors through an action of $\on{QCoh}(\on{LocSys}_{\check{G}})$. By acting on the Whittaker sheaf, we obtain a functor:
\[\bL_G^{\on{spec}}: \on{QCoh}(\on{LocSys}_{\check{G}}) \longrightarrow D(\Bun_G).\]

\noindent Similarly, we have a Poincaré functor
\[
\on{Whit}(D(\on{Gr}_{G,\on{Ran}}))\xrightarrow{\on{Poinc}_{G,!}} D(\on{Bun}_G)
\]

\noindent that fits into a commutative diagram:
\[\begin{tikzcd}
	{\on{Rep}(\check{G})_{\mathrm{Ran}}} && {\on{Whit}(D(\mathrm{Gr}_{G,\mathrm{Ran}}))} \\
	\\
	{\on{QCoh}(\mathrm{LocSys}_{\check{G}})} && {D(\mathrm{Bun}_G).}
	\arrow["{\mathrm{CS}_G^{-1}}", from=1-1, to=1-3]
	\arrow["{\mathrm{Loc}_{\check{G}}}"', from=1-1, to=3-1]
	\arrow["{\mathrm{Poinc}_{G,!}}", from=1-3, to=3-3]
	\arrow["{\mathbb{L}_G^{\mathrm{spec}}}"', from=3-1, to=3-3]
\end{tikzcd}\]

It suffices to show that the diagram
\begin{equation}\label{d:diagram1}
\begin{tikzcd}
	{D(\mathrm{Bun}_M)} && {\on{QCoh}(\mathrm{LocSys}_{\check{M}})} \\
	\\
	{D(\mathrm{Bun}_G)} && {\on{QCoh}(\mathrm{LocSys}_{\check{G}})}
	\arrow["{\mathbb{L}_M^{\mathrm{spec}}}"', from=1-3, to=1-1]
	\arrow["{\mathrm{CT}_{P,!}}", from=3-1, to=1-1]
	\arrow["{\mathrm{CT}_P^{\mathrm{spec}}}"', from=3-3, to=1-3]
	\arrow["{\mathbb{L}_G^{\mathrm{spec}}}", from=3-3, to=3-1]
\end{tikzcd}
\end{equation}
\noindent commutes. Here, $\on{CT}_{P,!}$ is the functor of $*$-pull, $!$-push along:
\[
\on{Bun}_M\leftarrow \on{Bun}_P\to\on{Bun}_G,
\]

\noindent and $\on{CT}_P^{\on{spec}}$ is the functor of pull-push along
\[
\mathrm{LocSys}_{\check{M}}\leftarrow \mathrm{LocSys}_{\check{P}}\to \mathrm{LocSys}_{\check{G}}.
\]

\noindent The reader should think that the implication
\[
(\on{commutativity}\; \on{of}\; (\ref{d:diagram1}))\to (\bL_G\;\on{interchanges}\; \on{Eisenstein}\;\on{series})
\]

\noindent follows by taking dual functors in (\ref{d:diagram1}), where we identify $D(\on{Bun}_G)$ and $\on{QCoh}(\on{LocSys}_{\check{G}})$ with their own duals through miraculous duality for the former and in the standard way for the latter.\footnote{We have grossly simplified this implication step. As a first point, the results of \cite{lin2023poincare} are needed to make this precise.}

Let us expand (\ref{d:diagram1}) to the following diagram:
\begin{equation}\label{d:diagram2}
\begin{tikzcd}
	{D(\mathrm{Bun}_M)} && {\on{QCoh}(\mathrm{LocSys}_{\check{M}})} \\
	\\
	{D(\mathrm{Bun}_G)} && {\on{QCoh}(\mathrm{LocSys}_{\check{G}})} \\
	\\
	{\on{Whit}(D(\mathrm{Gr}_{G,\mathrm{Ran}}))} && {\on{Rep}(\check{G})_{\mathrm{Ran}}.}
	\arrow["{\mathbb{L}_M^{\mathrm{spec}}}"', from=1-3, to=1-1]
	\arrow["{\mathrm{CT}_{P,!}}", from=3-1, to=1-1]
	\arrow["{\mathrm{CT}_P^{\mathrm{spec}}}"', from=3-3, to=1-3]
	\arrow["{\mathbb{L}_G^{\mathrm{spec}}}", from=3-3, to=3-1]
	\arrow["{\mathrm{Poinc}_{G,!}}", from=5-1, to=3-1]
	\arrow["{\mathrm{Loc}_{\check{G}}}"', from=5-3, to=3-3]
	\arrow["{\mathrm{CS}_G^{-1}}", from=5-3, to=5-1]
\end{tikzcd}
\end{equation}

\noindent We already noted that the lower diagram commutes. Thus, it suffices to show that the outer diagram commutes.

We have commutative diagrams:
\[\begin{tikzcd}
	{\on{Whit}(D(\mathrm{Gr}_{G,\mathrm{Ran}}))} && {\on{Whit}(D(\mathrm{Gr}_{M,\mathrm{Ran}}))} & {\on{Rep}(\check{G})_{\mathrm{Ran}}} && {\on{Rep}(\check{M})_{\mathrm{Ran}}} \\
	\\
	{D(\mathrm{Bun}_G)} && {D(\mathrm{Bun}_M),} & {\on{QCoh}(\mathrm{LocSys}_{\check{G}})} && {\on{QCoh}(\mathrm{LocSys}_{\check{M}}).}
	\arrow["{\mathrm{Jac}_!^M}", from=1-1, to=1-3]
	\arrow["{\mathrm{Poinc}_{G,!}}"', from=1-1, to=3-1]
	\arrow["{\mathrm{Poinc}_{M,!}}", from=1-3, to=3-3]
	\arrow["{C^{\bullet}(\check{\mathfrak{n}}_P,-)}", from=1-4, to=1-6]
	\arrow["{\mathrm{Loc}_{\check{G}}}"', from=1-4, to=3-4]
	\arrow["{\mathrm{Loc}_{\check{M}}}", from=1-6, to=3-6]
	\arrow["{\mathrm{CT}_{P,!}}"', from=3-1, to=3-3]
	\arrow["{\mathrm{CT}_P^{\mathrm{spec}}}"', from=3-4, to=3-6]
\end{tikzcd}\]

\noindent The commutativity of the left diagram was established in \cite{lin2023poincare}, and the commutativity of the right diagram was established in \cite[Thm. 12.2.10]{campbell2024proof}.\footnote{Really one has to precompose $\on{Jac}_!^M$ and $C^{\bullet}(\check{\fn}_P,-)$ with the functor of inserting the unit (cf. \cite[§C.11]{arinkin2024proof}), but we ignore this here.}\label{f:footnote10} Putting it all together, the commutativity of the outer diagram of (\ref{d:diagram2}) follows from our Theorem \ref{t:main}.

\begin{rem}
    We remark here that the commutativity of the right diagram can be proved roughly as follows. The diagram
    \[\begin{tikzcd}
	{\on{Rep}(\check{G})_{\Ran}} & {\on{Rep}(\check{M})_{\Ran}} \\
	{\on{QCoh}(\on{LocSys}_{\check{G}})} & {\on{QCoh}(\on{LocSys}_{\check{M}})}
	\arrow["{\on{Res}^{\check{G}}_{\check{M}}}", from=1-1, to=1-2]
	\arrow["{\on{Loc}_{\check{G}}}"', from=1-1, to=2-1]
	\arrow["{\on{Loc}_{\check{M}}}", from=1-2, to=2-2]
	\arrow["{\iota^*}"', from=2-1, to=2-2]
\end{tikzcd}\]

\noindent commutes, where $\iota: \on{LS}_{\check{M}} \to \on{LS}_{\check{G}}$ denotes the map of inducing local systems along $\check{M}\to\check{G}$, and pullback along this map is obviously compatible with the evident $\on{Rep}(\check{G})_{\Ran}$-actions. Here we view $\on{Rep}(\check{G})_{\Ran}$ as a monoidal category under \emph{external} convolution. Both functors carry actions of $\cO(\check{N}_P)_{\on{Ran}}$ (see §\ref{S:HeckeandKD} below for the definition of the latter). Taking invariants for $\cO(\check{N}_P)_{\on{Ran}}$, we obtain a commutative diagram with target category $\Omega(\check{\fn}_P)_{\on{Ran}}\on{-mod}(\on{QCoh}(\on{LocSys}_{\check{M}}))$.

Finally, one can show that
\[\Omega(\check{\fn}_P)_{\Ran}\text{-mod}(\on{QCoh}(\on{LocSys}_{\check{M}})) \simeq \on{QCoh}(\on{LocSys}_{\check{P}}),\]

\noindent and that under this equivalence, the functor $\on{CT}^{\on{spec}}_P$ corresponds to the composition
\[\on{QCoh}(\on{LocSys}_{\check{G}}) \longrightarrow \Omega(\check{\fn}_P) \text{-mod}(\on{QCoh}(\on{LocSys}_{\check{M}})) \longrightarrow \on{QCoh}(\on{LocSys}_{\check{M}}),\]

\noindent where the second arrow is the forgetful functor.\footnote{We have ignored one subtlety: taking invariants for $\cO(\check{N}_P)_{\on{Ran}}$ as an algebra object in $\on{Rep}(\check{M})_{\on{Ran}}$ with \emph{external} convolution does not recover the functor $C^{\bullet}(\check{\fn}_P,-)$. Rather, this functor is given by taking invariants when $\cO(\check{N}_P)_{\on{Ran}}$ is considered an algebra object in $\on{Rep}(\check{M})_{\on{Ran}}$ with \emph{pointwise} convolution. However, these differ exactly by the functor of inserting the unit, cf. the previous footnote.}
\end{rem}

\subsection{Hecke structures and Koszul duality}\label{S:HeckeandKD}

The second main result of this paper is the construction of Hecke and Drinfeld-Pl\"{u}cker structures on $\sIC$ and related sheaves. We refer to Section \ref{S:PSIC} for more details.

\subsubsection{Koszul duality} Consider $\cO(\check{N}_P)$ as a coalgebra object of $\on{Rep}(\check{M})$. Similarly, consider $C^{\bullet}(\check{\fn}_P)$, the cohomological Chevalley complex of $\check{\fn}_P$, as an algebra object of $\on{Rep}(\check{M})$. We define (co)algebra objects
\[
\cO(\check{N}_P)_{\on{Ran}},\Omega(\check{\fn}_P)_{\on{Ran}}\in\on{Rep}(\check{M})_{\on{Ran}}
\]

\noindent whose $!$-fibers at some $x\in X$ recover $\cO(\check{N}_P),C^{\bullet}(\check{\fn}_P)$, respectively.

\subsubsection{} Consider the semi-infinite category 
\[
\on{SI}_{P,\on{Ran}}=D(\fL_{\on{Ran}}N_P\fL^+_{\on{Ran}}M\backslash \on{Gr}_{G,\on{Ran}}).
\]

\noindent Note that $\on{SI}_{P,\on{Ran}}$ is naturally a module category for $\on{Rep}(\check{M}\times \check{G})_{\on{Ran}}$. As mentioned in §\ref{s:psic}, the semi-infinite IC-sheaf $\sIC$ defines an object of $\on{SI}_{P,\on{Ran}}$.

Koszul duality provides an equivalence of categories
\[
\on{Inv}_{\cO(\check{N}_P)_{\on{Ran}}}: \cO(\check{N}_P)_{\on{Ran}}\on{-comod}(\on{SI}_{P,\on{Ran}})\to \Omega(\check{\fn}_P)_{\on{Ran}}\on{-mod}(\on{SI}_{P,\on{Ran}})
\]

\noindent given by taking invariants for $\cO(\check{N}_P)_{\on{Ran}}$ (see Lemma \ref{l:KD} for a general statement of this type).

\subsubsection{}Denote by $\mathbf{j}_!$ the $!$-pushforward of the dualizing sheaf along the open embedding $S^0_{P,\on{Ran}}\into \widetilde{S}^0_{P,\on{Ran}}$. We have:
\begin{thm}[Proposition \ref{p:restostrata!}]\label{t:KD}

The sheaf $\sIC$ is equipped with a canonical comodule structure for $\cO(\check{N}_P)_{\on{Ran}}$. Moreover, we have:
\[
\on{Inv}_{\cO(\check{N}_P)_{\on{Ran}}}(\sIC)\simeq \mathbf{j}_!.
\]

\noindent In particular, $\mathbf{j}_!$ is equipped with a module structure for $\Omega(\check{\fn}_P)_{\on{Ran}}$.
    
\end{thm}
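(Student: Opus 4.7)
My plan is to proceed in two stages: first extract the comodule structure from the Hecke structure on $\sIC$ developed earlier in the paper, and then identify the invariants with $\mathbf{j}_!$ via a stratification argument.

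For the construction of the coaction, recall that $\sIC$ was built so as to carry a universal Hecke / Drinfeld--Pl\"{u}cker structure in $\on{SI}_{P,\on{Ran}}$, which encodes its equivariance under the $\on{Rep}(\check{P})_{\on{Ran}}$-module structure on $\on{SI}_{P,\on{Ran}}$ coming from semi-infinite Satake. Since $\check{M}$ acts on $\check{N}_P$ by conjugation, the coordinate ring $\cO(\check{N}_P)$ is a coalgebra object of $\on{Rep}(\check{M})$; factorizing gives a coalgebra $\cO(\check{N}_P)_{\on{Ran}} \in \on{Rep}(\check{M})_{\on{Ran}}$. Extracting from the $\on{Rep}(\check{P})_{\on{Ran}}$-equivariance the dependence on the unipotent radical $\check{N}_P \hookrightarrow \check{P}$, while remembering the residual $\on{Rep}(\check{M})_{\on{Ran}}$-action, produces the desired coaction of $\cO(\check{N}_P)_{\on{Ran}}$ on $\sIC$.

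For the identification $\on{Inv}_{\cO(\check{N}_P)_{\on{Ran}}}(\sIC) \simeq \mathbf{j}_!$, the defining property $j_{\on{Ran}}^! \sIC \simeq \omega_{\on{Gr}_{P,\on{Ran}}}$ together with $(j_{\on{Ran},!}, j_{\on{Ran}}^!)$-adjunction produces a canonical map $\mathbf{j}_! \to \sIC$. Equipping $\mathbf{j}_!$ with the trivial $\Omega(\check{\fn}_P)_{\on{Ran}}$-module structure via the augmentation $\Omega(\check{\fn}_P)_{\on{Ran}} \to \omega_{\on{Ran}}$, this map refines to a morphism
\[\alpha: \mathbf{j}_! \longrightarrow \on{Inv}_{\cO(\check{N}_P)_{\on{Ran}}}(\sIC)\]
in $\Omega(\check{\fn}_P)_{\on{Ran}}\on{-mod}$. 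To show $\alpha$ is an isomorphism, I would stratify $\widetilde{S}^0_{P,\on{Ran}}$ by its $\Lp$-indexed strata. On the open stratum the map is the identity on $\omega$ by construction. On a deeper stratum $S^\lambda_{P,\on{Ran}}$ with $\lambda \neq 0$ we have $i_\lambda^* \mathbf{j}_! = 0$, so it is enough to show $i_\lambda^* \on{Inv}_{\cO(\check{N}_P)_{\on{Ran}}}(\sIC) = 0$. Factorizably, the $*$-restriction of $\sIC$ to $S^\lambda_{P,\on{Ran}}$ is controlled in terms of the $\lambda$-graded piece of $\cO(\check{N}_P)_{\on{Ran}}$, so that the cobar complex computing $i_\lambda^* \on{Inv}$ reduces to the cobar complex of $\cO(\check{N}_P)$ against itself in positive weight. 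This is the factorizable manifestation of the Koszul duality between $\cO(\check{N}_P)$ and $\Omega(\check{\fn}_P)$ (Lemma \ref{l:KD}) and is acyclic in positive weight.

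The main technical obstacle is the interaction between $*$-restriction to strata and the formation of invariants, since the latter is a totalization that does not a priori commute with $*$-restriction. I would address this by exploiting the weight grading on $\cO(\check{N}_P)_{\on{Ran}}$ coming from $\Lp$: the coaction respects this grading, and on each fixed stratum only finitely many weights contribute, so that the cobar complex becomes term-by-term supported on a controlled locus and the relevant (finite) limits do commute with $*$-restriction. A secondary subtlety is the coherent assembly of the coaction at the level of factorization $\infty$-categories --- in particular, checking that the Drinfeld--Pl\"{u}cker datum extracted from the Hecke structure satisfies the full hierarchy of compatibilities required of a genuine coaction --- but this should follow from the general formalism established earlier in the text.
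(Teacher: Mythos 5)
Your construction of the coaction is essentially the paper's: there the coaction of $\cO(\check{N}_P)_{\on{Ran}}$ on the algebra $\cO(\check{G})_{\on{Ran}}$ (coming from $\check{N}_P\curvearrowright\check{G}$) induces a coaction on the monad $\on{Oblv}\circ\on{Ind}_{\on{DrPl}_{\check{M},\check{G}}}^{\on{Hecke}_{\check{M},\check{G}}}$, hence on $\sIC=\on{Ind}_{\on{DrPl}_{\check{M},\check{G}}}^{\on{Hecke}_{\check{M},\check{G}}}(\delta_{\on{Gr}_{G,\on{Ran}}})$ (Corollary \ref{c:actiononinduction}); the coherence issues you flag at the end are absorbed into this monadic formulation rather than checked by hand.

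The identification step, however, has a genuine gap, and it is located exactly where you place your "main technical obstacle." First, your vanishing argument on the stratum $S^{\lambda}_{P,\on{Ran}}$ takes as input that $j^{\lambda,*}(\sIC)$ is "controlled in terms of the $\lambda$-graded piece of $\cO(\check{N}_P)_{\on{Ran}}$," together with its comodule structure. In the paper this is Proposition \ref{p:strata}, and it is \emph{deduced from} the present theorem (its proof invokes Proposition \ref{p:restostrata!} and Koszul duality), so you would need an independent computation of the $*$-restrictions of $\sIC$ to all strata, as comodules, before your argument can start. Second, the proposed commutation of $*$-restriction with the cobar totalization is not justified by the weight grading alone: the weight-zero constants in $\cO(\check{N}_P)$ contribute in every cosimplicial degree, so fixing the weight $\lambda$ does not make the totalization finite without first passing to a reduced complex, and $*$-restriction is colimit-preserving, not limit-preserving. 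The paper avoids both issues at once. By the compatibility of \S\ref{s:comp1}, $\on{Inv}_{\cO(\check{N}_P)_{\on{Ran}}}(\sIC)\simeq \on{Ind}_{\on{DrPl}_{\check{M},\check{G}}}^{\on{EnhDrPl}_{\check{M},\check{G}}}(\delta_{\on{Gr}_{G,\on{Ran}}})=\cO(\check{N}_P\backslash\check{G})_{\on{Ran}}\underset{\cO(\overline{\check{N}_P\backslash\check{G}})_{\on{Ran}}}{\otimes}\delta_{\on{Gr}_{G,\on{Ran}}}$, a Bar \emph{colimit}; one then applies not the bare $*$-restriction but the hyperbolic restriction $\on{pres}^{\theta}={}'p^{\theta}_!\circ j^{\theta,*}$ (which commutes with $!$-fibers by Braden's theorem and intertwines geometric Satake for $G$ with that for $M$ via $\on{Res}^{\check{G}}_{\check{M}}$) term by term. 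The resulting colimit is the Bar complex computing $\cO(\check{N}_P\backslash\check{G})_x\otimes_{\cO(\overline{\check{N}_P\backslash\check{G}})_x}\delta_{\on{Gr}_{M,x}}$ for the augmentation module structure on $\delta_{\on{Gr}_{M,x}}$, which is tautologically $\delta_{\on{Gr}_{M,x}}$ itself since the augmentation already defines an enhanced Drinfeld--Pl\"ucker structure. This yields the vanishing on $\theta\neq 0$ and the identification on $\theta=0$ simultaneously, with no limit--colimit interchange. If you want to salvage your route, you would need to first prove the stratum-wise description of $\sIC$ independently (e.g.\ from the colimit description of Appendix \ref{S:APPB}) and then run the reduced cobar argument in each fixed positive weight; this is doable but strictly harder than the paper's computation.
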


\subsubsection{Hecke structures}\label{ss:heckestructures} Let $\overline{\check{N}_P\backslash \check{G}}$ be the affinization of $\check{N}_P\backslash \check{G}$. 

For a scheme $X$, we denote by $\cO(X)$ the derived global section of the structure sheaf of $X$. We consider the schemes
\[
\check{G},\; \check{N_P}\backslash \check{G}, \; \overline{\check{N}_P\backslash \check{G}}
\]

\noindent as acted on by $\check{M}\times \check{G}$ in the obvious ways. We get algebras
\[
\cO(\check{G})_{\on{Ran}},\; \cO(\check{N}_P\backslash \check{G})_{\on{Ran}},\; \cO(\overline{\check{N}_P\backslash \check{G}})_{\on{Ran}}\in \on{Rep}(\check{M}\times\check{G})_{\on{Ran}}
\]

\noindent whose fiber at a point $x\in X$ recovers $\cO(\check{G}), \cO(\check{N}_P\backslash \check{G}), \cO(\overline{\check{N}_P\backslash \check{G}})\in \on{Rep}(\check{M}\times \check{G})$, respectively.

\subsubsection{}\label{s:3way} We define three categories consisting of objects of $\on{SI}_{P,\on{Ran}}$ equipped with extra structure related to the action of $\on{Rep}(\check{M}\times \check{G})_{\on{Ran}}$:
\begin{enumerate}
    \item $\on{Hecke}_{\check{M},\check{G}}(\on{SI}_{P,\on{Ran}}):=\cO(\check{G})_{\on{Ran}}\on{-mod}(\on{SI}_{P,\on{Ran}})$.

    For $\cF\in \on{SI}_{P,\on{Ran}}$, we refer to a lift of $\cF$ to $\on{Hecke}_{\check{M},\check{G}}(\on{SI}_{P,\on{Ran}})$ as a \emph{Hecke structure} on $\cF$. Concretely, this amounts to a family of isomorphisms:
    \[
    \cF\star V\simeq \on{Res}^{\check{G}}_{\check{M}}(V)\star \cF, \; V\in\on{Rep}(\check{G})_{\on{Ran}}
    \]

    \noindent satisfying natural higher compatibilities.

    \item $\on{DrPl}_{\check{M},\check{G}}(\on{SI}_{P,\on{Ran}}):=\cO(\overline{\check{N}_P\backslash \check{G}})_{\on{Ran}}\on{-mod}(\on{SI}_{P,\on{Ran}})$.

    For $\cF\in \on{SI}_{P,\on{Ran}}$, we refer to a lift of $\cF$ to $\on{EnhDrPl}_{\check{M},\check{G}}(\on{SI}_{P,\on{Ran}})$ as a \emph{Drinfeld-Plücker structure} on $\cF$. At a point $x\in X$, this amounts to a family of maps:
    \[
    V^{\check{N}_P}\star \cF\to \cF\star V, \; V\in\on{Rep}(\check{G})
    \]

    \noindent satisfying natural higher compatibilities. Here $V^{\check{N}_P}$ is the \emph{underived} invariants of $V$ along $\check{N}_P$.

    \item $\on{EnhDrPl}_{\check{M},\check{G}}(\on{SI}_{P,\on{Ran}}):=\cO(\check{N}_P\backslash \check{G})_{\on{Ran}}\on{-mod}(\on{SI}_{P,\on{Ran}})$.

    For $\cF\in \on{SI}_{P,\on{Ran}}$, we refer to a lift of $\cF$ to $\on{EnhDrPl}_{\check{M},\check{G}}(\on{SI}_{P,\on{Ran}})$ as an \emph{enhanced Drinfeld-Plücker structure} on $\cF$. Concretely, this amounts to a family of isomorphisms:
    \[
    \cF\star V\simeq C^{\bullet}(\check{\fn}_P,V)\underset{\Omega(\check{\fn}_P)_{\on{Ran}}}{\star} \cF, \; V\in\on{Rep}(\check{G})_{\on{Ran}}
    \]

    \noindent satisfying natural higher compatibilities.
\end{enumerate}

\begin{thm}\label{t:loc_Hecke}
$\sIC$ is canonically equipped with a Hecke structure. Moreover, $\mathbf{j}_!$ is canonically equipped with an enhanced Drinfeld-Plücker structure.
\end{thm}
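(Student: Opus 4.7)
The strategy is to obtain both assertions from a single construction: first equip $\sIC$ with its Hecke structure, then deduce the enhanced Drinfeld--Pl\"ucker structure on $\mathbf{j}_!$ by passing through the Koszul duality equivalence of Theorem~\ref{t:KD}.

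\textbf{Step 1: Hecke structure on $\sIC$.} I would build this into the very definition of $\sIC$, in keeping with the approach of the paper. Concretely, I would characterize $\sIC$ as the essentially unique object of the Hecke-equivariant category $\cO(\check{G})_{\Ran}\text{-mod}(\on{SI}_{P,\Ran})$ whose underlying sheaf is supported on $\ot{S}^0_{P,\Ran}$, whose restriction to the open stratum $S^0_{P,\Ran}$ is trivialized as the dualizing sheaf, and which lies in the prescribed range of perverse degrees along the deeper strata. Existence and uniqueness (including the higher coherence of the Hecke isomorphisms) should follow from a universal-property argument: on the open stratum the Hecke structure is forced, and the combination of perversity bounds together with factorization at the Ran level pins down the extension uniquely. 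By construction this produces the Hecke structure, i.e.\ a system of isomorphisms $\sIC \star V \simeq \on{Res}^{\check{G}}_{\check{M}}(V)\star \sIC$ for $V \in \on{Rep}(\check{G})_{\Ran}$ with full higher coherence.

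\textbf{Step 2: Enhanced Drinfeld--Pl\"ucker structure on $\mathbf{j}_!$.} By Theorem~\ref{t:KD}, $\sIC$ carries a canonical $\cO(\check{N}_P)_{\Ran}$-comodule structure whose image under $\on{Inv}_{\cO(\check{N}_P)_{\Ran}}$ is $\mathbf{j}_!$, equipped with an $\Omega(\check{\fn}_P)_{\Ran}$-module structure. The Hecke structure of Step~1 is compatible with the $\cO(\check{N}_P)_{\Ran}$-coaction on both sides. Applying $\on{Inv}_{\cO(\check{N}_P)_{\Ran}}$---which is compatible with the convolution action of $\on{Rep}(\check{M})_{\Ran}$ on $\on{SI}_{P,\Ran}$, and which identifies $\on{Inv}_{\cO(\check{N}_P)_{\Ran}}(\on{Res}^{\check{G}}_{\check{M}}(V))$ with $C^{\bullet}(\check{\fn}_P, V)$ as an $\Omega(\check{\fn}_P)_{\Ran}$-module---yields canonical isomorphisms
\[
\mathbf{j}_! \star V \;\simeq\; C^{\bullet}(\check{\fn}_P, V) \underset{\Omega(\check{\fn}_P)_{\Ran}}{\star} \mathbf{j}_!, \qquad V \in \on{Rep}(\check{G})_{\Ran},
\]
coherent in $V$. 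By definition this is an enhanced Drinfeld--Pl\"ucker structure on $\mathbf{j}_!$.

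\textbf{Main obstacle.} The substantive content lies entirely in Step~1: producing the Hecke structure on $\sIC$ with all higher coherences, as an object of $\cO(\check{G})_{\Ran}\text{-mod}(\on{SI}_{P,\Ran})$. This is where the Ran-space, factorization perspective pays off, since on $\ot{\Bun}_P$ one would otherwise be forced to carry out the delicate stratum-by-stratum analysis of $\on{IC}_{\ot{\Bun}_P}$ performed in \cite{braverman1999geometric, braverman2006deformations}. The hardest part will be verifying that the Hecke eigen-isomorphisms assemble with compatible higher coherences across the semi-infinite strata, which requires tracking the factorization structure and perversity data in tandem. Once Step~1 is established, Step~2 is a formal consequence of Koszul duality and Theorem~\ref{t:KD}.
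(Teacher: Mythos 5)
Your Step 2 is essentially the paper's argument: the enhanced Drinfeld--Pl\"ucker structure on $\mathbf{j}_!$ is obtained by identifying $\on{Inv}_{\cO(\check{N}_P)_{\on{Ran}}}(\sIC)$ with $\on{Ind}_{\on{DrPl}_{\check{M},\check{G}}}^{\on{EnhDrPl}_{\check{M},\check{G}}}(\delta_{\on{Gr}_{G,\on{Ran}}})$ (which carries such a structure by construction) and then invoking Theorem~\ref{t:KD} (Proposition~\ref{p:restostrata!}) to identify the latter with $\mathbf{j}_!$. That part is fine.

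The gap is in Step 1, which you yourself flag as carrying all the substantive content but for which you do not actually give a construction. Characterizing $\sIC$ as ``the essentially unique Hecke object supported on $\ot{S}^0_{P,\on{Ran}}$ with prescribed open-stratum restriction and perversity bounds'' begs the existence question: producing \emph{some} object of $\cO(\check{G})_{\on{Ran}}\mathendash\mathsf{mod}(\on{SI}_{P,\on{Ran}})$ with these properties is exactly what has to be done, and perversity bounds plus factorization do not manufacture the coherent system of eigen-isomorphisms. Moreover, this characterization-by-t-structure route is precisely the Gaitsgory-style approach the paper is written to avoid. What the paper actually does is: (i) construct a Drinfeld--Pl\"ucker structure on the delta sheaf $\delta_{\on{Gr}_{G,\on{Ran}}}$ along the unit section (Appendix~\ref{S:DrPLstructure}; the combinatorial input is the canonical map $\on{IC}_M^{\lambda}\to\on{IC}^{\lambda}$ coming from $\on{Gr}_M\into\on{Gr}_G$, assembled factorizably over twisted arrows via external convolution), and then (ii) \emph{define} $\sIC:=\on{Ind}_{\on{DrPl}_{\check{M},\check{G}}}^{\Hecke_{\check{M},\check{G}}}(\delta_{\on{Gr}_{G,\on{Ran}}})$, so that the Hecke structure, with all higher coherences, is tautological. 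Your proposal never mentions the delta sheaf or its Drinfeld--Pl\"ucker structure, yet your Step 2 implicitly needs this induction presentation anyway: the $\cO(\check{N}_P)_{\on{Ran}}$-coaction on $\sIC$ that you feed into $\on{Inv}_{\cO(\check{N}_P)_{\on{Ran}}}$ comes from Corollary~\ref{c:actiononinduction}, i.e.\ from $\sIC$ being induced from a Drinfeld--Pl\"ucker object, not from an abstract uniqueness characterization.
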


\begin{rem}
In fact, contrary to \cite{gaitsgory2018semi}\cite{gaitsgory2021semi}, we \emph{define} $\sIC$ by the requirement that it possesses a Hecke structure. Since we have avoided defining $\sIC$ in this introduction, we keep the first part of the above result as a theorem for simplicity. On the other hand, that $\mathbf{j}_!$ possesses an enhanced Drinfeld-Plücker structure is quite non-trivial and follows from Theorem \ref{t:KD}.

This illustrates one of the main points of this paper rather well: instead of trying to define the enhanced Drinfeld-Plücker structure on $\mathbf{j}_!$ directly, we define another sheaf\footnote{In this case $\on{Inv}_{\cO(\check{N}_P)_{\on{Ran}}}(\sIC)$.} with an enhanced Drinfeld-Plücker structure and identify it with $\mathbf{j}_!$.\footnote{Which is in principle straightforward: to give an isomorphism $\cF\simeq\mathbf{j}_!$, one has to identify $\cF$ with the dualizing sheaf upon restriction to $S_{P,\on{Ran}}^0$ and show that the $*$-restriction of $\cF$ to $\widetilde{S}^0_{P,\on{Ran}}\setminus S_{P,\on{Ran}}^0$ vanishes.} If, for example, one tried to construct isomorphisms
\[
 \mathbf{j}_!\star V\simeq C^{\bullet}(\check{\fn}_P,V)\underset{\Omega(\check{\fn}_P)_{\on{Ran}}}{\star} \mathbf{j}_!, \; V\in\on{Rep}(\check{G})_{\on{Ran}}
\]

\noindent directly, this would be very subtle.

\subsubsection{Where do these structures come from?}

We have highlighted the advantage of our approach to constructing, say, Hecke structures on sheaves of interest compared to the approach taken previously in the literature, as this required constructing essentially all the structures 'by hand'. This begs the question of whether we have completely removed the necessity of doing things by hand.

This is not quite true. After all, our statements involve Langlands duality,\footnote{Langlands duality that does not come from geometric Satake, that is.} and so combinatorics has to appear at some point. In our case, this appears when having to construct a Drinfeld-Plücker structure on the delta sheaf along the unit section $\on{Ran}\to \fL^+_{\on{Ran}}M\backslash \on{Gr}_{G,\on{Ran}}$ from which we bootstrap everything. However, constructing this Drinfeld-Plücker structure is very simple, and we regard it as the minimal 'combinatorical' input needed to obtain the results of this paper.

\end{rem}

\subsection{Local-to-global comparisons}

In this subsection, we describe how to relate the semi-infinite IC-sheaf $\sIC$ to the IC-sheaf of $\widetilde{\on{Bun}}_P$. We refer to Section \ref{S:LOCTOGLOB} for more details.

\subsubsection{} We let
\[
 \ot{\Gr}_{P,\Bun_M}\to \on{Bun}_M
\]

\noindent denote a relative version of $\widetilde{S}^0_{P,\on{Ran}}$ living over $\on{Bun}_M$, see §\ref{s:relsemiinf}. By definition, it comes with a canonical map:
\begin{equation}\label{eq:projectionmap}
\ot{\Gr}_{P,\Bun_M}\to \fL^+_{\on{Ran}}M\backslash \tildePloc.
\end{equation}

We let ${}_{\on{Bun}_M}\sIC, {}_{\on{Bun}_M}\mathbf{j}_!$ denote the $!$-pullbacks of $\sIC, \mathbf{j}_!$, respectively.

\subsubsection{} We have a natural map:
\[
\pi_P: \ot{\Gr}_{P,\Bun_M}\to \widetilde{\on{Bun}}_P.
\]

\begin{thm}[Theorem \ref{t:IC-to-hecke}]\label{t:LOCTOGLOB} 
We have a canonical isomorphism:
\[
\pi_P^!(\on{IC}_{\widetilde{\on{Bun}}_P})[\on{dim}(\on{Bun}_P)]\simeq \sIC.
\]

\noindent Moreover, the counit map
\[
\pi_{P,!}\circ \pi_P^!(\on{IC}_{\widetilde{\on{Bun}}_P})\to \on{IC}_{\widetilde{\on{Bun}}_P}
\]

\noindent is an isomorphism.
\end{thm}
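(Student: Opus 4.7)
The plan is to identify $\pi_P^!(\IC_{\ot{\Bun}_P})[\dim \Bun_P]$ with ${}_{\Bun_M}\sIC$ by appealing to the axiomatic characterization of the latter established earlier in the paper, and then to deduce the counit isomorphism by a separate contractibility argument on the fibers of $\pi_P$.

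For the first isomorphism, I would proceed in two steps. On the smooth locus, $\pi_P^{-1}(\Bun_P) = \Gr_{P,\Bun_M}$, and $\pi_P$ restricts there to a smooth map with pro-smooth fibers of effective relative dimension zero, matching the shift $\dim \Bun_P$ that appears in the normalization of $\IC_{\ot{\Bun}_P}$. Since $\IC_{\ot{\Bun}_P}|_{\Bun_P} \simeq \omega_{\Bun_P}[-\dim\Bun_P]$, the shifted $!$-pullback $\pi_P^!(\IC_{\ot{\Bun}_P})[\dim\Bun_P]$ restricts to $\omega_{\Gr_{P,\Bun_M}}$, matching ${}_{\Bun_M}\sIC|_{\Gr_{P,\Bun_M}}$. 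On the closed complement, one verifies the perverse-bound axioms: ${}_{\Bun_M}\sIC$ is pinned down (via its Hecke structure, cf.\ Theorem \ref{t:loc_Hecke}) as the unique extension of $\omega_{\Gr_{P,\Bun_M}}$ whose $*$-restriction to each boundary semi-infinite stratum is concentrated in strictly negative perverse degrees. The analogous bounds are known for $\IC_{\ot{\Bun}_P}$ on the defect strata of $\ot{\Bun}_P$, and since $\pi_P$ is smooth of controlled relative dimension along each such stratum, the shifted $!$-pullback inherits these bounds. Uniqueness then yields the desired isomorphism.

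For the counit statement, it suffices to show that the fibers of $\pi_P$ are cohomologically contractible, in the sense that $\pi_{P,!}$ of the relative dualizing complex of $\pi_P$ recovers (up to the appropriate shift) the dualizing complex of $\ot{\Bun}_P$; the claim then follows from the projection formula applied to $\pi_P^!(\IC_{\ot{\Bun}_P})$. Over $\Bun_P$ this is automatic from the pro-unipotence of the smooth fibers. Over the defect strata, one invokes the factorization structure of $\ot{\Bun}_P$ along its defect divisor: a point of $\ot{\Bun}_P$ with defect supported at a finite subset $\ul{x} \subset X$ has $\pi_P$-fiber which decomposes (up to a smooth factor) as a product of closed local semi-infinite orbits $\ol{S}_{P,x}^{\la_x}$ in $\Gr_{G,x}$, each of which is cohomologically contractible in the relevant sense.

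The main obstacle I expect is the boundary contractibility input; controlling $\pi_{P,!}\omega$ on closed semi-infinite orbits in the parabolic case requires extending Gaitsgory's Borel-case analysis and carefully matching the global defect stratification of $\ot{\Bun}_P$ to the local semi-infinite stratification of $\ot{S}_{P,\Ran}^0$ used to construct $\sIC$. The first isomorphism, by contrast, is essentially a formal consequence once the axiomatic characterization of $\sIC$ from the earlier sections is in hand.
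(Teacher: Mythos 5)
The central step of your first part rests on a claim that is not available in this paper: that ${}_{\Bun_M}\sIC$ is ``pinned down as the unique extension of $\omega_{\Gr_{P,\Bun_M}}$'' satisfying perverse bounds on the boundary semi-infinite strata. No such characterization is proved here, and proving one would require setting up a semi-infinite t-structure on $D(\ot{\Gr}_{P,\Bun_M})$ (or on $\on{SI}^{\leq 0}_{P,\Ran}$) together with a uniqueness theorem for intermediate extensions of infinite-dimensional strata --- exactly the machinery the paper is designed to avoid ($\sIC$ is \emph{defined} as $\on{Ind}_{\on{DrPl}}^{\on{Hecke}}(\delta)$, not by a t-structure). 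Note also that, even granting such a t-structure, a uniqueness statement needs bounds on both the $*$- and the $!$-restrictions; you only build the $*$-side into your characterization. The paper runs the argument in the opposite direction: using universal homological contractibility of $\pi_{P,\on{pol}}$ (Proposition \ref{prop: relative Grassmannian contractibility}, proved via $\Bun_M^{\on{gen}}\to\Bun_P^{\on{gen}}$), it descends ${}_{\Bun_M}\sIC$ to a sheaf $\mathscr{I}$ on $\ot{\Bun}_P$ and then verifies the \emph{standard} perverse IC axioms for $\mathscr{I}$ there, where uniqueness of the intermediate extension is unproblematic. The inputs are Proposition \ref{p:strata} and Lemma \ref{l:oleq-1} for the $*$-bounds, Lemma \ref{l:rescomp} for the $!$-bounds, and a dimension count using the smoothness of ${}_{\theta}\ot{\Bun}_P\to \sH^+_{M,X^{\theta}}$.

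A second, related error: $\pi_P$ is not smooth, neither over $\Bun_P$ nor along the defect strata --- its fibers are ind-schemes of Beilinson--Drinfeld type over the Ran space, and there is no ``controlled relative dimension.'' The correct statement, and the one the paper uses both for transporting perverse bounds and for the counit, is that $\pi_P$ is universally homologically contractible (Corollary \ref{cor: positive contractibility}); the shift by $\dim(\Bun_P)$ is then accounted for not by relative dimension of $\pi_P$ but by the explicit computation of the strata restrictions. Your sketch for the counit via contractibility of fibers is in the right spirit, but as written it is dissociated from the first part; in the paper both assertions fall out of the same descent argument, since $\pi_{P,\on{pol}}^!$ is fully faithful and Hecke-equivariant (Lemma \ref{hecke-to-hecke}).
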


\begin{rem}
Here, $\on{dim}(\on{Bun}_P)$ denotes the locally constant function on $\on{Bun}_P$ that gives the dimension of a given connected component.
\end{rem}

\subsubsection{} Consider the prestack $\widetilde{\on{Bun}}_{P,\on{pol}}$ over $\on{Ran}$ that parametrizes a point $x_I\in\on{Ran}$ and a generalized $P$-reduction on $X$ that is non-degenerate away from $x_I$ (see §\ref{s:tildepol} for a precise definition).

The main property of $\widetilde{\on{Bun}}_{P,\on{pol}}$ is that it carries Hecke modifications for both $M$ and $G$. That is, the category $D(\widetilde{\on{Bun}}_{P,\on{pol}})$ is a module category for $\on{Rep}(\check{M}\times \check{G})_{\on{Ran}}$. This allows us to talk about Hecke structures and (enhanced) Drinfeld-Plücker structures of objects of $D(\widetilde{\on{Bun}}_{P,\on{pol}})$ as in §\ref{s:3way}.

\subsubsection{}
We have an open embedding:
\[
j_{P,\on{pol}}: \on{Bun}_P\times\on{Ran}\to \widetilde{\on{Bun}}_{P,\on{pol}}.
\]

\noindent We define $\mathbf{j}_!^{\on{glob}}:=j_{P,\on{pol},!}(\omega_{\on{Bun}_P\times\on{Ran}})\in D(\widetilde{\on{Bun}}_{P,\on{pol}})$.

\subsubsection{} We have a closed substack
\[
i_{P,\on{pol}}: \widetilde{\on{Bun}}_{P,\on{zer}}\into \widetilde{\on{Bun}}_{P,\on{pol}}
\]

\noindent defined by requiring the Drinfeld-Plücker maps be regular. It comes with a tautological map forgetting the point of $\on{Ran}$:
\[
\on{oblv}_{\on{zer}}: \widetilde{\on{Bun}}_{P,\on{zer}}\to \widetilde{\on{Bun}}_{P}.
\]

We define
\[
\on{IC}_{\widetilde{\on{Bun}}_{P,\on{pol}}}:=i_{P,\on{pol},*}\circ \on{oblv}_{\on{zer}}^!(\on{IC}_{\widetilde{\on{Bun}}_P})\in D(\widetilde{\on{Bun}}_{P,\on{pol}}).
\]

\noindent As a consequence of Theorem \ref{t:LOCTOGLOB} and the results in §\ref{S:HeckeandKD}, we obtain:
\begin{thm}\label{t:glob_KD}
    
$\on{IC}_{\widetilde{\on{Bun}}_{P,\on{pol}}}$ is a equipped with a canonical comodule structure for $\cO(\check{N}_P)_{\on{Ran}}$. Moreover, we have:
\[
\on{Inv}_{\cO(\check{N}_P)_{\on{Ran}}}(\on{IC}_{\widetilde{\on{Bun}}_{P,\on{pol}}})\simeq \mathbf{j}_!^{\on{glob}}.
\]
\noindent In particular, $\mathbf{j}_!$ is equipped with a module structure for $\Omega(\check{\fn}_P)_{\on{Ran}}$.
\end{thm}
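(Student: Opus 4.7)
The plan is to deduce Theorem \ref{t:glob_KD} from its local counterpart (Theorem \ref{t:KD}) by transporting the Koszul duality picture along the local-to-global comparison of Theorem \ref{t:LOCTOGLOB}. The key device is a Hecke-equivariant map
\[
\rho: \on{Bun}_M \underset{\mathbb{B}\fL^+_{\Ran}M}{\times} \fL^+_{\Ran}M \backslash \tildePloc \longrightarrow \widetilde{\on{Bun}}_{P,\on{pol}},
\]
sending a triple $(\cF_M, x_I, \text{semi-infinite modification})$ to the corresponding polarized generalized $P$-reduction that is non-degenerate away from $x_I$. By construction $\rho$ factors through $\widetilde{\on{Bun}}_{P,\on{zer}}$, and composing with $\on{oblv}_{\on{zer}}$ recovers a Ran-extended version of the map $\pi_P$ of Theorem \ref{t:LOCTOGLOB}. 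The crucial point is that $\rho$ is equivariant for the Hecke actions of $\on{Rep}(\check{M} \times \check{G})_{\Ran}$: $\check{M}$-modifications act on $\cF_M$, while $\check{G}$-modifications act through the semi-infinite direction.

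Using Theorem \ref{t:LOCTOGLOB} together with base change along the closed embedding $i_{P,\on{pol}}$, I expect the pushforward $\rho_*({}_{\on{Bun}_M}\sIC)$ to be identified (up to the appropriate dimension shift) with $\on{IC}_{\widetilde{\on{Bun}}_{P,\on{pol}}}$. Since $\rho$ restricts to an isomorphism over the open locus $\on{Bun}_P\times\on{Ran}$, the analogous pushforward of ${}_{\on{Bun}_M}\mathbf{j}_!$ yields $\mathbf{j}_!^{\on{glob}}$. I would then transport the $\cO(\check{N}_P)_{\Ran}$-comodule structure on $\sIC$ given by Theorem \ref{t:KD} first by $!$-pullback to the relative prestack $\ot{\Gr}_{P,\Bun_M}$ and then by Hecke-equivariant pushforward along $\rho$, endowing $\on{IC}_{\widetilde{\on{Bun}}_{P,\on{pol}}}$ with a canonical $\cO(\check{N}_P)_{\Ran}$-comodule structure.

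For the invariants statement, I would argue that $\on{Inv}_{\cO(\check{N}_P)_{\Ran}}(-)$ is computed as a cobar-type totalization of Hecke-equivariant operations, and hence commutes with the Hecke-equivariant pushforward $\rho_*$. Combined with Theorem \ref{t:KD}, this gives
\[
\on{Inv}_{\cO(\check{N}_P)_{\Ran}}(\on{IC}_{\widetilde{\on{Bun}}_{P,\on{pol}}}) \simeq \rho_*\on{Inv}_{\cO(\check{N}_P)_{\Ran}}({}_{\on{Bun}_M}\sIC) \simeq \rho_*({}_{\on{Bun}_M}\mathbf{j}_!) \simeq \mathbf{j}_!^{\on{glob}}.
\]
The main obstacle I anticipate is making the Hecke-equivariance of $\rho$ fully precise---that is, verifying that Hecke modifications at the level of $\widetilde{\on{Bun}}_{P,\on{pol}}$ genuinely arise from the local semi-infinite modifications predicted by the formalism---and then checking that the cobar totalization defining invariants is preserved under $\rho_*$, which typically demands a continuity or t-exactness input that must be supplied by hand.
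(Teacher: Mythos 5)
Your high-level strategy — transport the local Koszul duality of Theorem \ref{t:KD} to $\widetilde{\on{Bun}}_{P,\on{pol}}$ along a Hecke-equivariant local-to-global map — is exactly the paper's, but two of your key steps do not go through as written. First, the source of your map $\rho$ is wrong for the purpose of Hecke equivariance: the prestack $\on{Bun}_M\underset{\mathbb{B}\fL^+_{\Ran}M}{\times}\fL^+_{\Ran}M\backslash\tildePloc=\ot{\Gr}_{P,\Bun_M}$ is \emph{not} stable under Hecke modifications for $\check{G}$ (nor, for general $V\in\on{Rep}(\check{M})$, for $\check{M}$), since a $\check{G}$-modification destroys regularity of the Pl\"ucker maps. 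Hence $D(\ot{\Gr}_{P,\Bun_M})$ carries no $\on{Rep}(\check{M}\times\check{G})_{\Ran}$-action and ``Hecke-equivariant pushforward along $\rho$'' does not typecheck. This is precisely why the paper works with the full relative Grassmannian $\pi_{P,\on{pol}}:{}_{\Bun_M}\Gr_{G,\Ran}\to\ot{\Bun}_{P,\on{pol}}$ and only afterwards restricts to the closed locus $\ot{\Bun}_{P,\on{zer}}$. Relatedly, your assertion that $\rho$ restricts to an isomorphism over $\Bun_P\times\Ran$ is false: $\Gr_{P,\Bun_M}\to\Bun_P\times\Ran$ has large (ind-)fibers; what is true, and what the argument actually needs, is universal homological contractibility (Lemma \ref{lem: affine contractibility} and Proposition \ref{prop: relative Grassmannian contractibility}), which yields the counit isomorphism $\pi^0_{P,!}(\omega_{\Gr_{P,\Bun_M}})\simeq\omega_{\Bun_P\times\Ran}$ and hence $\pi_{P,\on{pol},!}({}_{\Bun_M}\mathbf{j}_!)\simeq\mathbf{j}_!^{\on{glob}}$. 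You never invoke this input, and without it the transport fails.

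Second, the step you correctly flag as problematic — commuting $\on{Inv}_{\cO(\check{N}_P)_{\Ran}}$, a cobar \emph{totalization} (i.e.\ a limit), past a continuous pushforward — is a genuine gap, and the paper's route is designed to avoid it. Instead of pushing forward $\on{Inv}(\sIC)$, one pushes forward the \emph{enhanced Drinfeld--Pl\"ucker object} ${}_{\Bun_M}\mathbf{j}_!$: since $\pi_{P,\on{pol}}^!$ is $\on{Rep}(\check{M}\times\check{G})_{\Ran}$-equivariant, so is its left adjoint $\pi_{P,\on{pol},!}$ (rigidity), and the Hecke induction $\on{Ind}^{\on{Hecke}_{\check{M},\check{G}}}_{\on{EnhDrPl}_{\check{M},\check{G}}}$ is a bar-type \emph{colimit}, which does commute with $\pi_{P,\on{pol},!}$. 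Combined with Theorem \ref{t:IC-to-hecke} this identifies $\IC_{\ot{\Bun}_{P,\on{pol}}}[\on{dim}(\Bun_P)]$ with $\on{Ind}^{\on{Hecke}_{\check{M},\check{G}}}_{\on{EnhDrPl}_{\check{M},\check{G}}}(\mathbf{j}_!^{\on{glob}})$; the comodule structure is then Corollary \ref{c:actiononinduction}, and the computation of invariants is the purely formal identity of \S\ref{s:comp1} together with full faithfulness of $\on{Oblv}^{\on{EnhDrPl}_{\check{M},\check{G}}}_{\on{DrPl}_{\check{M},\check{G}}}$, with no limit--pushforward interchange required. If you restructure your argument along these lines — replace $\rho$ by $\pi_{P,\on{pol}}$, replace the isomorphism claim by contractibility, and replace ``$\on{Inv}$ commutes with pushforward'' by ``$\on{Ind}$ commutes with pushforward'' — your proof becomes the paper's.
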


\begin{thm}\label{t:glob_Hecke}
$\on{IC}_{\widetilde{\on{Bun}}_{P,\on{pol}}}$ is canonically equipped with a Hecke structure. Moreover, $\mathbf{j}_!^{\on{glob}}$ is canonically equipped with an enhanced Drinfeld-Plücker structure.
\end{thm}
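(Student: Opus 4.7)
The strategy is to deduce Theorem \ref{t:glob_Hecke} from the local structures of Theorem \ref{t:loc_Hecke} by transporting them along a Hecke-equivariant local-to-global comparison map, in the spirit of Theorem \ref{t:LOCTOGLOB} but upgraded to the polar setting. The key geometric input is a natural map
\[
\tilde\pi_{P,\on{pol}}: \ot{\Gr}_{P,\on{Bun}_M} \longrightarrow \widetilde{\on{Bun}}_{P,\on{pol}}
\]
lifting $\pi_P$ through the closed embedding $i_{P,\on{pol}}: \widetilde{\on{Bun}}_{P,\on{zer}}\into \widetilde{\on{Bun}}_{P,\on{pol}}$. The essential point is that both source and target carry compatible module structures over $\on{Rep}(\check{M}\times\check{G})_{\on{Ran}}$ and $\tilde\pi_{P,\on{pol}}$ intertwines them: the target's action comes from polar Hecke modifications, by the very design of $\widetilde{\on{Bun}}_{P,\on{pol}}$, while the source's action comes from the action on $\fL^+_{\on{Ran}}M\backslash \widetilde{S}^0_{P,\on{Ran}}\subseteq \fL^+_{\on{Ran}}M\backslash\on{Gr}_{G,\on{Ran}}$ via the projection \eqref{eq:projectionmap}.

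I would first upgrade the counit identification of Theorem \ref{t:LOCTOGLOB} to the polar level. From the definition $\on{IC}_{\widetilde{\on{Bun}}_{P,\on{pol}}} = i_{P,\on{pol},*}\circ\on{oblv}_{\on{zer}}^!(\on{IC}_{\widetilde{\on{Bun}}_P})$, together with base-change along $i_{P,\on{pol}}$ and the counit isomorphism $\pi_{P,!}\circ\pi_P^!(\on{IC}_{\widetilde{\on{Bun}}_P})\simeq \on{IC}_{\widetilde{\on{Bun}}_P}$, I expect isomorphisms
\[
\tilde\pi_{P,\on{pol},!}\bigl({}_{\on{Bun}_M}\sIC[-\on{dim}(\on{Bun}_P)]\bigr)\simeq \on{IC}_{\widetilde{\on{Bun}}_{P,\on{pol}}},
\]
and analogously
\[
\tilde\pi_{P,\on{pol},!}\bigl({}_{\on{Bun}_M}\mathbf{j}_![-\on{dim}(\on{Bun}_P)]\bigr)\simeq \mathbf{j}_!^{\on{glob}},
\]
the second identification using the compatibility of $j_{\on{Ran}}$ with the open embedding $j_{P,\on{pol}}$ under $\tilde\pi_{P,\on{pol}}$. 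Since $\tilde\pi_{P,\on{pol}}$ is $\on{Rep}(\check{M}\times\check{G})_{\on{Ran}}$-equivariant, its pushforward $\tilde\pi_{P,\on{pol},!}$ lifts canonically to a functor of $\on{Rep}(\check{M}\times\check{G})_{\on{Ran}}$-module categories and hence transports algebra-module structures. Applying it to the local Hecke structure on $\sIC$ and the local enhanced Drinfeld-Plücker structure on $\mathbf{j}_!$ supplied by Theorem \ref{t:loc_Hecke}, each pulled back to $\ot{\Gr}_{P,\on{Bun}_M}$, produces the claimed structures on $\on{IC}_{\widetilde{\on{Bun}}_{P,\on{pol}}}$ and $\mathbf{j}_!^{\on{glob}}$.

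The main obstacle is establishing the Hecke-equivariance of $\tilde\pi_{P,\on{pol}}$ at a sufficiently coherent higher-categorical level: one must promote the pointwise intertwining of actions into a fully coherent morphism of $\on{Rep}(\check{M}\times\check{G})_{\on{Ran}}$-module prestacks, so that the pushforward is module-functorial and genuinely transports $\cO(\check{G})_{\on{Ran}}$- and $\cO(\check{N}_P\backslash\check{G})_{\on{Ran}}$-module structures. A convenient shortcut for the second assertion is to invoke Koszul duality directly: Theorem \ref{t:glob_KD} identifies $\mathbf{j}_!^{\on{glob}}\simeq \on{Inv}_{\cO(\check{N}_P)_{\on{Ran}}}(\on{IC}_{\widetilde{\on{Bun}}_{P,\on{pol}}})$, and the invariants functor carries $\cO(\check{G})_{\on{Ran}}$-module structures to $\cO(\check{N}_P\backslash\check{G})_{\on{Ran}}$-module structures, so once the Hecke structure on $\on{IC}_{\widetilde{\on{Bun}}_{P,\on{pol}}}$ is in hand the enhanced Drinfeld-Plücker structure on $\mathbf{j}_!^{\on{glob}}$ follows formally.
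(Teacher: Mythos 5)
Your overall strategy is the paper's: transport the local structures of Theorem \ref{t:loc_Hecke} along a Hecke-equivariant $!$-pushforward, using the local-to-global isomorphisms. But there is a concrete flaw in your setup. The map you propose, $\tilde\pi_{P,\on{pol}}\colon \ot{\Gr}_{P,\Bun_M}\to \widetilde{\on{Bun}}_{P,\on{pol}}$, cannot be $\on{Rep}(\check{M}\times\check{G})_{\on{Ran}}$-equivariant, because its source does not carry the action at all: Hecke modifications for $G$ (and for $M$) do not preserve the closed sub-prestack $\fL^+_{\on{Ran}}M\backslash\widetilde{S}^0_{P,\on{Ran}}\subset \fL^+_{\on{Ran}}M\backslash\on{Gr}_{G,\on{Ran}}$, so $D(\ot{\Gr}_{P,\Bun_M})$ is not a module category for $\on{Rep}(\check{M}\times\check{G})_{\on{Ran}}$. (That $\sIC$ is supported on $\widetilde{S}^0_{P,\on{Ran}}$ is a property of the object, Lemma \ref{l:ICisSI}, not of the category.) This is precisely why the paper introduces the polar space $\widetilde{\on{Bun}}_{P,\on{pol}}$ together with the map $\pi_{P,\on{pol}}\colon {}_{\Bun_M}\Gr_{G,\Ran}\to \widetilde{\on{Bun}}_{P,\on{pol}}$ defined on the \emph{full} relative Grassmannian: there the equivariance of $\pi_{P,\on{pol}}^!$ is geometrically evident (the map commutes with Hecke correspondences for $M$ and $G$), and the equivariance of the left adjoint $\pi_{P,\on{pol},!}$ follows by rigidity of $\on{Rep}(\check{M}\times\check{G})_{\on{Ran}}$ --- which also disposes of the ``higher coherence'' obstacle you flag. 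The objects ${}_{\Bun_M}\sIC$ and ${}_{\Bun_M}\mathbf{j}_!$ then just happen to be supported over $\widetilde{\on{Bun}}_{P,\on{zer}}$, so their pushforwards land where you want them.

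Two smaller points. First, your second displayed isomorphism has a spurious shift: equation \eqref{eq:glob} reads $\pi_{P,\on{pol},!}({}_{\Bun_M}\mathbf{j}_!)\simeq \mathbf{j}_!^{\on{glob}}$ with no $[-\on{dim}(\Bun_P)]$; the shift only enters for the IC sheaves because of their perverse normalization. Second, the paper's order of deduction is slightly different from yours and avoids needing the Hecke structure on $\sIC$ as the starting point for the first assertion: it first gets the enhanced Drinfeld--Pl\"ucker structure on $\mathbf{j}_!^{\on{glob}}$ by equivariantly pushing forward the one on ${}_{\Bun_M}\mathbf{j}_!$ (Proposition \ref{p:restostrata!}), then observes that $\on{Ind}_{\on{EnhDrPl}_{\check{M},\check{G}}}^{\on{Hecke}_{\check{M},\check{G}}}(\mathbf{j}_!^{\on{glob}})$ tautologically carries a Hecke structure, and the content of Theorem \ref{t:IC-to-hecke} is to identify this induced Hecke object with $\on{IC}_{\widetilde{\on{Bun}}_{P,\on{pol}}}[\on{dim}(\Bun_P)]$ by a stratum-by-stratum perverse-degree estimate. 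Your Koszul-duality shortcut for the second assertion is fine in spirit (it is the compatibility of \S\ref{s:comp1}), provided you do not then use Theorem \ref{t:glob_KD} as an independent input, since in the paper it is established by the same mechanism. Once you replace $\tilde\pi_{P,\on{pol}}$ by $\pi_{P,\on{pol}}$ on ${}_{\Bun_M}\Gr_{G,\Ran}$, your argument goes through and coincides with the paper's.
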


\subsection{Organization of the paper}

In Section \ref{S:Notation}, we recall notations and conventions.

In Section 3, we introduce the main geometric players that appear in this paper.

In Section 4, we introduce and study the parabolic semi-infinite IC-sheaf.

In Section 5, we prove the local-to-global comparison between the semi-infinite IC-sheaf and the IC-sheaf on $\widetilde{\on{Bun}}_P$.

In Section 6, we prove the compatibility stated in §\ref{S:JACquet} between restriction of representations and the Jacquet functor under the geometric Casselman-Shalika equivalence.

\subsection{Acknowledgements}

We thank Justin Campbell, Gurbir Dhillon, Linus Hamann, David Hansen and Sergey Lysenko for many stimulating and insightful discussions.

We are especially grateful to Dennis Gaitsgory, Ivan Mirkovi\'{c} and Sam Raskin, conversations with whom had a significant influence on the development of this paper.

For the second author: the research project is implemented in the framework of H.F.R.I call "Basic research Financing (Horizontal support of all Sciences)" under the National Recovery and Resilience Plan "Greece 2.0" funded by the European Union - NextGenerationEU (H.F.R.I Project Number: 16785).

\section{Notation}\label{S:Notation}
In this section, we establish notation and conventions used throughout the paper.

\subsection{Categorical conventions and base field} 

\subsubsection{} Throughout, we work over an algebraically closed field $k$ of characteristic zero. We freely use the language of higher category theory and higher algebra in the sense of \cite{lurie2009higher}, \cite{lurie2017higher}, \cite{gaitsgory2019study}. Throughout, by a (DG) category, we mean a $k$-linear presentable stable $(\infty,1)$-category. We denote by $\on{DGCat}_{\on{cont}}$ the category of DG-categories in which the morphisms are colimit-preserving functors. Henceforth, we refer to DG-categories simply as \emph{categories}.

\subsubsection{}For a category $\sC$ equipped with a t-structure, we let $\sC^{\leq 0}$ and $\sC^{\geq 0}$ the subcategories of connective and coconnective objects, respectively. We denote by $\sC^{\heartsuit}=\sC^{\leq 0}\cap \sC^{\geq 0}$ the heart of the t-structure.

\subsection{D-modules and functoriality}

\subsubsection{} Let $\cY$ be a prestack locally almost of finite type in the sense of \cite{gaitsgory2019study}. Following \cite{gaitsgory2017study}, we denote by $D(\cY)$ the (DG-)category of D-modules on $\cY$.

\subsubsection{Functoriality} Let $f:\cX\to \cY$ be a map of prestacks locally almost of finite type. We have a pullback functor
\[
f^!: D(\cY)\to D(\cX).
\]

\noindent Whenever its left adjoint is defined, we denote it by $f_!$.

If $f$ is ind-representable, we similarly have a (continuous) pushforward functor
\[
f_{*}: D(\cX)\to D(\cY).
\]

\noindent Whenever its left adjoint is defined, we denote it by $f^{*}$. Recall that both $f_!$ and $f^{*}$ are defined on holonomic D-modules.

\subsection{Lie theory notation}

\subsubsection{} 
Let $G$ be a connected reductive complex algebraic group over $k$. We assume throughout that $G$ has a simply-connected derived subgroup. This is a standard assumption that simplifies the definition of Drinfeld's compactifications and Zastava spaces. To remove this hypothesis, consult \cite[§4.1]{arkhipov2005modules} or \cite[§7]{schieder2015harder}.

We choose a splitting $T\subset B$ and an opposing Borel $B^{-}$ so that $B^{-}\cap B=T$. We let $N, N^{-}$ denote the unipotent radicals of $B,B^-$. Denote by $\fg,\fb^{-},\fn,\fn^{-},\ft$ the corresponding Lie algebras. We let $\check{G}$ be the Langlands dual group of $G$.

\subsubsection{} We let $\check{\Lambda}, \Lambda$ (resp. $\check{\Lambda}^+, \Lambda^+$) be the lattice of characters and cocharacters of $T$ (resp. dominant characters and dominant cocharacters). The reason for choosing this notation is that cocharacters appear more in this paper than characters, making it more convenient to reserve the 'checks' for the characters. Let $W$ be the finite Weyl group of $G$.

We denote the coroots of $G$ by $\Phi$, and we write $\sJ=\sJ_G$ for the set of positive simple coroots of $G$ and by $\sJ^{\on{neg}}$ the set of negative simple coroots. For each $i\in \sJ$, we have a corresponding simple coroot $\alpha_i$ and simple root $\check{\alpha}_i$.

\subsubsection{Parabolic notation}
Let $P\subset G$ be a standard parabolic subgroup with unipotent radical $N_P$ and Levi quotient $M$. We let $\fp,\fn_P,\fm$ be the corresponding Lie algebras. Write $\sJ_M\subset \sJ$ for the subset of the Dynkin diagram corresponding to $M$. We let $\check{\Lambda}_{G,P}, \Lambda_{G,P}$ be the lattices of characters and cocharacters, respectively, for the torus $M^{\on{ab}}:=M/[M,M]$. There is a natural map
\begin{equation}\label{eq:latticequotient}
\Lambda\to \Lambda_{G,P}
\end{equation}

\noindent induced by $T\to M^{\on{ab}}$. By simply-connectedness of $[G,G]$, the kernel of the above map is spanned by $\alpha_i$, where $i\in \sJ_M\subset \sJ$ is a simple coroot corresponding to a vertex in the Dynkin diagram for $M$. We let $\Lambda_{G,P}^{\on{pos}}\subset \Lambda_{G,P}$ be the monoid spanned by linear combinations with non-negative coefficients of the images of $\alpha_i$ for $i\in\sJ\setminus\sJ_M$ under the map (\ref{eq:latticequotient}). Write $\Lambda_{G,P}^{\on{neg}}:=-\Lambda_{G,P}^{\on{pos}}$.

\section{The main geometric players}

Fix a smooth projective curve $X$ over $k$.

\subsection{Affine Grassmannians and configuration spaces.} 
In this subsection, we review the various versions of the factorizable affine grassmannian needed in the paper. We refer to \cite{braverman2002intersection} for a more detailed discussion of the spaces introduced. We freely use the language of chiral/factorization theory as in \cite{raskin2015chiral}.

\subsubsection{} Associated to the lattice $\Lambda_{G,P}$ we have the scheme $\on{Conf}_{G,P}$ parameterizing divisors on $X$ with coefficients in $\Lambda^{\on{neg}}_{G,P}$. It splits as a disjoint union
\[
\on{Conf}_{G,P}=\underset{\theta\in\Lambda_{G,P}^{\on{neg}}}{\coprod} X^{\theta},
\]

\noindent where $X^{\theta}$ parameterizes $\Lambda^{\on{neg}}_{G,P}$-valued divisors on $X$ of total degree $\theta$. Note that $\on{Conf}_{G,P}$ is naturally a monoid under addition of divisors.

\subsubsection{}\label{s:+gr} Denote by $\on{Gr}_{M,x}$ the affine Grassmannian for $M$ at a point $x\in X$. For an $M$-bundle $\sP_M$ and an $M$-representation $W$, we let $W_{\sP_M}$ be the induced vector bundle on $X$. Mostly, we will consider representations of the form $W=V^{N_P}$ for a $G$-representation $V$.

\subsubsection{} We let $\on{Gr}_{M,x}^+$ be the subscheme of $\on{Gr}_{M,x}$ parameterizing $(\sP_M,\phi)\in\on{Gr}_{M,x}$, where:
\begin{itemize}

    \item $\sP_M$ is an $M$-bundle on $X$.

    \item $\phi$ is a trivialization away from $x$
\[
\phi: \sP_{M\vert X-x}\xrightarrow{\simeq} \sP^0_{M\vert X-x}
\]

\noindent such that for every $G$-representation $V$, the induced map of vector bundles
\[
V^{N_P}_{\sP_{M\vert X-x}}\xrightarrow{\simeq} V^{N_P}_{\sP^0_{M\vert X-x}}
\]

\noindent is regular on $D_x$.

\end{itemize}

\subsubsection{} The connected components of $\on{Gr}_M$ are indexed by $\pi_1^{\on{alg}}(M):=\Lambda/\bZ \sJ_M$, the algebraic fundamental group of $M$ (i.e., the coweight lattice of $M$ modulo its coroot lattice). Since $[G,G]$ is simply-connected, we have $\pi_1^{\on{alg}}(M)=\Lambda_{G,P}$. For $\theta\in\Lambda_{G,P}$, we write $\on{Gr}_M^{\theta}$ for the corresponding connected component.

Moreover, let $\on{Gr}_M^{+,\theta}=\on{Gr}_M^{+}\cap \on{Gr}_M^{\theta}$. By construction (see e.g. \cite[Prop. 6.2.3]{braverman1999geometric}), we have
\[
\on{Gr}_M^{+,\theta}\neq \emptyset
\]

\noindent if and only if $\theta\in \Lambda_{G,P}^{\on{neg}}$.

\subsubsection{}\label{s:Gr_M+} Denote by $\on{Gr}_{M,\on{Ran}}$ the Beilinson-Drinfeld affine Grassmannian living over $\on{Ran}=\on{Ran}_X$, the Ran space of $X$. We may similarly define a factorization space $\on{Gr}_{M,\on{Ran}}^{+}$ over $\on{Ran}$ with fiber at $x\in X$ given by $\on{Gr}_{M,x}^+$.

\subsubsection{}\label{sec: arcs and loops} For an algebraic group $H$, we will also consider the group prestacks
\[\mathfrak{L}^+_{\Ran}H \,\,\, \text{and} \,\,\, \mathfrak{L}_{\Ran}H\]

\noindent of \emph{arcs} and \emph{loops} into $H$ over $\Ran$, respectively. More precisely, the fiber of $\mathfrak{L}^+_{\Ran}H$ over a point $x_I$ of $\Ran$ is a map from the formal completion of $x_I$ along $X$ to $H$, and the fiber of $\mathfrak{L}_{\Ran}H$ over a point $x_I$ is a map from the complement of $x_I$ in the \emph{affinization} of the formal completion of $x_I$ along $X$ into $H$. 

Note we have an equivalence
\[\Gr_{H,\Ran} \overset{\sim}{\longrightarrow} \mathfrak{L}_{\on{Ran}}(H)/\mathfrak{L}^+_{\on{Ran}}(H)\]

\noindent as spaces over $\Ran$. Throughout this paper, we sheafify quotients of prestack by algebraic groups in the fppf topology.

\subsubsection{Version living over configuration space}\label{s: Gr_M+Conf}
Denote by $\on{Gr}_{M,\on{Conf}}$ the factorization space over $\on{Conf}_{G,P}$ parameterizing triples $(D,\sP_M,\phi)$, where $D\in\on{Conf}_{G,P}$ is a $\Ln$-valued divisor on $X$, $\sP_M$ is an $M$-bundle, and $\phi$ is a trivialization of $\sP_M$ away from the support of $D$.\footnote{We write $\on{Gr}_{M,\on{Conf}}$ instead of the more cumbersome notation $\on{Gr}_{M,\on{Conf}_{G,P}}$.}

\subsubsection{}
    For an algebraic group $H$ we have versions $\mathfrak{L}^+_{\on{Conf}}H$ and $\mathfrak{L}_{\on{Conf}}H$ of the arc and loop spaces, living over $\on{Conf}_{G,P}$. These split as disjoint unions:
    \[\mathfrak{L}^+_{\on{Conf}}H = \coprod_{\theta \in \Lambda_{G,P}^{\on{neg}}} \mathfrak{L}^+_{X^{\theta}}H \,\,\, \text{and} \,\,\, \mathfrak{L}_{\on{Conf}}H = \coprod_{\theta \in \Lambda_{G,P}^{\on{neg}}} \mathfrak{L}_{X^{\theta}}H.\]

    \noindent Here the index $X^{\theta}$ is used to denote the connected component living over the component $X^{\theta}$ of $\on{Conf}_{G,P}$.
    
    Concretely, $\mathfrak{L}_{\on{Conf}}H$ parameterizes a divisor $D\in\on{Conf}_{G,P}$ and a map from the punctured disk around the support of $D$ to $H$. The arc group $\mathfrak{L}^+_{\on{Conf}}H$ is defined similarly. 
    
\subsubsection{}\label{s:weirdplus} Take $H=M$. We define the group prestack $\mathfrak{L}_{\on{Conf}}M^+\subset \mathfrak{L}_{\on{Conf}}M$ by the requirement that the diagram
\[\begin{tikzcd}
	{\mathfrak{L}_{\on{Conf}}M^+} && {\mathfrak{L}_{\on{Conf}}M} \\
	\\
	{\mathrm{Gr}^+_{M,\mathrm{Conf}}} && {\mathrm{Gr}_{M,\mathrm{Conf}}}
	\arrow[from=1-1, to=1-3]
	\arrow[from=1-3, to=3-3]
	\arrow[from=1-1, to=3-1]
	\arrow[from=3-1, to=3-3]
\end{tikzcd}\]

\noindent be Cartesian. For $\theta\in \Lambda_{G,P}^{\on{neg}}$, we let $\mathfrak{L}_{X^{\theta}}M^{+}$ be the pullback of $\mathfrak{L}_{\on{Conf}}M^+$ to $X^{\theta}$.

\subsubsection{} We have a natural factorization space $\on{Gr}_{M,\on{Conf}}^+$ over $\on{Conf}_{G,P}$ with fiber at $\theta\cdot x$ given by $\on{Gr}_{M,x}^{+,\theta}$.\footnote{There is another space which deserves to be called $\on{Gr}_{M,\on{Conf}}^+$; namely the factorization space over $\on{Conf}_{G,P}$ with fiber at $\theta\cdot x$ given by $\on{Gr}_{M,x}^{+}$. This space will not appear in the paper, however, and so no confusion is likely to occur.} For example, if $P=B$ is a Borel subgroup, then
\[
(\on{Gr}_{M,\on{Conf}}^+)^{\on{red}}\simeq\on{Conf}_{G,B}.
\]

We let:
\[
\on{Gr}^+_{M,X^{\theta}}=\on{Gr}^+_{M,\on{Conf}}\underset{\on{Conf}_{G,P}}{\times} X^{\theta}.
\]

\subsubsection{} Observe that we have a natural action
\[
\on{Ran}\curvearrowright \on{Gr}_{M,\on{Ran}}
\]

\noindent given by $\underline{y}\cdot (\underline{x},\sP_M,\phi)=(\underline{y}\cup\underline{x},\sP,\phi)$, where $\underline{x},\underline{y}\in\on{Ran}$. Moreover, the action preserves $\on{Gr}_{M,\on{Ran}}^+$. In other words, $\on{Gr}_{M,\on{Ran}}$ and $\on{Gr}_{M,\on{Ran}}^+$ are \emph{unital} factorization spaces.

We similarly have an action:
\[
\on{Conf}_{G,P}\curvearrowright \on{Gr}_{M,\on{Conf}}.
\]

\subsubsection{Warning}
Note that this action does not preserve $\on{Gr}_{M,\on{Conf}}^+$.

\subsubsection{} We have the following basic lemma:
\begin{lem}\label{l:fppf}
The prestacks
\[
\on{Gr}_{M,\on{Ran}}/\on{Ran},\;\;\on{Gr}_{M,\on{Conf}}/\on{Conf}_{G,P}
\]

\noindent become isomorphic after sheafification in the fppf topology. Under this isomorphism, the composition
\[
\on{Gr}_{M,\on{Conf}}^+\to \on{Gr}_{M,\on{Conf}}/\on{Conf}_{G,P}\simeq \on{Gr}_{M,\on{Ran}}/\on{Ran}
\]

\noindent maps isomorphically onto $\on{Gr}_{M,\on{Ran}}^+/\on{Ran}$.
\end{lem}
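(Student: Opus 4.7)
Assume $P\ne G$, so that $\Lambda_{G,P}^{\on{neg}}$ contains non-zero elements (the case $P=G$ is essentially trivial). The plan is to realize both quotients, after fppf sheafification, as a common target prestack $\cY$ whose $S$-points are pairs $(\sP_M,\phi)$ consisting of an $M$-bundle on $X\times S$ together with a trivialization on the complement of a finite flat closed subscheme of $X\times S$, with the evident equivalence identifying $\phi$ with its restriction to a smaller such open. The comparison map on quotients is induced by the support map $\on{supp}\colon \on{Conf}_{G,P}\to \on{Ran}$, which lifts to a morphism $\on{Gr}_{M,\on{Conf}}\to \on{Gr}_{M,\on{Ran}}$ intertwining the unital actions of the two monoids. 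Both quotients carry tautological forgetful maps to $\cY$, and it suffices to check that each is an fppf-local isomorphism.

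For essential surjectivity, given $(\sP_M,\phi)\in \cY(S)$, the complement $Z\subset X\times S$ of the trivialization is finite flat over $S$, hence fppf-locally on $S$ splits as a disjoint union of graphs of sections $\sigma_i\colon S\to X$. These sections furnish an element of $\on{Ran}(S)$ lifting $(\sP_M,\phi)$; on the $\on{Conf}_{G,P}$ side, one additionally chooses any non-zero $\theta\in \Lambda_{G,P}^{\on{neg}}$ and takes the divisor $D = \sum_i \theta\cdot\sigma_i(S) \in \on{Conf}_{G,P}(S)$. For faithfulness, two lifts $(\underline{x}_1,\sP_M,\phi)$ and $(\underline{x}_2,\sP_M,\phi)$ become equivalent in the quotient via the common refinement $(\underline{x}_1\cup \underline{x}_2, \sP_M,\phi)$; similarly, two lifts $(D_1,\sP_M,\phi)$ and $(D_2,\sP_M,\phi)$ become equivalent via $(D_1+D_2,\sP_M,\phi)$ by acting by $D_2$ on the first and by $D_1$ on the second.

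For the second statement, the ``$+$''-condition --- regularity of the induced maps $V^{N_P}_{\sP_M}\to V^{N_P}_{\sP^0_M}$ for every $V\in\on{Rep}(G)$ --- cuts out the same sub-prestack $\cY^+\subseteq \cY$ from either side. The crucial additional point is that for $\on{Gr}_{M,\on{Conf}}^+$ the divisor $D$ is \emph{uniquely} determined by $(\sP_M,\phi)$: its coefficient at each support point must equal the coweight of $\sP_M$ there, which is non-zero precisely on $\on{supp}(D)$. Hence the forgetful map $\on{Gr}_{M,\on{Conf}}^+\to \cY^+$ is already an fppf-local isomorphism before any further quotienting, which is precisely the phenomenon behind the preceding Warning that the $\on{Conf}_{G,P}$-action does not preserve $\on{Gr}_{M,\on{Conf}}^+$.

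The main technical subtlety is interpreting the monoidal quotients correctly: $\on{Ran}$ and $\on{Conf}_{G,P}$ are commutative monoids rather than groups, so fppf sheafification of the simplicial bar construction must be treated with care. Once one grants the standard fact that a finite flat closed subscheme of $X\times S$ becomes a disjoint union of graphs of sections $S\to X$ after a suitable fppf base change on $S$, the rest is bookkeeping.
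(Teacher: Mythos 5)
Your proposal is correct and follows essentially the same route as the paper: the paper's intermediate object is the prestack $\on{Gr}_{M,\on{gen}}$ of $M$-bundles with a trivialization on a domain (your $\cY$), the fppf-local splitting of the complement into graphs of sections is exactly the cited lemma of Barlev, and the recovery of the divisor $D$ from a point of $\on{Gr}^+_{M,\on{gen}}$ — which the paper carries out via the induced line bundles $\sL^{\check{\nu}}_{\sP_M}\simeq \cO(-\langle D,\check{\nu}\rangle)$ — is your observation that the coefficients of $D$ are forced by the local modification data. The only caveats are cosmetic: the complement of the trivialization locus need not be \emph{flat} over $S$ (only finite), so "domain" is the safer notion, and the $P=G$ case is degenerate rather than trivial; neither affects the argument.
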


\begin{proof}
Consider the prestack $\on{Gr}_{M,\on{gen}}$ whose $S$-points for an affine scheme $S$ parameterize an $M$-bundle $\sP_M$ on $S\times X$ together with a trivialization $\phi$ on \emph{some} domain $U\subset S\times X$. We remind that a domain is an open subscheme such that for every $k$-point $s\in S$, its restriction to $X$ is dense (equivalently, non-empty). We have natural maps:
\[
\on{Gr}_{M,\on{Ran}}/\on{Ran}\to\on{Gr}_{M,\on{gen}}\leftarrow \on{Gr}_{M,\on{Conf}}/\on{Conf}_{G,P}.
\]

We claim that these maps become isomorphism after sheafification in the fppf topology. It suffices to check this for $S$-points where $S$ is an affine scheme of finite type. By \cite[Lemma 5.5.1]{barlev2012d}, every domain $U\subset S\times X$ is fppf locally the complement of a union of graphs. Similarly, by Lemma 3.2.7 in \emph{loc.cit}, every domain is Zariski locally the complement of a divisor.

To prove the second part of the lemma, define $\on{Gr}_{M,\on{gen}}^+$ in the obvious way. We have maps
\begin{equation}\label{eq:corrplus}
\on{Gr}_{M,\on{Ran}}^+/\on{Ran}\to\on{Gr}_{M,\on{gen}}^+\leftarrow \on{Gr}_{M,\on{Conf}}^+.
\end{equation}

\noindent By construction, the left-most map is an isomorphism after fppf sheafification. Let us construct an explicit inverse to the right-most map. Thus, let $(U\subset S\times X, \sP_M,\phi)$ be an $S$-point of $\on{Gr}_{M,\on{gen}}^+$. We get an associated $\Lp$-valued divisor $D$ on $X$. Concretely, for every character $\check{\nu}$ of $M$ of the form $\check{\nu}=V^{N_P}$ for some $G$-representation $V$, $D$ is the divisor such that
\[
\sL_{\sP_M}^{\check{\nu}}\simeq \cO_{S\times X}(-\langle D,\check{\nu}\rangle)
\]

\noindent compatibly with the embedding into $\cO_{S\times X}\simeq \sL^{\check{\nu}}_{\sP_M^0}$. We claim that for every $G$-representation $V$, the embedding of coherent sheaves
\[
\phi_V: V^{N_P}_{\sP_M}\into V^{N_P}_{\sP_M^0}
\]

\noindent is an isomorphism away from the support of $D$. Indeed, it suffices to check this after taking determinants of both vector bundles, where the assertion in turn follows from the definition of $D$.

This provides a map
\[
\on{Gr}_{M,\on{gen}}^+\to \on{Gr}_{M,\on{Conf}}^+,
\]

\noindent which is easily seen to be the inverse of the right-most map of (\ref{eq:corrplus}).

\end{proof}

\subsection{Factorization via twisted arrows: categories}
In the next subsections, we review the construction of factorization algebras (resp. factorization categories) from a commutative algebra (resp. symmetric monoidal category). Many of the constructions follow \cite[§2]{gaitsgory2021semi}.

\subsubsection{Twisted arrows} Denote by $\on{fSet}$ the category of non-empty finite sets under surjection. Note that any surjection $\phi: I\onto J$ induces a closed embedding
\[
\Delta_{\phi}: X^J\into X^I
\]

\noindent in the natural way.

\subsubsection{} We let $\on{TwArr}$ be the category whose objects are surjections
\[
\phi: I\onto J,
\]

\noindent where $I,J\in\on{fSet}$. Morphisms between $(I_1\onto J_1)$ and $(I_2\onto J_2)$ are given by commutative diagrams
\begin{equation}\label{d:twarrmor}
\begin{tikzcd}
	{I_1} && {J_1} \\
	\\
	{I_2} && {J_2.}
	\arrow["{\phi_1}", two heads, from=1-1, to=1-3]
	\arrow["{\psi_I}"', two heads, from=1-1, to=3-1]
	\arrow["{\phi_2}", two heads, from=3-1, to=3-3]
	\arrow["{\psi_J}"', two heads, from=3-3, to=1-3]
\end{tikzcd}
\end{equation}

\noindent Both $\psi_I$ and $\psi_J$ are required to be surjective.

\subsubsection{}\label{s:unitality} Let $\sC$ be a unital symmetric monoidal category. We may construct a unital factorization category $\on{Fact}(\sC)_{\on{Ran}}$ over $\on{Ran}$ whose fiber at $x\in X$ is $\sC$ itself. Namely, let:
\begin{equation}\label{eq:factcatdef}
\on{Fact}(\sC)_{\on{Ran}}:=\underset{(I\onto J)\in\on{TwArr}}{\on{colim}} \sC^{\otimes I}\otimes D(X^J),
\end{equation}

\noindent where for any commutative diagram (\ref{d:twarrmor}), the corresponding functors between the categories are induced by:
\begin{itemize}
    \item $\Delta_{\psi_J},*: D(X^{J_1})\to D(X^{J_2})$.

    \item The natural map $\sC^{\otimes I_1}\to \sC^{\otimes I_2}$ coming from $\psi_I$ and the monoidal structure on $\sC$.
\end{itemize}

\noindent It is easy to see that the unital structure on $\sC$ as a symmetric monoidal category defines a natural unital structure on $\on{Fact}(\sC)_{\on{Ran}}$ as a factorization category. Concretely, for $K\in\on{fSet}$, the unital structure is induced by the maps
\[
D(X^K)\otimes \sC^{\otimes I}\otimes D(X^J)\to \sC^{\otimes I\sqcup K}\otimes D(X^{J\sqcup K}),
\]

\noindent where $\sC^{\otimes I}\to \sC^{\otimes I\sqcup K}$ is given by inserting the unit and $D(X^K)\otimes D(X^J)\to D(X^{J\sqcup K})$ is exterior product. These combine to give an action
\[
D(\on{Ran})\curvearrowright \on{Fact}(\sC)_{\on{Ran}},
\]

\noindent where $D(\on{Ran})$ is considered as a symmetric monoidal category under convolution (i.e., pushforward along the map $\on{Ran}\times \on{Ran}\to\on{Ran},\;\; (\underline{x},\underline{y})\mapsto \underline{x}\cup\underline{y}$).

\subsubsection{Variant: Fixing I}

Let $I\in\on{fSet}$. We define the category $\on{TwArr}_{I/}$ whose objects are maps
\[
I\onto J\onto K,
\]

\noindent and where morphisms are commutative diagrams:
\[\begin{tikzcd}
	I && {J_1} && {K_1} \\
	\\
	I && {J_2} && {K_2.}
	\arrow[two heads, from=1-1, to=1-3]
	\arrow[two heads, from=1-3, to=1-5]
	\arrow["{\mathrm{id}}"', from=1-1, to=3-1]
	\arrow[two heads, from=3-1, to=3-3]
	\arrow[two heads, from=3-3, to=3-5]
	\arrow[two heads, from=1-3, to=3-3]
	\arrow[two heads, from=3-5, to=1-5]
\end{tikzcd}\]

It is not difficult to see that we have an isomorphism:
\begin{equation}\label{eq:twarri}
\on{Fact}(\sC)_I:=\on{Fact}(\sC)_{\on{Ran}} \underset{D(\on{Ran})}{\otimes}D(X^I)\simeq \underset{(I\onto J\onto K)\in\on{TwArr}_{I/}}{\on{colim}} \sC^{\otimes J}\otimes D(X^K).
\end{equation}

\subsubsection{Pointwise convolution}\label{s:pointwise}
Let $I\onto J\onto K$. Observe that each term
\[
\sC^{\otimes J}\otimes D(X^K)
\]

\noindent in the colimit (\ref{eq:twarri}) is symmetric monoidal category in $D(X^I)\on{-mod}$. Since the transition functors are moreover symmetric monoidal, we see that $\on{Fact}(\sC)_I$ is equipped with a symmetric monoidal structure over $X^I$.

Taking the limit over $I\in\on{fSet}$, we see that $\on{Fact}(\sC)_{\on{Ran}}$ is equipped with a symmetric monoidal structure over $\on{Ran}$. We refer to this monoidal structure
\[
\on{Fact}(\sC)_{\on{Ran}}\underset{D(\on{Ran}_X)}{\otimes} \on{Fact}(\sC)_{\on{Ran}}\to \on{Fact}(\sC)_{\on{Ran}}
\]

\noindent as \emph{pointwise convolution}. Note in particular that the pointwise convolution of two elements in $\on{Fact}(\sC)_{\on{Ran}}$ supported on $x,y\in X$, respectively, with $x\neq y$ is zero.

Unless stated otherwise, we will only consider $\on{Fact}(\sC)_{\on{Ran}}$ with its monoidal structure given by poinstwise convolution.

\subsubsection{} We have the following basic lemma:
\begin{lem}
The functor 
\[
\sC\mapsto \on{Fact}(\sC)_{\on{Ran}}
\]

\noindent is symmetric monoidal. That is, we have a canonical symmetric monoidal equivalence:
\[
\on{Fact}(\sC)_{\on{Ran}}\underset{D(\on{Ran}_X)}{\otimes} \on{Fact}(\sD)_{\on{Ran}}\xrightarrow{\simeq} \on{Fact}(\sC\otimes \sD)_{\on{Ran}}.
\]
\end{lem}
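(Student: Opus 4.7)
The plan is to express both sides as colimits over twisted arrow categories and reduce the claim to a cofinality statement. Since the relative tensor product $-\otimes_{D(\on{Ran}_X)} -$ preserves colimits in $\on{DGCat}_{\on{cont}}$, I would begin by moving the defining colimits of each $\on{Fact}(-)_{\on{Ran}}$ out from under the tensor product, obtaining
\[
\on{Fact}(\sC)_{\on{Ran}} \underset{D(\on{Ran})}{\otimes} \on{Fact}(\sD)_{\on{Ran}} \simeq \underset{\substack{(I_1\onto J_1)\\ (I_2\onto J_2)}}{\on{colim}}\; \sC^{\otimes I_1}\otimes \sD^{\otimes I_2}\otimes \Bigl(D(X^{J_1})\underset{D(\on{Ran})}{\otimes} D(X^{J_2})\Bigr).
\]

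Next I would identify the internal tensor product. Since the pointwise convolution structure makes $D(X^{J_i})$ a $D(\on{Ran})$-module via the tautological map $X^{J_i} \to \on{Ran}$, a descent argument for $D$-modules yields
\[
D(X^{J_1})\underset{D(\on{Ran})}{\otimes} D(X^{J_2}) \simeq D\bigl(X^{J_1}\underset{\on{Ran}}{\times} X^{J_2}\bigr),
\]
and the fiber product decomposes (as a locally closed stratification, hence via recollement as a colimit) indexed by quotients $K$ of $J_1 \sqcup J_2$ onto which both $J_1$ and $J_2$ surject; these index the collision patterns matching the two supports in $X$.

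The final step is to reorganize the resulting colimit and compare with
\[
\on{Fact}(\sC\otimes\sD)_{\on{Ran}} = \underset{(I\onto J)}{\on{colim}}\; \sC^{\otimes I}\otimes\sD^{\otimes I}\otimes D(X^J).
\]
The combined data $(I_1\onto J_1,\; I_2\onto J_2,\; J_1\onto K\ot J_2)$ assemble into a single surjection $I_1\sqcup I_2\onto K$, which is an object of the twisted arrow category for $\sC\otimes \sD$ with $I = I_1\sqcup I_2$ and $J = K$. Writing down the comparison functor of indexing categories and showing it is cofinal — so that the unit-insertion maps on either factor identify a term $\sC^{\otimes I_1} \otimes \sD^{\otimes I_2}$ with an instance of $(\sC\otimes \sD)^{\otimes I}$ for $I = I_1 \sqcup I_2$ — yields the underlying equivalence. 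Compatibility with the pointwise convolution symmetric monoidal structure from §\ref{s:pointwise} is then automatic, since pointwise convolution is constructed termwise on each fixed-$\on{fSet}$-index piece of the defining colimits.

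The main obstacle I expect is the cofinality check: verifying rigorously that the combinatorics of twisted arrows for $\sC\otimes\sD$ match the ``doubled'' combinatorics arising from $\on{Fact}(\sC) \otimes_{D(\on{Ran})} \on{Fact}(\sD)$ requires computing slice categories of the comparison functor and showing each is contractible, which is essentially a bar-complex argument for the unit insertions. A secondary technical point is justifying the identification $D(X^{J_1}) \otimes_{D(\on{Ran})} D(X^{J_2}) \simeq D(X^{J_1}\times_{\on{Ran}} X^{J_2})$, which, while standard in the schematic setting, requires care because $\on{Ran}$ is only a prestack and the module structure in question is pointwise convolution rather than the usual pullback structure.
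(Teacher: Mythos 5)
Your strategy is genuinely different from the paper's, and as written it is a plan rather than a proof: the two steps you yourself flag as obstacles are exactly the ones that carry all the content, and neither is carried out. Concretely: (i) the very first interchange, pulling the defining colimits out of $-\underset{D(\on{Ran})}{\otimes}-$, is only legitimate if each term $\sC^{\otimes I}\otimes D(X^J)$ is a $D(\on{Ran})$-submodule and the transition functors ($\Delta_{\psi_J,*}$ and the multiplications) are $D(\on{Ran})$-linear, so that the colimit is computed in $D(\on{Ran})\mod$; this holds for the $\overset{!}{\otimes}$-type action via $X^J\to\on{Ran}$ (projection formula), but it needs to be said, since for the convolution/unital action of §\ref{s:unitality} it is false. (ii) The identification $D(X^{J_1})\underset{D(\on{Ran})}{\otimes}D(X^{J_2})\simeq D(X^{J_1}\underset{\on{Ran}}{\times}X^{J_2})$ is not a formal descent statement: $\on{Ran}$ is not $1$-affine, so this has to be argued by hand (e.g.\ via self-duality of $D(X^{J_i})$ over $D(\on{Ran})$ and the explicit stratified description of the fiber product). (iii) The cofinality of the comparison functor between the doubled twisted-arrow category and $\on{TwArr}$ for $\sC\otimes\sD$ is the combinatorial heart and is simply not done. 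None of these is obviously fatal, but until they are executed the argument is incomplete.

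The paper avoids all of this with a two-line devissage that you should internalize: first construct the canonical symmetric monoidal functor $\on{Fact}(\sC)_{\on{Ran}}\underset{D(\on{Ran})}{\otimes}\on{Fact}(\sD)_{\on{Ran}}\to\on{Fact}(\sC\otimes\sD)_{\on{Ran}}$ (this part is formal), then observe that it is a functor of factorization categories, and an equivalence of factorization categories may be checked after restriction to the main diagonal $X\subset\on{Ran}$. Over $X$ the formula \eqref{eq:twarri} collapses both sides: $D(X)\underset{D(\on{Ran})}{\otimes}\on{Fact}(\sC)_{\on{Ran}}\simeq\sC\otimes D(X)$ since the indexing category $\on{TwArr}_{*/}$ is trivial, and likewise for $\sD$ and $\sC\otimes\sD$, so the map becomes the identity on $\sC\otimes\sD\otimes D(X)$. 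All of your collision-pattern combinatorics is exactly what the factorization structure lets you not do.
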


\begin{proof}
It is easy to see that we have a canonically defined symmetric monoidal functor
\[
\on{Fact}(\sC)_{\on{Ran}}\underset{D(\on{Ran}_X)}{\otimes} \on{Fact}(\sD)_{\on{Ran}}\to \on{Fact}(\sC\otimes \sD)_{\on{Ran}},
\]

\noindent and we need to check it is an equivalence. Since the functor is a functor of factorization categories, we may check the map is an isomorphism over $X\subset \on{Ran}$. But 
\[
D(X)\underset{D(\on{Ran}_X)}{\otimes}\on{Fact}(\sC)_{\on{Ran}}\simeq \sC\otimes D(X)
\]

\noindent by (\ref{eq:twarri}), and similarly for $\sD$, so this is clear.
\end{proof}

\begin{rem}
The above lemma also formally gives the pointwise symmetric monoidal structure on $\on{Fact}(\sC)_{\on{Ran}}$.

\end{rem}

\subsection{Factorization via twisted arrows: algebras} 

\subsubsection{} \label{s:factalg} Let $\sA\in\on{CommAlg}^{\on{un}}(\sC)$ be a unital commutative algebra object in $\sC$. We may associate a unital factorization algebra $\on{Fact}^{\on{alg}}(\sA)$ in $\on{Fact}(\sC)_{\on{Ran}}$ whose $!$-fiber at $x\in X$ is $\sA$ itself. By $!$-fiber of $\on{Fact}^{\on{alg}}(\sA)$ at $x$, we mean the functor
\[
\on{Fact}(\sC)_{\on{Ran}}\simeq D(\on{Ran}_X)\underset{D(\on{Ran}_X)}{\otimes} \on{Fact}(\sC)_{\on{Ran}}\xrightarrow{i_x^!\otimes \on{id}}D(\lbrace x\rbrace)\underset{D(\on{Ran}_X)}{\otimes} \on{Fact}(\sC)_{\on{Ran}}\simeq \sC
\]

\noindent evaluated on $\on{Fact}^{\on{alg}}(\sA)$, where $i_x: \lbrace x\rbrace\to \on{Ran}$ is the inclusion.

Namely, we set:
\[
\on{Fact}^{\on{alg}}(\sA)_{\on{Ran}}:=\underset{(I\onto J)\in\on{TwArr}}{\on{colim}} \sA^{\boxtimes I}\boxtimes \omega_{X^J}.
\]

\noindent Here, $\sA^{\boxtimes I}\boxtimes \omega_{X^J}\in \sC^{\otimes I}\otimes D(X^J)$. For a commutative diagram (\ref{d:twarrmor}), the colimit is formed with respect to the induced maps:
\begin{itemize}
    \item $\Delta_{\psi_J,*}(\omega_{X^{J_1}})\to \omega_{X^{J_2}}$.

    \item $m_{I_1\onto I_2}(\sA^{\boxtimes I_1})\to \sA^{\boxtimes I_2}$ coming from the algebra structure on $\sA$, where $m_{I_1\onto I_2}$ is the multiplication map $\sC^{\otimes I_1}\to \sC^{\otimes I_2}$.
\end{itemize}

\noindent The unital structure on $\sA$ naturally defines a unital structure on $\on{Fact}^{\on{alg}}(\sA)_{\on{Ran}}$ as a factorization algebra.

\subsubsection{} It is easy to see that the functor
\[
\sA\mapsto \on{Fact}^{\on{alg}}(\sA)_{\on{Ran}}
\]

\noindent is right-lax symmetric monoidal. In particular, if $\sA,\sB\in\on{CommAlg}^{\on{un}}(\sC)$, we have a canonical map of commutative factorization algebras:
\begin{equation}\label{eq:laxfactalg}
\on{Fact}^{\on{alg}}(\sA)_{\on{Ran}}\otimes \on{Fact}^{\on{alg}}(\sB)_{\on{Ran}}\to \on{Fact}^{\on{alg}}(\sA\otimes \sB)_{\on{Ran}}
\end{equation}

\noindent taking place in $\on{Fact}(\sC)_{\on{Ran}}$.
\begin{lem}
The map (\ref{eq:laxfactalg}) is an isomorphism. In particular, the functor
\[
\sA\mapsto \on{Fact}^{\on{alg}}(\sA)_{\on{Ran}}
\]

\noindent is symmetric monoidal.
\end{lem}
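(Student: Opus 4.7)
The plan is to reduce checking that (\ref{eq:laxfactalg}) is an isomorphism to a pointwise check via the factorization structure, and then to verify the pointwise statement using the explicit description of pointwise convolution on $\on{Fact}(\sC)_{\on{Ran}}$.

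First, I would note that both sides of (\ref{eq:laxfactalg}) are unital factorization algebras in $\on{Fact}(\sC)_{\on{Ran}}$ (in the sense that the map is a morphism of such), and that by general principles a morphism of unital factorization algebras is an isomorphism as soon as it induces an isomorphism on $!$-fibers at a single point $x \in X$. Indeed, the factorization isomorphisms propagate the fiber at $x$ to the fiber at any finite collection of points, and the unital structure propagates it to all of $\on{Ran}$; more formally, one can use that each fiber over $X^I$ is reconstructed from the fiber over $X$ via external tensor product over the disjoint locus together with the colimit (\ref{eq:twarri}).

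Next, I would analyze the $!$-fiber at $x$ of the left-hand side. The key input is that the $!$-restriction functor $i_x^! : \on{Fact}(\sC)_{\on{Ran}} \to \sC$ is symmetric monoidal with respect to pointwise convolution. This is immediate from the description in \S\ref{s:pointwise}: pointwise convolution of objects supported at $x$ and $y$ vanishes unless $x=y$, in which case it is computed by the symmetric monoidal structure on $\sC$ inherited via the equivalence $\{x\} \underset{D(\on{Ran})}{\otimes} \on{Fact}(\sC)_{\on{Ran}} \simeq \sC$. Therefore
\[
i_x^!\bigl(\on{Fact}^{\on{alg}}(\sA)_{\on{Ran}} \otimes \on{Fact}^{\on{alg}}(\sB)_{\on{Ran}}\bigr) \;\simeq\; i_x^!\on{Fact}^{\on{alg}}(\sA)_{\on{Ran}} \otimes_{\sC} i_x^!\on{Fact}^{\on{alg}}(\sB)_{\on{Ran}} \;\simeq\; \sA \otimes \sB,
\]
while by construction $i_x^! \on{Fact}^{\on{alg}}(\sA \otimes \sB)_{\on{Ran}} \simeq \sA \otimes \sB$ as well. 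I would then verify that the map induced by (\ref{eq:laxfactalg}) on these $!$-fibers is the identity, which is evident from the construction: in the colimit presentation, the term $(\sA \otimes \sB)^{\boxtimes I} \boxtimes \omega_{X^J}$ on the right comes precisely from the termwise tensor product of the corresponding terms for $\sA$ and $\sB$ on the left.

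For the second assertion (that $\sA \mapsto \on{Fact}^{\on{alg}}(\sA)_{\on{Ran}}$ is symmetric monoidal), I would combine the first part with the preceding lemma. Namely, $\on{Fact}^{\on{alg}}$ factors as $\sA \mapsto \on{Fact}^{\on{alg}}(\sA) \in \on{CommAlg}(\on{Fact}(\sC)_{\on{Ran}})$, and under the symmetric monoidal equivalence $\on{Fact}(\sC)_{\on{Ran}} \otimes_{D(\on{Ran})} \on{Fact}(\sD)_{\on{Ran}} \simeq \on{Fact}(\sC \otimes \sD)_{\on{Ran}}$ the tensor product $\on{Fact}^{\on{alg}}(\sA) \otimes \on{Fact}^{\on{alg}}(\sB)$ corresponds, by the first part applied to $\sC \otimes \sD$ in place of $\sC$, to $\on{Fact}^{\on{alg}}(\sA \otimes \sB)$.

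The main obstacle I anticipate is the descent step — the claim that isomorphism on a single $!$-fiber implies global isomorphism for unital factorization algebras. This is standard but requires some care in the $\infty$-categorical setting; the cleanest route is to combine the colimit formula (\ref{eq:twarri}) with the fact that each $\sA^{\boxtimes I} \boxtimes \omega_{X^J}$ is built, via factorization over the complement of the diagonals and unital extension across them, from the single $!$-fiber.
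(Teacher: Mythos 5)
Your proposal is correct and takes essentially the same approach as the paper: one reduces via the factorization structure to a check over the curve, where both sides become $(\sA\otimes\sB)\boxtimes\omega_X$. The only cosmetic difference is that the paper restricts to all of $X\subset\on{Ran}$ rather than to a single point $x$, which sidesteps the extra (though harmless, given that both restrictions to $X$ are of the constant form $\sA\boxtimes\omega_X$) step of propagating an isomorphism from one $!$-fiber to the whole of $X$.
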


\begin{proof}
We need to check that the map (\ref{eq:laxfactalg}) is an isomorphism. Since (\ref{eq:laxfactalg}) is a map of factorization algebras, it suffices to check that it is an isomorphism when restricted to $X\subset \on{Ran}$. But there it follows from the fact that the restriction of $\on{Fact}^{\on{alg}}(\sA)$ to $X$ is given by
\[
\sA\boxtimes \omega_X\in \sC\otimes D(X)\simeq D(X)\underset{D(\on{Ran}_X)}{\otimes}\on{Fact}(\sC)_{\on{Ran}},
\]

\noindent and similarly for $\on{Fact}(\sB)^{\on{alg}}$.
\end{proof}

\subsubsection{}\label{s:bialg} From the above lemma, we see that $\on{Fact}^{\on{alg}}(\sA)$ is a commutative algebra object in $\on{Fact}(\sC)_{\on{Ran}}$. Moreover, if $\sA\in\on{CommBiAlg}(\sC):=\on{CommAlg}^{\on{un}}(\on{CoAlg}(\sC))$ is a bialgebra object in $\sC$ that is commutative as a unital algebra object, then $\on{Fact}^{\on{alg}}(\sA)$ is a bialgebra object in $\on{Fact}(\sC)_{\on{Ran}}$.

\subsubsection{Coalgebra analogue} If $\sA\in\on{CoCommCoAlg}^{\on{un}}(\sC)$ is a unital cocommutataive coalgebra object in $\sC$, we may similarly define a unital factorization coalgebra $\on{Fact}^{\on{coalg}}(\sA)_{\on{Ran}}$ in $\on{Fact}(\sC)_{\on{Ran}}$ whose cofiber (or $*$-fiber) at $x$ is $\sA$ itself.\footnote{Similar to before, $*$-fiber at $x$ means the functor $\on{Fact}(\sC)_{\on{Ran}}\simeq D(\on{Ran}_X)\underset{D(\on{Ran}_X)}{\otimes}\on{Fact}(\sC)_{\on{Ran}}\xrightarrow{i_x^{*}} D(\lbrace x\rbrace)\underset{D(\on{Ran}_X)}{\otimes}\on{Fact}(\sC)_{\on{Ran}}\simeq \sC$ evaluated on $\on{Fact}^{\on{coalg}}(\sA)$. In general, this functor takes values in the pro-category $\on{Pro}(\sC)$. However, when evaluated on $\on{Fact}^{\on{coalg}}(\sA)$, it is easily seen to factor through $\sC\subset \on{Pro}(\sC)$.} Namely:
\[
\on{Fact}^{\on{coalg}}(\sA):=\underset{(I\onto J)\in\on{TwArr}^{\on{op}}}{\on{colim}} \sA^{\boxtimes I}\boxtimes \underline{k}_{X^J}.
\]

\noindent Here, $\underline{k}$ denotes the constant sheaf. The maps between the terms in the colimits are dual to those of §\ref{s:factalg}. Similar to above, $\on{Fact}^{\on{coalg}}(\sA)$ has a natural structure of a cocommutative coalgebra object in $\on{Fact}(\sC)_{\on{Ran}}$. Moreover, if $\sA$ is a bialgebra which is cocommutative as a coalgebra, then $\on{Fact}(\sA)^{\on{coalg}}_{\on{Ran}}$ is a cocommutative bialgebra object in $\on{Fact}(\sC)_{\on{Ran}}$.

\subsubsection{}\label{s:indep} Finally, we note the following. Let
\[
\on{Fact}(\sC)_{\on{Ran},\on{indep}}:=\on{Fact}(\sC)_{\on{Ran}}\underset{D(\on{Ran})}{\otimes} \on{Vect}
\]

\noindent be the \emph{independent} category of $\on{Fact}(\sC)_{\on{Ran}}$. Here, we consider $D(\on{Ran})$ with its convolution monoidal structure. The action $D(\on{Ran})\curvearrowright \on{Vect}$ is given by pushforward along $p_{\on{Ran}}:\on{Ran}\to\on{pt}$, which is symmetric monoidal.

$!$-pullback along $p_{\on{Ran}}$ defines a fully faithful embedding
\[
\on{Fact}(\sC)_{\on{Ran},\on{indep}}\into \on{Fact}(\sC)_{\on{Ran}}.
\]

\noindent It is clear that $\on{Fact}^{\on{alg}}(\sA)$ (resp. $\on{Fact}^{\on{coalg}}(\sA)$) lies in the image of this functor.

\subsection{Geometric Satake and factorization algebras associated to nilpotent radicals.}\label{S:geometric satake and factorization algebras}

\subsubsection{} Fix a parabolic subgroup $\check{P}$ of $\check{G}$ with Levi $\check{M}$. We set
\[
\on{Rep}(\check{M})_{\on{Ran}}:=\on{Fact}(\on{Rep}(\check{M}))_{\on{Ran}}.
\]

\subsubsection{}\label{s:OmegaRan} Denote by $\check{\fn}_P$ the nilradical of $\check{\fp}=\on{Lie}(\chP)$, and let
\[
C^{\bullet}(\cnp)\in\on{Rep}(\check{M})
\]

\noindent be the cohomological Chevalley complex of $\cnp$ considered as a representation of $\chM$. Note that $C^{\bullet}(\cnp)$ is a commutative unital algebra object in $\on{Rep}(\chM)$. Let
\[
\Omega(\cnp)_{\on{Ran}}=\on{Fact}^{\on{alg}}(C^{\bullet}(\cnp))_{\on{Ran}}\in \Mran
\]

\noindent be the corresponding commutative factorization algebra.

\subsubsection{} Dually, let
\[
C_{\bullet}(\cnp)_{\on{Ran}}\in\on{Rep}(\check{M})
\]

\noindent be the homological Chevalley complex of $\cnp$. Let
\[
\Upsilon(\cnp)_{\on{Ran}}:=\on{Fact}^{\on{coalg}}(C_{\bullet}(\cnp))\in\Mran
\]

\noindent be the corresponding factorization algebra. It has the structure of a cocommutative coalgebra.

\subsubsection{}\label{s:U&O_Ran} Let
\[
U(\cnp)\in\on{Rep}(\check{M})
\]

\noindent be the universal enveloping algebra of $\cnp$ and define
\[
\fU(\cnp)_{\on{Ran}}:=\on{Fact}^{\on{coalg}}(U(\cnp))\in\Mran.
\]

\noindent It has the structure of a cocommutative bialgebra, cf. §\ref{s:bialg}.

Similarly, let
\[
\cO(\check{N}_P)\in \on{Rep}(\check{M})
\]

\noindent be the commutative algebra given by functions on $\check{N}_P$. We let
\[
\cO(\check{N}_P)_{\on{Ran}}:=\on{Fact}^{\on{alg}}(\cO(\check{N}_P))\in \Mran.
\]

\noindent It has the structure of a commutative bialgebra.

\subsubsection{} Let
\[
\on{Sph}_{M,\on{Ran}}
\]

\noindent be the factorization category associated to the spherical category of $M$. That is
\[
\on{Sph}_{M,\on{Ran}}=\underset{I\in\on{fSet}}{\on{colim}}\on{Sph}_{M,I},
\]

\noindent where
\[
\on{Sph}_{M,I}=D(\mathfrak{L}^+_{X^I}M\backslash \mathfrak{L}_{X^I}M/\mathfrak{L}^+_{X^I}M).
\]

\noindent Here, $\mathfrak{L}^+_{X^I}M=\mathfrak{L}^+_{\on{Ran}}M\underset{\on{Ran}}{\times}X^I$ (resp. $\mathfrak{L}_{X^I}M$) parameterizes an $I$-tuple $x_I\in X^I$ and a map $D_{x_I}\to M$ (resp. $\overset{\circ}{D}_{x_I}\to M$).

\subsubsection{} We have a full subcategory
\[
\on{Sph}_{M,\on{Ran}}^+\subset \on{Sph}_{M,\on{Ran}}
\]

\noindent consisting of D-modules supported on $\on{Gr}_{M,\on{Ran}}^+$. Note that the (pointwise) monoidal structure on $\on{Sph}_{M,\on{Ran}}$ restricts to one on $\on{Sph}_{M,\on{Ran}}^+$.

\subsubsection{} We also have versions living over the configuration space:
\[
\on{Sph}_{M,\on{Conf}}^+\subset \on{Sph}_{M,\on{Conf.}}
\]

\noindent Here, the $!$-fiber of $\on{Sph}_{M,\on{Conf}}^+$ at some $\theta\cdot x\in\on{Conf}_{G,P}$ is
\[
D(\mathfrak{L}^+_{x}M\backslash \on{Gr}_{M,x}^{+,\theta}).
\]

Let us give a concrete description of $\on{Sph}_{M,\on{Conf}}^+$, which also makes it evident that the category is equipped with a natural external convolution monoidal structure.

\subsubsection{} Recall the group prestack $\mathfrak{L}_{\on{Conf}}M^+$ over $\on{Conf}_{G,P}$ from §\ref{s:weirdplus}. Observe that we have:
\[
\mathfrak{L}_{\on{Conf}}M^+/\mathfrak{L}^+_{\on{Conf}}M=\on{Gr}^+_{M,\mathrm{Conf}}.
\]

\noindent Moreover, we have:
\[
\on{Sph}_{M,\on{Conf}}^+=D(\mathfrak{L}^+_{\on{Conf}}M\backslash \mathfrak{L}_{\on{Conf}}M^+/\mathfrak{L}^+_{\on{Conf}}M).
\]

\noindent Note that we have a natural decomposition
\[
\on{Sph}_{M,\on{Conf}}^+=\underset{\theta\in \Lambda_{G,P}^{\on{neg}}}{\bigoplus} \on{Sph}_{M,X^{\theta},}^{+}
\]

\noindent where $\on{Sph}_{M,X^{\theta}}^{+}:=\on{Sph}_{M,\on{Conf}}^+\underset{D(\on{Conf}_{G,P})}{\otimes} D(X^{\theta})$.

\subsubsection{} There is a natural action (not relative to $\on{Conf}_{G,P}$):
\[
\mathfrak{L}_{\on{Conf}}M^+\times \mathrm{Gr}^+_{M,\mathrm{Conf}}\to \mathrm{Gr}^+_{M,\mathrm{Conf}}
\]

\noindent changing the trivialization at the punctured disk. Note that the underlying divisors of $\on{Conf}_{G,P}$ get added under this action.

\subsubsection{} The external convolution structure on $\on{Sph}_{M,\on{Conf}}^+$ is defined by $!$-pull, $*$-push along the correspondence:
\[\begin{tikzcd}
	{\mathfrak{L}^+_{\on{Conf}}M\backslash \mathfrak{L}_{\on{Conf}}M^+\overset{\mathfrak{L}^+_{\on{Conf}}M}{\times}\mathrm{Gr}^+_{M,\mathrm{Conf}}} & {\mathfrak{L}^+_{\on{Conf}}M\backslash \mathfrak{L}_{\on{Conf}}M^+/\mathfrak{L}^+_{\on{Conf}}M} \\
	\\
	{\mathfrak{L}^+_{\on{Conf}}M\backslash \mathfrak{L}_{\on{Conf}}M^+/\mathfrak{L}^+_{\on{Conf}}M\times \mathfrak{L}^+_{\on{Conf}}M\backslash \mathfrak{L}_{\on{Conf}}M^+/\mathfrak{L}^+_{\on{Conf}}M.}
	\arrow[from=1-1, to=3-1]
	\arrow[from=1-1, to=1-2]
\end{tikzcd}\]

\noindent We note that the usual argument shows that the horizontal map of the above correspondence is stratified semismall. In particular, convolution is t-exact for the natural perverse t-structure on $\on{Sph}_{M,\on{Conf}}^+$ given in §\ref{s:tstructure}.

\subsubsection{The factorizable naive geometric Satake functor} By \cite[§6]{raskin2021chiral}, we have a monoidal functor
\[
\on{Sat}^{\on{nv}}_{\on{Ran}}: \Mran\to \on{Sph}_{M,\on{Ran}}
\]

\noindent that restricts to the usual (naive) geometric Satake functor $\on{Sat}^{\on{nv}}$ on fibers and that is t-exact when restricted to $X^I$. We will denote by $\on{Sat}^{\on{nv}}_{X^I}$ the restriction of $\on{Sat}^{\on{nv}}_{\on{Ran}}$ to $D(X^I)$.

By abuse of notation, we also denote by 
\[
\Omega(\cnp)_{\on{Ran}},\;\; \Upsilon(\cnp)_{\on{Ran}},\;\; \fU(\cnp)_{\on{Ran}},\;\; \cO(\check{N}_P)_{\on{Ran}}
\]

\noindent the corresponding factorization algebras from §\ref{S:geometric satake and factorization algebras} under $\on{Sat}^{\on{nv}}_{\on{Ran}}$.
\begin{lem}
The factorization algebras
\[
\Omega(\cnp)_{\on{Ran}},\;\; \Upsilon(\cnp)_{\on{Ran}},\;\; \fU(\cnp)_{\on{Ran}},\;\; \cO(\check{N}_P)_{\on{Ran}}
\]

\noindent are supported on 
\[
\on{Sph}^+_{M,\on{Ran}}\subset\on{Sph}_{M,\on{Ran.}}
\]

\end{lem}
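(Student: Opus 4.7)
The plan is to reduce the factorization-level support statement to a fiber-level one, and then verify the latter by a weight analysis.

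For the reduction, I would use that each of the four factorization (co)algebras is, by construction in \S\ref{s:factalg}, a twisted-arrow colimit of terms of the form $\on{Sat}^{\on{nv}}(\sA)^{\boxtimes I}\boxtimes \omega_{X^J}$ (or the coalgebra variant with $\underline{k}_{X^J}$ in place of $\omega_{X^J}$), pushed into $\on{Sph}_{M,\on{Ran}}$ along the factorization and partial-diagonal maps. Since the sub-ind-scheme $\on{Gr}_{M,\on{Ran}}^+\subset \on{Gr}_{M,\on{Ran}}$ is itself a factorization sub-ind-scheme (the $+$ condition on Pl\"ucker maps being local in $X$), and external products with D-modules on $X^J$ preserve this support, each term lies in $\on{Sph}^+_{M,\on{Ran}}$ once its $\on{Sat}^{\on{nv}}(\sA)$-factor lies in $\on{Sph}^+_M$. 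Closedness of $\on{Sph}^+_{M,\on{Ran}}$ under colimits then reduces the claim to checking each of the four fibers separately.

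For the fiber-level check, each of $C^{\bullet}(\check{\fn}_P)$, $C_{\bullet}(\check{\fn}_P)$, $U(\check{\fn}_P)$, and $\cO(\check{N}_P)$ is built functorially from $\check{\fn}_P$ (or its dual) via a standard construction --- exterior algebra, universal enveloping algebra, or symmetric algebra. Consequently, as a representation of $\check{M}$, each is concentrated, under the action of the connected center $Z(\check{M})^{\circ}$, in a single cone of central characters generated by the $\check{T}$-weights of $\check{\fn}_P$ (respectively of $\check{\fn}_P^{\vee}$). Under the Langlands identification of characters of $Z(\check{M})^{\circ}$ with $\Lambda_{G,P}=\pi_0(\on{Gr}_M)$, this cone corresponds, with the normalization of $\on{Sat}^{\on{nv}}$ used here, to the components $\on{Gr}_M^{\theta}$ for $\theta\in \Lambda_{G,P}^{\on{neg}}$. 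Moreover, the Satake image of an irrep of highest weight $\check{\lambda}$ is supported on the Schubert closure $\on{Gr}_M^{\leq\lambda}$, and for $\lambda$ $M$-dominant with $\bar\lambda\in\Lambda_{G,P}^{\on{neg}}$ one checks $\on{Gr}_M^{\leq\lambda}\subset \on{Gr}_M^+$ directly from the Pl\"ucker-line characterization of $\on{Gr}_M^+$ in \S\ref{s:+gr} (the inequality $\langle\lambda,\check{\nu}\rangle\leq 0$ for all dominant $\check{\nu}\in\check{\Lambda}_{G,P}^+$ is exactly the condition $\bar\lambda\in\Lambda_{G,P}^{\on{neg}}$, and such inequalities are stable under the dominance order on $\Lambda^+_M$).

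The substantive content is the weight analysis in the second paragraph, which is a combinatorial exercise in Lie theory. The main point requiring care will be bookkeeping the sign conventions so that all four (co)algebras --- two of which are built from $\check{\fn}_P$ and two from its dual --- end up with central-character supports in the common cone $\Lambda_{G,P}^{\on{neg}}$ under the chosen normalization. Past this, the argument is routine.
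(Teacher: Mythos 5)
Your reduction to a fiberwise statement in the first paragraph is fine and matches what the paper does implicitly. The gap is in the second paragraph, in the final step: the claim that for $\lambda$ an $M$-dominant coweight with image $\bar\lambda\in\Lambda_{G,P}^{\on{neg}}$ one has $\overline{\on{Gr}}_M^{\lambda}\subset\on{Gr}_M^{+}$ is false. The condition $\bar\lambda\in\Lambda_{G,P}^{\on{neg}}$ only records the central character of $V_{\check M}^{\lambda}$, i.e.\ the connected component of $\on{Gr}_M$ on which the Satake sheaf lives, and it is necessary but not sufficient for membership in $\on{Gr}_M^{+}$: the definition in \S\ref{s:+gr} requires regularity of the map on the \emph{entire} vector bundle $V^{N_P}_{\sP_M}\to V^{N_P}_{\sP_M^0}$, where $V^{N_P}$ is a typically higher-dimensional irreducible $M$-representation, not just on the Pl\"ucker lines attached to characters of $M^{\on{ab}}$. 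Concretely, for a $T$-fixed point $t^{\mu}$ one needs a sign condition on $\langle\mu,\check\nu\rangle$ for \emph{all} weights $\check\nu$ of $V^{N_P}$, equivalently a condition on the whole $W_M$-orbit of $\mu$, not merely on its image in $\Lambda_{G,P}$. For instance, with $G=GL_3$, $P$ the $(2,1)$-parabolic and $\lambda=(1,-2;1)$, the image $\bar\lambda$ lies in $\Lambda_{G,P}^{\on{neg}}$, yet $t^{\lambda}\notin\on{Gr}_M^{+}$ because the two weights of the standard representation of the $GL_2$-factor pair with $\lambda$ to values of opposite sign. So your central-character cone computation only yields support on the components $\on{Gr}_M^{\theta}$, $\theta\in\Lambda_{G,P}^{\on{neg}}$, which is strictly weaker than the lemma.

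The correct criterion (this is \cite[Prop.\ 6.2.3]{braverman1999geometric}, which the paper invokes) is expressed in terms of the \emph{lowest} weight: $\overline{\on{Gr}}_M^{\lambda}\subset\on{Gr}_M^{+}$ is governed by $w_0^M(\lambda)$ lying in the appropriate cone inside $\Lambda$ itself (spanned by coroots of $G$), not in the quotient $\Lambda_{G,P}$. Verifying this for the weights of $C^{\bullet}(\cnp)$, $C_{\bullet}(\cnp)$, $U(\cnp)$ and $\cO(\check N_P)$ then requires one further input that your proposal omits and that is the actual content of the paper's proof: since $\check M$ normalizes $\check N_P$, the Weyl group $W_M$ permutes the roots of $\cnp$, so $w_0^M$ applied to a (signed) sum of roots of $\cnp$ is again such a sum, hence lands in the required cone. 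Your "stable under the dominance order" remark does not substitute for this, since the issue is the $W_M$-orbit of the highest weight rather than the partial order on dominant weights. With that correction your strategy becomes essentially the paper's argument.
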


\begin{proof}
Recall that we may also regard $\Lambda_{G,P}$ as the character lattice of $Z(\check{M})^{\circ}$, the connected component of the identity of the center of $\check{M}$. Here, $\Lp$ becomes the positive span of the simple positive roots not contained in $\fm$. We will prove the assertion for $\Omega(\cnp)_{\on{Ran}}$, the proof for the remaining factorization algebras is similar.

Let $w_0^M$ be the longest element in the Weyl group of $M$. By \cite[Prop. 6.2.3]{braverman1999geometric}, it suffices to check that if $\nu$ is an $\check{M}$-dominant root occurring in $\check{\fn}_P$, then $w_0^M(\nu)$ is a sum of positive roots in $\check{G}$. However, this simply follows from the fact that $\check{M}$ stabilizes $\check{\mathfrak{n}}_P$; in particular, $w_0^M(\nu)$ will be a sum of positive roots in $\check{\mathfrak{n}}_P$.
\end{proof}

\subsubsection{}\label{s:fromrantoconf} Let
\[
\on{Sph}_{M,\on{Ran},\on{indep}}^+:=\on{Sph}_{M,\on{Ran}}^+\underset{D(\on{Ran})}{\otimes} \on{Vect,}
\]

\noindent as in §\ref{s:indep}. By Lemma \ref{l:fppf}, we have an equivalence:
\begin{equation}\label{eq:ran=conf}
\on{Sph}_{M,\on{Ran},\on{indep}}^+\simeq \on{Sph}_{M,\on{Conf.}}^+
\end{equation}

\noindent By unitality of the factorization algebras, we may view $\Omega(\cnp)_{\on{Ran}}, \Upsilon(\cnp)_{\on{Ran}}, \fU(\cnp)_{\on{Ran}}, \cO(\check{N}_P)_{\on{Ran}}$ as factorization algebras in $\on{Sph}_{M,\on{Conf}}^+$. To avoid confusion, we write
\[
\Omega(\cnp)_{\on{Conf}}, \Upsilon(\cnp)_{\on{Conf}}, \fU(\cnp)_{\on{Conf}}, \cO(\check{N}_P)_{\on{Conf}}
\]

\noindent for the corresponding factorization algebras in $\on{Sph}_{M,\on{Conf}}^+$.

\subsubsection{} By construction, the $!$-fiber of $\Omega(\cnp)_{\on{Conf}}$ and $\cO(\check{N}_P)_{\on{Conf}}$ at $\theta\cdot x\in\on{Conf}_{G,P}$ is
\[
C^{\bullet}(\cnp)^{\theta}, \cO(\check{N}_P)^{\theta}\in\on{Sph}_{M,x}^+;
\]

\noindent that is, the image under geometric Satake of the $\theta$-graded piece of $C^{\bullet}(\cnp)$ and $\cO(\check{N}_P)$, respectively. We have analogous assertions for $\Upsilon(\cnp)_{\on{Conf}}, \fU(\cnp)_{\on{Cof}}$ with respect to $*$-fibers.

We let
\[
\Omega(\cnp)_{X^{\theta}}, \Upsilon(\cnp)_{X^{\theta}}, \fU(\cnp)_{X^{\theta}},\cO(\check{N}_P)_{X^{\theta}}\in\on{Sph}_{M,X^{\theta}}^{+}
\]

\noindent be the respective restrictions of the factorization algebras to $X^{\theta}\subset \on{Conf}_{G,P}$.

\subsubsection{} Note that the pointwise convolution structure on $\on{Sph}_{M,\on{Ran}}$ restricts to one on $\on{Sph}_{M,\on{Ran}}^+$.
\begin{lem}\label{l:point=ext}
Under the equivalence (\ref{eq:ran=conf}), pointwise convolution structure goes to external convolution.
\end{lem}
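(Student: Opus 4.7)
The plan is to exhibit both monoidal structures as $!$-pull, $*$-push along convolution correspondences that become identified under the equivalence (\ref{eq:ran=conf}). First, I would unfold the definitions: pointwise convolution on $\on{Sph}^+_{M,\on{Ran}}$ is, by the discussion of §\ref{s:pointwise} specialized to the spherical Hecke category, given by $!$-pull, $*$-push along the convolution diagram
\[
\mathfrak{L}^+_{\on{Ran}}M \backslash \mathfrak{L}_{\on{Ran}}M^{+} \overset{\mathfrak{L}^+_{\on{Ran}}M}{\times} \on{Gr}^+_{M,\on{Ran}} \longrightarrow \mathfrak{L}^+_{\on{Ran}}M \backslash \on{Gr}^+_{M,\on{Ran}},
\]
with the twisted product formed \emph{over $\on{Ran}$}. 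External convolution on $\on{Sph}^+_{M,\on{Conf}}$ is, as spelled out following §\ref{s: Gr_M+Conf}, the analogous correspondence built from $\mathfrak{L}_{\on{Conf}}M^{+}$ and $\on{Gr}^+_{M,\on{Conf}}$, but with the twisted product taken \emph{absolutely}, not fibered over $\on{Conf}_{G,P}$.

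Next, I would extend Lemma \ref{l:fppf} from the Grassmannians to these convolution correspondences. The proof carries through verbatim: an $S$-point of either twisted product fppf-locally records a pair of $M$-bundles on the formal neighborhood of a domain $U \subset S \times X$, trivializations away from that neighborhood satisfying the $\on{Gr}^+_M$-regularity condition, and a change-of-trivialization; the configuration-space parametrization is recovered from the supports of the associated divisors exactly as in the second half of the proof of Lemma \ref{l:fppf}. The crucial geometric point is that upon quotienting the Ran-side correspondence by $\on{Ran}$, the fiber product over $\on{Ran}$ becomes an absolute product, matching the Conf-side correspondence after quotienting by $\on{Conf}_{G,P}$.

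Passing from $\on{Sph}^+_{M,\on{Ran}}$ to $\on{Sph}^+_{M,\on{Ran},\on{indep}}$ categorically realizes this quotient by $\on{Ran}$ via base change along $p_{\on{Ran},*}\colon D(\on{Ran}) \to \on{Vect}$. Combined with the fppf identification of convolution correspondences, this identifies pointwise convolution with external convolution under (\ref{eq:ran=conf}). The main technical point --- rather than a genuine obstacle --- is upgrading this space-level identification to a coherent monoidal equivalence of functors; this should follow formally once one verifies naturality of the fppf identification in $I \in \on{fSet}$ and checks compatibility with the unital structure of §\ref{s:unitality}, the latter being precisely what forces the two flavors of twisted product (fibered versus absolute) to match after killing the Ran-data.
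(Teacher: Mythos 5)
Your proposal is correct and follows essentially the same route as the paper: the paper likewise exhibits pointwise convolution as pull--push along the convolution correspondence of unital factorization spaces over $\on{Ran}$, observes that taking the quotient by $\on{Ran}$ yields the convolution diagram for $\on{Sph}^+_{M,\on{Ran},\on{indep}}$, and identifies this with the external convolution diagram for $\on{Sph}^+_{M,\on{Conf}}$ via Lemma \ref{l:fppf}. Your additional remarks on extending the fppf identification to the correspondences and on coherence are consistent with what the paper leaves implicit.
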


\begin{proof}
The pointwise convolution structure on $\on{Sph}_{M,\on{Ran}}^+$ is given by pull-push along the correspondence
\[\begin{tikzcd}
	\mathfrak{L}^+_{\on{Ran}}M\backslash \mathfrak{L}_{\on{Ran}}M^+\overset{\mathfrak{L}^+_{\on{Ran}}M}{\times} \mathrm{Gr}_{M,\mathrm{Ran}}^+ & {\mathfrak{L}^+_{\on{Ran}}M\backslash \mathfrak{L}_{\on{Ran}}M^+/\mathfrak{L}^+_{\on{Ran}}M} \\
	\\
	\mathfrak{L}^+_{\on{Ran}}M\backslash \mathfrak{L}_{\on{Ran}}M^+/\mathfrak{L}^+_{\on{Ran}}M\underset{\mathrm{Ran}}{\times} \mathfrak{L}^+_{\on{Ran}}M\backslash \mathfrak{L}_{\on{Ran}}M^+/\mathfrak{L}^+_{\on{Ran}}M.
	\arrow[from=1-1, to=3-1]
	\arrow[from=1-1, to=1-2]
\end{tikzcd}\]

\noindent Note that the above is a correspondence of unital factorization spaces over $\on{Ran}$. Taking the quotient by $\on{Ran}$, we obtain the convolution diagram for $\on{Sph}_{M,\on{Ran},\on{indep}}^+$. However, by Lemma \ref{l:fppf}, this is precisely the convolution diagram for $\on{Sph}_{M,\on{Conf}}^+$.
\end{proof}

\subsubsection{} By the above lemma, $\Omega(\cnp)_{\on{Conf}}$ is equipped with a commutative algebra structure in $\on{Sph}_{M,\on{Conf}}^+$ with its external convolution structure. Similar assertions hold for $\Upsilon(\cnp)_{\on{Conf}}$ and $\fU(\cnp)_{\on{Conf}}$.

\subsubsection{} By construction, $\Omega(\cnp)_{\on{Conf}}$ is a locally compact object of $\on{Sph}_{M,\on{Conf}}^+$, and we have
\begin{equation}\label{eq:verdier}
\bD(\Omega(\cnp)_{\on{Conf}})\simeq \Upsilon(\cnp)_{\on{Conf}}.
\end{equation}

\noindent Similarly, we have:
\begin{equation}\label{eq:VerdierdualityFU}
\bD(\cO(\check{N}_P)_{\on{Conf}})\simeq \fU(\cnp)_{\on{Conf}}.
\end{equation}

\subsubsection{t-structure}\label{s:tstructure} We define a t-structre on $\on{Sph}_{M,\on{Conf}}^+$ by the requirement that the forgetful functor
\begin{equation}\label{eq:forgetGO}
\on{Sph}_{M,\on{Conf}}^+\to D(\mathrm{Gr}^+_{M,\mathrm{Conf}})
\end{equation}

\noindent is right t-exact. The argument in \cite[Lemma 2.1.15]{beraldo2021geometric} shows that (\ref{eq:forgetGO}) is t-exact.
\begin{lem}\label{l:perverse}
The factorization algebras $\Omega(\cnp)_{\on{Conf}}$ and $\Upsilon(\cnp)_{\on{Conf}}$ are perverse.
\end{lem}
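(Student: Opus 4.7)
The plan is to reduce to perversity on each connected component $X^\theta$ of $\on{Conf}_{G,P}$, verify perversity on the open non-diagonal locus using the factorization property and the t-exactness of $\on{Sat}^{\on{nv}}_{X^I}$, and then extend to all of $X^\theta$. The statement for $\Upsilon(\cnp)_{\on{Conf}}$ will follow by Verdier duality.

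First I would decompose $\Omega(\cnp)_{\on{Conf}} = \bigoplus_{\theta \in \Ln} \Omega(\cnp)_{X^\theta}$ and, by \S\ref{s:tstructure}, reduce to checking perversity of the image of each summand in $D(\on{Gr}_{M,X^\theta}^+)$. Unwinding the twisted-arrows colimit of \S\ref{s:factalg}, the restriction of $\Omega(\cnp)_{X^\theta}$ to the open locus $\mathring{X}^\theta \subset X^\theta$ of multiplicity-free colored divisors factorizes (up to the étale cover by the ordered configuration space) into an external product of the $!$-fibers $\Omega(\cnp)^{\theta_i}_{x_i} \simeq \on{Sat}^{\on{nv}}(C^\bullet(\cnp)^{\theta_i})$. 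The key computation is that the cohomological shift built into the graded piece $C^\bullet(\cnp)^{\theta_i} \subset \Lambda^\bullet \cnp^*$ conspires with the dimension of the corresponding stratum of $X^\theta$ (and the shift built into $\on{Sat}^{\on{nv}}_{X^I}$) so that the external product lives in perverse degree zero. T-exactness of $\on{Sat}^{\on{nv}}_{X^I}$ then yields perversity on $\mathring{X}^\theta$.

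Next, I would show that $\Omega(\cnp)_{X^\theta}$ is the clean (or intermediate) extension of its restriction to $\mathring{X}^\theta$. I expect this to be the main obstacle: one has to verify the support and cosupport conditions along the deeper diagonals, i.e.\ that nothing in the twisted-arrow colimit contributes cohomology in illegal perverse degrees when points collide. I would argue by induction on the codimension of the stratum, exploiting unitality of the factorization algebra to reduce, at a partition $\theta = \theta' + \theta''$ realized at a single point, to the case of the fiber at that point, where the assertion is controlled by the explicit structure of $C^\bullet(\cnp)$ as a Chevalley (Koszul) complex and the matching of weights in $\Lambda_{G,P}^{\on{neg}}$ with cohomological degrees.

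Finally, perversity of $\Upsilon(\cnp)_{\on{Conf}}$ follows from that of $\Omega(\cnp)_{\on{Conf}}$ via \eqref{eq:verdier}: since $\Omega(\cnp)_{\on{Conf}}$ is locally compact and Verdier duality is t-exact for the perverse t-structure on $\on{Sph}_{M,\on{Conf}}^+$, we obtain $\Upsilon(\cnp)_{\on{Conf}} \simeq \bD(\Omega(\cnp)_{\on{Conf}}) \in (\on{Sph}_{M,\on{Conf}}^+)^\heartsuit$.
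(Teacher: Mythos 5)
Your reduction to $\Upsilon$ via Verdier duality and the decomposition over the components $X^{\theta}$ match the paper, but the core of your argument diverges from the paper's and has a genuine gap at the step you yourself flag as the main obstacle.

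The gap: you propose to verify perversity on the multiplicity-free locus $\mathring{X}^{\theta}$ and then show that $\Omega(\cnp)_{X^{\theta}}$ is the clean (or intermediate) extension from there, by an induction on the codimension of diagonal strata. The clean/intermediate-extension claim is not justified and is not the right statement to aim for: a factorization algebra is determined by its fibers, and the $!$-fiber of $\Omega(\cnp)_{X^{\theta}}$ at a deep diagonal point $\theta\cdot x$ is $\on{Sat}^{\on{nv}}(C^{\bullet}(\cnp)^{\theta})$, which in general spreads over several cohomological degrees; perversity is then a quantitative comparison between those degrees and the codimension of the diagonal. Your induction defers exactly this estimate to ``the explicit structure of $C^{\bullet}(\cnp)$ and the matching of weights with cohomological degrees,'' which is precisely the content of the lemma rather than a proof of it. Without an actual inequality bounding the cohomological amplitude of $\Lambda^{\bullet}\cnp^{*}$ in weight $\theta$ against the length of $\theta$, the support/cosupport conditions along diagonals are not established.

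The paper avoids stratifying $X^{\theta}$ altogether. It filters $\cnp$ by its lower central series, so that $\Omega(\cnp)_{\on{Conf}}$ acquires a filtration with associated graded a convolution product of the $\Omega(\cnp^{i})_{\on{Conf}}$; since convolution on $\on{Sph}^{+}_{M,\on{Conf}}$ is t-exact (stratified semismallness), this reduces the claim to $\cnp$ abelian. Decomposing an abelian $\cnp$ into $Z(\check M)$-weight lines and using t-exactness of convolution once more, one is reduced to $\Omega(\check\fn_{P,\alpha})_{X^{\alpha}}$, which is supported on the main diagonal $X\into X^{\alpha}$, is ULA over $X$, and has $!$-fiber $\on{Sat}^{\on{nv}}(\check\fn_{P,\alpha}[-1])$ in perverse degree $1$ — hence is perverse. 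If you want to salvage your stratification approach, you would need to extract from this same filtration (or some equivalent combinatorial input) the degree estimate on $C^{\bullet}(\cnp)^{\theta}$ at each diagonal; as written, that input is missing.
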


\begin{proof}
It suffices to show that $\Omega(\cnp)_{\on{Conf}}$ is perverse by (\ref{eq:verdier}).

Let $F^0\cnp=\cnp, F^{-1}\cnp=[\cnp,\cnp], F^{-2}\cnp=[F^{-1}\cnp,\cnp]$ etc. Let $\cnp^i=F^{-i}\cnp/F^{-i-1}\cnp$. Denoting by $\star$ the convolution structure on $\on{Sph}_{M,\on{Conf}}^+$. Then $\Omega(\cnp)_{\on{Conf}}$ carries a filtration with associated graded
\[
\underset{i}{\star} \Omega(\cnp^i)_{\on{Conf}}.
\]

\noindent Since convolution is t-exact, we may assume that $\cnp$ is abelian. Decompose $\cnp=\underset{\alpha\in \Lambda_{G,P}^{\on{pos}}}{\check{\fn}_{P,\alpha}}$ into its weight spaces for the action of $Z(\check{M})$. Then we need to show that each
\[
\Omega(\check{\fn}_{P,\alpha})_{\on{Conf}}
\]

\noindent is perverse, where we consider $\check{\fn}_{P,\alpha}$ as an abelian Lie algebra. Recall that $\Omega(\check{\fn}_{P,\alpha})_{X^{\theta}}$ denotes the restriction of $\Omega(\check{\fn}_{P,\alpha})_{\on{Conf}}$ to $X^{\theta}\subset \on{Conf}_{G,P}$. Then $\Omega(\check{\fn}_{P,\alpha})_{X^{\theta}}$ is non-zero only if $\theta=n\cdot \alpha$, in which case
\[
\Omega(\check{\fn}_{P,\alpha})_{X^{\theta}}=(\Omega(\check{\fn}_{P,\alpha})_{X^{\alpha}})^{\star n}.
\]

\noindent Thus, it suffices to show that $\Omega(\check{\fn}_{P,\alpha})_{X^{\alpha}}$ is perverse. By construction, it is supported over the main diagonal $X\into X^{\alpha},\; x\mapsto \alpha\cdot x$. Moreover, it is easy to see that $\Omega(\check{\fn}_{P,\alpha})_{X^{\alpha}}$ is ULA over $X$. Thus, we simply need to check that the $!$-fibers of $\Omega(\check{\fn}_{P,\alpha})_{X^{\alpha}}$ at a point $\alpha\cdot x$ is concentrated in perverse degree $1$. But this fiber is the image of $C^{\bullet}(\check{\fn}_{P,\alpha})^{\alpha}=\check{\fn}_{P,\alpha}[-1]$ under geometric Satake.
\end{proof}

\subsubsection{} We record the following lemmas for later use:
\begin{lem}\label{l:ugeq1}
For $\theta\neq 0$, the sheaf $\fU(\cnp)_{X^{\theta}}$ is concentrated in perverse degrees $\geq 1$.
\end{lem}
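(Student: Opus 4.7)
The strategy mirrors the proof of Lemma~\ref{l:perverse}. First, the PBW theorem in characteristic zero supplies an isomorphism $\Sym(\cnp)\xrightarrow{\simeq}U(\cnp)$ of cocommutative coalgebras via symmetrization. Since $\on{Fact}^{\on{coalg}}(-)$ depends only on the cocommutative coalgebra structure, we may replace $U(\cnp)$ by $\Sym(\cnp)$; equivalently, we may assume that $\cnp$ is abelian.

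Assuming $\cnp$ abelian, decompose $\cnp=\bigoplus_\alpha \cnp_{P,\alpha}$ by $Z(\chM)$-weight. Then $\Sym(\cnp)\simeq \bigotimes_\alpha \Sym(\cnp_{P,\alpha})$ as cocommutative bialgebras. Applying the symmetric monoidality of $\on{Fact}^{\on{coalg}}$ (the cocommutative-coalgebra analog of the result of \S\ref{s:factalg}) and Lemma~\ref{l:point=ext}, one obtains
\[
\fU(\cnp)_{\on{Conf}}\simeq \underset{\alpha}{\star}\,\fU(\cnp_{P,\alpha})_{\on{Conf}}
\]
in $\on{Sph}_{M,\on{Conf}}^+$ endowed with external convolution. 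Restricting to $X^\theta$ for $\theta=\sum_\alpha n_\alpha\alpha \ne 0$, at least one $n_\alpha\geq 1$, while the factors with $n_\beta=0$ contribute monoidal units (perverse of degree $0$). By t-exactness of external convolution (\S\ref{s:tstructure}), it therefore suffices to treat the single-weight abelian case: for $V:=\cnp_{P,\alpha}$ abelian of single $Z(\chM)$-weight $\alpha$ and $n\geq 1$, the sheaf $\fU(V)_{X^{n\alpha}}$ is in perverse degrees $\geq 1$.

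In this case, the cocommutative coalgebra $\Sym V$ is primitively generated by its weight-$\alpha$ part $V$, and the same argument as in the proof of Lemma~\ref{l:perverse} yields the factorization identity
\[
\fU(V)_{X^{n\alpha}}\simeq \bigl(\fU(V)_{X^\alpha}\bigr)^{\star n}.
\]
For $n=1$, the sheaf $\fU(V)_{X^\alpha}$ is supported on the main diagonal $X\hookrightarrow X^\alpha$ and is ULA over $X$; since its $*$-fiber at $\alpha\cdot x$ is $U(V)^\alpha=V$ concentrated in cohomological degree $0$, it is isomorphic to $V\otimes\underline{k}_X$, which is perverse of degree $1$ on the smooth curve $X$ (with the convention, consistent with \S\ref{s:factalg}, that $\underline{k}_{X^J}$ is perverse of degree $|J|$). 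By t-exactness of $\star$, the $n$-fold convolution therefore lies in perverse degrees $\geq n\geq 1$, completing the argument.

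The principal technical point is the identity $\fU(V)_{X^{n\alpha}}\simeq(\fU(V)_{X^\alpha})^{\star n}$ in the single-weight abelian case. This is the cocommutative-coalgebra counterpart of the identity invoked in Lemma~\ref{l:perverse} for $\Omega$, and encodes the fact that the factorization coalgebra $\on{Fact}^{\on{coalg}}(\Sym V)$ is generated via external convolution from its weight-$\alpha$ (primitive) component $\fU(V)_{X^\alpha}$.
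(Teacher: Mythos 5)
Your argument is correct in substance, but it takes a genuinely different (and longer) route than the paper's. The paper's proof is a two-line direct argument: by factorization one reduces to the $*$-restriction of $\fU(\cnp)_{\on{Conf}}$ along the main diagonal $X\to X^\theta$; that restriction is ULA over $X$, and its $*$-fiber at a point is $\on{Sat}^{\on{nv}}(U(\cnp)^\theta)$, which is perverse; since $\underline{k}_X$ sits in perverse degree $1$ on a curve, the ULA sheaf lies in perverse degree $\geq 1$. No PBW, no reduction to the abelian case, and no convolution-power identity are needed — the lower bound comes purely from the one-dimensionality of the diagonal. What you have done instead is transplant the paper's proof of Lemma \ref{l:perverse} (perversity of $\Omega(\cnp)_{\on{Conf}}$) to the coalgebra side, replacing the lower-central-series filtration used there by the PBW coalgebra isomorphism $\Sym(\cnp)\simeq U(\cnp)$ — which is a clean observation, since $\on{Fact}^{\on{coalg}}$ only sees the cocommutative coalgebra structure — and then invoking t-exactness of convolution. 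This buys you the sharper statement that the weight-$\theta$ piece sits in degrees $\geq \sum_\alpha n_\alpha$, at the cost of more machinery.

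One caveat worth flagging: the identity $\fU(V)_{X^{n\alpha}}\simeq(\fU(V)_{X^\alpha})^{\star n}$ is not literally an isomorphism — the $*$-fibers at the main diagonal are $\Sym^n(V)$ and $V^{\otimes n}$ respectively. The correct statement is that $\fU(V)_{X^{n\alpha}}$ is the $\Sigma_n$-invariant direct summand of $(\fU(V)_{X^\alpha})^{\star n}$ (the addition map $X^n\to X^{(n)}\simeq X^{n\alpha}$ is finite, so invariants split off as a summand). The same imprecision appears in the paper's own proof of Lemma \ref{l:perverse}, and it does not affect your conclusion, since a direct summand of a complex in perverse degrees $\geq n$ again lies in degrees $\geq n$. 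You should also note that the restriction of the external convolution $\underset{\alpha}{\star}\,\fU(\cnp_{P,\alpha})_{\on{Conf}}$ to $X^\theta$ is a direct sum over all decompositions $\theta=\sum_\alpha n_\alpha\alpha$, not a single term; each summand has some $n_\alpha\geq 1$, so the bound still holds summand by summand.
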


\begin{proof}
Since $\fU(\cnp)_{\on{Conf}}$ is a factorization algebra, it suffices to show that its $*$-pullback along the diagonal
\[
X\to X^{\theta},\;\; x\mapsto \theta\cdot x
\]

\noindent is concentrated in perverse degrees $\geq 1$. Denote by $\fU(\cnp)_{\Delta^{\theta}}$ the resulting sheaf over $X$. As in the proof of Lemma \ref{l:perverse}, the sheaf $\fU(\cnp)_{\Delta^{\theta}}$ is ULA over $X$. Moreover, its $*$-fiber at some $x$ is given by the image of
\[
U(\check{\fn}_P)^{\theta}\in\on{Rep}(\check{M})^{\heartsuit}
\]

\noindent under geometric Satake. Since the latter is perverse, this forces $\fU(\cnp)_{\Delta^{\theta}}$ to live in strictly positive cohomological perverse degrees.
\end{proof}

\begin{lem}\label{l:oleq-1} For $\theta \neq 0$, the sheaf $\mathcal{O}(\cnp)_{X^{\theta}}$ is concentrated in perverse degrees $\leq -1$.
\end{lem}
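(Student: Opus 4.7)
The cleanest approach is to deduce the lemma directly from Lemma \ref{l:ugeq1} via Verdier duality. Equation (\ref{eq:VerdierdualityFU}) provides a canonical isomorphism $\bD(\cO(\check{N}_P)_{\on{Conf}}) \simeq \fU(\cnp)_{\on{Conf}}$ in $\on{Sph}_{M,\on{Conf}}^+$, and restricting both sides to the connected component $X^{\theta}$ gives $\bD(\cO(\check{N}_P)_{X^{\theta}}) \simeq \fU(\cnp)_{X^{\theta}}$. Since Verdier duality on $X^{\theta}$ is a contravariant equivalence exchanging the perverse conditions ``$\geq n$'' and ``$\leq -n$'', the bound of Lemma \ref{l:ugeq1} translates immediately to the assertion that $\cO(\check{N}_P)_{X^{\theta}}$ is concentrated in perverse degrees $\leq -1$.

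As an alternative that mirrors the proof of Lemma \ref{l:ugeq1} without appealing to (\ref{eq:VerdierdualityFU}), one can argue as follows. Since $\cO(\check{N}_P)_{\on{Conf}}$ is a commutative factorization algebra, its perverse degrees on $X^{\theta}$ are controlled by its $!$-pullback along the main diagonal $\Delta\colon X \to X^{\theta}$. This pullback is ULA over $X$, and its $!$-fiber at a point $x$ is the image under geometric Satake of the underived functions $\cO(\check{N}_P)^{\theta} \in \on{Rep}(\check{M})^{\heartsuit}$, which is perverse; tracking the shift coming from ULAness and the embedding $\Delta$ then produces the bound $\leq -1$, symmetrically opposite to the $\geq 1$ bound obtained in Lemma \ref{l:ugeq1} from the perverse $*$-fiber of $\fU(\cnp)$.

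No serious obstacle is anticipated: the lemma is essentially a formal consequence of its Verdier dual counterpart. The only minor point to verify in the first approach is that the isomorphism (\ref{eq:VerdierdualityFU}) restricts along the component inclusion $X^{\theta} \into \on{Conf}_{G,P}$ as expected, which is automatic since $X^{\theta}$ is a connected component of $\on{Conf}_{G,P}$ and Verdier duality commutes with $!$-pullback along such inclusions.
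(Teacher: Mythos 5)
Your proof is correct and coincides with the paper's own (very terse) argument, which likewise offers exactly these two routes: either apply Lemma \ref{l:ugeq1} together with the Verdier duality (\ref{eq:VerdierdualityFU}), or run the argument of Lemma \ref{l:ugeq1} directly with $!$-fibers in place of $*$-fibers. Nothing further is needed.
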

\begin{proof}
This follows from a similar argument to that of Lemma \eqref{l:ugeq1}, or alternatively by applying \emph{loc.cit} and Verdier duality. 
\end{proof}

\subsection{Drinfeld's compactification.}\label{S:DRINF}\subsubsection{}

Associated to our smooth projective curve $X$, we have the moduli stack $\Bun_G$ of principal $G$-bundles on $X$. Additionally, we have the moduli stacks $\Bun_P$ and $\Bun_M$ of principal $P$ and $M$-bundles, respectively. The quotient map $P \to M$ and the inclusion $P \hookrightarrow G$ induce morphisms: 
\[
q:\Bun_P \to \Bun_M;
\]
\[
p:\Bun_P \to \Bun_G.\]

The stack $\Bun_M$ has connected components $\Bun_M^{\eta}$ indexed by $\eta \in \pi_1^{\on{alg}}(M)=\Lambda_{G,P}$. We denote by $\Bun^{\eta}_P$ the preimage of $\Bun^{\eta}_M$ in $\Bun_P$. 

\subsubsection{} The map $p:\on{Bun}_P\to \on{Bun}_G$ has a relative compactification 
\[\ot{p}:\ot{\Bun}_P \longrightarrow \Bun_G,\] 

\noindent namely Drinfeld's compactification, see \cite{braverman1999geometric}. The stack $\ot{\Bun}_P$ is also equipped with a map
\[\ot{q}: \ot{\Bun}_P \longrightarrow \Bun_M \]

\noindent and splits as a disjoint union \[\ot{\Bun}_P = \coprod_{\eta \in \Lambda_{G,P}} {\ot{\Bun}^{\eta}_P}\] of connected components.

\subsubsection{Stratification}\label{sec: stratification of Drinfeld's comp} For $\theta\in\Lambda_{G,P}^{\on{neg}}$, we let $\sH_{M,\on{Conf}}$ be the Hecke stack over $\on{Conf}_{G,P}$ parameterizing $(D,\sP_M^1,\sP_M^2,\phi)$, where:
\begin{itemize}
    \item $D\in \on{Conf}_{G,P}$.

    \item Each $\sP_M^i$ is an $M$-bundle on $X$.

    \item $\phi$ is an isomorphism:
    \[
    (\sP_M^1)_{\vert X-D}\simeq (\sP_M^2)_{\vert X-D}.
    \]
\end{itemize}

\noindent We may similarly define the Hecke stack $\sH_{\on{Conf}}^{+}\subset \sH_{\on{Conf}}$, where we require that the isomorphism $\phi$ extends to a regular embedding whenever twisting by $V^{N_P}$ for every $V\in \on{Rep}(G)$, analogous to the definition given in §\ref{s:+gr}.

We have decompositions:
\[
\sH_{M,\on{Conf}}=\underset{\theta\in\Lambda_{G,P}^{\on{neg}}}{\coprod} \sH_{M,X^{\theta}}, \;\; \sH_{M,\on{Conf}}^+=\underset{\theta\in\Lambda_{G,P}^{\on{neg}}}{\coprod} \sH_{M,X^{\theta}}^{+}.
\]

\subsubsection{}\label{s:strata} We have a natural map
\begin{equation}\label{eq:locclosed}
\on{act}: \sH_M^{+}\underset{\on{Bun}_M}{\times} \tildeP\to \ot{\on{Bun}}_P
\end{equation}

\noindent by modifying the $M$-bundle at $D$. For $\theta\in \Lambda_{G,P}^{\on{neg}}$, we let $\iota_{\theta}$ be the composition:
\[
\sH_{M,X^{\theta}}^{+}\underset{\on{Bun}_M}{\times} \on{Bun}_P\to \sH_{M,\on{Conf}}^{+}\underset{\on{Bun}_M}{\times} \tildeP\to \ot{\on{Bun}}_P.
\]
\begin{prop}[\cite{braverman2002intersection}, Prop. 1.9]\label{p:BFGM1}
Each map $\iota_{\theta}$ is a locally closed embedding and these stratify $\ot{\on{Bun}}_P$.
\end{prop}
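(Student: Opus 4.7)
The plan is to construct a ``defect'' invariant on $\widetilde{\on{Bun}}_P$ valued in $\on{Conf}_{G,P}$, to show that its locus over $X^\theta$ is canonically identified with $\sH_{M,X^\theta}^{+}\underset{\on{Bun}_M}{\times} \on{Bun}_P$ via $\iota_\theta$, and then to verify that the resulting partition by $\theta \in \Lambda_{G,P}^{\on{neg}}$ is a locally closed stratification. I would work throughout with the Plücker description of $\widetilde{\on{Bun}}_P$ from \cite{braverman1999geometric}: an $S$-point is a triple $(\sP_G, \sP_M, \lbrace \kappa_V \rbrace_V)$ where $\kappa_V \colon V^{N_P}_{\sP_M} \to V_{\sP_G}$ is a Plücker map for each $V \in \on{Rep}(G)$, the collection satisfies the Plücker relations, and each $\kappa_V$ restricts to a bundle embedding on some non-empty open subset of $X$.

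First I would extract the defect. For each $\check\nu \in \check\Lambda_{G,P}^+$ that arises as the highest weight of an irreducible $V \in \on{Rep}(G)$ with $\dim V^{N_P}=1$, the map $\kappa_V$ yields a section of the line bundle $V_{\sP_G} \otimes (\sL^{\check\nu}_{\sP_M})^{-1}$ whose zero locus is an effective Cartier divisor. These divisors, as $\check\nu$ varies over a generating set, package into a $\Lambda_{G,P}^{\on{neg}}$-valued divisor on $X$, that is, a point of $\on{Conf}_{G,P}$. The stratum in question should consist of those points whose defect lies in the component $X^\theta$.

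Second I would construct the inverse of $\iota_\theta$ on geometric points. Given $(\sP_G, \sP_M, \kappa_\bullet)$ with defect $D$ of class $\theta$, the Plücker maps restrict to bundle isomorphisms away from $D$, producing a $P$-reduction $\sP_P'$ of $\sP_G|_{X-D}$. Since $G/P$ is projective, this reduction extends canonically to a global $P$-bundle $\sP_P \in \on{Bun}_P$. Setting $\sP_M^1 := \sP_P/N_P$, the induced isomorphism $\sP_M^1|_{X-D} \simeq \sP_M|_{X-D}$ supplies a Hecke modification of type $\theta$ between $\sP_M$ and $\sP_M^1$, and the regularity of the original Plücker maps at $D$ encodes precisely the $V^{N_P}$-regularity condition defining $\sH_M^+$. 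This identifies the defect-$\theta$ locus with $\sH_{M,X^\theta}^{+} \underset{\on{Bun}_M}{\times} \on{Bun}_P$, providing an inverse to $\iota_\theta$.

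The main obstacle will be upgrading this bijection of geometric points to an isomorphism of prestacks and verifying that each stratum is locally closed. Algebraicity of the defect in families follows from the fact that, for a flat family of Plücker maps, the vanishing loci of the highest-weight line bundle sections cut out effective relative Cartier divisors, so that the defect assembles into an algebraic map $\widetilde{\on{Bun}}_P \to \on{Conf}_{G,P}$. For the locally closed property, I would argue that the locus where the total defect class is $\succeq \theta$ in the standard partial order on $\Lambda_{G,P}^{\on{neg}}$ is closed, being cut out by explicit vanishing of Plücker sections; the equality locus is then open inside this closed substack, giving the locally closed stratum. Covering is automatic, since every point has a well-defined defect, with $\theta=0$ recovering precisely the open stratum $\on{Bun}_P \subset \widetilde{\on{Bun}}_P$.
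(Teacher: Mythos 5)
This proposition is not proved in the paper at all — it is quoted from \cite{braverman2002intersection}, Prop.\ 1.9 (and its parabolic variant), so there is no internal argument to compare against. Your plan follows the same standard route as \emph{loc.cit.}: read off a defect divisor from the Plücker data, saturate to produce a genuine $P$-bundle together with a positive Hecke modification of $M$-bundles, and deduce locally closedness from semicontinuity of the defect degree. The architecture is correct.

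Two steps, however, are genuinely off as written. First, the defect does \emph{not} assemble into an algebraic map $\ot{\Bun}_P\to\on{Conf}_{G,P}$: the degree $\langle D,\check\nu\rangle$ is only upper semicontinuous in families (it is the length of the torsion of $\on{coker}(\kappa_V)$ on geometric fibers and jumps on closed subsets), so the divisor $D$ varies algebraically only over each locus where this degree is constant. Relatedly, $V_{\sP_G}\otimes(\sL^{\check\nu}_{\sP_M})^{-1}$ is a vector bundle of rank $\dim V$, not a line bundle, and the defect is not the zero locus of the section $\kappa_V$; it is the divisor of zeros of the induced map from $\sL^{\check\nu}_{\sP_M}$ into its saturation inside $V_{\sP_G}$. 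Locally closedness should therefore be argued directly from upper semicontinuity of the integers $\langle D,\check\omega_i\rangle$ for $i\in\sJ\setminus\sJ_M$ (the condition $\geq$ is closed, the condition $\leq$ is open), not from a nonexistent global map to $\on{Conf}_{G,P}$. Second, extending the $P$-reduction across $D$ by properness of $G/P$ is a valuative-criterion argument and is only valid over a field; for an $S$-family the local rings of $X\times S$ along $D$ are not discrete valuation rings. The step you flag as the main obstacle — upgrading the pointwise bijection to an isomorphism of stacks — is exactly where the content of the BFGM proof lies, and it is resolved by the saturation construction in families: over the constant-defect locus the saturations of the $\kappa_V$ form a flat family of subbundles still satisfying the Plücker relations, hence a family of genuine $P$-structures, and the torsion quotients are flat over the base, so $D$ is a relative Cartier divisor and the inverse to $\iota_\theta$ is algebraic. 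With these repairs your argument becomes the one in \cite{braverman2002intersection}.
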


\subsubsection{} Henceforth, we write
\[
{}_{\theta}\ot{\Bun}_P:=\sH_{M,X^{\theta}}^{+}\underset{\on{Bun}_M}{\times} \on{Bun}_{P.}
\]

\noindent In particular, ${}_{0}\ot{\Bun}_P$ identifies with $\Bun_P$ itself, and in this case we write $j=\iota_0$ for the open embedding:
\[
j: \on{Bun}_P\into\tildeP.
\]

We will also consider a finer stratification over each connected component of $\ot{\Bun}_P$. Namely, ${\ot{\Bun}^{\eta}_P}$ is stratified by the substacks
\[{}_{\theta}\ot{\Bun}^{\eta}_P \simeq \sH_{M,X^{\theta}}^+ \underset{\on{Bun}_M}{\times} \Bun_P^{\theta+\eta},\]

\noindent whose embedding into ${\ot{\Bun}^{\eta}_P}$ we also denote by $\iota_{\theta}$. 

\section{Parabolic semi-infinite IC-sheaf}\label{S:PSIC}
In this section, we introduce the factorizable parabolic semi-infinite IC-sheaf. We define it in terms of Drinfeld-Plücker and Hecke structures, and so we start this section by discussing the latter notions.

\subsection{Pointwise Hecke- and Drinfeld-Plücker structures}\label{S:pointwiseHeckeandDrPl}
In this subsection, we give an overview of (parabolic) Hecke and Drinfeld-Plücker structures. We follow closely that of \cite[§5.4]{gaitsgory2021semi} where the case of a principal parabolic is considered.

\subsubsection{} For a scheme $X$, we write $\cO(X)$ for the derived global sections of the structure sheaf of $X$.

\subsubsection{Hecke structures} Denote by $\on{Res}_{\check{M}}^{\check{G}}$ the restriction functor
\[
\on{Rep}(\check{G})\to \on{Rep}(\check{M}).
\]

\noindent Consider the regular representation
\begin{equation}\label{eq:regrep}
\cO(\check{G})\in \on{Rep}(\check{G})\otimes \on{Rep}(\check{G})
\end{equation}

\noindent considered as a $\check{G}\times \check{G}$-representation. By slight abuse of notation, we continue to $\cO(\check{G})$ for the image of (\ref{eq:regrep}) under the functor
\[
\on{Res}_{\check{M}}^{\check{G}}\otimes\on{id}: \on{Rep}(\check{G})\otimes \on{Rep}(\check{G})\to \on{Rep}(\check{M})\otimes \on{Rep}(\check{G}).
\]

\subsubsection{}\label{s:bimod} Let $\sC$ be a $(\on{Rep}(\check{M}),\on{Rep}(\check{G}))$-bimodule category. Since $\on{Rep}(\check{G})$ is symmetric monoidal, a right $\on{Rep}(\check{G})$-action gives a left action. As such, we consider $\sC$ as acted on by $\on{Rep}(\check{M})\otimes\on{Rep}(\check{G})$ on the left.

Consider the category
\[
\on{Hecke}_{\check{M},\check{G}}(\sC):=\cO(\check{G})\on{-mod}(\sC)
\]

\noindent of modules for $\cO(\check{G})$ in $\sC$. For $c\in \sC$, we refer to a lift of $c$ to an object of $\cO(\check{G})\on{-mod}(\sC)$ as a \emph{Hecke structure} on $c$.

\subsubsection{Drinfeld-Plücker structures}\label{s:abinvs}
Let $\check{N}_P$ denote the unipotent radical of $\check{P}$. Consider the functor
\[
(-)^{\check{N}_P}: \on{Rep}(\check{G})\to \on{Rep}(\check{M}),\;\; V\mapsto V^{\check{N}_P}
\]

\noindent of taking invariants against $\check{N}_P$. Here we take \emph{non-derived} invariants.\footnote{So for example the trivial representation maps to the trivial representation.}

Let
\[
\overline{\check{N}_P\backslash \check{G}}:=\on{Spec}(H^0(\cO(\check{N}_P\backslash \check{G})))
\]

\noindent be the (parabolic) basic affine space. We have:
\[
\cO(\overline{\check{N}_P\backslash \check{G}})\simeq\underset{\lambda}{\bigoplus}\; (V^{\lambda})^{\check{N}_P}\otimes (V^{\lambda})^*
\]

\noindent as $\check{M}\times\check{G}$-representations, where the sum is over all dominant coweights of $G$. We remind that $(V^{\lambda})^{\check{N}_P}$ identifies with the irreducible representation $\check{M}$ of highest weight $\lambda$.

\subsubsection{}\label{s:whatdrplactuallymeans}
Consider the category
\[
\on{DrPl}_{\check{M},\check{G}}(\sC):=\cO(\overline{\check{N}_P\backslash \check{G}})\on{-mod}(\sC).
\]

\noindent Concretely, for $c\in \sC$, the datum of a lift of $c$ to $\on{DrPl}_{\check{M},\check{G}}(\sC)$ consists of a family of maps
\[
V^{\check{N}_P}\star c\to c\star V,\;\; V\in\on{Rep}(\check{G})
\]

\noindent satisfying Drinfeld-Plücker identities as in \cite[§5.3]{gaitsgory2021semi}. We refer to such a lift as a \emph{Drinfeld-Plücker} structure on $c$.

\subsubsection{} 
The map
\[
\check{G}\to \overline{\check{N}_P\backslash \check{G}}
\]

\noindent induces a homomorphism
\[
\cO(\overline{\check{N}_P\backslash \check{G}})\to \cO(\check{G})
\]

\noindent and hence a forgetful functor
\begin{equation}\label{eq:hecketodrpl}
\on{Hecke}_{\check{M},\check{G}}(\sC)\to \on{DrPl}_{\check{M},\check{G}}(\sC).
\end{equation}

\noindent The functor (\ref{eq:hecketodrpl}) admits a left adjoint:
\[
\on{Ind}_{\on{DrPl}_{\check{M},\check{G}}}^{\Hecke_{\check{M},\check{G}}}: \on{DrPl}_{\check{M},\check{G}}(\sC)\to \Hecke_{\check{M},\check{G}}(\sC),\;\; c\mapsto \cO(\check{G})\underset{\cO(\overline{\check{N}_P\backslash \check{G}})}{\otimes} c.
\]

\subsection{Factorizable Hecke- and Drinfeld-Plücker structures}\label{S:factheckedrinf}

\subsubsection{}\label{s:algsinquestion} From the algebras
\[
\cO(\check{G}),\cO(\overline{\check{N}_P\backslash \check{G}})\in\on{Rep}(\check{M})\otimes \on{Rep}(\check{G}),
\]

\noindent we get the algebras (cf. §\ref{s:factalg}):
\[
\on{Fact}^{\on{alg}}(\cO(\overline{\check{N}_P\backslash \check{G}}))_{\on{Ran}}, \on{Fact}^{\on{alg}}(\cO(\check{G}))_{\on{Ran}}\in (\on{Rep}(\check{M})\otimes \on{Rep}(\check{G}))_{\on{Ran}}\simeq \on{Rep}(\check{M})_{\on{Ran}}\underset{D(\on{Ran})}{\otimes}\on{Rep}(\check{G})_{\on{Ran}}.
\]

\noindent We use the shorthand notation:
\[
\cO(\overline{\check{N}_P\backslash \check{G}})_{\on{Ran}}:=\on{Fact}^{\on{alg}}(\cO(\overline{\check{N}_P\backslash \check{G}}))_{\on{Ran}},\;\; \cO(\check{G})_{\on{Ran}}:=\on{Fact}^{\on{alg}}(\cO(\check{G}))_{\on{Ran}}.
\]

\noindent Moreover, recall the coalgebra
\[
\cO(\check{N}_P)_{\on{Ran}}\in\on{Rep}(\check{M})_{\on{Ran}},
\]

\noindent cf. §\ref{s:U&O_Ran}.

\subsubsection{}
Suppose that $\sC$ is a category over $\on{Ran}$ equipped with a $(\on{Rep}(\check{M})_{\on{Ran}},\on{Rep}(\check{G})_{\on{Ran}})$-bimodule structure. As in §\ref{s:bimod}, we consider it as a category acted on by $\on{Rep}(\check{M})_{\on{Ran}}\underset{D(\on{Ran})}{\otimes}\on{Rep}(\check{M})_{\on{Ran}}$ on the left.

We write
\[
\on{Hecke}_{\check{M},\check{G}}(\sC):=\cO(\check{G})_{\on{Ran}}\on{-mod}(\sC),\;\; \on{DrPl}_{\check{M},\check{G}}(\sC):=\cO(\overline{\check{N}_P\backslash \check{G}})_{\on{Ran}}\on{-mod}(\sC).
\]

\noindent As in §\ref{S:pointwiseHeckeandDrPl}, we have a map of algebras
\[
\cO(\overline{\check{N}_P\backslash \check{G}})_{\on{Ran}}\to \cO(\check{G})_{\on{Ran}},
\]

\noindent inducing a forgetful functor
\begin{equation}\label{eq:hecketodrplran}
\on{Oblv}_{\on{DrPl}_{\check{M},\check{G}}}^{\on{Hecke}_{\check{M},\check{G}}}:\on{Hecke}_{\check{M},\check{G}}(\sC)\to \on{DrPl}_{\check{M},\check{G}}(\sC).
\end{equation}

\noindent The functor (\ref{eq:hecketodrplran}) admits a left adjoint:
\begin{equation}\label{eq:inddrplhecke}
\on{Ind}_{\on{DrPl}_{\check{M},\check{G}}}^{\Hecke_{\check{M},\check{G}}}: \on{DrPl}_{\check{M},\check{G}}(\sC)\to \Hecke_{\check{M},\check{G}}(\sC),\;\; c\mapsto \cO(\check{G})_{\on{Ran}}\underset{\cO(\overline{\check{N}_P\backslash \check{G}})_{\on{Ran}}}{\otimes} c.
\end{equation}

\subsubsection{} Recall that $\cO(\check{N}_P)_{\on{Ran}}$ has a natural structure of a coalgebra object in $\on{Rep}(\check{M})_{\on{Ran}}$. The action
\[
\check{N}_P\curvearrowright \check{G}
\]

\noindent induces a coaction of $\cO(\check{N}_P)_{\on{Ran}}$ on $\cO(\check{G})_{\on{Ran}}$. It follows that we have a coaction of $\cO(\check{N}_P)_{\on{Ran}}$ on the monad 
\[
\on{Oblv}_{\on{DrPl}_{\check{M},\check{G}}}^{\on{Hecke}_{\check{M},\check{G}}}\circ\on{Ind}_{\on{DrPl}_{\check{M},\check{G}}}^{\Hecke_{\check{M},\check{G}}}: \on{DrPl}_{\check{M},\check{G}}(\sC)\to \on{DrPl}_{\check{M},\check{G}}(\sC).
\]

\begin{cor}\label{c:actiononinduction}
Let $c\in \on{DrPl}_{\check{M},\check{G}}(\sC)$. Then $\cO(\check{N}_P)_{\on{Ran}}$ coacts on $\on{Oblv}_{\on{DrPl}_{\check{M},\check{G}}}^{\on{Hecke}_{\check{M},\check{G}}}\circ\on{Ind}_{\on{DrPl}_{\check{M},\check{G}}}^{\Hecke_{\check{M},\check{G}}}(c)$.
\end{cor}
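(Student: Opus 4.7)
The proof is essentially formal, reducing to a single compatibility between the coaction of $\cO(\check{N}_P)_{\on{Ran}}$ on $\cO(\check{G})_{\on{Ran}}$ and the algebra map $\cO(\overline{\check{N}_P\backslash \check{G}})_{\on{Ran}} \to \cO(\check{G})_{\on{Ran}}$. The plan is to exhibit this compatibility pointwise, transport it to the Ran space via the symmetric monoidality of $\on{Fact}^{\on{alg}}(-)_{\on{Ran}}$, and then deduce the coaction on the monad by base change of the bimodule.

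First I would establish the structure at the pointwise level. The left action $\check{N}_P \curvearrowright \check{G}$ makes $\cO(\check{G}) \in \on{Rep}(\check{M})\otimes \on{Rep}(\check{G})$ into a comodule algebra for the Hopf algebra $\cO(\check{N}_P) \in \on{Rep}(\check{M})$. The classical identification $\cO(\overline{\check{N}_P\backslash \check{G}}) = \cO(\check{G})^{\check{N}_P}$ of underived invariants (valid since $\check{N}_P$ is unipotent and $\check{N}_P\backslash \check{G}$ is quasi-affine) says precisely that the canonical map $\cO(\overline{\check{N}_P\backslash \check{G}}) \to \cO(\check{G})$ is a morphism of $\cO(\check{N}_P)$-comodule algebras when the source is given the trivial coaction. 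Applying the symmetric monoidal functor $\on{Fact}^{\on{alg}}(-)_{\on{Ran}}$ transports this to the Ran space: $\cO(\check{G})_{\on{Ran}}$ becomes a $\cO(\check{N}_P)_{\on{Ran}}$-comodule algebra, and the map $\cO(\overline{\check{N}_P\backslash \check{G}})_{\on{Ran}} \to \cO(\check{G})_{\on{Ran}}$ is a morphism of such, with trivial coaction on the source.

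The key consequence is that the coaction map
\[
\cO(\check{G})_{\on{Ran}} \longrightarrow \cO(\check{N}_P)_{\on{Ran}} \otimes_{D(\on{Ran})} \cO(\check{G})_{\on{Ran}}
\]
is $\cO(\overline{\check{N}_P\backslash \check{G}})_{\on{Ran}}$-linear: triviality of the coaction on the source means the module structure on the right-hand side is carried entirely by the second tensor factor. Tensoring this map with $c \in \on{DrPl}_{\check{M},\check{G}}(\sC)$ over $\cO(\overline{\check{N}_P\backslash \check{G}})_{\on{Ran}}$ then yields the desired coaction map on
\[
\on{Oblv}_{\on{DrPl}_{\check{M},\check{G}}}^{\on{Hecke}_{\check{M},\check{G}}} \circ \on{Ind}_{\on{DrPl}_{\check{M},\check{G}}}^{\Hecke_{\check{M},\check{G}}}(c) \;=\; \cO(\check{G})_{\on{Ran}}\underset{\cO(\overline{\check{N}_P\backslash \check{G}})_{\on{Ran}}}{\otimes} c,
\]
and coassociativity and counit pass through from the corresponding structures on $\cO(\check{G})_{\on{Ran}}$. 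Since the construction is manifestly functorial in $c$, it upgrades to a coaction on the whole monad.

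There is no real obstacle: the only geometric input is the classical computation of $\cO(\overline{\check{N}_P\backslash \check{G}})$ as $\check{N}_P$-invariants, and everything else is the standard formalism of comodule algebras combined with the symmetric monoidality of the $\on{Fact}^{\on{alg}}$ construction. The one bookkeeping point that warrants care is ensuring that the $\cO(\overline{\check{N}_P\backslash \check{G}})_{\on{Ran}}$-linearity of the coaction map is correctly formulated in the $(\on{Rep}(\check{M})_{\on{Ran}},\on{Rep}(\check{G})_{\on{Ran}})$-bimodule category, but this is immediate once $\cO(\check{N}_P)_{\on{Ran}}$ is viewed as acting only through the $\check{M}$-factor.
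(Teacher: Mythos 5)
Your proposal is correct and follows the same route as the paper, which simply observes that the action $\check{N}_P\curvearrowright\check{G}$ induces a coaction of $\cO(\check{N}_P)_{\on{Ran}}$ on $\cO(\check{G})_{\on{Ran}}$ and asserts that the coaction on the monad $\on{Oblv}\circ\on{Ind}$ follows. You have merely made explicit the point the paper leaves implicit — that the coaction map is $\cO(\overline{\check{N}_P\backslash\check{G}})_{\on{Ran}}$-linear because the coaction restricts trivially to the invariant subalgebra, so it descends along the relative tensor product with $c$.
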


\subsubsection{Enhanced Drinfeld-Plücker structures}\label{s:enhdrpl}
Next, we introduce an intermediate structure that sits between a Hecke strcture and a Drinfeld-Plücker structure.

From the algebra
\[
\cO(\check{N}_P\backslash \check{G})\in\on{Rep}(\check{M})\otimes \on{Rep}(\check{G}),
\]

\noindent we get the algebra
\[
\cO(\check{N}_P\backslash \check{G})_{\on{Ran}}:=\on{Fact}^{\on{alg}}(\cO(\check{N}_P\backslash \check{G}))_{\on{Ran}}\in \on{Rep}(\check{M})_{\on{Ran}}\underset{D(\on{Ran})}{\otimes} \on{Rep}(\check{G})_{\on{Ran}}.
\]

\noindent Define
\[
\on{EnhDrPl}_{\check{M},\check{G}}(\sC):=\cO(\check{N}_P\backslash \check{G})_{\on{Ran}}\on{-mod}(\sC).
\]

\noindent We refer to objects in $\on{EnhDrPl}_{\check{M},\check{G}}(\sC)$ as objects in $\sC$ equipped with an \emph{enhanced Drinfeld-Plücker} structure. The map $\check{N}_P\backslash \check{G}\to \overline{\check{N}_P\backslash \check{G}}$ induces a forgetful functor
\[
\on{Oblv}^{\on{EnhDrPl}_{\check{M},\check{G}}}_{\on{DrPl}_{\check{M},\check{G}}}: \on{EnhDrPl}_{\check{M},\check{G}}(\sC)\to \on{DrPl}_{\check{M},\check{G}}(\sC),
\]

\noindent which admits a left adjoint:
\[
\on{Ind}^{\on{EnhDrPl}_{\check{M},\check{G}}}_{\on{DrPl}_{\check{M},\check{G}}}:\on{DrPl}_{\check{M},\check{G}}(\sC)\to \on{EnhDrPl}_{\check{M},\check{G}}(\sC).
\]

\noindent Similarly, the map $\check{G}\to \check{N}_P\backslash \check{G}$ induces a forgetful functor
\[
\on{Oblv}_{\on{EnhDrPl}_{\check{M},\check{G}}}^{\on{Hecke}_{\check{M},\check{G}}}:\on{Hecke}_{\check{M},\check{G}}(\sC)\to \on{EnhDrPl}_{\check{M},\check{G}}(\sC)
\]

\noindent with a left adjoint:
\[
\on{Ind}_{\on{EnhDrPl}_{\check{M},\check{G}}}^{\on{Hecke}_{\check{M,\check{G}}}}\on{EnhDrPl}_{\check{M},\check{G}}(\sC)\to\on{Hecke}_{\check{M},\check{G}}(\sC).
\]

\noindent By construction, the composition
\[
\on{Hecke}_{\check{M},\check{G}}(\sC)\xrightarrow{\on{Oblv}_{\on{EnhDrPl}_{\check{M},\check{G}}}^{\on{Hecke}_{\check{M},\check{G}}}} \on{EnhDrPl}_{\check{M},\check{G}}(\sC)\xrightarrow{\on{Oblv}^{\on{EnhDrPl}_{\check{M},\check{G}}}_{\on{DrPl}_{\check{M},\check{G}}}}\on{DrPl}_{\check{M},\check{G}}(\sC)
\]

\noindent recovers the functor $\on{Oblv}_{\on{DrPl}_{\check{M},\check{G}}}^{\on{Hecke}_{\check{M},\check{G}}}$, and similarly for the left adjoints.

Note that because $\check{N}_P\backslash \check{G}\to \overline{\check{N}_P\backslash \check{G}}$ is an open embedding and the target is affine, it follows that the forgetful functor $\on{Oblv}^{\on{EnhDrPl}_{\check{M},\check{G}}}_{\on{DrPl}_{\check{M},\check{G}}}$ is fully faithful. That is, an upgrade of a Drinfeld-Plücker structure to an enhanced Drinfeld-Plücker structure is a property.

\subsubsection{} We now describe more explicitly what it means to have a Hecke structure and an enhanced Drinfeld-Plücker structure.

Tautologically, we have identifications
\[
\on{Hecke}_{\check{M},\check{G}}(\sC)\simeq \on{Rep}(\check{M})_{\on{Ran}}\underset{\on{Rep}(\check{M})_{\on{Ran}}\underset{D(\on{Ran})}{\otimes}\on{Rep}(\check{G})_{\on{Ran}}}{\otimes} \sC
\]
\[
\simeq \on{Rep}(\check{G})_{\on{Ran}}\underset{\on{Rep}(\check{G})_{\on{Ran}}\underset{D(\on{Ran})}{\otimes}\on{Rep}(\check{G})_{\on{Ran}}}{\otimes} \sC,
\]

\noindent where in the last term we consider $\sC$ as acted on by $\on{Rep}(\check{G})_{\on{Ran}}\underset{D(\on{Ran})}{\otimes}\on{Rep}(\check{G})_{\on{Ran}}$ via the monoidal functor
\[
\on{Rep}(\check{G})_{\on{Ran}}\underset{D(\on{Ran})}{\otimes}\on{Rep}(\check{G})_{\on{Ran}}\xrightarrow{\on{Res}_{\check{M}}^{\check{G}}\otimes \on{id}} \on{Rep}(\check{M})_{\on{Ran}}\underset{D(\on{Ran})}{\otimes}\on{Rep}(\check{G})_{\on{Ran}}.
\]

\noindent This shows that for $c\in\sC$, the datum of a lift to $\on{Hecke}_{\check{M},\check{G}}(\sC)$ is equivalent to the datum of an identification between the action of $\on{Rep}(\check{G})_{\on{Ran}}$ on $c$ and the action induced by the restriction functor $\on{Res}_{\check{M}}^{\check{G}}$. That is, we require a family of isomorphisms
\[
\on{Res}_{\check{M}}^{\check{G}}(V)\star c\simeq c\star V, \;\; V\in \on{Rep}(\check{G})_{\on{Ran}}
\]

\noindent satisfying natural compatibilities (see e.g. \cite[§5]{gaitsgory2021semi}).

\subsubsection{}\label{s:inv} For enhanced Drinfeld-Plücker structures, we have an equivalence:
\begin{equation}\label{eq:EnhDrPlReformulation}
\on{EnhDrPl}_{\check{M},\check{G}}(\sC)\simeq \on{Rep}(\check{P})_{\on{Ran}}\underset{\on{Rep}(\check{M})_{\on{Ran}}\underset{D(\on{Ran})}{\otimes}\on{Rep}(\check{G})_{\on{Ran}}}{\otimes} \sC.
\end{equation}

\noindent Moreover, consider the functor
\[
C^{\bullet}(\check{\fn}_P,-): \on{Rep}(\check{P})_{\on{Ran}}\to \on{Rep}(\check{M})_{\on{Ran}}
\]

\noindent induced by restricting along $\check{P}\to\check{G}$ and applying (derived) Lie algebra cohomology against $\check{\fn}_P$.\footnote{We remark that even though taking invariants against $\check{\mathfrak{n}}_P$ is not symmetric monoidal, it is lax symmetric monoidal, and so still gives rise to a functor on the level of twisted arrows.}

This functor induces a symmetric monoidal equivalence
\[
\on{Rep}(\check{P})_{\on{Ran}}\overset{\simeq}{\to} \Omega(\check{\fn}_P)_{\on{Ran}}\on{-mod}(\on{Rep}(\check{M})_{\on{Ran}}),
\]

\noindent where $\Omega(\check{\fn}_P)_{\on{Ran}}$ is as in §\ref{s:OmegaRan}. In particular, we have an action:
\[
\on{Rep}(\check{P})_{\on{Ran}}\curvearrowright \Omega(\check{\fn}_P)_{\on{Ran}}\on{-mod}(\sC).
\]

\noindent From (\ref{eq:EnhDrPlReformulation}), we obtain an equivalence
\begin{equation}\label{eq:EnhDrPlEquivalence}
\on{EnhDrPl}_{\check{M},\check{G}}(\sC)\simeq \on{Rep}(\check{G})_{\on{Ran}}\underset{\on{Rep}(\check{G})_{\on{Ran}}\underset{D(\on{Ran})}{\otimes}\on{Rep}(\check{G})_{\on{Ran}}}{\otimes} \Omega(\check{\fn}_P)_{\on{Ran}}\on{-mod}(\sC), 
\end{equation}

\noindent where we consider $\Omega(\check{\fn}_P)_{\on{Ran}}\on{-mod}(\sC)$ as acted on by $\on{Rep}(\check{G})_{\on{Ran}}\underset{D(\on{Ran})}{\otimes}\on{Rep}(\check{G})_{\on{Ran}}$ via the monoidal functor
\[
\on{Rep}(\check{G})_{\on{Ran}}\underset{D(\on{Ran})}{\otimes}\on{Rep}(\check{G})_{\on{Ran}}\xrightarrow{\on{Res}_{\check{P}}^{\check{G}}\otimes \on{id}}\on{Rep}(\check{P})_{\on{Ran}}\underset{D(\on{Ran})}{\otimes}\on{Rep}(\check{G})_{\on{Ran}}.
\]

\noindent From the map $\check{N}_P\backslash \check{G}\to \check{N}_P\backslash \on{pt}$, we obtain a forgetful functor
\[
\on{Oblv}: \on{EnhDrPl}_{\check{M},\check{G}}(\sC)\to \Omega(\check{\fn}_P)_{\on{Ran}}\on{-mod}(\sC).
\]

\noindent We remind that $\Omega(\check{\fn}_P)_{\on{Ran}}$ is an algebra object of $\on{Rep}(\check{M})_{\on{Ran}}$, and the latter acts on $\sC$ via the symmetric monoidal functor $\on{Rep}(\check{M})_{\on{Ran}}\to \on{Rep}(\check{M})_{\on{Ran}}\otimes \on{Rep}(\check{G})_{\on{Ran}}$, inserting the unit on the second factor.

In particular, the existence of an enhanced Drinfeld-Plücker structure implies the existence of a module structure for $\Omega(\check{\fn}_P)_{\on{Ran}}$. The equivalence (\ref{eq:EnhDrPlEquivalence}) shows that for $c\in \Omega(\check{\fn}_P)_{\on{Ran}}\on{-mod}(\sC)$, the datum of a lift to $\on{EnhDrPl}_{\check{M},\check{G}}(\sC)$ is equivalent to the datum of an identification between the action of $\on{Rep}(\check{G})_{\on{Ran}}$ on $c$ and the action induced by the monoidal functor $C^{\bullet}(\check{\fn}_P,-): \on{Rep}(\check{G})_{\on{Ran}}\to \Omega(\check{\fn}_P)_{\on{Ran}}\on{-mod}(\on{Rep}(\check{M})_{\on{Ran}})$. That is, we require a family of isomorphisms
\[
C^{\bullet}(\check{\fn}_P,V)\underset{\Omega(\check{\fn}_P)_{\on{Ran}}}{\star} c\simeq c\star V, \;\; V\in \on{Rep}(\check{G})_{\on{Ran}}
\]

\noindent satisfying natural higher compatibilities.

\subsubsection{}\label{s:comp1} We highlight one further compatibility between the constructions. The functor
\[
\on{Triv}_ {\cO(\check{N}_P)}: \sC\to \cO(\check{N}_P)_{\on{Ran}}\on{-comod}(\sC)
\]

\noindent endowing an object $c\in \sC$ with the trivial comodule structure admits a right adjoint:
\[
\on{Inv}_ {\cO(\check{N}_P)}: \cO(\check{N}_P)_{\on{Ran}}\on{-comod}(\sC)\to \sC
\]

\noindent given by taking invariants against $\cO(\check{N}_P)_{\on{Ran}}$. The objects in the image of this functor naturally admit a module structure for $\Omega(\mathfrak{\check{n}}_P)_{\on{Ran}}$, and as such, we get an enhanced functor:
\[
\on{Inv}_ {\cO(\check{N}_P),\on{enh}}: \cO(\check{N}_P)_{\on{Ran}}\on{-comod}(\sC)\to \Omega(\mathfrak{\check{n}}_P)_{\on{Ran}}\on{-mod}(\sC).
\]

\begin{lem}\label{l:KD}
Let $\sC$ be a category acted on by $\on{Rep}(\check{M})_{\on{Ran}}$. Suppose $\sC$ is dualizable as a module category for $\on{Rep}(\check{M})_{\on{Ran}}$. Then the functor
\[
\on{Inv}_ {\cO(\check{N}_P),\on{enh}}: \cO(\check{N}_P)_{\on{Ran}}\on{-comod}(\sC)\to \Omega(\mathfrak{\check{n}}_P)_{\on{Ran}}\on{-mod}(\sC)
\]

\noindent is an equivalence.
\end{lem}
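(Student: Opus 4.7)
The plan is to reduce to the ``universal'' case $\sC = \on{Rep}(\check{M})_{\on{Ran}}$ using the dualizability hypothesis, and then deduce the statement from a factorizable enhancement of the classical Koszul duality between $\cO(\check{N}_P)$ and $C^{\bullet}(\check{\fn}_P)$.

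First I would establish the base-change identities
\[
\Omega(\check{\fn}_P)_{\on{Ran}}\on{-mod}(\sC) \simeq \Omega(\check{\fn}_P)_{\on{Ran}}\on{-mod}(\on{Rep}(\check{M})_{\on{Ran}}) \underset{\on{Rep}(\check{M})_{\on{Ran}}}{\otimes} \sC,
\]
\[
\cO(\check{N}_P)_{\on{Ran}}\on{-comod}(\sC) \simeq \cO(\check{N}_P)_{\on{Ran}}\on{-comod}(\on{Rep}(\check{M})_{\on{Ran}}) \underset{\on{Rep}(\check{M})_{\on{Ran}}}{\otimes} \sC,
\]
and verify that under these identifications the functor $\on{Inv}_{\cO(\check{N}_P),\on{enh}}$ is obtained by base change from its counterpart at $\sC = \on{Rep}(\check{M})_{\on{Ran}}$. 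The algebra statement is formal: tensoring with a dualizable module category commutes with limits needed for the monadic Barr--Beck--Lurie description of $\Omega(\check{\fn}_P)_{\on{Ran}}\on{-mod}$. The coalgebra statement is where I expect the main technical work; I would exploit the Verdier duality \eqref{eq:VerdierdualityFU} (and its $\cO$/$\fU$ analogue) to present $\cO(\check{N}_P)_{\on{Ran}}$-comodule categories in terms of dualizable data, so that they too commute with the relevant dualizable tensor products.

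Next, §\ref{s:inv} already identifies $\Omega(\check{\fn}_P)_{\on{Ran}}\on{-mod}(\on{Rep}(\check{M})_{\on{Ran}}) \simeq \on{Rep}(\check{P})_{\on{Ran}}$ via $C^{\bullet}(\check{\fn}_P,-)$. On the comodule side, the splitting $\check{P} \simeq \check{N}_P \rtimes \check{M}$ gives pointwise an equivalence $\on{Rep}(\check{P}) \simeq \cO(\check{N}_P)\on{-comod}(\on{Rep}(\check{M}))$, and applying the symmetric monoidal construction $\on{Fact}(-)_{\on{Ran}}$ promotes this to
\[
\cO(\check{N}_P)_{\on{Ran}}\on{-comod}(\on{Rep}(\check{M})_{\on{Ran}}) \simeq \on{Rep}(\check{P})_{\on{Ran}}.
\]
Under both identifications, I would verify that $\on{Inv}_{\cO(\check{N}_P),\on{enh}}$ becomes the identity endofunctor of $\on{Rep}(\check{P})_{\on{Ran}}$: pointwise this is the assertion that for $V \in \on{Rep}(\check{P})$, the derived invariants $V^{\check{N}_P} = C^{\bullet}(\check{\fn}_P,V)$ recover $V$ as a $\check{P}$-representation (equivalently, as a $C^{\bullet}(\check{\fn}_P)$-module equipped with compatible $\check{M}$-action), which is exactly the classical Koszul duality between the coalgebra $\cO(\check{N}_P)$ and the algebra $C^{\bullet}(\check{\fn}_P)$.

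The main obstacle will be the reduction step for comodule categories: while associative-algebra modules in dualizable module categories satisfy the required base-change property formally, the coalgebra analogue is delicate. I would control it either (i) by expressing $\cO(\check{N}_P)_{\on{Ran}}$-comodules as $\fU(\check{\fn}_P)_{\on{Ran}}$-modules via Verdier duality on the spherical side (using \eqref{eq:VerdierdualityFU} and Lemma \ref{l:point=ext} to make sense of both at the factorization level), reducing everything to the algebra case; or (ii) by a direct analysis of the cobar resolution of $\cO(\check{N}_P)_{\on{Ran}}$ in $\on{Rep}(\check{M})_{\on{Ran}}$, exploiting that $\cO(\check{N}_P)$ decomposes into weight spaces each of which is finite-dimensional, which allows the cobar totalization to commute with the dualizable tensor product defining $\sC$.
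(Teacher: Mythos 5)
Your overall strategy --- reduce to the universal case $\sC = \on{Rep}(\check{M})_{\on{Ran}}$ by base change, then invoke classical Koszul duality between $\cO(\check{N}_P)$ and $C^{\bullet}(\check{\fn}_P)$ --- is exactly the paper's. The difference lies in how the comodule base-change is handled, and here the paper is much more economical: in the general setting of a cocommutative coalgebra $A$ in a symmetric monoidal $\sA$ acting on $\sC$, it observes that the forgetful functor $A\on{-comod}(\sA)\underset{\sA}{\otimes}\sC\to \sC$ is conservative precisely because $\sC$ is dualizable as an $\sA$-module, hence comonadic, hence the comparison functor $A\on{-comod}(\sA)\underset{\sA}{\otimes}\sC\to A\on{-comod}(\sC)$ is an equivalence (the algebra side being \cite[Cor.~8.5.7]{gaitsgory2019study}, with no dualizability needed). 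Your alternative (ii) --- commuting the cobar totalization past $-\underset{\on{Rep}(\check{M})_{\on{Ran}}}{\otimes}\sC$ --- is morally the same dualizability input, just packaged less efficiently; note also that the finite-dimensionality of weight spaces you invoke is not what makes this work. Your alternative (i) is genuinely problematic: Verdier duality exchanges $\cO(\check{N}_P)_{\on{Conf}}$ and $\fU(\check{\fn}_P)_{\on{Conf}}$ as objects, but comodules over a coalgebra are not equivalent to modules over its dual algebra --- already pointwise, $\cO(\check{N}_P)\on{-comod}(\on{Rep}(\check{M}))\simeq \on{Rep}(\check{P})$ sits inside $U(\check{\fn}_P)\on{-mod}(\on{Rep}(\check{M}))$ only as the integrable (locally nilpotent) objects --- so this route does not reduce the comodule statement to the algebra one. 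Your unpacking of the universal case via the identification of both sides with $\on{Rep}(\check{P})_{\on{Ran}}$ is fine; the paper simply cites ``the usual Koszul duality'' at that point.
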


\begin{proof}
\step The assertion is clear when $\sC=\on{Rep}(\check{M})_{\on{Ran}}$ by the usual Koszul duality between $\cO(\check{N}_P)$ and $\Omega(\check{\fn}_P)$. Thus, it remains to prove that the canonical functors:
\[
\cO(\check{N}_P)_{\on{Ran}}\on{-comod}(\on{Rep}(\check{M})_{\on{Ran}})\underset{\on{Rep}(\check{M})_{\on{Ran}}}{\otimes}\sC\to \cO(\check{N}_P)_{\on{Ran}}\on{-comod}(\sC);
\]
\[\Omega(\check{\fn}_P)_{\on{Ran}}\on{-mod}(\on{Rep}(\check{M})_{\on{Ran}})\underset{\on{Rep}(\check{M})_{\on{Ran}}}{\otimes}\sC\to \Omega(\check{\fn}_P)_{\on{Ran}}\on{-mod}(\sC)
\]

\noindent are equivalences.

\step Consider the following setup. Let $\sA$ be a symmetric monoidal category and $\sC$ a module category for $\sA$. Let $A\in \sA$ be a cocommutative coalgebra object, and let $B\in \sA$ be a commutative algebra object. We may consider the functors:
\[
A\on{-comod}(\sA)\underset{\sA}{\otimes}\sC\to A\on{-comod}(\sC);
\]

\[
B\on{-mod}(\sA)\underset{\sA}{\otimes}\sC\to B\on{-mod}(\sC).
\]

The second functor is always an equivalence, see \cite[Corollary 8.5.7]{gaitsgory2019study}. We claim that the first functor is an equivalence under the assumption that $\sC$ is dualizable as an $\sA$-module category. Indeed, we need to check that the forgetful functor 
\[
A\on{-comod}(\sA)\underset{\sA}{\otimes}\sC\to \sC
\]

\noindent is comonadic. In turn it suffices to check that it is conservative. But this holds whenever $\sC$ is dualizable as an $\sA$-module category.  
\end{proof}

\subsubsection{} By construction, the composition
\[
\on{DrPl}_{\check{M},\check{G}}(\sC)\xrightarrow{\on{Ind}_{\on{DrPl}_{\check{M},\check{G}}}^{\Hecke_{\check{M},\check{G}}}} \on{Hecke}_{\check{M},\check{G}}(\sC)\xrightarrow{\on{Cor.} \ref{c:actiononinduction}} \cO(\check{N}_P)_{\on{Ran}}\on{-comod}(\sC)\xrightarrow{\on{Inv}_{\cO(\check{N}_P),\on{enh}}}\Omega(\mathfrak{\check{n}}_P)_{\on{Ran}}\on{-mod}(\sC)
\]

\noindent coincides with the functor
\[
\on{DrPl}_{\check{M},\check{G}}(\sC)\xrightarrow{\on{Ind}_{\on{DrPl}_{\check{M},\check{G}}}^{\on{EnhDrPl}_{\check{M},\check{G}}}} \on{EnhDrPl}_{\check{M},\check{G}}(\sC)\xrightarrow{\on{Oblv}} \Omega(\mathfrak{\check{n}}_P)_{\on{Ran}}\on{-mod}(\sC).
\]

\subsection{Semi-infinite category}

\subsubsection{} We now specialize to the case of interest, namely the semi-infinite category defined below. Before doing so, we need a suitable modification of the geometric Satake equivalence for $M$. 

For $\theta$ an element of $\in\Lambda_{G,P}$, which we remind identifies with the character lattice of the connected component of the center of $\check{M}$, denote by $e^{\theta}\in\on{Rep}(Z(\check{M})^{\on{\circ}})$ the corresponding character of $Z(\check{M})^{\on{\circ}}$. Let $2\rho_M$ denote the sum of positive coroots of $M$.

The category $\on{Rep}(\check{M})$ decomposes as a direct sum
\[
\on{Rep}(\check{M})\simeq \underset{\theta\in\Lambda_{G,P}}{\bigoplus} \on{Rep}(\check{M})^{\theta},
\]

\noindent where $\on{Rep}(\check{M})^{\theta}$ denotes the category of $\check{M}$-representations in which $Z(\check{M})^{\on{circ}}$ acts via $\theta$.

\subsubsection{}\label{s:shift2} Denote by $\sS^{\theta}$ the functor:
\begin{equation}\label{eq:shift2}
\sS^{\theta}: \on{Rep}(\check{M})^{\theta}\to \on{Rep}(\check{M})^{\theta},\;\; V\mapsto V[-\langle 2(\rho_G-\rho_M),\theta\rangle].
\end{equation}

\noindent These functors combine to a symmetric monoidal autoequivalence:
\[
\sS=\underset{\theta\in\Lambda_{G,P}}{\oplus}\sS^{\theta}:  \on{Rep}(\check{M})\to \on{Rep}(\check{M}).
\]

\noindent This functor makes sense factorizably as well. Namely, we have an evident symmetric monoidal factorizable functor
\[
\sS_{\on{Ran}}: \on{Rep}(\check{M})_{\on{Ran}}\to \on{Rep}(\check{M})_{\on{Ran}}
\]

\noindent that recovers $\sS$ on fibers. In terms of the presentation (\ref{eq:factcatdef}), $\sS_{\on{Ran}}$ is defined by taking the colimit of the functors
\[
\on{Rep}(\check{M})^{\otimes I}\otimes D(X^J)\xrightarrow{\sS^{\otimes I}\otimes \on{id}}\on{Rep}(\check{M})^{\otimes I}\otimes D(X^J)\to \on{Rep}(\check{M})_{\on{Ran}}.
\]

\subsubsection{} Let $\mathfrak{L}N_P$ denote the loop group of $N_P$ and consider its factorizable version $\mathfrak{L}_{\on{Ran}}N_P\to\on{Ran}$. We remind that it is a (factorizable) ind-group scheme. We define:
\[
\on{SI}_{P,\on{Ran}}:= D(\mathfrak{L}_{\on{Ran}}N_P\mathfrak{L}^+_{\on{Ran}}M\backslash \on{Gr}_{G,\on{Ran}}).
\]

\noindent We refer to the above category as the \emph{semi-infinite category} (associated to $P$). Note that it is a full subcategory of $D(\mathfrak{L}^+_{\on{Ran}}M\backslash \on{Gr}_{G,\on{Ran}})$.

\subsubsection{} The action
\[
\on{Sph}_{M,\on{Ran}}\curvearrowright D(\mathfrak{L}^+_{\on{Ran}}M\backslash \on{Gr}_{G,\on{Ran}})
\]

\noindent induces an action
\begin{equation}\label{eq:modifiedact}
\on{Rep}(\check{M})_{\on{Ran}}\xrightarrow{\sS_{\on{Ran}}} \on{Rep}(\check{M})_{\on{Ran}}\to \on{Sph}_{M,\on{Ran}}\curvearrowright \on{SI}_{P,\on{Ran}}.
\end{equation}

\noindent Here, the second functor is the (naive) geometric Satake functor. Henceforth, when we talk about the action of $\on{Rep}(\check{M})_{\on{Ran}}$ on $D(\mathfrak{L}^+_{\on{Ran}}M\backslash \on{Gr}_{G,\on{Ran}})$ or $\on{SI}_{P,\on{Ran}}$, we always mean (\ref{eq:modifiedact}).

\subsubsection{} We have a right action
\[
D(\mathfrak{L}^+_{\on{Ran}}M\backslash \on{Gr}_{G,\on{Ran}})\curvearrowleft \on{Sph}_{G,\on{Ran}}\leftarrow \on{Rep}(\check{G})_{\on{Ran}}.
\]

\noindent As such, the categories
\[
D(\mathfrak{L}^+_{\on{Ran}}M\backslash \on{Gr}_{G,\on{Ran}}),\;\; \on{SI}_{P,\on{Ran}}
\]

\noindent are equipped with a $(\on{Rep}(\check{M})_{\on{Ran}},\on{Rep}(\check{G})_{\on{Ran}})$-bimodule structure and hence fit the framework of §\ref{S:factheckedrinf}. 

\subsubsection{}\label{s:deltasheaf} The unit section
\[
\on{Ran}\to \on{Gr}_{G,\on{Ran}}
\]

\noindent induces a section
\[
s_{\on{Ran}}: \bB \mathfrak{L}^+_{\on{Ran}}M\to \mathfrak{L}^+_{\on{Ran}}M\backslash \on{Gr}_{G,\on{Ran.}}
\]

\noindent Let
\[
\delta_{\on{Gr}_{G,\on{Ran}}}:=(s_{\on{Ran}})_!(\omega_{\bB \mathfrak{L}^+_{\on{Ran}}M})\in D(\mathfrak{L}^+_{\on{Ran}}M\backslash \on{Gr}_{G,\on{Ran}}).
\]

\subsubsection{}\label{sec: def of semiinfinite ic} In Appendix \ref{S:DrPLstructure}, we construct a Drinfeld-Plücker structure on $\delta_{\on{Gr}_{G,\on{Ran}}}$. As such, we may define:
\[
\on{IC}_{P,\on{Ran}}^{\frac{\infty}{2}}:=\on{Ind}_{\on{DrPl}_{\check{M},\check{G}}}^{\Hecke_{\check{M},\check{G}}}(\delta_{\on{Gr}_{G,\on{Ran}}})\in \on{Hecke}_{\check{M},\check{G}}(D(\mathfrak{L}^+_{\on{Ran}}M\backslash \on{Gr}_{G,\on{Ran}})).
\]

\noindent We refer to the above sheaf as the (factorizable) semi-infinite sheaf associated to $P$.

\subsubsection{}\label{s:coaction} By Corollary \ref{c:actiononinduction}, we have a coaction:
\[
\on{IC}_{P,\on{Ran}}^{\frac{\infty}{2}}\to \cO(\check{N}_P)_{\on{Ran}}\star \on{IC}_{P,\on{Ran}}^{\frac{\infty}{2}}.
\]

\subsection{Stratification}
The purpose of this section is to calculate the restriction of the semi-infinite IC-sheaf to a natural stratification.

\subsubsection{} Let $S^0_{P,\on{Ran}}\subset \on{Gr}_{G,\on{Ran}}$ be the $\mathfrak{L}_{\on{Ran}}N_P$-orbit along the unit section $\on{Ran}\to \on{Gr}_{G,\on{Ran}}$. That is, $\Ploc$ is the factorizable ind-scheme defined as follows: for an affine test scheme $T$, a map $T\to \on{Gr}_{G,\on{Ran}}$ corresponding to a triple $(\underline{x},\sP_G,\phi)$ factors through $\Ploc$ if and only if for every $G$-representation $V$, the meromorphic map of vector bundles (regular on $T\times X\setminus \Gamma$, where $\Gamma$ is the union of graphs of the maps comprising $\underline{x}$)
\begin{equation}\label{eq:meromap1}
V_{\sP_G^0}^{N_P}\to V_{\sP_G^0}\to V_{\sP_G}
\end{equation}

\noindent extends to an injective map of vector bundles over $T\times X$ (that is, the meromorphic map extends to a regular map of coherent sheaves over $T\times X$ with cokernel flat over $T\times X$).

\subsubsection{}\label{s:tildeploc} Let $\tildePloc\subset \on{Gr}_{G,\on{Ran}}$ be the 'closure' of $\Ploc$ in $\on{Gr}_{G,\on{Ran}}$. That is, a map $T\to \on{Gr}_{G,\on{Ran}}$ factors through $\tildePloc$ if and only if the meromorphic map (\ref{eq:meromap1}) extends to an injective map of coherent sheaves (with cokernel flat over $T$).

\subsubsection{} The prestack $\tildePloc$ admits a stratification indexed by $\Lambda_{G,P}^{\on{neg}}$, by bounding the zeroes of (\ref{eq:meromap1}). More precisely, for $\theta\in \Lambda_{G,P}^{\on{neg}}$, let $S^{\theta}_{P,\on{Ran}}$ be the prestack defined as follows: for an affine test scheme $T$, a map $T\to \tildePloc$ corresponding to a triple $(\underline{x}, \sP_G,\phi)$ factors through $S^{\theta}_{P,\on{Ran}}$ if and only if there exists a colored divisor $D\in X^{\theta}(T)$ such that for every $\lambda\in \Lambda_{G,P}^{\on{neg}}$, the map (\ref{eq:meromap1}) extends to an injective map of vector bundles:
\[
V_{\sP_G^0}^{N_P}(\lambda(D))\to V_{\sP_G}.
\]

Note that each stratum $S^{\theta}_{P,\on{Ran}}$ is $\mathfrak{L}_{\on{Ran}}N_P$-stable.

\subsubsection{}We denote by $j^{\theta}$ the corresponding locally closed embedding:
\[
j^{\theta}: S^{\theta}_{P,\on{Ran}}\into \tildePloc.
\]

By taking the zeroes of (\ref{eq:meromap1}), we obtain a map
\begin{equation}\label{eq:semiinftodiv0}
S^{\theta}_{P,\on{Ran}}\to X^{\theta}.
\end{equation}

\subsubsection{} For $\theta\in \Lambda_{G,P}^{\on{neg}}$, let
\[
(X^{\theta}\times \on{Ran})^{\subset}\subset X^{\theta}\times\on{Ran}
\]

\noindent be the subprestack whose $T$-points parameterize pairs $(D,\underline{x})$ of a colored divisor on $T\times X$ of total degree $\theta$ together with a map $\underline{x}: T\to\on{Ran}$ such that $D$ is set-theoretically supported on the union of the graphs of $\underline{x}$. 

\subsubsection{} Recall the factorization space $\on{Gr}_{M,\on{Ran}}^{+}$ defined in $\S$\ref{s:Gr_M+}. Recall similarly the space $\on{Gr}^+_{M,\on{Conf}}\to \on{Conf}_{G,P}$ defined in §\ref{s: Gr_M+Conf}. By Lemma \ref{l:fppf}, we have a natural map $\on{Gr}_{M,\on{Ran}}^{+}\to \on{Gr}^+_{M,\on{Conf}}$.

We let $\on{Gr}^{+}_{M,X^{\theta}}$ and $\on{Gr}^{+}_{M,(X^{\theta}\times \on{Ran})^{\subset}}$ denote the respective pullbacks of $\on{Gr}_{M,\on{Conf}}^{+}$ and $\on{Gr}_{M,\on{Ran}}^{+}$ along $X^{\theta}\to \on{Conf}_{G,P}$. Note that we have a natural projection map:
\[
\on{Gr}^{+}_{M,(X^{\theta}\times \on{Ran})^{\subset}}\to (X^{\theta}\times \on{Ran})^{\subset}.
\]

\subsubsection{} By construction, the map (\ref{eq:semiinftodiv0}) factors through a map:
\[
'p^{\theta}: S_{P,\on{Ran}}^{\theta}\to \mathfrak{L}_{\on{Ran}}N_P\backslash S_{P,\on{Ran}}^{\theta}\to \on{Gr}^{+}_{M,(X^{\theta}\times \on{Ran})^{\subset}}.
\]

\noindent Moreover:

\begin{lem}
The functor
\[
'p^{\theta,!}: D(\on{Gr}^{+}_{M,(X^{\theta}\times \on{Ran})^{\subset}})\to D(\mathfrak{L}_{\on{Ran}}N_P\backslash S^{\theta}_{P,\on{Ran}})
\]

\noindent is an equivalence.
\end{lem}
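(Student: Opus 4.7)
The plan is to identify $\mathfrak{L}_{\on{Ran}}N_P\backslash S^{\theta}_{P,\on{Ran}}$, via $'p^\theta$, as a classifying prestack for a pro-unipotent factorization group scheme over $\on{Gr}^{+}_{M,(X^{\theta}\times \on{Ran})^{\subset}}$, from which the equivalence on D-module categories will follow from the standard descent result that $!$-pullback along the classifying stack of a pro-unipotent group scheme is an equivalence.

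First, I would unpack $'p^\theta$ on $T$-points. A point $(\underline{x},\sP_G,\phi) \in S^{\theta}_{P,\on{Ran}}(T)$ determines a canonical $\Lambda_{G,P}^{\on{neg}}$-valued divisor $D\in X^{\theta}(T)$ from the polar behavior of the maps $V^{N_P}_{\sP_G^0}\to V_{\sP_G}$, together with a canonical reduction of $\sP_G$ to $P$ over $X\setminus\on{supp}(D)$ (the one making the map arise from the $P$-structure). Passing to the $M$-quotient and extending across $\on{supp}(D)$---which is possible precisely because of the ``plus'' condition defining $S^{\theta}_{P,\on{Ran}}$---produces an $M$-bundle $\sP_M$ on $X$ together with an induced trivialization $\phi_M$ of $\sP_M|_{X-\underline{x}}$. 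The rule $(\underline{x},\sP_G,\phi)\mapsto (\underline{x},D,\sP_M,\phi_M)$ is $\mathfrak{L}_{\on{Ran}}N_P$-invariant since the $\mathfrak{L}_{\on{Ran}}N_P$-action on $\phi$ leaves the induced $M$-trivialization fixed.

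Second, I would show that the natural section $\on{Gr}^{+}_{M,(X^{\theta}\times \on{Ran})^{\subset}} \hookrightarrow S^{\theta}_{P,\on{Ran}}$ coming from $M\hookrightarrow P\hookrightarrow G$ meets every $\mathfrak{L}_{\on{Ran}}N_P$-orbit, with stabilizer a pro-unipotent group scheme over $\on{Gr}^{+}_{M,(X^{\theta}\times \on{Ran})^{\subset}}$. The first assertion holds because any two $P$-reductions of a fixed $G$-bundle in the stratum having the same $M$-quotient differ by an $N_P$-cocycle on $X\setminus\on{supp}(D)$, which is representable by an element of $\mathfrak{L}_{\on{Ran}}N_P$. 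The stabilizer of a given section point is the subgroup of $\mathfrak{L}_{\on{Ran}}N_P$ preserving the associated $P$-reduction; this is a twist of $\mathfrak{L}^+_{\on{Ran}}N_P$, and hence pro-unipotent since $N_P$ is unipotent.

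Combining these, the induced map $\mathfrak{L}_{\on{Ran}}N_P\backslash S^{\theta}_{P,\on{Ran}}\to \on{Gr}^{+}_{M,(X^{\theta}\times \on{Ran})^{\subset}}$ presents the source as a classifying prestack for a pro-unipotent factorization group scheme over the target, so that $'p^{\theta,!}$ is an equivalence by the cited descent principle. The main obstacle will be verifying factorization compatibility over $\on{Ran}$---in particular, that the fiberwise stabilizers assemble into a genuine factorization group scheme---but this is essentially bookkeeping once the pointwise picture is established, using the unitality of $\mathfrak{L}^+_{\on{Ran}}N_P$ as a factorization group.
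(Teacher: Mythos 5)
Your proposal is correct and is essentially the paper's argument: the paper simply asserts that the map $\mathfrak{L}_{\on{Ran}}N_P\backslash S_{P,\on{Ran}}^{\theta}\to \on{Gr}^{+}_{M,(X^{\theta}\times \on{Ran})^{\subset}}$ realizes the source as a unipotent gerbe over the target, which is exactly the classifying-prestack-for-a-pro-unipotent-group-scheme picture you spell out. Your write-up just makes explicit the transitivity along fibers and the pro-unipotence of the stabilizers that the paper leaves to the reader.
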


\begin{proof}
It is easy to see that the map $\mathfrak{L}_{\on{Ran}}N_P\backslash S_{P,\on{Ran}}^{\theta}\to \on{Gr}^{+}_{M,(X^{\theta}\times \on{Ran})^{\subset}}$ realizes the source as a unipotent gerbe over the target.
\end{proof}

\subsubsection{} Note that the action of $\mathfrak{L}^+_{\on{Ran}}M$ on $\on{Gr}_{G,\on{Ran}}$ stabilizes $\tildePloc$. Define
\[
\on{SI}_{P,\on{Ran}}^{\leq 0}:= D(\mathfrak{L}_{\on{Ran}}N_P\mathfrak{L}^+_{\on{Ran}}M\backslash \tildePloc)
\]

\noindent to be the full subcategory consisting of $\on{SI}_{M,\on{Ran}}$ of D-modules supported on $\tildePloc$. The following lemma is proved in Appendix \ref{S:APPB}:
\begin{lem}\label{l:ICisSI}
The sheaf $\sIC$ defines an object in $\on{SI}_{P,\on{Ran}}^{\leq 0}$. That is, $\on{IC}_{P,\on{Ran}}^{\frac{\infty}{2}}$ is $\mathfrak{L}_{\on{Ran}}N_P$-equivariant and is supported on $\tildePloc$.
\end{lem}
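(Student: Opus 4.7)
The plan is to verify both required conditions --- $\fL_{\on{Ran}}N_P$-equivariance and support on $\tildePloc$ --- simultaneously by exhibiting $\sIC$ as a colimit of sheaves manifestly in $\on{SI}_{P,\on{Ran}}^{\leq 0}$. Since $\sIC=\on{Ind}_{\on{DrPl}_{\check{M},\check{G}}}^{\on{Hecke}_{\check{M},\check{G}}}(\delta_{\on{Gr}_{G,\on{Ran}}})$ and the induction is given by the relative tensor product $c\mapsto \cO(\check{G})_{\on{Ran}}\underset{\cO(\overline{\check{N}_P\backslash\check{G}})_{\on{Ran}}}{\otimes} c$, one resolves it by the usual bar simplicial object
$$[n]\longmapsto \cO(\check{G})_{\on{Ran}}\star \cO(\overline{\check{N}_P\backslash \check{G}})_{\on{Ran}}^{\star n}\star \delta_{\on{Gr}_{G,\on{Ran}}},$$
with face maps coming from the $\on{Rep}(\check{M})\otimes \on{Rep}(\check{G})$-actions and the algebra structures of the factors. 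As $\on{SI}_{P,\on{Ran}}^{\leq 0}$ is stable under colimits inside $D(\fL^+_{\on{Ran}}M\backslash \on{Gr}_{G,\on{Ran}})$, it suffices to check that every term of the bar belongs to $\on{SI}_{P,\on{Ran}}^{\leq 0}$.

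The key geometric input is the decomposition $\cO(\overline{\check{N}_P\backslash \check{G}})=\bigoplus_{\lambda}(V^{\lambda})^{\check{N}_P}\otimes (V^{\lambda})^{*}$ as a $\check{M}\times \check{G}$-representation, together with the observation that the left $\check{M}$-factor $(V^{\lambda})^{\check{N}_P}$, pushed through the modified Satake functor $\on{Sat}^{\on{nv}}_{\on{Ran}}\circ \sS_{\on{Ran}}$, lands in $\on{Sph}^{+}_{M,\on{Ran}}$ --- i.e.\ is supported on $\on{Gr}^{+}_{M,\on{Ran}}$. Left convolution of $\delta_{\on{Gr}_{G,\on{Ran}}}$ by a sheaf in $\on{Sph}^{+}_{M,\on{Ran}}$ therefore produces a sheaf supported on the image of $\on{Gr}^{+}_{M,\on{Ran}}\to \on{Gr}_{G,\on{Ran}}$, which by the Plücker description in \S\ref{s:tildeploc} sits inside $\tildePloc$; the tautological factorization $\on{Gr}^{+}_{M,\on{Ran}}\into \fL^+_{\on{Ran}}M\backslash \tildePloc/\fL_{\on{Ran}}N_P$ simultaneously yields the $\fL_{\on{Ran}}N_P$-equivariance. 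The right convolution by the $\check{G}$-factor $(V^{\lambda})^{*}$ commutes with any left $\fL_{\on{Ran}}N_P$-action and, being convolution along the Hecke correspondence for $G$, preserves the Plücker-defining condition for $\tildePloc$.

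The only term of the bar not immediately covered is the outermost factor $\cO(\check{G})_{\on{Ran}}$, whose left $\check{M}$-part is $\on{Res}^{\check{G}}_{\check{M}}(V^{\lambda})$ rather than $(V^{\lambda})^{\check{N}_P}$. To handle this, I would appeal to the standard $\check{P}$-stable filtration of $\on{Res}^{\check{G}}_{\check{M}}(V^{\lambda})$ whose associated graded is built from $\check{N}_P$-invariants of subrepresentations, reducing this case to the previous one up to a convergent filtration. The main obstacle I anticipate is precisely this last step: making rigorous the compatibility between the bar resolution and the Plücker stratification of $\tildePloc$. A cleaner alternative is to bootstrap via the Hecke structure $\on{Res}^{\check{G}}_{\check{M}}(V)\star \sIC\simeq \sIC\star V$ already encoded in $\sIC$, combined with the left $\on{Sph}^{+}_{M,\on{Ran}}$-stability of the subcategory of $D(\fL^+_{\on{Ran}}M\backslash \on{Gr}_{G,\on{Ran}})$ consisting of $\fL_{\on{Ran}}N_P$-equivariant sheaves supported on $\tildePloc$, which is what makes the whole argument close up.
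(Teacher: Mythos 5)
Your overall strategy --- present $\sIC$ as a colimit of objects that visibly lie in $\on{SI}_{P,\on{Ran}}^{\leq 0}$ and use that this subcategory is closed under colimits --- is sound, and it is the strategy of the paper. But the colimit you chose, the bar resolution of $\cO(\check{G})_{\on{Ran}}\underset{\cO(\overline{\check{N}_P\backslash\check{G}})_{\on{Ran}}}{\otimes}\delta_{\on{Gr}_{G,\on{Ran}}}$, does not work: its individual terms do \emph{not} lie in $\on{SI}_{P,\on{Ran}}^{\leq 0}$. Already the $n=0$ piece of the inner part, $\bigoplus_{\lambda}(V^{\lambda})^{\check{N}_P}\star\delta_{\on{Gr}_{G,\on{Ran}}}\star(V^{\lambda})^{*}$, fails on both counts. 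For support: take $G=SL_2$, $P=B$; the $\lambda$-summand is $\delta_{t^{\lambda}}\star V^{\lambda}$, supported on $t^{\lambda}\cdot\overline{\on{Gr}^{\lambda}_G}\ni t^{2\lambda}$, which is not in $\widetilde{S}^0_{B,\on{Ran}}$. More structurally, the central character of $(V^{\lambda})^{\check{N}_P}=V^{\lambda}_{\check{M}}$ lies in $\Lambda_{G,P}^{\on{pos}}$, so its Satake sheaf is supported on components of $\on{Gr}_M$ where $\on{Gr}_M^{+}$ is empty --- the opposite half from the one meeting $\tildePloc$ --- so your "key geometric input" is false as stated. For equivariance: $\delta_{\on{Gr}_{G,\on{Ran}}}$ itself is supported on $\tildePloc$ but is certainly not $\fL_{\on{Ran}}N_P$-equivariant (its support is a single point of the infinite-dimensional orbit $S^0_{P,\on{Ran}}$), so "supported on (the image of) $\on{Gr}_M^+$, hence equivariant by the tautological factorization" is a non sequitur: support on $\tildePloc$ does not imply descent along $\tildePloc\to\fL_{\on{Ran}}N_P\backslash\tildePloc$. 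Finally, right convolution by $V\in\on{Rep}(\check{G})$ does \emph{not} preserve the Plücker condition defining $\tildePloc$ (Hecke modifications of $\sP_G$ introduce poles in the maps $V^{N_P}_{\sP^0_G}\to V_{\sP_G}$), so the subcategory you want to "close up" under the Hecke structure in your fallback argument is not stable under the operations you apply.

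The fix is to replace the simplicial bar colimit by the \emph{filtered} colimit presentation
\[
\sIC\;\simeq\;\underset{\lambda\in\Lambda_M^+}{\on{colim}}\;e^{-\lambda}\star\delta_{\on{Gr}_{G,\on{Ran}}}\star V^{\lambda},
\]
which is what the paper establishes (Lemma \ref{l:colimdescrip}, proved by identifying $\on{Fun}(\check{G})$ with $\on{colim}_{\lambda}\,e^{-\lambda}\otimes\on{Fun}(\overline{\check{N}_P\backslash\check{G}})\otimes V^{\lambda}$ via a weight computation). Here the left translate by $t^{-\lambda}$ is exactly what repairs the support: $S^{\theta'}_x\cap\overline{\on{Gr}^{\lambda}_{G,x}}\neq\emptyset$ forces $\lambda-\theta'$ to be a non-negative sum of positive coroots, so each term $e^{-\lambda}\star\delta\star V^{\lambda}$ is supported on $\tildePloc$. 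Equivariance still does not hold term by term, but it holds in the colimit: after reducing to field-valued points (Lemma \ref{l:redtopts}) and writing $\fL_xN_P=\on{colim}_{\alpha}N_{\alpha}$ as an ind-group scheme, for each $\alpha$ the set of $\lambda$ with $\on{Ad}_{t^{\lambda}}(N_{\alpha})\subset N(O_x)$ is cofinal in $\Lambda_M^+$, and for such $\lambda$ the term $e^{-\lambda}\star\delta\star V^{\lambda}$ is $N_{\alpha}$-equivariant. This interplay between the index $\lambda$ and the exhaustion of $\fL N_P$ is the essential idea your proposal is missing.
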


\subsubsection{} Next, we describe the restriction of $\sIC$ to the stratum $S^{\theta}_{\on{Ran}}$. 

Note that the map $'p^{\theta}$ is $\mathfrak{L}^+_{\on{Ran}}M$-equivariant. We similarly denote by $'p^{\theta}$ the induced map:
\[
'p^{\theta}: \mathfrak{L}^+_{\on{Ran}}M\backslash S^{\theta}_{P,\on{Ran}}\to \mathfrak{L}^+_{\on{Ran}}M\backslash \on{Gr}^{+}_{M,(X^{\theta}\times \on{Ran})^{\subset}}.
\]

We let $p^{\theta}$ denote the composition:
\[
p^{\theta}:\mathfrak{L}^+_{\on{Ran}}M\backslash S^{\theta}_{P,\on{Ran}}\xrightarrow{'p^{\theta}}\mathfrak{L}^+_{\on{Ran}}M\backslash \on{Gr}^{+}_{M,(X^{\theta}\times \on{Ran})^{\subset}}\to \mathfrak{L}^+_{\on{Ran}}M\backslash \on{Gr}^{+}_{M,X^{\theta}}.
\]

\noindent The following proposition is proved in Section \ref{s:stratcomp} below.

\begin{prop}\label{p:strata}
We have a canonical isomorphism:
\[
j^{\theta,*}(\on{IC}_{P,\on{Ran}}^{\frac{\infty}{2}})\simeq p^{\theta,!}(\cO(\check{N}_P)_{X^{\theta}})[-\langle 2(\rho_G-\rho_M),\theta\rangle].
\]
\end{prop}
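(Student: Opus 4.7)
The plan is to compute $j^{\theta,*}\sIC$ by unwinding the definition of $\sIC$ via induction and then applying parabolic Mirkovi\'c--Vilonen to identify the restriction on each stratum. By (\ref{eq:inddrplhecke}) and §\ref{sec: def of semiinfinite ic},
\[
\sIC \simeq \cO(\check{G})_{\on{Ran}} \underset{\cO(\overline{\check{N}_P\backslash \check{G}})_{\on{Ran}}}{\star} \delta_{\on{Gr}_{G,\on{Ran}}},
\]
where $\star$ is the (twisted) action of $\on{Rep}(\check{M})_{\on{Ran}}\underset{D(\on{Ran})}{\otimes}\on{Rep}(\check{G})_{\on{Ran}}$ on $D(\mathfrak{L}^+_{\on{Ran}}M\backslash \on{Gr}_{G,\on{Ran}})$ described in (\ref{eq:modifiedact}). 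I would resolve this relative tensor product by the bar complex and analyze the $*$-restriction to $S^{\theta}_{P,\on{Ran}}$ term by term, noting that $j^{\theta,*}\delta_{\on{Gr}_{G,\on{Ran}}}$ is supported at the unit so the nontrivial contributions to $j^{\theta,*}\sIC$ (for $\theta\neq 0$) all come from the convolution factors.

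The main computational input is the parabolic Mirkovi\'c--Vilonen identification. For $V\in \on{Rep}(\check{G})_{\on{Ran}}$, the Satake sheaf $V\star \delta_{\on{Gr}_{G,\on{Ran}}}$ has $*$-restriction to $S^{\theta}_{P,\on{Ran}}$ canonically isomorphic, after pushforward along $\fL_{\on{Ran}}N_P\fL^+_{\on{Ran}}M\backslash(-)$, to $p^{\theta,!}$ applied to the $\theta$-graded piece $V^{\theta}$ (viewed as an $M$-representation via the action of $Z(\check{M})^{\circ}$ and pushed through naive $M$-Satake into $\on{Sph}^+_{M,X^{\theta}}$), up to a cohomological shift. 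Summing over $V$ through the decomposition $\cO(\check{G}) = \bigoplus_V V\otimes V^*$ and using the $\cO(\overline{\check{N}_P\backslash\check{G}})$-module structure on $\delta$ arising from its Drinfeld--Pl\"ucker structure, the relative tensor product collapses on the algebraic side to the identity
\[
\bigl(\cO(\check{G})\underset{\cO(\overline{\check{N}_P\backslash\check{G}})}{\otimes} k\bigr)^{\theta} \simeq \cO(\check{N}_P)^{\theta} \quad\text{in}\quad \on{Rep}(\check{M}),
\]
reflecting geometrically that the preimage of the unit of $\overline{\check{N}_P\backslash\check{G}}$ under $\check{G}\to\overline{\check{N}_P\backslash\check{G}}$ is $\check{N}_P$. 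Factorization over $X^{\theta}$ then promotes this pointwise identification to $\cO(\check{N}_P)_{X^{\theta}}\in \on{Sph}^+_{M,X^{\theta}}$, pulled back along $p^{\theta}$.

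The cohomological shift $[-\langle 2(\rho_G-\rho_M),\theta\rangle]$ appears as the net of three perverse normalizations: the Mirkovi\'c--Vilonen shift by $[\langle 2\rho_G,\theta\rangle]$ on the $G$-side, the inverse shift $[-\langle 2\rho_M,\theta\rangle]$ coming from naive $M$-Satake on the target, and the explicit twist $\sS^{\theta}$ by $[-\langle 2(\rho_G-\rho_M),\theta\rangle]$ built into the modified action (\ref{eq:modifiedact}).

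I expect the main obstacle to be the factorizable version of the Mirkovi\'c--Vilonen input: while fiberwise over a point of $X^{\theta}$ the identification follows from classical parabolic MV, one must check that the restrictions $j^{\theta,*}(V\star \delta_{\on{Gr}_{G,\on{Ran}}})$, as $V$ varies over the factorization algebra $\cO(\check{G})_{\on{Ran}}$, assemble compatibly to recover $\cO(\check{N}_P)_{X^{\theta}}$ as a factorization algebra in $\on{Sph}^+_{M,X^{\theta}}$ (rather than only at individual fibers). This requires exploiting the unital factorization structure on $S^{\theta}_{P,\on{Ran}}$ through the zero-divisor map (\ref{eq:semiinftodiv0}) together with Lemma \ref{l:fppf} and Lemma \ref{l:point=ext}, so that the factorization compatibility ultimately reduces to a fiberwise check along the main diagonal.
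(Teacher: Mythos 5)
Your proposal is correct in substance, but it takes a different route from the paper's own proof of this proposition. The paper does \emph{not} run the bar complex for $\cO(\check{G})_{\on{Ran}}\underset{\cO(\overline{\check{N}_P\backslash\check{G}})_{\on{Ran}}}{\otimes}\delta$ directly. Instead it first proves Proposition \ref{p:restostrata!}, i.e.\ that $\on{Inv}_{\cO(\check{N}_P)}(\sIC)=\on{Ind}_{\on{DrPl}}^{\on{EnhDrPl}}(\delta)\simeq\mathbf{j}_!$, by exactly the kind of bar-complex-plus-parabolic-MV computation you describe but with $\cO(\check{N}_P\backslash\check{G})$ in place of $\cO(\check{G})$ — there the collapsing identity is the simpler $\cO(\check{N}_P\backslash\check{G})\otimes_{\cO(\overline{\check{N}_P\backslash\check{G}})}k\simeq k$, so the answer on each stratum is either $0$ or $\delta$. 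Proposition \ref{p:strata} is then deduced formally: $\on{pres}$ is $\on{Sph}_{M,\on{Ran}}$-linear, so $\on{pres}(\sIC)$ inherits the $\cO(\check{N}_P)_{\on{Ran}}$-coaction, its invariants are $\delta_{\on{Gr}_{M,\on{Ran}}}$ by Proposition \ref{p:restostrata!}, and Koszul duality (Lemma \ref{l:KD}) recovers $\on{pres}(\sIC)\simeq\delta\underset{\Omega(\check{\fn}_P)_{\on{Ran}}}{\star}\delta\simeq\cO(\check{N}_P)_{\on{Ran}}$. Your route is more direct; its cost is that you must redo the higher-coherence check for the transition maps of the $\cO(\check{G})$-bar complex (the same perversity argument as in the proof of Proposition \ref{p:restostrata!} still applies, since all terms remain in the heart of $\on{Sph}_{M,x}$), whereas the paper's route does that delicate check once, in the case with the simplest answer, and then gets this proposition for free — and it needs Proposition \ref{p:restostrata!} independently anyway. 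Your key algebraic input, that the derived fiber of $\check{G}\to\overline{\check{N}_P\backslash\check{G}}$ over the unit is $\check{N}_P$ (the unit lies in the open locus where the map is an $\check{N}_P$-torsor, so there is no higher Tor), is correct and is precisely what replaces the Koszul-duality step.

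Two small points to make explicit. First, for a single term $V\star\delta$ the $*$-restriction to $S^{\theta}_{P,\on{Ran}}$ does not itself descend along $'p^{\theta}$; parabolic MV only computes the pushforward $'p^{\theta}_!\circ j^{\theta,*}$. To convert your computation of $\on{pres}(\sIC)$ into the stated form $j^{\theta,*}(\sIC)\simeq p^{\theta,!}(\cdots)$ you must invoke the $\fL_{\on{Ran}}N_P$-equivariance of the colimit, i.e.\ Lemma \ref{l:ICisSI} — this is the first line of the paper's proof and should be the first line of yours. Second, your reduction of the factorization compatibility to a fiberwise check is legitimate and is exactly what the paper does, but the cleaner justification is that $\on{pres}$ commutes with $!$-fibers by hyperbolic localization (base change for the opposite parabolic), so the whole identification can be checked at points of $X\subset\on{Ran}$.
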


\subsection{Stratification computation}\label{s:stratcomp}

In this section, we prove Proposition \ref{p:strata}.

\subsubsection{}  Denote by $j=j^0$ the open embedding:
\[
S^{0}_{P,\on{Ran}}\into \tildePloc.
\]

\noindent For convenience, write:
\[
\mathbf{j}_!:=j_!(\omega_{S^{0}_{P,\on{Ran}}})\in D(\tildePloc).
\]

\noindent Note that $\mathbf{j}_!$ naturally defines an object of $\on{SI}^{\leq 0}_{P,\on{Ran}}$.

\subsubsection{}\label{s:DrPlstructureondelta} Consider the delta sheaf $\delta_{\on{Gr}_{G,\on{Ran}}}\in D(\mathfrak{L}^+_{\on{Ran}}M\backslash \on{Gr}_{G,\on{Ran}})$. From its Drinfeld-Plücker structure, we may define the object:
\[
\on{Ind}_{\on{DrPl}_{\check{M},\check{G}}}^{\on{EnhDrPl}_{\check{M},\check{G}}}(\delta_{\on{Gr}_G,\on{Ran}})\in \on{EnhDrPl}_{\check{M},\check{G}}(D(\mathfrak{L}^+_{\on{Ran}}M\backslash \on{Gr}_{G,\on{Ran}})).
\]

\noindent By \S \ref{s:comp1}, we have an isomorphism
\[
\on{Ind}_{\on{DrPl}_{\check{M},\check{G}}}^{\on{EnhDrPl}_{\check{M},\check{G}}}(\delta_{\on{Gr}_G,\on{Ran}})\simeq \on{Inv}_ {\cO(\check{N}_P)}(\sIC)
\]

\noindent of $\Omega(\mathfrak{\check{n}}_P)_{\on{Ran}}$-modules. By Lemma \ref{l:ICisSI}, the sheaf $\on{Inv}_ {\cO(\check{N}_P)}(\sIC)$ defines an object of the category $\on{SI}^{\leq 0}_{P,\on{Ran}}$.

\subsubsection{} Consider the diagram:
\[\begin{tikzcd}
	{S_{P,\on{Ran}}^{\theta}} && \tildePloc \\
	\\
	{\on{Gr}^{+}_{M,(X^{\theta}\times\on{Ran})^{\subset}}.}
	\arrow["{j^{\theta}}", from=1-1, to=1-3]
	\arrow["{'p^{\theta}}"', from=1-1, to=3-1]
\end{tikzcd}\]

\noindent Let $\on{pres}^{\theta}:= 'p^{\theta}_!\circ j^{\theta,*}[\langle 2(\rho_G-\rho_M),\theta\rangle]: D(\tildePloc)\to D(\on{Gr}^{+}_{M,(X^{\theta}\times\on{Ran})^{\subset}})$ denote the corresponding parabolic restriction functor. Let $\on{pres}:=\underset{\theta}{\bigoplus} \on{pres}^{\theta}$.

We denote by $\on{pres}^{\theta,x}: D(\ot{S}^0_{P,x})\to D(\on{Gr}_{M,x}^{+,\theta})$ the corresponding functor where we replace $\on{Ran}$ by a point $x\in X$. Similarly, let $\on{pres}^x:=\underset{\theta}{\bigoplus} \on{pres}^{\theta,x}$.

\subsubsection{} We will prove:

\begin{prop}\label{p:restostrata!}
We have a canonical identification:
\[
\on{Inv}_ {\cO(\check{N}_P)}(\sIC)\simeq \mathbf{j}_!.
\]

\noindent In particular, the sheaf $\mathbf{j}_!$ is equipped with a canonical module structure for $\Omega(\mathfrak{\check{n}}_P)_{\on{Ran}}$ and carries a canonical enhanced Drinfeld-Plücker structure.
\end{prop}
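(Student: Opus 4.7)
The plan is to identify $F := \on{Inv}_{\cO(\check{N}_P)}(\sIC)$ with $\mathbf{j}_!$ by checking its stratum-wise $*$-restrictions. Note first that by \S\ref{s:DrPlstructureondelta} we already have $F \simeq \on{Ind}^{\on{EnhDrPl}_{\check{M},\check{G}}}_{\on{DrPl}_{\check{M},\check{G}}}(\delta_{\on{Gr}_{G,\on{Ran}}})$, which gives the ``In particular'' clause of the proposition. It remains to produce the isomorphism with $\mathbf{j}_!$. Since $\sIC$ lies in $\on{SI}^{\leq 0}_{P,\on{Ran}}$ by Lemma \ref{l:ICisSI} and invariants preserve the support condition, so does $F$. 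Consequently, using the characterization recorded in the remark after Theorem \ref{t:loc_Hecke}, it suffices to verify that $j^{0,*}F \simeq \omega_{S^0_{P,\on{Ran}}}$ and that $j^{\theta,*}F \simeq 0$ for every $\theta \in \Ln \setminus \{0\}$.

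The open stratum case is immediate from Proposition \ref{p:strata}: at $\theta = 0$ the shift is trivial and $\cO(\check{N}_P)_{X^0}$ is the monoidal unit, giving $j^{0,*}\sIC \simeq \omega_{S^0_{P,\on{Ran}}}$. The induced comodule structure degenerates to the trivial coaction at the empty divisor (this is the unitality of the factorization coalgebra $\cO(\check{N}_P)_{\on{Ran}}$), so $\on{Inv}_{\cO(\check{N}_P)}$ acts as the identity and $j^{0,*}F \simeq \omega_{S^0_{P,\on{Ran}}}$.

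The interesting case is the vanishing for $\theta \neq 0$. The key observation is that the functor $j^{\theta,*}$ is $\on{Rep}(\check{M})_{\on{Ran}}$-linear, because each stratum $S^\theta_{P,\on{Ran}}$ is $\mathfrak{L}_{\on{Ran}}^+ M$-stable and the action of $\on{Rep}(\check{M})_{\on{Ran}}$ on $\on{SI}_{P,\on{Ran}}$ factors through spherical convolution. Consequently $j^{\theta,*}$ commutes with convolution by $\cO(\check{N}_P)_{\on{Ran}}$ and hence with the invariants functor. Combined with the $\on{Rep}(\check{M})_{\on{Ran}}$-linearity of $p^{\theta,!}$ and Proposition \ref{p:strata}, the computation of $j^{\theta,*}F$ reduces to that of $p^{\theta,!}$ applied to $\on{Inv}_{\cO(\check{N}_P)_{\on{Ran}}}(\cO(\check{N}_P)_{X^\theta})$, where the comodule structure on $\cO(\check{N}_P)_{X^\theta}$ is the one inherited from the coaction on $\sIC$ of \S\ref{s:coaction}. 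The plan then is to identify this inherited comodule structure with the canonical one coming from the Hopf coproduct on the bialgebra $\cO(\check{N}_P)_{\on{Ran}}$ (see \S\ref{s:bialg}). Granting this identification, the invariants of a Hopf (factorization) algebra as a comodule over itself via its coproduct are the counit, concentrated at the zero divisor $X^0 \subset \on{Conf}_{G,P}$, and therefore vanish after restriction to $X^\theta$ for $\theta \neq 0$.

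The step I expect to be the main obstacle is the identification of the induced coaction at the level of the stratum with the canonical Hopf coproduct coaction on $\cO(\check{N}_P)_{X^\theta}$. This requires unwinding the construction of $\sIC$ as $\on{Ind}^{\on{Hecke}_{\check{M},\check{G}}}_{\on{DrPl}_{\check{M},\check{G}}}(\delta_{\on{Gr}_{G,\on{Ran}}})$ together with the action on the induction provided by Corollary \ref{c:actiononinduction} (which comes from the translation coaction of $\cO(\check{N}_P)$ on $\cO(\check{G})$), and then matching this datum against the identification of Proposition \ref{p:strata}. Everything else in the argument is essentially formal once this compatibility between Hecke induction and the stratification is pinned down.
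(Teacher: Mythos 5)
Your proposal has two genuine problems. The first is a circularity: you take Proposition \ref{p:strata} as an input (both for the open stratum and, via the reduction to $p^{\theta,!}$, for the closed strata), but in the paper Proposition \ref{p:strata} is itself deduced \emph{from} Proposition \ref{p:restostrata!} (its proof invokes ``(the proof of) Proposition \ref{p:restostrata!}'' and Koszul duality). The logical order is the opposite of the one you assume: one first computes $\on{Inv}_{\cO(\check{N}_P)}(\sIC)\simeq\mathbf{j}_!$ and only then reads off the $*$-restrictions of $\sIC$ to the strata. Unless you supply an independent proof of Proposition \ref{p:strata} (say via the colimit description of Appendix \ref{S:APPB}), your argument is circular.

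The second problem is the ``key observation'' that $j^{\theta,*}$ is $\on{Rep}(\check{M})_{\on{Ran}}$-linear. Stability of $S^{\theta}_{P,\on{Ran}}$ under $\fL^+_{\on{Ran}}M$ is not enough: the $\on{Rep}(\check{M})_{\on{Ran}}$-action is by convolution with $\on{Sph}_{M,\on{Ran}}$, i.e.\ with full $\fL M$-double cosets, and this does not preserve individual strata -- convolving with $\cO(\check{N}_P)_{X^{\theta'}}$ shifts the index by $\theta'$. Indeed the Hopf coproduct you invoke at the end, $\cO(\check{N}_P)_{X^{\theta}}\to\bigoplus_{\theta'+\theta''=\theta}\cO(\check{N}_P)_{X^{\theta'}}\star\cO(\check{N}_P)_{X^{\theta''}}$, is exactly this strata-mixing, contradicting your linearity claim. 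What is true, and what the paper uses, is that the \emph{total} functor $\on{pres}=\bigoplus_{\theta}{}'p^{\theta}_!\circ j^{\theta,*}$ (hyperbolic localization) is $\on{Sph}_{M,\on{Ran}}$-linear; the paper then applies $\on{pres}$ to the Bar complex computing $\on{Ind}_{\on{DrPl}}^{\on{EnhDrPl}}(\delta_{\on{Gr}_{G,\on{Ran}}})$, uses the compatibility $\on{pres}^x\circ\on{Sat}^{\on{nv}}_G\simeq\on{Sat}^{\on{nv}}_M\circ\on{Res}^{\check{G}}_{\check{M}}$ (up to shift) term by term, and recognizes the result as the Bar complex for the augmentation module, which collapses to $\delta_{\on{Gr}_{M,\on{Ran}}}$. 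This also disposes of the step you flag as the ``main obstacle'' (matching the induced coaction with the coproduct), which your proposal leaves unproven: the paper never needs that identification, only the matching of transition maps in the two Bar colimits, which it checks using perversity of the relevant terms.
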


\begin{proof}
Since both sheaves in question are objects of $\on{SI}^{\leq 0}_{P,\on{Ran}}$, by Lemma \ref{l:ICisSI}, it suffices to show that
\begin{equation}\label{eq:prestheta}
\on{pres}^{\theta}(\on{Inv}_ {\cO(\check{N}_P)}(\sIC))=0
\end{equation}

\noindent for all $\theta\neq 0$ and that
\begin{equation}\label{eq:pres0}
\on{pres}^{0}(\on{Inv}_ {\cO(\check{N}_P)}(\sIC))\simeq \delta_{\on{Gr}_{M,\on{Ran}}}.
\end{equation}

\noindent Here $\delta_{\on{Gr}_{M,\on{Ran}}}$ is the image of the dualizing sheaf under the unit section $\on{Ran}\to \on{Gr}_{M,\on{Ran}}$. Note that $\on{pres}^{0}(\on{Inv}_ {\cO(\check{N}_P)}(\sIC))$ is a unital sheaf; that is, defines an object of $D(\on{Gr}_{M,\on{Ran}}^{+,0}/\on{Ran})\simeq \on{Vect}$. Moreover, $\on{Inv}_ {\cO(\check{N}_P)}(\sIC)$ is a factorization algebra. As such, we may check the identities (\ref{eq:prestheta}) and (\ref{eq:pres0}) after taking the $!$-fiber at every $x\in X\subset \on{Ran}$.\footnote{We remark that the functor $\on{pres}$ commutes with taking $!$-fibers by hyperbolic localization: the functor $\on{pres}$ coincides with the analogous parabolic restriction functor for the opposite parabolic $P^-$, replacing $*$-pull, $!$-push with $!$-pull, $*$-push, and the latter functor clearly commutes with $!$-fibers by base change.}

We write $\cO(\check{N}_P\backslash \check{G})_x$ (resp. $\cO(\overline{\check{N}_P\backslash \check{G}})_x$) for the $!$-restriction of $\cO(\check{N}_P\backslash \check{G})_{\on{Ran}}$ (resp. $\cO(\overline{\check{N}_P\backslash \check{G}})_{\on{Ran}}$) to a point $x$, i.e., as objects of $\on{Rep}(\check{M})\otimes \on{Rep}(\check{G})$.

By \S \ref{s:comp1}, we have the identity:
\[
\on{Inv}_ {\cO(\check{N}_P)}(\sIC)\simeq \on{Ind}_{\on{DrPl}_{\check{M},\check{G}}}^{\on{EnhDrPl}_{\check{M},\check{G}}}(\delta_{\on{Gr}_{G,\on{Ran}}})=\cO(\check{N}_P\backslash \check{G})_{\on{Ran}}\underset{\cO(\overline{\check{N}_P\backslash \check{G}})_{\on{Ran}}}{\otimes} \delta_{\on{Gr}_{G,\on{Ran}}}. 
\]

\noindent As such, it suffices to establish the identities:
\begin{equation}\label{eq:presthetax}
\on{pres}^{\theta,x}(\cO(\check{N}_P\backslash \check{G})_x\underset{\cO(\overline{\check{N}_P\backslash \check{G}})_x}{\otimes} \delta_{\on{Gr}_{G,x}})=0;
\end{equation}

\begin{equation}\label{eq:pres0x}
\on{pres}^{0,x}(\cO(\check{N}_P\backslash G)_x\underset{\cO(\overline{\check{N}_P\backslash G})_x}{\otimes} \delta_{\on{Gr}_{G,x}})=\delta_{\on{Gr}_{M,x}}.
\end{equation}

\step Write the tensor product in question as a Bar complex:
\[
\cO(\check{N}_P\backslash \check{G})_x\underset{\cO(\overline{\check{N}_P\backslash \check{G}})_x}{\otimes} \delta_{\on{Gr}_{G,x}}=\underset{n}{\on{colim}} \;\cO(\check{N}_P\backslash \check{G})_x\otimes \cO(\overline{\check{N}_P\backslash \check{G}})_x^{\otimes n}\otimes \delta_{\on{Gr}_{G,x}}.
\]

Note that $\on{pres}^x(\cO(\check{N}_P\backslash \check{G})_x\otimes \cO(\overline{\check{N}_P\backslash \check{G}})_x^{\otimes n}\otimes \delta_{\on{Gr}_{G,x}})$ is by definition the image of $\cO(\check{N}_P\backslash \check{G})_x\otimes \cO(\overline{\check{N}_P\backslash \check{G}})_x^{\otimes n}$ under the functor:
\begin{equation}\label{eq:comp2}
\on{Rep}(\check{M})\otimes\on{Rep}(\check{G})\to \on{Sph}_{M,x}\otimes \on{Sph}_{G,x}\xrightarrow{\on{id}\otimes \on{pres}^x}\on{Sph}_{M,x}\otimes \on{Sph}_{M,x}\xrightarrow{-\star -} \on{Sph}_{M,x}.
\end{equation}

\noindent Here, the first functor is the geometric Satake functor, the third is convolution, and we have abused notation by also denoting $\on{pres}^x: \on{Sph}_{G,x}\to \on{Sph}_{M,x}$ the (shifted) direct sum over all $\theta$ of the functor of $*$-pull and $!$-push along:\\
\begin{tikzcd}
	{\mathfrak{L}^+_{x}M\backslash S^{\theta}_x} && {\mathfrak{L}^+_{x}G\backslash \mathfrak{L}_{x}G/\mathfrak{L}^+_{x}G} \\
	\\
	{\mathfrak{L}^+_{x}M\backslash \mathfrak{L}_{x}M/\mathfrak{L}^+_{x}M.}
	\arrow[from=1-1, to=1-3]
	\arrow[from=1-1, to=3-1]
\end{tikzcd}

Recall that by construction of the geometric Satake functor $\on{Sat}^{\on{nv}}: \on{Rep}(\check{G})\to \on{Sph}_{G,x}$, the composition
\[
\on{Rep}(\check{G})\xrightarrow{\on{Sat}^{\on{nv}}} \on{Sph}_{G,x}\xrightarrow{\on{pres}^x} \on{Sph}_{M,x}
\]

\noindent coincides with the functor
\begin{equation}\label{eq:res+sat}
\on{Rep}(\check{G})\xrightarrow{\on{Res}_{\check{M}}^{\check{G}}}\on{Rep}(\check{M})\xrightarrow{\underset{\theta}{\bigoplus}\;\on{Sat}_M^{\on{nv},\theta}[-\langle 2(\rho_G-\rho_M),\theta\rangle]} \on{Sph}_{M,x},
\end{equation}

\noindent see e.g. \cite[§3]{mirkovic2007geometric} or \cite[Thm. 2.2 (3)]{braverman2001crystals}. As such, we get:
\[
\on{pres}^x(\cO(\check{N}_P\backslash \check{G})_x\otimes \cO(\overline{\check{N}_P\backslash \check{G}})_x^{\otimes n}\otimes \delta_{\on{Gr}_{G,x}})\simeq \cO(\check{N}_P\backslash \check{G})_x\otimes \cO(\overline{\check{N}_P\backslash \check{G}})_x^{\otimes n}\otimes \delta_{\on{Gr}_{M,x}},
\]

\noindent where we now consider $\on{Sph}_{M,x}$ as module category for $\on{Rep}(\check{M})\otimes \on{Rep}(\check{G})$ via the restriction functor $\on{Rep}(\check{M})\otimes \on{Rep}(\check{G})\xrightarrow{\on{id}\otimes \on{Res}_{\check{M}}^{\check{G}}} \on{Rep}(\check{M})\otimes \on{Rep}(\check{M})$ and the usual Satake action of the latter on $\on{Sph}_{M,x}$.\footnote{We remark that the shift by $-\langle 2(\rho_G-\rho_M,\theta)\rangle$ in the functor (\ref{eq:res+sat}) exactly cancels the shift appearing in \S\ref{s:shift2} for the action of $\on{Rep}(\check{M})$ on $D(\mathfrak{L}_x^+M\backslash \on{Gr}_{G,x})$.} We conclude that:
\begin{equation}\label{eq:colimit1}
\on{pres}^x(\cO(\check{N}_P\backslash \check{G})_x\underset{\cO(\overline{\check{N}_P\backslash \check{G}})_x}{\otimes} \delta_{\on{Gr}_{G,x}})\simeq \underset{n}{\on{colim}} \;\cO(\check{N}_P\backslash \check{G})_x\otimes \cO(\overline{\check{N}_P\backslash \check{G}})_x^{\otimes n}\otimes \delta_{\on{Gr}_{M,x}}.
\end{equation}

\step Consider $\delta_{\on{Gr}_{M,x}}$ as a module for $\cO(\overline{\check{N}_P\backslash \check{G}})_x\in \on{Rep}(\check{M})\otimes \on{Rep}(\check{G})$ via the augmentation. That is, the action of $\cO(\overline{\check{N}_P\backslash \check{G}})_x$ on $\delta_{\on{Gr}_{M,x}}$ is given by $\cO(\overline{\check{N}_P\backslash \check{G}})_x\to k$ induced by the point $1\in \overline{\check{N}_P\backslash \check{G}}$, and where we consider $\cO(\overline{\check{N}_P\backslash \check{G}})_x$ as an $\check{M}$-representation via the diagonal map $\check{M}\to \check{M}\times \check{G}$.

This endows $\delta_{\on{Gr}_{M,x}}$ with a Drinfeld-Plücker structure at $x$. This tautologically comes from an enhanced Drinfeld-Plücker structure (which we remind is a property, not a structure, cf. \S \ref{s:enhdrpl}). That is:
\begin{equation}\label{eq:colimit2}
\delta_{\on{Gr}_{M,x}}\simeq \cO(\check{N}_P\backslash \check{G})_x\underset{\cO(\overline{\check{N}_P\backslash \check{G}})_x}{\otimes}\delta_{\on{Gr}_{M,x}}=\underset{n}{\on{colim}} \; \cO(\check{N}_P\backslash \check{G})_x\otimes \cO(\overline{\check{N}_P\backslash \check{G}})_x^{\otimes n}\otimes \delta_{\on{Gr}_{M,x}}.
\end{equation}

\noindent If we can show that the transition maps in the colimits (\ref{eq:colimit1}) and (\ref{eq:colimit2}) agree in a coherent manner, then we are done.

In the transition maps in (\ref{eq:colimit1}), we consider maps of two types:
\begin{itemize}
    \item The maps $\cO(\overline{\check{N}_P\backslash \check{G}})_x\otimes \delta_{\on{Gr}_{M,x}}\to \delta_{\on{Gr}_{M,x}}$ induced by applying $\on{pres}^x$ to the map $\cO(\overline{\check{N}_P\backslash \check{G}})_x\otimes \delta_{\on{Gr}_{G,x}}\to \delta_{\on{Gr}_G,x}$ in $D(\mathfrak{L}^+_{x}M\backslash \on{Gr}_{G,x})$ induced by the Drinfeld-Plücker structure on $\delta_{\on{Gr}_{G,x}}$. This evidently coincides with the corresponding map in the colimit (\ref{eq:colimit2}). Moreover, since both $\cO(\overline{\check{N}_P\backslash \check{G}})_x\otimes \delta_{\on{Gr}_{M,x}}$ and $\delta_{\on{Gr}_{M,x}}$ are in the heart of the perverse t-structure on $\on{Sph}_{M,x}$, this automatically provides higher coherence.

    \item The maps $\cO(\check{N}_P\backslash \check{G})_x\otimes \cO(\overline{\check{N}_P\backslash \check{G}})_x\to \cO(\check{N}_P\backslash \check{G})_x$. By construction, these are obtained by applying the functor (\ref{eq:comp2}) to the natural action $\cO(\overline{\check{N}_P\backslash \check{G}})_x$ on $\cO(\check{N}_P\backslash \check{G})_x$. These coincide with the similar maps appearing in the colimit (\ref{eq:colimit2}).
\end{itemize}

\end{proof}

\subsubsection{}

We may now prove Proposition \ref{p:strata}:

\begin{proof}[Proof of Proposition \ref{p:strata}]

By $\mathfrak{L}_{\on{Ran}}N_P$-equivariance of $\sIC$, we need to show that $\on{pres}(\sIC)\simeq \cO(\check{N}_P)_{\on{Ran}}$. Indeed, by unitality, the $!$-restriction of $\cO(\check{N}_P)_{\on{Ran}}$ along $\on{Gr}_{M,(X^{\theta}\times\on{Ran})^{\subset}}^+\to \on{Gr}_{M,\on{Ran}}^+$ descends to $\on{Gr}_{M,X^{\theta}}^+$.

Since $\on{pres}$ is $\on{Sph}_{M,\on{Ran}}$-linear, we obtain a coaction of $\cO(\check{N}_P)_{\on{Ran}}$ on $\on{pres}(\sIC)$. Moreover, by (the proof of) Proposition \ref{p:restostrata!}, we have an isomorphism 
\[
\on{Inv}_ {\cO(\check{N}_P)}(\on{pres}(\sIC))\simeq \delta_{\on{Gr}_{M,\on{Ran}}}
\]

\noindent as $\Omega(\mathfrak{\check{n}}_P)_{\on{Ran}}$-modules, where the action of $\Omega(\mathfrak{\check{n}}_P)_{\on{Ran}}$ on $\delta_{\on{Gr}_{M,\on{Ran}}}$ is trivial. The proof now follows by Koszul duality: denote by $\delta_{\on{Gr}_{M,\on{Ran}}}\underset{\Omega(\mathfrak{\check{n}}_P)_{\on{Ran}}}{\star}-$ the functor of taking coinvariants for a $\Omega(\mathfrak{\check{n}}_P)_{\on{Ran}}$-module. That is, the inverse functor to the equivalence of Lemma \ref{l:KD}.\footnote{We apply Lemma \ref{l:KD} to the category $\sC=\on{Sph}_{M,\on{Ran}}$, which is easily seen to be dualizable as a module category for $\on{Rep}(\check{M})_{\on{Ran}}$.} Then:
\[
\on{pres}(\sIC)\simeq \delta_{\on{Gr}_{M,\on{Ran}}}\underset{\Omega(\mathfrak{\check{n}}_P)_{\on{Ran}}}{\star}\on{Inv}_ {\cO(\check{N}_P)}(\on{pres}(\sIC))\simeq  \delta_{\on{Gr}_{M,\on{Ran}}}\underset{\Omega(\mathfrak{\check{n}}_P)_{\on{Ran}}}{\star}\delta_{\on{Gr}_{M,\on{Ran}}}\simeq \cO(\check{N}_P)_{\on{Ran}}.
\]

\end{proof}

\subsubsection{}\label{s:!rescomp} Finally, let us record a lemma that provides a lower bound on the perverse cohomological degrees of the $!$-restriction of $\sIC$ to strata. This will be used in the local-to-global comparison in Section \ref{S:loctoglob}.

By $\mathfrak{L}_{\on{Ran}}N_P$-equivariance and unitality of $\sIC$, the sheaf $j^{\theta,!}(\sIC)$ descends along $p^{\theta}$. That is, we have 
\[
j^{\theta,!}(\sIC)\simeq p^{\theta,!}(\cF^{\theta})
\]

\noindent for some $\cF^{\theta}\in D(\on{Gr}_{M,X^{\theta}}^{+})$.

\subsubsection{} The following lemma is proved in Appendix \ref{s:APPB2}:
\begin{lem}\label{l:rescomp}
For $\theta\neq 0$, the sheaf $\cF^{\theta}$ lies in perverse cohomological degrees $\geq 1 + \langle 2(\rho_G-\rho_M),\theta\rangle$.
\end{lem}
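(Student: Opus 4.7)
The plan is to adapt the strategy used in the proof of Proposition \ref{p:restostrata!} (which computes the $*$-restriction $j^{\theta,*}(\sIC)$) to the present setting of the $!$-restriction. By the $\mathfrak{L}_{\on{Ran}}N_P$-equivariance of $\sIC$ (Lemma \ref{l:ICisSI}) and unitality/factorization, the entire assertion reduces to an analogous statement over a point $x \in X \subset \on{Ran}$: we must control the perverse cohomological degrees of the descent of $j^{\theta,!}(\sIC_x)$ along the unipotent gerbe $p^\theta_x: \mathfrak{L}^+_x M \backslash S^\theta_{P,x} \to \mathfrak{L}^+_x M \backslash \on{Gr}^{+,\theta}_{M,x}$.

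Next, I would compute the $!$-restriction by introducing the \emph{opposite} parabolic restriction functor $\on{pres}^!_x := {'p^\theta_{x,*}} \circ j^{\theta,!}$ (suitably shifted). By Braden's hyperbolic localization theorem, this functor is canonically isomorphic to the analogous $*$-pull, $!$-push functor associated to the opposite parabolic $P^-$, when restricted to the $\mathfrak{L}^+_x T$-equivariant objects in which $\sIC_x$ lives. As in the proof of Proposition \ref{p:restostrata!}, the post-composition of $\on{pres}^!_x$ with the naive geometric Satake functor for $\check{G}$ agrees with the restriction functor $\on{Res}^{\check{G}}_{\check{M}}$, but where the Satake factor on the target $\on{Sph}_{M,x}$ now carries the opposite homological normalization. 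Running the Bar-complex argument from \emph{loc.cit.} with the Hecke structure on $\sIC = \on{Ind}_{\on{DrPl}_{\check{M},\check{G}}}^{\Hecke_{\check{M},\check{G}}}(\delta_{\on{Gr}_{G,\on{Ran}}})$ then produces, in place of $\cO(\check{N}_P)_{X^{\theta}}$, its Verdier dual $\fU(\check{\fn}_P)_{X^\theta}$ (cf. \eqref{eq:VerdierdualityFU}), still with the overall cohomological shift $[-\langle 2(\rho_G-\rho_M),\theta\rangle]$ arising from the normalization $\sS_{\on{Ran}}$ of \S\ref{s:shift2}. In summary, one gets an identification
\[
\cF^\theta \simeq \fU(\cnp)_{X^\theta}\bigl[-\langle 2(\rho_G-\rho_M),\theta\rangle\bigr].
\]

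The bound now follows immediately from Lemma \ref{l:ugeq1}: since $\fU(\cnp)_{X^\theta}$ is concentrated in perverse degrees $\geq 1$ for $\theta\neq 0$, applying the shift $[-\langle 2(\rho_G-\rho_M),\theta\rangle]$ yields perverse degrees $\geq 1 + \langle 2(\rho_G-\rho_M),\theta\rangle$, as desired.

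The main obstacle is the step running the Bar-complex argument in the $!$-direction: unlike $j^{\theta,*}$, the functor $j^{\theta,!}$ does not commute with arbitrary colimits, so one cannot apply it term-by-term to the Bar resolution of $\sIC$. The cleanest route around this is via Braden's theorem, as sketched above, which effectively moves the computation to the opposite parabolic where it becomes a $*$-pull, $!$-push problem analogous to the one already solved. An alternative route is to identify the Verdier dual $\bD(\sIC)$ explicitly (as a dual Drinfeld-Pl\"ucker induction built from $\fU(\cnp)_{\on{Ran}}$ in place of $\cO(\check{N}_P)_{\on{Ran}}$) and use $j^{\theta,!} \simeq \bD\circ j^{\theta,*}\circ \bD$; the two approaches lead to the same answer, and in either case the nontrivial input is checking that the Satake compatibility produces the shift $[-\langle 2(\rho_G-\rho_M),\theta\rangle]$ (and not its opposite), since it is precisely this sign that makes the perverse estimate go through.
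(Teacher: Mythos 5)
Your plan aims at a strictly stronger statement than the lemma asks for: a full identification $\cF^\theta\simeq\fU(\cnp)_{X^\theta}[-\langle 2(\rho_G-\rho_M),\theta\rangle]$. That identification is true (it is recorded as a Remark after the lemma, as a consequence of Corollary \ref{c:globrestostrata} --- which, note, is itself downstream of this lemma, so it cannot be quoted here). But your argument for it has a genuine gap. First, the ``main obstacle'' you identify is a red herring: in the continuous DG-category framework of this paper, $j^{\theta,!}$ commutes with all colimits, so applying it termwise to a Bar resolution or to the filtered colimit of Lemma \ref{l:colimdescrip} is not the problem. The real problem is the step you wave at: after passing to the opposite hyperbolic restriction via Braden, you assert that ``running the Bar-complex argument'' now produces $\fU(\cnp)$ in place of $\cO(\check{N}_P)$. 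This does not follow from what you have said. Both hyperbolic restriction functors ($*$-pull/$!$-push and $!$-pull/$*$-push) send $\delta_{\on{Gr}_{G,x}}$ to $\delta_{\on{Gr}_{M,x}}$ and both are compatible with geometric Satake and $\on{Res}^{\check{G}}_{\check{M}}$; run naively, the identical Bar-complex manipulation would therefore output $\cO(\check{N}_P)$ again. The whole content of the dual statement is that the two functors differ by component-wise shifts and that the transition maps in the resulting colimit assemble into the coalgebra $\fU(\cnp)$ rather than the algebra $\cO(\check{N}_P)$; neither the shift bookkeeping nor the identification of transition maps is carried out, and the alternative route via $\bD(\sIC)$ presupposes a description of the Verdier dual of $\sIC$ that the paper never establishes.

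The paper's own proof is much more economical and proves only the stated bound. By factorization of $\cF^\theta$ over $\on{Conf}_{G,P}$ one reduces to the restriction along the main diagonal $X\to X^\theta$; this restriction is shown to be ULA over $X$ (Lemma \ref{l:ULA}), so it suffices to bound the $!$-fiber at a single point $x$ by $\geq 2+\langle 2(\rho_G-\rho_M),\theta\rangle$. One then invokes the filtered colimit description $\on{IC}^{\frac{\infty}{2}}_{P,x}\simeq\on{colim}_{\lambda}\,e^{-\lambda}\star\delta_{\on{Gr}_{G,x}}\star V^{\lambda}$ of Lemma \ref{l:colimdescrip}; since coconnectivity estimates pass through filtered colimits, everything reduces to bounding $i^{\lambda+\theta,!}_x(\on{IC}^{\lambda})$, which follows from parity vanishing of $\on{IC}^{\lambda}$ together with the observation that $\on{Gr}^{+,\lambda+\theta}_{M,x}\cap\on{Gr}^{\lambda}_{G,x}=\emptyset$ for $\theta\neq 0$. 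No identification of $\cF^\theta$, no Braden, and no duality is needed. If you want to salvage your stronger statement locally, you would need to actually prove the opposite-orbit analogue of Proposition \ref{p:strata}, which is a separate computation.
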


\begin{rem}
It follows from Corollary \ref{c:globrestostrata} below that in fact:
\[
\cF^{\theta}\simeq \fU(\cnp)_{X^{\theta}}[-\langle 2(\rho_G-\rho_M),\theta\rangle].
\]

\end{rem}

\newpage

\section{Local-to-global comparisons}\label{S:LOCTOGLOB}

In this section, we will describe a local version of the geometric Eisenstein series functors and compare them with their global counterparts. 

\subsection{The relative semi-infinite space.}\label{s:relsemiinf}

Define a relative version of $\ot{S}^0_{P, \Ran}$ over $\on{Bun}_M$ as the fiber product:
\[(\Bun_M \times \Ran)\underset{\mathbb{B}\mathfrak{L}^+_{\Ran}M}{\times}\mathfrak{L}^+_{\Ran}M\backslash\ot{S}^0_{P,\Ran}=: \ot{\Gr}_{P,\Bun_M}.\]

\noindent Here $\mathbb{B} \mathfrak{L}^+_{\Ran}M $ denotes the factorizable prestack parameterizing a point of $\Ran$ together with an $M$-bundle on its formal neighborhood along $X$. Note every such bundle is trivial fppf locally on the base. Unwinding the definition of $\ot{\Gr}_{P,\Bun_M}$, we see by Beauville-Laszlo gluing that there is a canonical map: 
\[ \pi_P: \ot{\Gr}_{P,\Bun_M} \longrightarrow \ot{\Bun}_P.\]

\subsubsection{} Note that $\ot{\Gr}_{P,\Bun_M}$ is stratified by the spaces
\[(\Bun_M \times \Ran)\underset{\mathbb{B}\mathfrak{L}^+_{\Ran}M}{\times}\mathfrak{L}^+_{\Ran}M\backslash S_{P,\Ran}^{\theta}\]

\noindent in a way compatible with the stratification on $\ot{\Bun}_P$, cf. Section \ref{S:DRINF}.

Let $\on{pr_0}$ and $\on{pr_M}$ denote the projections from $\ot{\Gr}_{P,\Bun_M}$ to $\mathfrak{L}^+_{\Ran}M\backslash\ot{S}^0_{P,\Ran}$ and $\Bun_M$, respectively.

\subsection{Homological contractibility results} In this section, we will show that the map $\pi_P$ is universally homologically contractible.

\subsubsection{} Let $\mathcal{Y}$ be a prestack equipped with a map $\mathcal{Y} \to \bB\mathfrak{L}^+_{\Ran}M$. Define 
\[_{\mathcal{Y}}{\Gr_{G,\Ran}} \coloneqq \cY\underset{\bB\mathfrak{L}^+_{\Ran}M}{\times} \mathfrak{L}^+_{\Ran}M\backslash \Gr_{G,\Ran}\]

\noindent to be the corresponding twisted version of the Beilinson-Drinfeld affine Grassmannian. To simplify notation, we write: 
\[_{\Bun_M}{\Gr_{G,\Ran}} \coloneqq _{\Bun_M \times \Ran}{\Gr_{G,\Ran}}.\]

\subsubsection{} Note that $_{\Bun_M}{\Gr_{G,\Ran}}$ parameterizes a point $x_I$ of $\Ran$, a $G$-bundle $\sP_G$ on $X$, an $M$-bundle $\sP_M$ on $X$ and an identification of $\sP_G$ with $\sP_M\overset{M}{\times} G$ away from $x_I$.

Note that there is an action
\[\on{Sph}_{M,\Ran} \underset{D(\on{Ran})}{\otimes}\on{Sph}_{G,\Ran}\curvearrowright D({_{\Bun_M}{\Gr_{G,\Ran}}}),\]

\noindent and therefore an action
\[\on{Rep}(\check{M})_{\on{Ran}} \underset{D(\on{Ran})}{\otimes} \on{Rep}(\check{G})_{\on{Ran}}\curvearrowright D({_{\Bun_M}{\Gr_{G,\Ran}}})\]

\noindent by restriction along the naive geometric Satake functor associated to the reductive group $M\times G$. 

\subsubsection{}\label{s:tildepol} Define $\ot{\Bun}_{P,\on{pol}}\to \on{Ran}$ to be the moduli stack parameterizing a point $x_I$ of $\Ran$, a $G$-bundle $\sP_G$, an $M$-bundle $\sP_M$, and for every $G$-representation $V$, injective meromorphic maps  
\begin{equation}\label{eq: meromorphic maps} V^{N_P}_{\sP_M} \to V_{\sP_G}\end{equation}

\noindent of coherent sheaves satisfying the Plücker relations, and which are non-degenerate away from $x_I$. 

Inside $\ot{\Bun}_{P,\on{pol}}$ there is the subfunctor $\ot{\Bun}_{P,\on{zer}}$ consisting of points as above with the property that each map \eqref{eq: meromorphic maps} is regular. In this case, the zeroes of the maps (\ref{eq: meromorphic maps}) are automatically supported on the $x_I$. We have a natural map:
\[
\ot{\Bun}_{P,\on{zer}}\to \ot{\on{Bun}}_P\times\on{Ran}.
\]

\subsubsection{} Note that there is a canonical map
\[\pi_{P,\on{pol}}: {_{\Bun_M}{\Gr_{G,\Ran}}} \longrightarrow \ot{\Bun}_{P,\on{pol}}\]

\noindent taking a generic reduction of a $G$-bundle to an $M$-bundle to the resulting (meromorphic) Plücker data. The fiber of $\pi_{P,\on{pol}}$ over $\ot{\Bun}_{P,\on{zer}}$ is precisely $\ot{\Gr}_{P,\Bun_M}$. We let $\pi_{P,\on{zer}}$ denote the resulting map:
\[
\pi_{P,\on{zer}}: \ot{\Gr}_{P,\Bun_M}\to \pi_{P,\on{zer}}.
\]

\subsubsection{} For $\theta\in \Lambda_{G,P}^{\on{neg}}$, define:
\[
{}_{\theta}\ot{\Bun}_{P,\on{zer}}:=\ot{\Bun}_{P,\on{zer}} \underset{{\ot{\Bun}_P}}{\times} {}_{\theta}\ot{\Bun}_P.
\]

\subsubsection{} The main property of $\ot{\Bun}_{P,\on{pol}}$ is that we have a Hecke action:
\[\on{Rep}(\check{M})_{\on{Ran}} \underset{D(\on{Ran})}{\otimes}\on{Rep}(\check{G})_{\on{Ran}}\curvearrowright D(\ot{\Bun}_{P,\on{pol}}). \]

\noindent The morphism $\pi_{P,\on{pol}}$ clearly commutes with Hecke correspondences for $M$ and $G$, and hence the functor $\pi^!_{P,\on{pol}}$ is equivariant for the action of $\on{Rep}(\check{M})_{\on{Ran}} \underset{D(\on{Ran})}{\otimes}\on{Rep}(\check{G})_{\on{Ran}}$. 

\subsubsection{} For an algebraic group $H$, define $\Bun_H^{\on{gen}}$ to be the stack parameterizing a generically defined $H$ bundle on $X$, see \cite[§2]{barlev2012d}. For any homomorphism $H \to K$ of algebraic groups, there is an evident map 
\[ \on{Ind}^{\on{gen}}_{H \to K}: \Bun_H^{\on{gen}} \longrightarrow \Bun_K^{\on{gen}}\]

\noindent given by induction of torsors. In particular, we obtain a map $\on{Ind}^{\on{gen}}_{M \to P}:\Bun_M^{\on{gen}} \to \Bun_P^{\on{gen}}$. 

\subsubsection{} Before continuing, let us briefly recall the notion of universal homological contractibility from \cite{gaitsgory2011contractibility}. A morphism $\cY \to \cX$ is said to be \emph{universally homologically contractible} if for any affine scheme $S$ mapping to $\cX$, the $!$-pullback functor
\[D(S) \longrightarrow D(S \underset{\cX}{\times} \cY)\]

\noindent along the projection $S \underset{\cX}{\times} \cY \to S$ is fully faithful. Using that any prestack may be written as a colimit of affine schemes, we obtain that for \emph{any} prestack $\cX_0$ mapping to $\cX$, we have that $!$-pullback 
\[D(\cX_0) \longrightarrow D(\cX_0 \underset{\cX}{\times} \cY)\]

\noindent along the projection is fully faithful. 

\begin{lem}\label{lem: affine contractibility}
    The induction map $\on{Ind}^{\on{gen}}_{M \to P}:\Bun_M^{\on{gen}} \to \Bun_P^{\on{gen}}$ is universally homologically contractible.
\end{lem}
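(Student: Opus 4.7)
The plan is to fix an arbitrary affine test scheme $S$ with a map $S \to \Bun_P^{\on{gen}}$ classifying a generically defined $P$-bundle $(\sP_P, U)$ on $S \times X$, and to show directly that the projection from the $S$-prestack $\sM := S \underset{\Bun_P^{\on{gen}}}{\times} \Bun_M^{\on{gen}}$ down to $S$ is universally homologically contractible. The first task is to describe $\sM$ geometrically. Using the Levi splitting $s: M \hookrightarrow P$ to identify $P \simeq M \ltimes N_P$, the quotient $P/M$ is isomorphic to $N_P$ as a scheme (with $P$ acting by left multiplication, which is affine in the $N_P$-coordinate), so giving a lift of $\sP_P$ to a generic $M$-bundle is the same datum as giving a generic section of the twisted affine $N_P$-bundle $\sP_P \overset{P}{\times} N_P \to S \times X$. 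Hence $\sM$ is identified with the $S$-prestack of generic sections of this bundle.

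To reduce to the linear case, I would use a descending filtration $N_P = N^0 \supset N^1 \supset \cdots \supset N^r = 1$ of $N_P$ by $M$-stable normal subgroups whose successive subquotients $\cV_i := N^i/N^{i+1}$ are abelian vector groups (such a filtration exists because $N_P$ is unipotent and acted on by $M$). Passing to associated bundles, this yields a tower of $S$-prestacks interpolating between $\sM$ and $S$, in which each arrow is, fppf-locally on its target, a torsor for the additive $S$-group prestack of generic sections of the twisted vector bundle on $S \times X$ associated to the $M$-representation $\cV_i$. Since universal homological contractibility is stable under composition and under fppf-local torsors by universally homologically contractible group prestacks, it suffices to establish the following key input: for any vector bundle $V$ on $S \times X$, the $S$-prestack $\Gamma^{\on{gen}}(S \times X, V)$ of generic sections of $V$ is universally homologically contractible over $S$.

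This last statement is essentially a result of \cite{barlev2012d} and \cite{gaitsgory2011contractibility}. The idea is that, fppf-locally on $S$, one can present $\Gamma^{\on{gen}}(S \times X, V)$ as a filtered colimit, indexed by $S$-flat effective Cartier divisors $D \subset S \times X$, of the $S$-schemes of honest sections of $V(D)$; each such term is an (infinite-dimensional) relative affine space over $S$, hence universally homologically contractible, and the property propagates through filtered colimits and fppf descent. The main obstacle I anticipate lies in the verification of the second paragraph, namely that the successive-reductions tower does trivialize fppf-locally into torsors for the prestacks $\Gamma^{\on{gen}}(S \times X, \cV_i)$. This relies on the fact that each subquotient $N^i/N^{i+1}$ is a vector group, so that the homogeneous space $(M \ltimes N^i)/(M \ltimes N^{i+1}) \simeq N^i/N^{i+1}$ is an affine space and a reduction of structure group from $M \ltimes N^i$ to $M \ltimes N^{i+1}$ is precisely a (generic) section of the resulting twisted affine bundle modelled on $\cV_i$.
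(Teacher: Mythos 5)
Your opening move is exactly the paper's: fix an $S$-point of $\Bun_P^{\on{gen}}$, identify the fiber product $S \underset{\Bun_P^{\on{gen}}}{\times} \Bun_M^{\on{gen}}$ with the prestack of generic sections of the twisted bundle $\sP_P \overset{P}{\times} P/M \simeq \sP_P \overset{P}{\times} N_P$ over a domain in $S\times X$, and observe that the fibers are affine spaces (the paper likewise notes, in a footnote, that this is \emph{not} an $N_P$-torsor in general). Where you diverge is in how you conclude contractibility of this section space. The paper stops here and invokes \cite[Lemma 3.1.2]{gaitsgory2011contractibility} (together with \cite[Remark 6.2.12]{barlev2012d}), which directly covers generic sections of fppf-locally trivial fibrations with affine-space fibers. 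You instead perform a unipotent d\'evissage: filter $N_P$ by $M$-stable normal subgroups with vector-group subquotients $\cV_i$, present the section space as a tower in which each stage is fppf-locally a torsor for generic sections of a twisted vector bundle, and then use only the linear case (generic sections of a vector bundle are universally homologically contractible, via the filtered-colimit-over-divisors presentation). Your route is sound but re-proves, in the special case at hand, essentially the content of the lemma the paper cites; it buys a more self-contained argument at the cost of two verifications you correctly flag as nontrivial: (i) that each stage of the tower really is fppf-locally a torsor for the group prestack of generic sections of $\sP$-twisted $\cV_i$ (this uses that $N^i/N^{i+1}$ is abelian and that the conjugation action of $N_P/N^{i+1}$ on it descends to $N_P/N^i$), and (ii) that universal homological contractibility descends along fppf-local torsors for u.h.c.\ group prestacks, which requires fppf descent for D-module categories. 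Neither point is a gap, but if you want the short proof, the cited lemma makes the d\'evissage unnecessary.
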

\begin{proof}
    Fix an $S$-point $\sP_P$ of $\Bun_P^{\on{gen}}$. We wish to show that pullback along the projection 
    \[S \underset{\Bun_P^{\on{gen}}}{\times}\Bun_M^{\on{gen}} \to S\]

    \noindent is fully faithful. To this end, if 
    \[p: \mathscr{Q} \to U \subseteq X \times S\]
    
    \noindent is a morphism from a prestack $\mathscr{Q}$ to a domain $U$ in $X \times S$, we can define the prestack 
    \[\underline{\on{GSect}}_{S}(X \times S, \mathscr{Q})\]
    
    \noindent of generically defined sections of $p$ (see \cite{barlev2012d} for a definition).\footnote{Although in \cite{barlev2012d}, the author only defines rational sections for $\mathscr{Q}$ living over the whole curve, the space still makes sense in this broader generality since, for example, the intersection of any two domains is a domain.}
    
    For a generically defined $S$-family of $P$-torsors $\sP_P$, we can consider the quotient 
    \[\sP_P \overset{P}{\times} P/M \coloneqq (\sP_P \times P/M)/P\]

    \noindent where $P$ acts diagonally. Note $\sP_P \overset{P}{\times} P/M$ is equipped with the structure of an fppf locally trivial fibration over $U$ with a universally homologically contractible fiber\footnote{Of course, in general there is no action of $N_P$ on $\sP_P \overset{P}{\times} P/M$ unless $\sP_P$ is trivial, and hence $\sP_P \overset{P}{\times} P/M$ is \emph{not} a torsor for $N_P$.} given by $P/M \simeq N_P$.

    Now it is easy to see that there is an isomorphism 
    \[\underline{\on{GSect}}_{S}(X \times S, \sP_P \overset{P}{\times} P/M) \overset{\sim}{\longrightarrow} S \underset{\Bun_P^{\on{gen}}}{\times}\Bun_M^{\on{gen}}\]

    \noindent commuting with the projections to $S$. The result now follows from \cite[Lemma 3.1.2]{gaitsgory2011contractibility} using the fact that $P/M 
    \simeq N_P$ is an affine space (see also \cite[Remark 6.2.12]{barlev2012d}). 
\end{proof}

\begin{prop}\label{prop: relative Grassmannian contractibility}
    The map $\pi_{P,\on{pol}}: {_{\Bun_M}}{\Gr_{G,\on{Ran}}} \to \ot{\Bun}_{P,\on{pol}}$ is universally homologically contractible.
\end{prop}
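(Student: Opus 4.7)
The plan is to exhibit $\pi_{P,\on{pol}}$ as a base change of the induction morphism $\on{Ind}^{\on{gen}}_{M \to P}: \Bun_M^{\on{gen}} \to \Bun_P^{\on{gen}}$, and conclude via Lemma \ref{lem: affine contractibility} together with the fact that universal homological contractibility is preserved under base change (which is immediate from the definition, since the base-changed $!$-pullback functor is a further $!$-pullback).

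Concretely, I would construct the commutative square
\[\begin{tikzcd}
{_{\Bun_M}\Gr_{G,\Ran}} \arrow[r] \arrow[d, "\pi_{P,\on{pol}}"'] & {\Bun_M^{\on{gen}}} \arrow[d, "\on{Ind}^{\on{gen}}_{M \to P}"] \\
{\ot{\Bun}_{P,\on{pol}}} \arrow[r] & {\Bun_P^{\on{gen}}}
\end{tikzcd}\]
in which the bottom horizontal arrow sends a point $(x_I,\sP_G,\sP_M,\text{Pl\"ucker data})$ to the generic $P$-reduction $\sP_P^{\on{gen}}$ of $\sP_G$ obtained by restricting the Pl\"ucker data to $X\setminus x_I$, and the top horizontal arrow sends $(x_I,\sP_G,\sP_M,\alpha)$ to the generic $M$-bundle $\sP_M|_{X\setminus x_I}$ (equivalently, the generic $M$-reduction of $\sP_G$ cut out by $\alpha$). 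Commutativity is automatic: both compositions yield $\on{Ind}^{\on{gen}}_{M \to P}(\sP_M|_{X\setminus x_I})$ as a generic $P$-reduction of $\sP_G$.

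The heart of the argument is to verify that the square is Cartesian. Given a test point of the fiber product, namely a compatible pair of a point of $\ot{\Bun}_{P,\on{pol}}$ and a generic $M$-bundle $\sP_M^{\on{gen}}$ refining the associated $\sP_P^{\on{gen}}$, I would produce an identification $\alpha:\sP_G|_{X\setminus x_I}\simeq (\sP_M\overset{M}{\times} G)|_{X\setminus x_I}$ as follows. The refinement gives $\sP_G|_{\on{dom}}\simeq \sP_M^{\on{gen}}\overset{M}{\times} G$; meanwhile the induced generic $M$-bundle of $\sP_P^{\on{gen}}$ is canonically identified with $\sP_M|_{X\setminus x_I}$ via the Pl\"ucker data, and it also equals $\sP_M^{\on{gen}}$ by the refinement property, giving a canonical identification $\sP_M^{\on{gen}}\simeq \sP_M|_{X\setminus x_I}$ of generic $M$-bundles. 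Composing yields $\alpha$, and this recipe is inverse to the canonical map from $_{\Bun_M}\Gr_{G,\Ran}$ to the fiber product.

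The main obstacle I anticipate is the careful handling of equivalence classes of generic data: points of $\Bun_H^{\on{gen}}$ are represented by pairs $(U,\sP_H^U)$ modulo restriction to common subdomains, and the identifications above must be made well-defined at the level of such equivalence classes. This is essentially bookkeeping---one extends representatives to the maximal domain $X\setminus x_I$ using the canonical identification with the restriction of the global $\sP_M$---but must be done carefully to ensure the fiber product statement. Once this is handled, base change from Lemma \ref{lem: affine contractibility} finishes the proof.
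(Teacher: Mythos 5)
Your proof is essentially the paper's: the paper realizes $\pi_{P,\on{pol}}$ as the base change of $\on{Ind}^{\on{gen}}_{M \to P}$ along a Cartesian square whose right-hand column is $\Bun_M^{\on{gen}} \underset{\Bun_G^{\on{gen}}}{\times} \Bun_G \to \Bun_P^{\on{gen}} \underset{\Bun_G^{\on{gen}}}{\times} \Bun_G$, which has the same fiber product as your square (the extra $\Bun_G$ factor cancels), and then invokes Lemma \ref{lem: affine contractibility} together with stability of universal homological contractibility under base change. Your verification of the Cartesian property on points, including the domain-of-definition bookkeeping for generic data, is exactly the content the paper leaves implicit.
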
\label{prop: full contractibility}
\begin{proof}
    Forgetting the point of $\on{Ran}$, we obtain morphisms
    \[{_{\Bun_M}}{\Gr_{G,\on{Ran}}} \longrightarrow \Bun_M^{\on{gen}} \underset{\Bun_G^{\on{gen}}}{\times} \Bun_G,\,\,\,  \,\,\, \ot{\Bun}_{P,\on{pol}} \longrightarrow  \Bun_P^{\on{gen}} \underset{\Bun_G^{\on{gen}}}{\times} \Bun_G \]

    \noindent such that the diagram 
    \[\begin{tikzcd}
	{{_{\Bun_M}}{\Gr_{G,\on{Ran}}}} & {\Bun_M^{\on{gen}} \underset{\Bun_G^{\on{gen}}}{\times} \Bun_G} \\
	{\ot{\Bun}_{P,\on{pol}}} & {\Bun_P^{\on{gen}} \underset{\Bun_G^{\on{gen}}}{\times} \Bun_G}
	\arrow[from=1-1, to=1-2]
	\arrow["{\pi_{P,\on{pol}}}"', from=1-1, to=2-1]
	\arrow[from=1-2, to=2-2]
	\arrow[from=2-1, to=2-2]
\end{tikzcd}\]

\noindent is Cartesian. Here the right vertical arrow is obtained via base change from $\on{Ind}^{\on{gen}}_{M \to P}$. Now the result follows from Lemma \ref{lem: affine contractibility}. 
\end{proof}

\begin{cor}\label{cor: positive contractibility}
    The map $\pi_P: \ot{\Gr}_{P,\Bun_M} \to \ot{\Bun}_P$ is universally homologically contractible. 
\end{cor}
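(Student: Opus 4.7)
The plan is to factor $\pi_P$ through $\ot{\Bun}_{P,\on{zer}}$ as
\[\ot{\Gr}_{P,\Bun_M} \xrightarrow{\pi_{P,\on{zer}}} \ot{\Bun}_{P,\on{zer}} \xrightarrow{\on{oblv}_{\on{zer}}} \ot{\Bun}_P,\]
where the second map forgets the Ran-point, and to verify universal homological contractibility for each factor separately. Since this property is closed under composition (it may be tested against arbitrary prestacks, as observed in the excerpt preceding Lemma \ref{lem: affine contractibility}, so successive fully faithful $!$-pullbacks compose), this will suffice.

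For the first factor, $\ot{\Gr}_{P,\Bun_M}$ sits in a Cartesian square
\[\begin{tikzcd}
\ot{\Gr}_{P,\Bun_M} \ar[r] \ar[d, "\pi_{P,\on{zer}}"'] & {{_{\Bun_M}}{\Gr_{G,\Ran}}} \ar[d, "\pi_{P,\on{pol}}"] \\
\ot{\Bun}_{P,\on{zer}} \ar[r] & \ot{\Bun}_{P,\on{pol}}
\end{tikzcd}\]
by the excerpt's identification of $\ot{\Gr}_{P,\Bun_M}$ with the preimage of $\ot{\Bun}_{P,\on{zer}}$ under $\pi_{P,\on{pol}}$. Universal homological contractibility is stable under base change, so Proposition \ref{prop: relative Grassmannian contractibility} immediately yields the property for $\pi_{P,\on{zer}}$.

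For the second factor, I would argue as follows. Given an affine test scheme $S$ with a map $\sigma_S: S \to \ot{\Bun}_P$, the defect of $\sigma_S$ is a $\Lambda_{G,P}^{\on{neg}}$-valued relative divisor on $X \times S$ whose support $Z_S \subset X \times S$ is finite over $S$. Unwinding definitions, the fiber product $S \times_{\ot{\Bun}_P} \ot{\Bun}_{P,\on{zer}}$ classifies pairs $(T \to S, x_I \in \Ran(T))$ with $|x_I| \supseteq (Z_S)_T$. Working fppf-locally on $S$, one may assume $Z_S$ is a finite union of graphs of sections $S \to X$, hence represented by an $S$-point of $\Ran$; the assignment $x_I \mapsto x_I \cup Z_S$ then identifies this prestack with $\Ran \times S$, and universal homological contractibility of $\on{oblv}_{\on{zer}}$ reduces fppf-locally to the classical theorem that $\Ran \to \on{pt}$ is universally homologically contractible \cite{gaitsgory2011contractibility}.

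The hard part is the second step: justifying that the contractibility of $\Ran$ survives in the relative, required-support setting as the defect $Z_S$ varies with $S$. Once one has the fppf-local reduction to the case where $Z_S$ is a union of sections, the remainder is formal from the absolute contractibility of $\Ran$ and the fact that pushforward along universally homologically contractible maps is fppf-local on the target.
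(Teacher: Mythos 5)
Your overall strategy coincides with the paper's: factor $\pi_P$ as $\on{oblv}_{\on{zer}}\circ\pi_{P,\on{zer}}$, obtain universal homological contractibility of $\pi_{P,\on{zer}}$ by base change from Proposition \ref{prop: relative Grassmannian contractibility} along the Cartesian square over $\ot{\Bun}_{P,\on{zer}}\to\ot{\Bun}_{P,\on{pol}}$, and reduce the second factor to a contractibility statement for Ran-type spaces. The first two steps are correct and are exactly what the paper does.

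The gap is in your treatment of $\on{oblv}_{\on{zer}}$. You correctly identify the fiber $S\underset{\ot{\Bun}_P}{\times}\ot{\Bun}_{P,\on{zer}}$ with the ``marked'' Ran space of points $x_I$ whose support contains $(Z_S)_T$, but the map $\Ran\times S\to S\underset{\ot{\Bun}_P}{\times}\ot{\Bun}_{P,\on{zer}}$, $x_I\mapsto x_I\cup Z_S$, is \emph{not} an isomorphism of prestacks: it is not a monomorphism, since $x_I$ and $x_I\cup\{z\}$ for $z\in Z_S$ have the same image. So you cannot transport the contractibility of $\Ran$ along this identification. What is actually needed is the homological contractibility of the marked Ran space $\Ran_{\supseteq Z}$ itself; this is a genuinely separate (though standard) statement, proved using the semigroup structure on $\Ran$ rather than by comparison with $\Ran$ --- it is \cite[Prop.~A.2.7]{gaitsgory2021semi}. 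Salvaging your route would require showing that the union map above is itself universally homologically contractible, which is of the same order of difficulty as the statement you are trying to prove.

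The paper disposes of the relative issue you flag as ``the hard part'' differently: it observes that $\on{oblv}_{\on{zer}}:\ot{\Bun}_{P,\on{zer}}\to\ot{\Bun}_P$ is \emph{pseudo-proper}, so that by \cite[Lemma A.2.5]{gaitsgory2021semi} universal homological contractibility may be checked on fibers over field-valued points only. Over a field-valued point the fiber is precisely a marked Ran space, and \cite[Prop.~A.2.7]{gaitsgory2021semi} applies. This removes both the need to fppf-localize the defect divisor $Z_S$ into graphs of sections and the need to control how $Z_S$ varies over $S$.
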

\begin{proof}

We have a Cartesian diagram:

\[\begin{tikzcd}
	{\ot{\Gr}_{P,\Bun_M}} && {{_{\Bun_M}}{\Gr_{G,\on{Ran}}}} \\
	\\
	{\ot{\Bun}_{P,\on{zer}}} && {\ot{\Bun}_{P,\on{pol}}.}
	\arrow[from=1-1, to=1-3]
	\arrow["{\pi_{P,\mathrm{zer}}}"', from=1-1, to=3-1]
	\arrow["{\pi_{P,\mathrm{pol}}}", from=1-3, to=3-3]
	\arrow[from=3-1, to=3-3]
\end{tikzcd}\]

It is easy to see that the forgetful map
    \[\ot{\Bun}_{P,\on{zer}} \to \ot{\Bun}_P \]

    \noindent that forgets the point of $\Ran$ is \emph{pseudo-proper}, i.e. its fiber over an affine scheme $S$ can be written as a colimit of schemes proper over $S$ with closed embeddings as transition maps. 

Hence by \cite[Lemma A.2.5]{gaitsgory2021semi}, it suffices to show that its fibers over field-valued points are homologically contractible. This follows, for example, from \cite[Prop. A.2.7]{gaitsgory2021semi}.
\end{proof}

\subsection{Local-to-global compatibility of IC sheaves}\label{S:loctoglob} In this section we verify that $\pi_P^!(\on{IC}_{\ot{\Bun}_P})$ is equivalent to the relative semi-infinite IC sheaf, up to suitable shifts.
 
\subsubsection{} There is a natural map
\[j_{P,\on{pol}}: \Bun_P \times \Ran \longrightarrow \ot{\Bun}_{P,\on{pol}}\]

\noindent over $\on{Ran}$ taking a $P$-bundle with a point $x_I$ of $\Ran$ to the associated non-degenerate Plücker data for the induced $G$-bundle (see e.g. \cite[§4.1]{braverman1999geometric}).

\subsubsection{} For any $\theta\in \Lambda_{G,P}^{\on{neg}}$, define
\[
\on{Gr}_{P,\Bun_M}^{\theta}:=\Bun_M \times \Ran\underset{\mathbb{B}\mathfrak{L}^+_{\Ran}M}{\times}\mathfrak{L}^+_{\Ran}M\backslash S^{\theta}_{P,\Ran}.
\]

\noindent When $\theta=0$, we write:
\[
\on{Gr}_{P,\Bun_M}^{0}=:\on{Gr}_{P,\Bun_M}.
\]

We denote by $j_P^{\theta}$ the corresponding embedding:
\[
j_P^{\theta}: \on{Gr}_{P,\Bun_M}^{\theta}\into {_{\Bun_M}}{\Gr_{G,\Ran}}.
\]

\noindent When $\theta=0$, we write: $j_P^{0}=:j_P$.

\subsubsection{} We have a Cartesian square
\begin{equation}\label{origincartsquare}\begin{tikzcd}
	\on{Gr}_{P,\Bun_M} & {{_{\Bun_M}}{\Gr_{G,\Ran}}} \\
	{\Bun_P \times \Ran} & {\ot{\Bun}_{P,\on{pol}}.}
	\arrow["{j_P}", from=1-1, to=1-2]
	\arrow["{\pi^0_{P}}"', from=1-1, to=2-1]
	\arrow["{\pi_{P,\on{pol}}}", from=1-2, to=2-2]
	\arrow["{j_{P,\on{pol}}}"', from=2-1, to=2-2]
\end{tikzcd}
\end{equation}

\subsubsection{} Note that the partially defined left adjoint $\pi^0_{P,!}$ of the functor $\pi^{0,!}_{P}$ is defined on $\omega_{\on{Gr}_{P,\Bun_M}}$ by holonomicity of the dualizing sheaf. Moreover, by the universal homological contractibility statement of Proposition \ref{prop: relative Grassmannian contractibility}, the counit map
\begin{equation}\label{eq: counit for dualizing is an iso}\pi^0_{P,!}(\omega_{\on{Gr}_{P,\Bun_M}}) \to \omega_{\Bun_P \times \on{Ran}}\end{equation}

\noindent is an isomorphism. 

Since $\pi^!_{P,\on{pol}}$ is $\on{Rep}(\check{M} \times \check{G})_{\on{Ran}}$-equivariant and by rigidity of the latter, the functor $\pi_{P,\on{pol},!}$ is also $\on{Rep}(\check{M} \times \check{G})_{\on{Ran}}$-equivariant. We therefore get induced functors
\[\pi_{P,\on{pol},!}:\on{DrPl}_{\check{M},\check{G}}(D({_{\Bun_M}}{\Gr_{G,\Ran}})) \longleftrightarrow \on{DrPl}_{\check{M},\check{G}}(D(\ot{\Bun}_{P,\on{pol}})): \pi^{!}_{P,\on{pol}},\]

\noindent where the functor from left to right is only partially defined. We have similar assertions for enhanced Drinfeld-Plücker structures and Hecke structures.

\subsubsection{} Recall that we denote by $\mathbf{j}_!$ the $!$-extension of $\omega_{{S^0_{P,\Ran}}}$ to $\mathfrak{L}^+_{\on{Ran}}\backslash {\Gr_{G,\on{Ran}}}$. By Proposition \ref{p:restostrata!}, $\mathbf{j}_!$ is equipped with a canonical enhanced Drinfeld-Plücker structure. Since the projection
\[\on{pr}_{\Bun_M}: {_{\Bun_M}}{\Gr_{G,\Ran}} \longrightarrow \mathfrak{L}^+_{\Ran}M \backslash \Gr_{G,\Ran}\]

\noindent commutes with Hecke correspondences, the sheaf 
\[{_{\Bun_M}}\mathbf{j}_! \coloneqq \on{pr}^!_{\Bun_M}(\mathbf{j}_!)\]

\noindent is also equipped with a Drinfeld-Plücker structure. As a result, the sheaf
\[{_{\Bun_M}}{\IC^{\frac{\infty}{2}}_{P,\Ran}} \coloneqq \on{pr}_{\Bun_M}^!(\IC^{\frac{\infty}{2}}_{P,\Ran}) \]

\noindent comes with a canonical identification:
\[
{_{\Bun_M}}{\IC^{\frac{\infty}{2}}_{P,\Ran}}\simeq \on{Ind}_{\on{EnhDrPl}_{\check{M},\check{G}}}^{\on{Hecke}_{\check{M},\check{G}}}({_{\Bun_M}}\mathbf{j}_!).
\]

\subsubsection{} Let us also denote by $\mathbf{j}^{\on{glob}}_!$ the $!$-extension of $\omega_{\Bun_P \times \on{\Ran}}$ along $j_{P,\on{pol}}: \Bun_P \times \on{\Ran}\to \widetilde{\on{Bun}}_{P,\on{pol}}$. By (\ref{eq: counit for dualizing is an iso}), we have:
\begin{equation}\label{eq:glob}
\pi_{P,\on{pol},!}({_{\Bun_M}}\mathbf{j}_!)\simeq\mathbf{j}_!^{\on{glob}}.
\end{equation}

\noindent By Proposition \ref{p:restostrata!}, $\mathbf{j}^{\on{glob}}_!$ is equipped with an enhanced Drinfeld-Plücker structure.

The following lemma is immediate from universal homological contractibility of $\pi_{P,\on{pol}}$ and $\on{Rep}(\check{M} \times \check{G})_{\on{Ran}}$-equivariance:
\begin{lem}\label{hecke-to-hecke}
    There is a canonical isomorphism:
    \[{_{\Bun_M}}{\IC^{\frac{\infty}{2}}_{P,\Ran}}= \on{Ind}_{\on{EnhDrPl}_{\check{M},\check{G}}}^{\on{Hecke}_{\check{M},\check{G}}}({_{\Bun_M}}\mathbf{j}_!) \simeq \pi^!_{P,\on{pol}}(\on{Ind}_{\on{EnhDrPl}_{\check{M},\check{G}}}^{\on{Hecke}_{\check{M},\check{G}}}(\mathbf{j}^{\on{glob}}_!)).\]
\noindent Moreover, the counit for $\pi^!_{P,\on{pol}}$ applied to $\on{Ind}_{\on{DrPl}}^{\on{Hecke}}(\mathbf{j}^{\on{glob}}_!)$ gives an isomorphism:
\[
\pi_{P,\on{pol},!}\circ \pi_ {P,\on{pol}}^!(\on{Ind}_{\on{EnhDrPl}_{\check{M},\check{G}}}^{\on{Hecke}_{\check{M},\check{G}}}(\mathbf{j}^{\on{glob}}_!))\to \on{Ind}_{\on{EnhDrPl}_{\check{M},\check{G}}}^{\on{Hecke}_{\check{M},\check{G}}}(\mathbf{j}^{\on{glob}}_!).
\]
\end{lem}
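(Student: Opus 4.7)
The argument rests on two inputs: base change along the Cartesian square \eqref{origincartsquare}, and the fact that the $!$-pullback $\pi^!_{P,\on{pol}}$ and its partially defined left adjoint $\pi_{P,\on{pol},!}$ are $\on{Rep}(\check{M}\times\check{G})_{\on{Ran}}$-linear. The first input lets us match the underlying sheaves, while equivariance plus rigidity of $\on{Rep}(\check{M}\times\check{G})_{\on{Ran}}$ ensures that the matching is compatible with enhanced Drinfeld-Pl\"ucker structures and with the induction functor $\on{Ind}^{\on{Hecke}_{\check{M},\check{G}}}_{\on{EnhDrPl}_{\check{M},\check{G}}}$.

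First, I would identify $\pi^!_{P,\on{pol}}(\mathbf{j}^{\on{glob}}_!)$ with ${_{\Bun_M}}\mathbf{j}_!$ as objects of $D({_{\Bun_M}}\Gr_{G,\on{Ran}})$. Applying $!$-pullback along $\pi_{P,\on{pol}}$ to the defining equation $\mathbf{j}^{\on{glob}}_!=j_{P,\on{pol},!}(\omega_{\Bun_P\times\on{Ran}})$ and using base change for $!$-pull and $!$-push along the Cartesian square \eqref{origincartsquare} (valid because $j_{P,\on{pol}}$ is an open embedding), one gets
\[\pi^!_{P,\on{pol}}(\mathbf{j}^{\on{glob}}_!)\simeq j_{P,!}\,\pi^{0,!}_P(\omega_{\Bun_P\times\on{Ran}})= j_{P,!}(\omega_{\on{Gr}_{P,\Bun_M}})\simeq {_{\Bun_M}}\mathbf{j}_!,\]
where the final isomorphism is another instance of base change along the square obtained by projecting ${_{\Bun_M}}\Gr_{G,\on{Ran}}\to\mathfrak{L}^+_{\on{Ran}}M\backslash\Gr_{G,\on{Ran}}$. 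Compatibility with enhanced Drinfeld-Pl\"ucker structures is automatic: the structure on $\mathbf{j}^{\on{glob}}_!$ is transported from the one on ${_{\Bun_M}}\mathbf{j}_!$ via the equivariant iso \eqref{eq:glob}, and the unit map $\on{id}\to\pi^!_{P,\on{pol}}\pi_{P,\on{pol},!}$ (applied in the enhanced DrPl category, which makes sense by equivariance of both adjoints) gives the desired upgrade; alternatively one can read off compatibility strata by strata from \S\ref{s:stratcomp}.

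Next, since $\pi^!_{P,\on{pol}}$ is linear over $\on{Rep}(\check{M}\times\check{G})_{\on{Ran}}$, it commutes with the forgetful and induction functors between $\on{EnhDrPl}_{\check{M},\check{G}}$ and $\on{Hecke}_{\check{M},\check{G}}$, which are defined purely in terms of the action via the algebra map $\cO(\ol{\check{N}_P\backslash\check{G}})_{\on{Ran}}\to\cO(\check{N}_P\backslash\check{G})_{\on{Ran}}\to\cO(\check{G})_{\on{Ran}}$. Combined with Step~1, this yields
\[\pi^!_{P,\on{pol}}\bigl(\on{Ind}^{\on{Hecke}_{\check{M},\check{G}}}_{\on{EnhDrPl}_{\check{M},\check{G}}}(\mathbf{j}^{\on{glob}}_!)\bigr)\simeq\on{Ind}^{\on{Hecke}_{\check{M},\check{G}}}_{\on{EnhDrPl}_{\check{M},\check{G}}}\bigl(\pi^!_{P,\on{pol}}(\mathbf{j}^{\on{glob}}_!)\bigr)\simeq\on{Ind}^{\on{Hecke}_{\check{M},\check{G}}}_{\on{EnhDrPl}_{\check{M},\check{G}}}({_{\Bun_M}}\mathbf{j}_!)={_{\Bun_M}}\IC^{\frac{\infty}{2}}_{P,\on{Ran}},\]
which is the first claim. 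The second claim is then immediate from universal homological contractibility of $\pi_{P,\on{pol}}$ (Proposition \ref{prop: relative Grassmannian contractibility}): this asserts that $\pi^!_{P,\on{pol}}$ is fully faithful, equivalently that the counit $\pi_{P,\on{pol},!}\pi^!_{P,\on{pol}}\to\on{id}$ is an isomorphism wherever defined, and applying it to the sheaf $\on{Ind}^{\on{Hecke}_{\check{M},\check{G}}}_{\on{EnhDrPl}_{\check{M},\check{G}}}(\mathbf{j}^{\on{glob}}_!)$ gives the assertion.

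The only real obstacle is bookkeeping: one must verify that the chosen enhanced Drinfeld-Pl\"ucker structures on $\mathbf{j}^{\on{glob}}_!$ and ${_{\Bun_M}}\mathbf{j}_!$ are matched by Step~1, and that the induction functor genuinely commutes with the right adjoint $\pi^!_{P,\on{pol}}$ at the $\infty$-categorical level. Both are settled cleanly by invoking rigidity of $\on{Rep}(\check{M}\times\check{G})_{\on{Ran}}$, which promotes equivariance of $\pi^!_{P,\on{pol}}$ to equivariance of the adjoint pair $(\pi_{P,\on{pol},!},\pi^!_{P,\on{pol}})$, so that all constructions carried out inside the module categories for $\cO(\check{N}_P\backslash\check{G})_{\on{Ran}}$ and $\cO(\check{G})_{\on{Ran}}$ are preserved.
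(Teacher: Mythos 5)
Your proposal is correct and is exactly the argument the paper has in mind: the paper dispatches this lemma in one line as "immediate from universal homological contractibility of $\pi_{P,\on{pol}}$ and $\on{Rep}(\check{M}\times\check{G})_{\on{Ran}}$-equivariance," and your three steps (base change to identify $\pi^!_{P,\on{pol}}(\mathbf{j}^{\on{glob}}_!)$ with ${_{\Bun_M}}\mathbf{j}_!$, equivariance plus rigidity to commute $\on{Ind}^{\on{Hecke}}_{\on{EnhDrPl}}$ past $\pi^!_{P,\on{pol}}$, and full faithfulness of $\pi^!_{P,\on{pol}}$ for the counit claim) are precisely the intended unpacking of those two inputs.
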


\subsubsection{} Let $i_{P,\on{pol}}$ denote the closed embedding:
\[i_{P,\on{pol}}: \ot{\Bun}_{P,\on{zer}}\to \ot{\Bun}_{P,\on{pol}}.\]

\noindent Moreover, let $\on{oblv}_{\on{zer}}$ denote the forgetful map
\[
\on{oblv}_{\on{zer}}: \ot{\Bun}_{P,\on{zer}}\to \ot{\Bun}_{P}.
\]

From the IC-sheaf on $\ot{\Bun}_P$ with respect to the perverse $t$-structure, we define the sheaf:
\[\IC_{\ot{\Bun}_{P,\on{pol}}} \coloneqq i_{\on{pol},*}(\on{oblv}_{\on{zer}}^!(\IC_{\ot{\Bun}_P})).\]

\subsubsection{} In what follows, we will use the notation $\on{dim}(\Bun_P)$ to denote the locally constant function on $\Bun_P$ that takes the value 
\[\on{dim}(\Bun_M)+\on{dim}(\Bun_{N_P})+\langle 2(\rho_G-\rho_M),\eta \rangle\]

\noindent on the component of $\Bun_P$ living over the connected component $\Bun^{\eta}_M$ of $\Bun_M$. We have the following local-to-global result:

\begin{thm}\label{t:IC-to-hecke}
    There is a canonical isomorphism:
    \[{_{\Bun_M}}{\IC^{\frac{\infty}{2}}_{P,\Ran}} \simeq \pi^!_{P,\on{pol}}(\IC_{\ot{\Bun}_{P,\on{pol}}}[\on{dim}(\Bun_P)]).\]
\noindent Moreover, the counit morphism
\[\pi_{P,\on{pol},!}({_{\Bun_M}}{\IC^{\frac{\infty}{2}}_{P,\Ran}}) \longrightarrow \IC_{\ot{\Bun}_{P,\on{pol}}}[\on{dim}(\Bun_P)]\]
\noindent is an isomorphism. 
\end{thm}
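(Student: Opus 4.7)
My plan is to first establish the counit isomorphism (the second assertion) and then deduce the pullback identification (the first) via Lemma \ref{hecke-to-hecke}. Since $\sIC$ is supported on $\ot{S}^0_{P,\Ran}$ (Lemma \ref{l:ICisSI}), the pushforward $\pi_{P,\on{pol},!}({_{\Bun_M}}{\sIC})$ is supported on $\ot{\Bun}_{P,\on{zer}}\subset \ot{\Bun}_{P,\on{pol}}$. I would verify that $\on{oblv}_{\on{zer}}:\ot{\Bun}_{P,\on{zer}}\to\ot{\Bun}_{P}$ is pseudo-proper with homologically contractible fibers, by an argument parallel to Corollary \ref{cor: positive contractibility}. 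Unpacking the definition of $\IC_{\ot{\Bun}_{P,\on{pol}}}$, this reduces the counit statement to the global assertion
\[\pi_{P,!}({_{\Bun_M}}{\sIC})\simeq \IC_{\ot{\Bun}_P}[\on{dim}(\Bun_P)]\]
on $\ot{\Bun}_P$ itself.

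Setting $\cG := \pi_{P,!}({_{\Bun_M}}{\sIC})$, I would check the defining properties of the right-hand side with respect to the BFGM stratification by $\iota_\theta:{}_\theta\ot{\Bun}_P\hookrightarrow \ot{\Bun}_P$ of Proposition \ref{p:BFGM1}. On the open stratum, base change in diagram \eqref{origincartsquare} gives $\cG|_{\Bun_P}\simeq (\pi_P^{0,\Bun_P})_!({_{\Bun_M}}{\sIC}|_{\Gr_{P,\Bun_M}})$, where $\pi_P^{0,\Bun_P}:\Gr_{P,\Bun_M}\to \Bun_P$ is universally homologically contractible (by composing the UHC morphism of Proposition \ref{prop: relative Grassmannian contractibility} with the projection away from $\Ran$). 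Combined with the $\theta=0$ case of Proposition \ref{p:strata}, which identifies $\sIC|_{S^0_{P,\Ran}}$ with the dualizing sheaf, this yields $\cG|_{\Bun_P}\simeq \omega_{\Bun_P}$ as required.

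On a non-open stratum, base change along the Cartesian square formed by $\iota_\theta$ and $\pi_P$ yields $\iota_\theta^*\cG\simeq \pi_{P,!}^\theta q_\theta^*({_{\Bun_M}}{\sIC})$, where $\pi_P^\theta:\Gr_{P,\Bun_M}^\theta\to {}_\theta\ot{\Bun}_P$ and $q_\theta$ is the inclusion of the stratum. I would show that $\pi_P^\theta$ factors as a universally homologically contractible map (a relative version of $\mathfrak{L}_{\Ran}N_P\backslash S^\theta_{P,\Ran}\to \Gr^+_{M,(X^\theta\times\Ran)^\subset}$) followed by the projection away from $\Ran$. Combined with Proposition \ref{p:strata} and the perverse-degree estimate of Lemma \ref{l:oleq-1} on $\cO(\check{N}_P)_{X^\theta}$, this produces the upper bound $\iota_\theta^*\cG[-\on{dim}(\Bun_P)]\in {}^pD^{\leq -1}$. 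For the $!$-restriction I would use Lemma \ref{l:rescomp} in place of Lemma \ref{l:oleq-1}. However, since $\pi_P$ is not ind-proper, the naive base change for $\iota_\theta^!\pi_{P,!}$ fails; this is the main technical obstacle, and I anticipate handling it either by Verdier duality against the $*$-restriction computation, or by a stratified pushforward argument exploiting the tower structure of the closed strata.

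Finally, to deduce the first isomorphism of the theorem, Lemma \ref{hecke-to-hecke} provides ${_{\Bun_M}}{\sIC}\simeq \pi_{P,\on{pol}}^!(\cH)$ for $\cH := \on{Ind}_{\on{EnhDrPl}_{\check{M},\check{G}}}^{\on{Hecke}_{\check{M},\check{G}}}(\mathbf{j}^{\on{glob}}_!)$, together with the counit isomorphism $\pi_{P,\on{pol},!}\pi_{P,\on{pol}}^!(\cH)\simeq \cH$ (a consequence of the universal homological contractibility of $\pi_{P,\on{pol}}$, Proposition \ref{prop: relative Grassmannian contractibility}). Applying $\pi_{P,\on{pol},!}$ to the identification ${_{\Bun_M}}{\sIC}\simeq \pi_{P,\on{pol}}^!(\cH)$ and comparing with the already-established pushforward formula yields $\cH\simeq \IC_{\ot{\Bun}_{P,\on{pol}}}[\on{dim}(\Bun_P)]$, which upon $!$-pullback gives the first isomorphism of the theorem.
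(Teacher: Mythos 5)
Your overall strategy is the same as the paper's — reduce to a statement on $\ot{\Bun}_P$ via pseudo-properness/contractibility of $\on{oblv}_{\on{zer}}$ and $\pi_{P,\on{pol}}$, then characterize the candidate sheaf as $\IC_{\ot{\Bun}_P}$ by the perverse-degree estimates on the BFGM strata, using Proposition \ref{p:strata}, Lemma \ref{l:oleq-1} and Lemma \ref{l:rescomp}. But the order in which you run the argument creates a gap that you yourself flag and do not close: you define $\cG:=\pi_{P,!}({_{\Bun_M}}{\sIC})$ first and then try to compute $\iota_\theta^!\cG$, which requires base change of a $!$-restriction against a non-ind-proper $!$-pushforward. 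Neither of your proposed fixes works as stated. Verdier duality would require knowing in advance that $\cG[-\on{dim}(\Bun_P)]$ is self-dual — the paper only uses self-duality of $\IC_{\ot{\Bun}_P}$ \emph{after} the theorem is proved (in Corollary \ref{c:globrestostrata}) — and on the infinite-dimensional source one has no useful duality for ${_{\Bun_M}}{\sIC}$ to transport; the ``stratified pushforward argument'' is not specified enough to assess.

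The paper avoids the problem entirely by reversing your two halves. It first invokes Lemma \ref{hecke-to-hecke} to trade the pushforward for the globally defined Hecke object $\cH:=\on{Ind}_{\on{EnhDrPl}}^{\on{Hecke}}(\mathbf{j}_!^{\on{glob}})$, for which $\pi_{P,\on{pol}}^!(\cH)\simeq{_{\Bun_M}}{\sIC}$ and the counit $\pi_{P,\on{pol},!}\pi_{P,\on{pol}}^!(\cH)\to\cH$ is already known to be an isomorphism from universal homological contractibility. The whole theorem then reduces to identifying (the descent to $\ot{\Bun}_P$ of) $\cH[-\on{dim}(\Bun_P)]$ with $\IC_{\ot{\Bun}_P}$. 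Both the $*$- and the $!$-restrictions of this descended sheaf to the strata are now computed purely by composing $!$-pullbacks up the contractible tower $\Gr^{\theta}_{P,\Bun_M}\to{}_{\theta}\ot{\Bun}_{P,\on{zer}}\to{}_{\theta}\ot{\Bun}_P$ and descending — no base change against $\pi_{P,!}$ is ever needed. If you restructure your argument this way (i.e.\ move your final paragraph to the front), the rest of your computations — the $\theta=0$ identification, the degree count $\leq -1$ from Lemma \ref{l:oleq-1}, and the degree count $\geq 1$ from Lemma \ref{l:rescomp} — go through exactly as you describe.
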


\begin{proof}
By Lemma \ref{hecke-to-hecke}, it suffices to prove that there is a canonical identification:
\[\on{Ind}_{\on{EnhDrPl}_{\check{M},\check{G}}}^{\on{Hecke}_{\check{M},\check{G}}}(\mathbf{j}_!^{\on{glob}})[-\on{dim}(\Bun_P)] \simeq \IC_{\ot{\Bun}_{P,\on{pol}}}.\]

By Lemma \eqref{l:ICisSI} and (\ref{eq:glob}), we get that $\on{Ind}_{\on{EnhDrPl}_{\check{M},\check{G}}}^{\on{Hecke}_{\check{M},\check{G}}}(\mathbf{j}_!^{\on{glob}})[-\on{dim}(\Bun_P)]$ is supported on the image of the closed embedding $i_{P,\on{pol}}$. We have a commutative diagram:
\begin{equation}\label{eq: local to global comparison diagram}\begin{tikzcd}
	& {\on{Gr}_{P,\Bun_M}^{\theta}} & {\ot{\Gr}_{P,\Bun_M}} & {{_{\Bun_M}}{\Gr_{G,\Ran}}} \\
	& {{}_{\theta}\ot{\Bun}_{P,\on{zer}}} & {\ot{\Bun}_{P,\on{zer}}} & {\ot{\Bun}_{P,\on{pol}}} \\
	{\sH^{+}_{M, X^{\theta}}} & {{}_{\theta}\ot{\Bun}_P} & {\ot{\Bun}_P}
	\arrow[from=1-2, to=1-3]
	\arrow[from=1-2, to=2-2]
	\arrow[curve={height=12pt}, from=1-2, to=3-1]
	\arrow[from=1-3, to=1-4]
	\arrow[from=1-3, to=2-3]
	\arrow[from=1-4, to=2-4]
	\arrow[from=2-2, to=2-3]
	\arrow[from=2-2, to=3-2]
	\arrow[from=2-3, to=2-4]
	\arrow[from=2-3, to=3-3]
	\arrow[from=3-2, to=3-1]
	\arrow[from=3-2, to=3-3]
\end{tikzcd}\end{equation}

\noindent with all squares Cartesian and all vertical arrows universally homologically contractible. For the remainder of the proof, we will denote the $!$-pullback of $\on{Ind}_{\on{EnhDrPl}_{\check{M},\check{G}}}^{\on{Hecke}_{\check{M},\check{G}}}(\mathbf{j}_!^{\on{glob}})[-\on{dim}(\Bun_P)]$ to $\ot{\Bun}_{P,\on{zer}}$ by $\mathscr{I}$. We immediately note that $\mathscr{I}$ descends to a sheaf on $\ot{\Bun}_P$ by unitality, which we will also denote by $\mathscr{I}$. To complete the proof, it suffices to show that as a sheaf on $\ot{\Bun}_P$, the $!$ (resp. $*$) restrictions of $\mathscr{I}$ to the strata indexed by $\theta \neq 0$ lie in perverse cohomoloigcal degrees $\geq 1$ (resp. $\leq -1$), and that the restriction of $\mathscr{I}$ to $\Bun_P$ coincides with $\omega_{\on{Bun}_P}[-\on{dim}(\on{Bun}_P)]$.

\step Let us first compute the $*$-restriction of $\mathscr{I}$ to ${}_{\theta}\ot{\Bun}_P$. All sheaves in question are ind-holonomic, and hence by base change along the diagram \eqref{eq: local to global comparison diagram}, $\iota_{\theta}^*(\mathscr{I})$ is the $!$-pushforward of a sheaf $\cG^{\theta}$ on $\on{Gr}^{\theta}_{P,\Bun_M}$ to ${}_{\theta}\ot{\Bun}_P$. By Proposition \ref{p:strata}, $\cG^{\theta}$ has the property that it is the $!$-pullback of $\mathcal{O}(\check{N}_P)_{X^{\theta}}[-\langle 2(\rho_G - \rho_M), \theta \rangle -\on{dim}(\Bun_P)]$ along the composition
\[
\on{Gr}_{P,\Bun_M}^{\theta}\to \sH_{M,X^{\theta}}^+\to \mathfrak{L}^+_{X^{\theta}} M \backslash \Gr^+_{M,X^{\theta}}.
\]

\noindent By universal homological contractibility of the map $\on{Gr}^{\theta}_{P,\Bun_M} \to {}_{\theta}\ot{\Bun}_P$, it follows that $\iota^*_{\theta}(\mathscr{I})$ is the $!$-pullback of 
\[\mathcal{O}(\check{N})_{X^{\theta}}[-\langle 2(\rho_G - \rho_M),\theta \rangle - \on{dim}(\Bun_P) ]\]

\noindent along the projection
\begin{equation}\label{eq:estimation1}
{}_{\theta}\ot{\Bun}_P \to \mathfrak{L}^+_{X^{\theta}}M \backslash \Gr^+_{M,X^{\theta}}.
\end{equation}

\noindent We factor the above map as the composition:
\[
{}_{\theta}\ot{\Bun}_P\to \sH_{M,X^{\theta}}^+ \to\mathfrak{L}^+_{X^{\theta}}M \backslash \Gr^+_{M,X^{\theta}}.
\]

\noindent Recall the t-structure from §\ref{s:tstructure}. A similar argument as in the proof of \cite[Lemma 2.1.15]{beraldo2021geometric} shows that $!$-pullback along the map $\sH_{M,X^{\theta}}^+ \to\mathfrak{L}^+_{X^{\theta}}M \backslash \Gr^+_{M,X^{\theta}}$ is t-exact up to a cohomological shift by $\on{dim}(\on{Bun}_M)$.

Next, fix a component $\Bun_M^{\eta}$ of $\Bun_M$ and consider the stratum
\[\iota_{\theta}: {_{\theta}}{\ot{\Bun}_P^{\eta}} \to \ot{\Bun}_P^{\eta}.\]

\noindent By definition of the former, we have an isomorphism:
\[{_{\theta}}{\ot{\Bun}^{\eta}_P} \overset{\sim}{\longrightarrow} \sH^{+}_{M, X^{\theta}} \underset{\on{Bun}_M}{\times} \Bun_P^{\theta+\eta}.\]

\noindent The projection
\[{_{\theta}}{\ot{\Bun}^{\eta}_P} \simeq \sH^{+}_{M, X^{\theta}} \underset{\on{Bun}_M}{\times} \Bun^{\theta+\eta}_P \longrightarrow \sH^{+}_{M, X^{\theta}}\]

\noindent is smooth of relative dimension $\on{dim}(\Bun_{N_P}) + \langle 2(\rho_G-\rho_M),\theta+\eta \rangle$, by \cite[Corollary 2.2.9]{frenkel2001whittaker} and the discussion in \emph{loc. cit}. Moreover, for $\theta\neq 0$, the sheaf $\mathcal{O}(\check{N}_P)_{X^{\theta}}$ lives in perverse degrees $\leq -1$, cf. Lemma \ref{l:oleq-1}.

Putting the results together, it follows that for $\theta \neq 0$, the result of $!$-pulling the sheaf

\[\mathcal{O}(\check{N}_P)_{X^{\theta}}[\langle -2(\rho_G-\rho_M),\theta \rangle -\on{dim}(\Bun_P)]\]

\noindent back to ${_{\theta}}{\ot{\Bun}^{\eta}_P}$ lives in perverse degrees at most
\begin{multline*}
 -1 - \on{dim}(\Bun_M) - \on{dim}(\Bun_{N_P}) - \langle 2(\rho_G - \rho_M), \theta+\eta \rangle + \langle 2(\rho_G-\rho_M),\theta \rangle + \on{dim}(\Bun_P) = \\-1 - \on{dim}(\Bun_M) - \on{dim}(\Bun_{N_P}) - \langle 2(\rho_G - \rho_M), \theta+\eta \rangle + \langle 2(\rho_G-\rho_M),\theta \rangle \\+ \on{dim}(\Bun_M) +\on{dim}(\Bun_{N_P}) + \langle 2(\rho_G-\rho_M),\eta \rangle = -1.
 \end{multline*}

\step For $\theta = 0$, the $!$-pullback of $\cO(\check{N}_P)_{X^0}[-\on{dim}(\on{Bun}_P)]$ along (\ref{eq:estimation1}) evidently gives $\omega_{\on{Bun}_P}[-\on{dim}(\on{Bun}_P)]$.

\step Using Lemma \ref{l:rescomp}, a similar computation to Step 1 shows that the $!$-restriction of $\mathscr{I}$ along $\iota_{\theta}:{}_{\theta}\widetilde{\on{Bun}}_P\to \widetilde{\on{Bun}}_P$ lives in perverse degrees $\geq 1$. It follows that $\mathscr{I}$, when viewed as a sheaf on $\ot{\Bun}_P$, is canonically isomorphic to $\IC_{\ot{\Bun}_P}$, concluding the proof. 
\end{proof}

\begin{cor}\label{c:globrestostrata}
We have canonical isomorphisms:
\[
\iota_{\theta}^!(\on{IC}_{\widetilde{\on{Bun}}_P})\simeq \fU(\cnp)_{X^{\theta}}\widetilde{\boxtimes}\on{IC}_{\on{Bun}_P};
\]
\[
\iota_{\theta}^*(\on{IC}_{\widetilde{\on{Bun}}_P})\simeq \cO(\check{N}_P)_{X^{\theta}}\widetilde{\boxtimes}\on{IC}_{\on{Bun}_P}
\]

\noindent as sheaves on ${}_{\theta}\widetilde{\on{Bun}}_P\simeq \sH_{M,X^{\theta}}^+\underset{\on{Bun}_M}{\times} \on{Bun}_P$.
\end{cor}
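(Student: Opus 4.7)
The plan is to derive both identifications from Theorem \ref{t:IC-to-hecke} (local-to-global comparison of IC sheaves) combined with Proposition \ref{p:strata} (stratum calculation for $\sIC$), with the $!$-statement obtained from the $*$-statement via Verdier duality. Concretely, the $*$-restriction is essentially already extracted in Step 1 of the proof of Theorem \ref{t:IC-to-hecke}: after identifying the auxiliary sheaf $\mathscr{I}$ with $\on{IC}_{\widetilde{\on{Bun}}_P}$, one reads off that $\iota_\theta^*(\on{IC}_{\widetilde{\on{Bun}}_P})$ is the $!$-pullback of $\cO(\check{N}_P)_{X^\theta}[-\langle 2(\rho_G-\rho_M),\theta\rangle - \on{dim}(\on{Bun}_P)]$ from the local Hecke stack $\mathfrak{L}^+_{X^\theta}M\backslash \on{Gr}^+_{M,X^\theta}$ to ${}_\theta\widetilde{\on{Bun}}_P$. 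Under the isomorphism ${}_\theta\widetilde{\on{Bun}}_P \simeq \sH_{M,X^\theta}^+ \underset{\on{Bun}_M}{\times} \on{Bun}_P$ and the smoothness of $\on{Bun}_P\to\on{Bun}_M$ (of relative dimension $\on{dim}(\on{Bun}_P)-\on{dim}(\on{Bun}_M)$), this sheaf is exactly $\cO(\check{N}_P)_{X^\theta}\widetilde{\boxtimes}\on{IC}_{\on{Bun}_P}$ once the shifts are tallied using $\on{IC}_{\on{Bun}_P} = \omega_{\on{Bun}_P}[-\on{dim}(\on{Bun}_P)]$.

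For the $!$-restriction, I plan to apply Verdier duality. On each connected component, $\widetilde{\on{Bun}}_P$ is locally of finite type and $\on{IC}_{\widetilde{\on{Bun}}_P}$ is Verdier self-dual by construction of the intermediate extension. Since $\iota_\theta$ is a locally closed embedding, $\iota_\theta^!\simeq \bD\iota_\theta^*\bD$, so dualizing the $*$-restriction yields $\iota_\theta^!(\on{IC}_{\widetilde{\on{Bun}}_P})\simeq \bD(\iota_\theta^*(\on{IC}_{\widetilde{\on{Bun}}_P}))$. Computing the Verdier dual of the $*$-restriction formula via equation (\ref{eq:VerdierdualityFU}), which asserts $\bD(\cO(\check{N}_P)_{\on{Conf}})\simeq \fU(\cnp)_{\on{Conf}}$, together with the Verdier self-duality of $\on{IC}_{\on{Bun}_P}$ (since $\on{Bun}_P$ is smooth), produces the desired $\fU(\cnp)_{X^\theta}\widetilde{\boxtimes}\on{IC}_{\on{Bun}_P}$.

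The main obstacle I anticipate is the careful bookkeeping of cohomological shifts and verifying that $\widetilde{\boxtimes}$ commutes with Verdier duality in both factors --- this requires unpacking the normalization of the twisted external product along the smooth projection ${}_\theta\widetilde{\on{Bun}}_P\to \sH_{M,X^\theta}^+$, where the relative dimension enters asymmetrically into the dualization. A complementary route that avoids invoking global Verdier duality on $\widetilde{\on{Bun}}_P$ is to mirror the proof of Proposition \ref{p:strata} directly in the $!$-direction for $\sIC$: the Hecke structure on $\sIC$ (cf. §\ref{s:coaction}) produces a canonical $\fU(\cnp)_{\on{Ran}}$-module structure on $j^{\theta,!}(\sIC)$ whose Koszul-dual invariants (via Lemma \ref{l:KD}) collapse onto the delta sheaf $\delta_{\on{Gr}_{M,\on{Ran}}}$, and Lemma \ref{l:rescomp} together with perverse-degree considerations then pin down $\cF^\theta\simeq \fU(\cnp)_{X^\theta}[-\langle 2(\rho_G-\rho_M),\theta\rangle]$. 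Transporting this local identification along the universally homologically contractible map $\pi_P$, whose $!$-pullback is fully faithful by Corollary \ref{cor: positive contractibility}, then yields the global $!$-restriction.
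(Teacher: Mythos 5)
Your proposal is correct and follows essentially the same route as the paper: the $*$-restriction is read off from Step 1 of the proof of Theorem \ref{t:IC-to-hecke} (with the footnoted shift-check that $d=0$), and the $!$-restriction is then deduced from Verdier self-duality of $\on{IC}_{\widetilde{\on{Bun}}_P}$ together with the duality $\bD(\cO(\check{N}_P)_{\on{Conf}})\simeq \fU(\cnp)_{\on{Conf}}$ of (\ref{eq:VerdierdualityFU}). The alternative local argument you sketch for the $!$-side is not needed, but it is consistent with the remark following Lemma \ref{l:rescomp}.
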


\begin{proof}
Using the second assertion of Theorem \ref{t:IC-to-hecke}, we essentially proved that
\[
\iota_{\theta}^*(\on{IC}_{\widetilde{\on{Bun}}_P})\simeq \cO(\check{N}_P)_{X^{\theta}}\widetilde{\boxtimes}\on{IC}_{\on{Bun}_P}
\]

\noindent in the course of the proof of \emph{loc.cit}.\footnote{More precisely, we saw that we have an isomorphism $\iota_{\theta}^*(\on{IC}_{\widetilde{\on{Bun}}_P})\simeq \cO(\check{N}_P)_{X^{\theta}}\widetilde{\boxtimes}\on{IC}_{\on{Bun}_P}$, up to a cohomological shift by some integer $d$. However, tracing through the proof of Theorem \ref{t:IC-to-hecke} and keeping track of the shifts, we see that $d=0$.}

By Verdier self-duality of $\on{IC}_{\widetilde{\on{Bun}}_P}$ and (\ref{eq:VerdierdualityFU}), we get the assertion for $\iota_{\theta}^!(\on{IC}_{\widetilde{\on{Bun}}_P})$.

\end{proof}

\newpage

\section{Restriction of representations and the Casselman-Shalika formula}\label{S:Whittakerness}

\subsection{Whittaker categories} For a factorization category $\sC$ with an action of the loop group $\mathfrak{L}_{\Ran}G$, we consider its \emph{Whittaker category} $\on{Whit}(\sC)$. When $\sC$ is the category of D-modules on some prestack $\mathcal{Y}$, we will often use the notation $\on{Whit}(\mathcal{Y}):=\on{Whit}(D(\mathcal{Y}))$. In particular, we let $\on{Whit}(\Gr_{G,\Ran})$ denote the \emph{spherical Whittaker category}, i.e. the Whittaker category for D-modules on the affine Grassmannian. 

\subsubsection{} The category $\on{Whit}(\sC):=\sC^{\fL_{\on{Ran}} N^-,\psi_{N^-,\on{Ran}}}$ consists of objects in $\sC$ that are equivariant for $\mathfrak{L}_{\Ran}N^-$ against a non-degenerate character $\psi_{N^-,\on{Ran}}=\psi_{\fL_{\on{Ran}} N^-}$ of $\fL_{\on{Ran}}N^-$. In particular $\on{Whit}(\sC)$ is equipped with a fully-faithful forgetful functor
\[\Oblv^{\on{Whit}}_{\sC}: \on{Whit}(\sC) \longrightarrow \sC \]

\noindent admitting a (non-continuous) right adjoint. We will renormalize the right adjoint to be continuous in the case of $\sC = D(\Gr_{G,\Ran})$ as follows. There are tautological equivalences:
\[\on{Whit}(\mathfrak{L}_{\Ran}N^-)\simeq \on{Whit}(\on{Gr}_{N^-,\on{Ran}}) \simeq D(\Ran). \]

\noindent We write $\psi_{N^-,\on{Ran}}$ for the images of $\omega_{\on{Ran}}$ under these equivalences. We write $\psi_{G,\on{Ran}}$ for the $!$-extension of $\psi_{N^-,\on{Ran}}\in D(\on{Gr}_{N^-,\on{Ran}})$ along $\on{Gr}_{N^-,\on{Ran}}\to \on{Gr}_{G,\on{Ran}}$.

Moreover, we write $\psi_{N^-,x}$ for the restriction of $\psi_{N^-,\on{Ran}}$ to $x\in X$.

\subsubsection{}\label{s:whitaveraging} Now, convolution provides a pairing 
\[ -\underset{\mathfrak{L}_{\Ran}G}{\star}-: \on{Whit}(\mathfrak{L}_{\Ran}G) \otimes D(\Gr_{G,\Ran}) \to \on{Whit}(\Gr_{G,\Ran})\]

\noindent and for an object $\cF$ of $D(\Gr_{G,\Ran})$, we write:
\[\Av^{\mathfrak{L}_{\Ran}N^-,\psi}_*(\cF) \coloneqq \psi_{G,\on{Ran}} \underset{\mathfrak{L}_{\Ran}G}{\star} \cF.\]

\noindent We call the functor $\Av^{\mathfrak{L}_{\Ran}(N^-),\psi}_*: D(\Gr_{G,\Ran})\to \on{Whit}(\Gr_{G,\Ran})$ \emph{renormalized Whittaker averaging}. 

\subsubsection{} Denote by $\on{Whit}(\Gr_{G,x})$ the fiber of $\on{Whit}(\Gr_{G,\Ran})$ at a $k$-point $x$ of $X$. Then $\on{Whit}(\Gr_{G,x})$ has a set of compact generators $\psi_{\mu}$ where $\mu$ ranges over anti-dominant coweights of $G$. Specifically, for some anti-dominant $\mu$, the sheaf $\psi_{\mu}$ is given by $!$-extending the object corresponding to the unit under the evident equivalence 
\[\on{Whit}(S^{-,\mu}_x)=D(S^{-,\mu}_x)^{\fL_x N^-,\psi_{N^-,x}} \simeq \Vect,\]

\noindent where $S^{-,\mu}_x$ denotes the $\mu$-th semi-infinite orbit for $N^-$ in $\Gr_{G,x}$. By the cleanness theorem of \cite{frenkel2001whittaker}, $\psi_{\mu}$ is equivalently the $*$-extension from $S^{-,\mu}_x$.

\subsubsection{} Define a t-structure on $\on{Whit}(\Gr_{G,x})$ by declaring an object $\cF$ to lie in $\on{Whit}(\Gr_{G,x})^{\leq 0}$ if an only if 
\[H^0\Hom_{\on{Whit}(\Gr_{G,x})}(\cF,\psi_{\mu}[-k])=0\]

\noindent for all anti-dominant $\mu$ for all integers $k > 0$.

We have the following geometric Casselman-Shalika formula (see \cite{frenkel2001whittaker} and \cite{raskin2018chiralII}):

\begin{thm}\label{Casselman-Shalika} There is a canonical equivalence of $\on{Rep}(\check{G})_{\Ran}$ factorization module categories 
\[\on{CS}_G: \on{Whit}(\Gr_{G,\Ran}) \overset{\sim}{\longrightarrow} \on{Rep}(\check{G})_{\Ran}\]

\noindent The $!$-fiber at any $k$-point $x \in X$ gives a t-exact equivalence 
\[\on{CS}_{G,x}: \on{Whit}(\Gr_{G,x}) \overset{\sim}{\longrightarrow} \on{Rep}(\check{G})\]

\noindent that sends $\psi_{\mu}$ to the irreducible representation of highest weight $w_0(\mu)$. 
\end{thm}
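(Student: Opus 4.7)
The plan is to construct the inverse functor $\on{CS}_G^{-1} \colon \on{Rep}(\check{G})_{\on{Ran}} \to \on{Whit}(\on{Gr}_{G,\on{Ran}})$ explicitly, verify it is an equivalence pointwise via the cleanness theorem for Whittaker sheaves on semi-infinite orbits, and then promote this to a factorizable statement. Using the factorizable naive Satake functor $\on{Sat}^{\on{nv}}_{\on{Ran}} \colon \on{Rep}(\check{G})_{\on{Ran}} \to \on{Sph}_{G,\on{Ran}}$ together with the right action of $\on{Sph}_{G,\on{Ran}}$ on $\on{Whit}(\on{Gr}_{G,\on{Ran}})$, I would set
\[
\on{CS}_G^{-1}(V) := \on{Sat}^{\on{nv}}_{\on{Ran}}(V) \star \on{Whit}^{\on{vac}}_{\on{Ran}},
\]
where $\on{Whit}^{\on{vac}}_{\on{Ran}} := \on{Av}^{\fL_{\on{Ran}}N^-,\psi}_{*}(\delta_{\on{Gr}_{G,\on{Ran}}})$ is the renormalized Whittaker averaging of the unit. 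By construction this is tautologically a morphism of $\on{Rep}(\check{G})_{\on{Ran}}$-module factorization categories, and it remains to show it is an equivalence with the claimed t-exactness properties.

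First I would verify the pointwise statement at a fixed $x \in X$. For a dominant coweight $\lambda$, write $V^\lambda$ for the irreducible $\check{G}$-module of highest weight $\lambda$ and $\on{IC}^\lambda \in \on{Sph}_{G,x}$ for its image under naive Satake. Hyperbolic localization along the attracting/repelling loci of the cocharacter $2\rho$ identifies the $*$-restriction of $\on{IC}^\lambda \star \on{Whit}^{\on{vac}}_{x}$ to the semi-infinite orbit $S^{-,\mu}_x$ with the $\mu$-weight space $(V^\lambda)_\mu$ (up to appropriate shifts). Combined with the cleanness theorem of \cite{frenkel2001whittaker} — which asserts that Whittaker-equivariant sheaves on $\on{Gr}_{G,x}$ supported on a single semi-infinite $N^-$-orbit have their $!$- and $*$-extensions agreeing — this forces $\on{CS}_{G,x}^{-1}(V^\lambda)$ to be concentrated on the single orbit $S^{-,w_0(\lambda)}_x$ and to compute to $\psi_{w_0(\lambda)}$. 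Essential surjectivity then follows because $\{\psi_\mu\}$ generate $\on{Whit}(\on{Gr}_{G,x})$, and fully faithfulness from the matching of $\on{Hom}$-groups on both sides (which for $\on{Whit}(\on{Gr}_{G,x})$ can be computed by reducing to the clean stratum).

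To upgrade this to an equivalence of factorization module categories, I would check the comparison over each $X^I \subset \on{Ran}$. Both sides are $\on{Rep}(\check{G})_{X^I}$-module categories built functorially out of factorization data; since $\on{CS}_G^{-1}$ is assembled from the factorization structures on $\on{Sat}^{\on{nv}}_{\on{Ran}}$ and on the Whittaker averaging functor, the equivariance is automatic once both pieces are known to factorize. The key reduction is to the diagonal $X \subset \on{Ran}$, where the statement amounts to a ULA property along the curve of the sheaves $\on{CS}_G^{-1}(V)$ together with the pointwise identification above. The t-structure transport and the identification $\on{CS}_{G,x}(\psi_\mu) \simeq V^{w_0(\mu)}$ are then immediate consequences of the explicit computation.

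The main obstacle is the factorization upgrade of cleanness: one must show that the Whittaker averaging on the Beilinson--Drinfeld Grassmannian $\on{Gr}_{G,\on{Ran}}$ remains clean with respect to the relative semi-infinite stratification, and that the factorization isomorphisms on $\on{Whit}^{\on{vac}}_{\on{Ran}}$ intertwine those on $\on{Sat}^{\on{nv}}_{\on{Ran}}$ in a coherent way. This is exactly the content of \cite{raskin2018chiralII}, where the cleanness theorem is established factorizably; given this input, t-exactness of $\on{CS}_{G,x}$ follows from the fact that the $\psi_\mu$ form a generating set of the heart matching irreducibles in $\on{Rep}(\check{G})^\heartsuit$, while the pointwise Hom-computation propagates through the factorization structure to give fully faithfulness over all of $\on{Ran}$.
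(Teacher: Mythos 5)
The paper does not prove this statement at all: Theorem \ref{Casselman-Shalika} is imported from the literature, with the pointwise equivalence and cleanness attributed to \cite{frenkel2001whittaker} and the factorizable upgrade to \cite{raskin2018chiralII}. So there is no internal proof to compare against; your proposal is a reconstruction of the argument in those references rather than an alternative to anything in the text.

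As a reconstruction it is essentially the standard route and the outline is sound: define $\on{CS}_G^{-1}$ by acting on the vacuum Whittaker object via naive Satake, compute $*$-restrictions to the semi-infinite $N^-$-orbits by hyperbolic localization to identify weight spaces, invoke cleanness to pin down $\on{Sat}^{\on{nv}}(V^\lambda)\star \psi_0 \simeq \psi_{w_0(\lambda)}$, and then factorize. Two caveats. First, the fully-faithfulness step is where the real content lies and you dispatch it in one clause: one must actually compute $\on{Hom}(\psi_\mu,\psi_\nu[k])$ in $\on{Whit}(\Gr_{G,x})$ and show it matches $\on{Ext}^k_{\check{G}}(V^{w_0(\mu)},V^{w_0(\nu)})$, including the vanishing of all higher Exts between distinct irreducibles; this uses cleanness plus a contractibility/parity argument and is not a formal consequence of "reducing to the clean stratum." Second, the factorizable cleanness and the coherence of the factorization isomorphisms over $\Ran$ — which you correctly flag as the main obstacle — are precisely the content of \cite{raskin2018chiralII}, so your proposal, like the paper, ultimately treats the hard part as a black box. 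That is a perfectly legitimate thing to do here, but it means your write-up is a proof sketch conditional on the same external inputs the paper cites, not an independent proof.
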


%\subsection{Duality for the Whittaker category}

%Define the category $\on{Whit}(\Gr_{G,\Ran})_{co}$ to be the category of $\mathfrak{L}_{\Ran}(N^-)$ Whittaker \emph{coinvariants}. 

%Recall that Verdier duality defines an equivalence 
%
%\[D(\Gr_{G,\Ran}) \overset{\sim}{\longrightarrow} D(\Gr_{G,\Ran})^{\vee}\]

%\noindent where $D(\Gr_{G,\Ran})^{\vee}$ is the DG category \emph{dual} to $D(\Gr_{G,\Ran})$. By [CITE DENNIS], the composition 
%
%\[ \on{Whit}(\Gr_{G,\Ran}) \otimes D(\Gr_{G,\Ran}) \overset{\Oblv^{\on{Whit}} \otimes \Id}{\longrightarrow} D(\Gr_{G,\Ran}) \otimes D(\Gr_{G,\Ran}) \overset{C^*(\Gr_{G,\Ran}, - \otimes^! -)}{\longrightarrow} \Vect\]

%\noindent factors through the counit 
%
%\[\on{Whit}(\Gr_{G,\Ran}) \otimes \on{Whit}(\Gr_{G,\Ran})_{co} \longrightarrow \Vect\]

%\noindent of a duality datum identifying $\on{Whit}(\Gr_{G,\Ran})$ with $\on{Whit}(\Gr_{G,\Ran})_{co}^{\vee}$.

%\begin{rem}\label{rem: self-duality of the Whittaker category} In fact, there is a canonical equivalence [CITE SAM/DENNIS] $\on{Whit}(\Gr_{G,\Ran}) \simeq \on{Whit}(\Gr_{G,\Ran})_{co}$ satisfying
%*
%\[\begin{tikzcd}
	%& {D(\Gr_{G,\Ran})} \\
	%{\on{Whit}(\Gr_{G,\Ran})} && {\on{Whit}(\Gr_{G,\Ran})_{co}}
	%\arrow["{\Av^{\mathfrak{L}_{\Ran}(N^-),\chi}_*}"', from=1-2, to=2-1]
	%\arrow[from=1-2, to=2-3]
	%\arrow["\sim", from=2-1, to=2-3]
%\end{tikzcd}\]

%\noindent where the right diagonal arrow is the projection. By the above, the category $\on{Whit}(\Gr_{G,\Ran})$ is therefore \emph{self-dual}. 
%\end{rem}

\subsection{The Jacquet functors} In this section we will construct several functors
\[\on{Whit}(\Gr_{G,\Ran}) \longrightarrow \on{Whit}(\Gr_{M,\Ran}),\]

\noindent where the Whittaker condition on $D(\Gr_{M,\Ran})$ is understood to be relative to $\mathfrak{L}_{\Ran}N^-_M$ via the non-degenerate character $\psi_{N^-,\on{Ran}}$ obtained by restricting the same named character along the inclusion $\mathfrak{L}_{\Ran}N^-_M \hookrightarrow \mathfrak{L}_{\Ran}N^-$.

\subsubsection{} Consider the prestack
\[{_{\Gr_{M,\Ran}}}{\Gr_{G,\Ran}} \coloneqq 
 \Gr_{M,\Ran} \underset{\bB \mathfrak{L}^+_{\Ran}M}{\times}\mathfrak{L}^+_{\Ran}M \backslash \Gr_{G,\Ran}.\]

\noindent Unwinding the definitions, we see there is a canonical map 
\begin{equation}\label{eq: product to fiber product}{_{\Gr_{M,\Ran}}}{\Gr_{G,\Ran}} \longrightarrow \Gr_{G,\Ran} \underset{\on{Ran}}{\times} \Gr_{M,\Ran}\end{equation}

\noindent which is easily seen to be an isomorphism. Hence we have an equivalence of categories:
\begin{equation}\label{eq: D-mods to functors}D({_{\Gr_{M,\Ran}}}{\Gr_{G,\Ran}}) \overset{\sim}{\longrightarrow} \Hom_{D(\Ran)}(D(\Gr_{G,\Ran}),D(\Gr_{M,\Ran})).\end{equation}

\noindent That is, we can view D-modules on ${_{\Gr_{M,\Ran}}}{\Gr_{G,\Ran}}$ as kernels of functors $D(\Gr_{G,\Ran}) \to D(\Gr_{M,\Ran})$ relative to the action of $D(\Ran)$ on both sides. 

\subsubsection{} Note we have a diagonal action of $\mathfrak{L}_{\Ran}P$ on $\Gr_{G,\Ran} \underset{\on{Ran}}{\times} \Gr_{M,\Ran}$ and we have an isomorphism
\[\mathfrak{L}^+_{\Ran}N_P \mathfrak{L}_{\Ran}M \backslash \Gr_{G,\Ran} \overset{\sim}{\longrightarrow} \mathfrak{L}_{\Ran}P\backslash (\Gr_{G,\Ran} \underset{\on{Ran}}{\times} \Gr_{M,\Ran})\]

\noindent such that the diagram
\[\begin{tikzcd}
	{{_{\Gr_{M,\Ran}}}{\Gr_{G,\Ran}}} & {\Gr_{G,\Ran} \underset{\on{Ran}}{\times} \Gr_{M,\Ran}} \\
	{\mathfrak{L}^+_{\Ran}M\mathfrak{L}_{\Ran}N_P \backslash \Gr_{G,\Ran}} & {\mathfrak{L}_{\Ran}P\backslash (\Gr_{G,\Ran} \underset{\on{Ran}}{\times} \Gr_{M,\Ran})}
	\arrow[from=1-1, to=1-2]
	\arrow[from=1-1, to=2-1]
	\arrow[from=1-2, to=2-2]
	\arrow[from=2-1, to=2-2]
\end{tikzcd}\]

 \noindent commutes. Here the left vertical map is the projection to $\mathfrak{L}^+_{\Ran}M \backslash \Gr_{G,\Ran}$ followed by the canonical map to $\mathfrak{L}^+_{\on{Ran}}M \mathfrak{L}_{\Ran}N_P \backslash \Gr_{G,\Ran}$, and the right vertical map is the quotient map.

 It follows that we obtain a canonical equivalence:
 \[
 \on{SI}_{P,\on{Ran}}:=D(\mathfrak{L}^+_{\Ran}N_P \mathfrak{L}_{\Ran}M \backslash \Gr_{G,\Ran})\simeq \on{Hom}_{D(\mathfrak{L}_{\on{Ran}}P)}(D(\Gr_{G,\Ran}), D(\Gr_{M,\Ran})).
 \]
 
\noindent That is, we can view objects of the semi-infinite category as as $\mathfrak{L}_{\Ran}P$-equivariant functors from $D(\Gr_{G,\Ran})$ to $ D(\Gr_{M,\Ran})$. 

\subsubsection{} We therefore obtain a canonical pairing 
\[D(\Gr_{G,\Ran}) \underset{D(\on{Ran})}{\otimes} \on{SI}_{P,\Ran} \longrightarrow D(\Gr_{M,\Ran}).\]

By forgetting from $\mathfrak{L}_{\Ran}N^-$ equivariance against $\psi$ to $\mathfrak{L}_{\Ran}N^-_M$ equivariance against the restriction of $\chi$ to the latter, we obtain a pairing:
\begin{equation}\label{eq:whitpairing}
\on{Whit}(\Gr_{G,\Ran}) \underset{D(\on{Ran})}{\otimes} \on{SI}_{P,\Ran} \longrightarrow \on{Whit}(\Gr_{M,\Ran}).
\end{equation}

\noindent In particular, from the objects $\IC^{\frac{\infty}{2}}_{P,\Ran}, \mathbf{j}_!\in \on{SI}_{P,\on{Ran}}$, we get functors
\[\on{Jac}^M_{!*}, \, \on{Jac}^M_! : \on{Whit}(\Gr_{G,\Ran}) \longrightarrow \on{Whit}(\Gr_{M,\Ran}),\]

\noindent respectively. 

\subsubsection{}\label{s:heckeandenhdrplofkernels} Since $\sIC$ is an object of $\on{Hecke}_{\check{M},\check{G}}(\on{SI}_{P,\on{Ran}})$ by definition, we get that $\on{Jac}_{!*}^M$ is $\on{Rep}(\check{G})_{\on{Ran}}$-linear. That is:
\[
\on{Jac}_{!*}^M(V\star -)\simeq \on{Res}^{\check{G}}_{\check{M}}(V)\star \on{Jac}_{!*}^M(-),\;\; V\in \on{Rep}(\check{G})_{\on{Ran}}.
\]

\noindent Similarly, from the enhanced Drinfeld-Plücker structure on $\mathbf{j}_!$, we get:
\[
\on{Jac}_{!}^M(V\star -)\simeq C^{\bullet}(\check{\fn}_P,V)\underset{\Omega(\check{\fn}_P)_{\on{Ran}}}{\star} \on{Jac}_{!}^M(-),\;\; V\in \on{Rep}(\check{G})_{\on{Ran}}.
\]

\subsection{Composing Jacquet functors} We have Jacquet functors
\[
\on{Jac}^M_{!*}: \on{Whit}(\Gr_{G,\Ran}) \longrightarrow \on{Whit}(\Gr_{M,\Ran});
\]
\[\on{Jac}^{T_M}_{!*}: \on{Whit}(\Gr_{M,\Ran}) \longrightarrow D(\Gr_{T,\Ran})\]

\noindent associated to the parabolics $P$ and $B_M \coloneqq B \cap M$, respectively. In the notation, we denote by $T_M$ the maximal torus $T \subseteq M$ to emphasize that the domain of $\on{Jac}^{T_M}_{!*}$ is the Whittaker category of $M$. In this section we will show how to compose Jacquet functors for $G$ and $M$.

\subsubsection{}\label{s:Grptilderan} In what follows, denote by $\on{Gr}_{P,\Ran}$ the fiber product 
\[
\on{Gr}_{P,\Ran}:=\Gr_{M,\Ran} \underset{\mathbb{B}\mathfrak{L}^+_{\Ran}M}{\times} \backslash \mathfrak{L}^+_{\Ran}M S^0_{\Ran}.
\]

\noindent Similarly, define:
\[\ot{\Gr}_{P,\Ran} \coloneqq \Gr_{M,\Ran} \underset{\mathbb{B}\mathfrak{L}^+_{\Ran}M}{\times} \mathfrak{L}^+_{\Ran}M\backslash \ot{S}^0_{\Ran}.\]

We will also denote by ${_{\Gr_M}}{\on{IC}^{\frac{\infty}{2}}_{P,\Ran}}$ the $!$-pullback of $\on{IC}^{\frac{\infty}{2}}_{P,\Ran}$ to $\ot{\Gr}_{P,\Ran}$ along the projection:
\[\ot{\Gr}_{P,\Ran} \longrightarrow \mathfrak{L}^+_{\Ran}M\backslash \ot{S}^0_{\Ran}.\]

\noindent We have the following theorem. 

\begin{thm}\label{thm: composing eisenstein} The composition $\on{Jac}^{T_M}_{!*} \circ \on{Jac}^M_{!*}$ is canonically equivalent to the Jacquet functor
\[\on{Jac}^T_{!*}: \on{Whit}(\Gr_{G,\Ran}) \to D(\Gr_{T,\Ran})\]

\noindent associated to the Borel subgroup $B$ of $G$. 
\end{thm}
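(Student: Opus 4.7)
The plan is to identify both functors via their kernel sheaves in the appropriate semi-infinite categories. Each Jacquet functor $\on{Jac}^?_{!*}$ is realized, via the pairing (\ref{eq:whitpairing}), as convolution with the corresponding parabolic semi-infinite IC sheaf, and the composition of such functors corresponds to a convolution of kernels over $\on{Gr}_{M,\on{Ran}}$. The theorem thus reduces to identifying the composed kernel with $\on{IC}^{\frac{\infty}{2}}_{B,\on{Ran}}$, viewed as an object of $\on{Hecke}_{\check{T},\check{G}}(\on{SI}_{B,\on{Ran}})$.

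First I would construct a canonical $(\check{T},\check{G})$-Hecke structure on the composition kernel. As noted in \S\ref{s:heckeandenhdrplofkernels}, the $(\check{M},\check{G})$-Hecke structure on $\on{IC}^{\frac{\infty}{2}}_{P,\on{Ran}}$ makes $\on{Jac}^M_{!*}$ intertwine the actions of $\on{Rep}(\check{G})_{\on{Ran}}$ and $\on{Rep}(\check{M})_{\on{Ran}}$ via $\on{Res}^{\check{G}}_{\check{M}}$, and analogously $\on{Jac}^{T_M}_{!*}$ intertwines via $\on{Res}^{\check{M}}_{\check{T}}$. Composing these and using the transitivity $\on{Res}^{\check{G}}_{\check{T}} \simeq \on{Res}^{\check{M}}_{\check{T}} \circ \on{Res}^{\check{G}}_{\check{M}}$ of restriction functors produces the desired Hecke structure.

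Next, one uses the defining property $\on{IC}^{\frac{\infty}{2}}_{B,\on{Ran}} = \on{Ind}_{\on{DrPl}_{\check{T},\check{G}}}^{\on{Hecke}_{\check{T},\check{G}}}(\delta_{\on{Gr}_{G,\on{Ran}}})$ from \S\ref{sec: def of semiinfinite ic}. The key combinatorial input is the factorization $\check{N} \simeq \check{N}_P \rtimes \check{N}_M$ of the unipotent radical, which implies that the canonical $B$-Drinfeld--Pl\"ucker structure on $\delta_{\on{Gr}_{G,\on{Ran}}}$ (constructed in Appendix \ref{S:DrPLstructure}) factors compatibly through the $P$- and $B_M$-Drinfeld--Pl\"ucker structures. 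Functoriality of the induction $\on{Ind}^{\on{Hecke}}_{\on{DrPl}}$ then yields a canonical map from $\on{IC}^{\frac{\infty}{2}}_{B,\on{Ran}}$ to the composition kernel, equivalently a natural transformation $\on{Jac}^T_{!*} \to \on{Jac}^{T_M}_{!*} \circ \on{Jac}^M_{!*}$.

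To verify this map is an isomorphism I would check it stratum by stratum on $\ot{S}^0_{B,\on{Ran}}$. On the open stratum $S^0_{B,\on{Ran}}$ both sides restrict to the dualizing sheaf, by Proposition \ref{p:restostrata!} applied to $P$, $B_M$, and $B$. On a deeper stratum $S^{\theta}_{B,\on{Ran}}$, Proposition \ref{p:strata} identifies the restriction of $\on{IC}^{\frac{\infty}{2}}_{B,\on{Ran}}$ with a pullback of $\cO(\check{N})_{\on{Conf}}$, while the restriction of the composition kernel is a pullback of a convolution involving $\cO(\check{N}_P)_{\on{Conf}}$ and $\cO(\check{N}_M)_{\on{Conf}}$; the coalgebra isomorphism $\cO(\check{N}) \simeq \cO(\check{N}_M) \otimes \cO(\check{N}_P)$ in $\on{Rep}(\check{T})$, promoted to factorization algebras, then completes the identification. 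The main obstacle will be making the geometric setup of the kernel convolution precise in the infinite-dimensional setting over $\on{Ran}$ and tracking the equivariance conditions across the three parabolics, after which the identification becomes a formal consequence of the transitivity of restriction.
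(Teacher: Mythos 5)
Your reduction to kernels and your identification of the key combinatorial input (the multiplication map $\check{N}_P\backslash\check{G}\times\check{N}_M\backslash\check{M}\to\check{N}\backslash\check{G}$, i.e.\ the factorization of $\check{N}$) match the paper, and constructing the comparison map $\on{IC}^{\frac{\infty}{2}}_{B,\Ran}\to\mathfrak{r}_*({}_{\Gr_M}\on{IC}^{\frac{\infty}{2}}_{P,\Ran}\widetilde{\boxtimes}\,{}_{\Gr_T}\on{IC}^{\frac{\infty}{2}}_{B_M,\Ran})$ via the universal property of $\on{Ind}_{\on{DrPl}}^{\on{Hecke}}$ is sound. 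The gap is in your verification step. Showing that the $*$-restrictions of the two kernels to each stratum $S^{\theta}_{B,\Ran}$ are \emph{abstractly} isomorphic (via Proposition \ref{p:strata} and the coalgebra identity $\cO(\check{N})\simeq\cO(\check{N}_M)\otimes\cO(\check{N}_P)$) does not show that \emph{your} map restricts to an isomorphism on each stratum, which is what a stratumwise argument requires. Moreover, even the abstract identification of $\iota_\theta^*$ of the composed kernel is not an application of Proposition \ref{p:strata}: it requires computing the $*$-pushforward along $\mathfrak{r}$ over the deeper strata, whose fibers involve intersections of semi-infinite orbits (and whose preimages do not decompose simply according to $\theta=\theta_1+\theta_2$, since $\theta_1$ only lives in $\Lambda_{G,P}$). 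That is exactly the kind of singularity analysis the paper is built to avoid; the alternative of invoking a uniqueness principle (intermediate extension in a semi-infinite t-structure) is not available in the factorizable setting here.

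The paper sidesteps all of this: in Steps 2--4 of its proof it shows that the composed kernel is \emph{itself} of the form $\on{Ind}_{\on{DrPl}_{\check{T},\check{G}}}^{\on{Hecke}_{\check{T},\check{G}}}(\delta_{\on{Gr}_{G,\Ran}})$, by observing that $p_{G,T,*}$ commutes with the relevant Hecke actions and sends $\delta\boxtimes\delta$ to $\delta$, and that restriction along $\cO(\ol{\check{N}\backslash\check{G}})_{\Ran}\to\cO(\ol{\check{N}_P\backslash\check{G}})_{\Ran}\otimes\cO(\ol{\check{N}_M\backslash\check{M}})_{\Ran}$ intertwines the induction functors (this uses that $\check{G}\times\check{G}\to\check{N}_P\backslash\check{G}\times\check{N}_M\backslash\check{M}$ is the base change of $\check{G}\to\check{N}\backslash\check{G}$). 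Once both sides are inductions of the same Drinfeld--Pl\"{u}cker object --- and the two Drinfeld--Pl\"{u}cker structures on $\delta_{\on{Gr}_{G,\Ran}}$ are matched by a pointwise check on the compositions $V^{\check{N}}\star\delta\to V^{\check{N}_P}\star\delta\to V\star\delta$ --- the isomorphism is automatic and no stratum computation is needed. I recommend restructuring your argument along these lines rather than attempting the stratumwise comparison.
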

\begin{proof}
 \step
 We will consider the following commutative diagram
    \[\begin{tikzcd}
	&& {\ot{\Gr}_{P,\Ran} \underset{\Gr_{M,\Ran}}{\times} \overline{\Gr}_{B_M,\Ran}} \\
	& {\ot{\Gr}_{P,\Ran}} && {\overline{\Gr}_{B_M,\Ran}} \\
	{\Gr_{G,\Ran}} && {\Gr_{M,\Ran}} && {\Gr_{T,\Ran}.}
	\arrow["{\ot{\mathfrak{p}}_{P,B_M,\Ran}}", from=1-3, to=2-2]
	\arrow["{\ot{\mathfrak{q}}_{P,B_M,\Ran}}"', from=1-3, to=2-4]
	\arrow["{\ot{\mathfrak{p}}_{P,\Ran}}", from=2-2, to=3-1]
	\arrow["{\ot{\mathfrak{q}}_{P,\Ran}}"', from=2-2, to=3-3]
	\arrow["{\overline{\mathfrak{p}}_{B_M,\Ran}}", from=2-4, to=3-3]
	\arrow["{\overline{\mathfrak{q}}_{B_M,\Ran}}"', from=2-4, to=3-5]
\end{tikzcd}\]

\noindent Base change and the projection formula shows that the functor $\on{Jac}^{T_M}_{!*} \circ \on{Jac}^M_{!*}$ is given by 
\[\cF \longmapsto (\overline{\mathfrak{q}}_{B_M,\Ran} \circ \ot{\mathfrak{q}}_{P,B_M,\Ran})_*((\ot{\mathfrak{p}}_{P,\Ran} \circ \ot{\mathfrak{p}}_{P,B_M,\Ran})^!(\cF) \overset{!}{\otimes} {_{\Gr_M}}\IC^{\frac{\infty}{2}}_{P,\Ran} \widetilde{\boxtimes} {_{\Gr_T}}\IC^{\frac{\infty}{2}}_{B_M,\Ran}).\]

By composing Pl\"{u}cker data and forgetting the $M$-bundle, we obtain a map 
\[\mathfrak{r}: \ot{\Gr}_{P,\Ran} \underset{\Gr_{M,\Ran}}{\times} \overline{\Gr}_{B_M,\Ran} \longrightarrow \overline{\Gr}_{B,\Ran},\]

\noindent which is an equivalence on $\Gr_{P,\Ran} \underset{\Gr_{M,\Ran}}{\times} \Gr_{B_M,\Ran} \simeq \Gr_{B,\Ran}$ and such that 
\[\overline{\mathfrak{q}}_{B_M,\Ran} \circ \ot{\mathfrak{q}}_{P,B_M,\Ran} = \overline{\mathfrak{q}}_{B,\Ran} \circ \mathfrak{r} \,\,\, \text{and} \,\,\, \ot{\mathfrak{p}}_{P,\Ran} \circ \ot{\mathfrak{p}}_{P,B_M,\Ran} = \overline{\mathfrak{p}}_{B,\Ran} \circ \mathfrak{r}.\]

Another application of the projection formula shows that the composition $\on{Jac}^{T_M}_{!*} \circ \on{Jac}^M_{!*}$ is given by 
\[\cF \longmapsto \overline{\mathfrak{q}}_{B,\Ran,*}(\overline{\mathfrak{p}}^!_{B,\Ran}(\cF) \overset{!}{\otimes} \mathfrak{r}_*({_{\Gr_M}}\IC^{\frac{\infty}{2}}_{P,\Ran} \widetilde{\boxtimes} {_{\Gr_T}}\IC^{\frac{\infty}{2}}_{B_M,\Ran})).\]

To conclude the proof, it therefore suffices to produce a canonical isomorphism 
\[{_{\Gr_T}}\IC^{\frac{\infty}{2}}_{B,\Ran} \overset{\sim}{\longrightarrow} \mathfrak{r}_*({_{\Gr_M}}\IC^{\frac{\infty}{2}}_{P,\Ran} \widetilde{\boxtimes} {_{\Gr_T}}\IC^{\frac{\infty}{2}}_{B_M,\Ran}).\]

\step

\noindent To this end, note we have closed embeddings
\[\ot{\Gr}_{P,\Ran} \longhookrightarrow { _{\Gr_{M,\Ran}}}{\Gr_{G,\Ran}}\]

\noindent and
\[\ol{\Gr}_{B_M,\Ran} \longhookrightarrow {_{\Gr_{T,\Ran}}}{\Gr_{M,\Ran}}.\]

\noindent Hence we can identify ${_{\Gr_M}}{\IC^{\frac{\infty}{2}}_{P,\Ran}}$ and ${_{\Gr_T}}{\IC^{\frac{\infty}{2}}_{B_M,\Ran}}$ with their pushforwards along the above maps. Let us further recall that by \eqref{eq: product to fiber product} we have isomorphisms 
\[ { _{\Gr_{M,\Ran}}}{\Gr_{G,\Ran}} \simeq \Gr_{G,\Ran} \underset{\Ran}{\times} \Gr_{M,\Ran}\]

\noindent and

\[  { _{\Gr_{T,\Ran}}}{\Gr_{M,\Ran}} \simeq \Gr_{M,\Ran} \underset{\Ran}{\times} \Gr_{T,\Ran}\]

\noindent that commute with Hecke correspondences for $G$ and $M$ in the first case, and for $M$ and $T$ in the second.

By the definition of the semi-infinite IC sheaf (see §\ref{sec: def of semiinfinite ic}), we have: 
\[\on{IC}_{P,\on{Ran}}^{\frac{\infty}{2}}:=\on{Ind}_{\on{DrPl}_{\check{M},\check{G}}}^{\Hecke_{\check{M},\check{G}}}(\delta_{\on{Gr}_{G,\on{Ran}}})\]

\noindent A similar equation holds for the pair $(M,B_M)$. 

It follows that the pullback of $\on{IC}_{P,\on{Ran}}^{\frac{\infty}{2}}$ to $\Gr_{G,\Ran} \underset{\Ran}{\times} \Gr_{M,\Ran}$ is the Hecke object induced from a Drinfeld-Pl\"{u}cker structure on the delta sheaf ${_{\Gr_M}}{\delta_{\Gr_{G,\Ran}}}$ defined as pushforward of the dualizing sheaf along the closed embedding %
\[\mathbb{B}\fL^+_{\Ran}M \underset{\mathbb{B}\fL^+_{\Ran}M}{\times} \Gr_{M,\Ran} \simeq \Gr_{M,\Ran} \longhookrightarrow \Gr_{G,\Ran} \times_{\Ran} \Gr_{M,\Ran},\]

\noindent and similarly for ${_{\Gr_T}}\IC^{\frac{\infty}{2}}_{B_M,\Ran}$. 

The projection map
\[p_{G,T}: \Gr_{G,\Ran} \underset{\Ran}{\times} \Gr_{M,\Ran} \underset{\Ran}{\times} \Gr_{T,\Ran} \longrightarrow \Gr_{G,\Ran} \underset{\Ran}{\times} \Gr_{T,\Ran}\]

\noindent onto the first and third factors commutes with Hecke functors for $G$ and $T$. It follows that the functor
\[p_{G,T,*}: D(\Gr_{G,\Ran} \underset{\Ran}{\times} \Gr_{M,\Ran} \underset{\Ran}{\times} \Gr_{T,\Ran}) \longrightarrow D(\Gr_{G,\Ran} \underset{\Ran}{\times} \Gr_{T,\Ran})\]

\noindent commutes with the action of the category $\on{Rep}(\check{G} \times \check{M} \times \check{T})_{\Ran}$, where the latter acts on $$D(\Gr_{G,\Ran} \underset{\Ran}{\times} \Gr_{T,\Ran})$$ through the restriction functor 
\[\on{Rep}(\check{G} \times \check{M} \times \check{T})_{\Ran} \longrightarrow \on{Rep}(\check{G} \times \check{T})_{\Ran}\]

\noindent given by forgetting the $\check{M}$ module structure. 

\step

We consider 
\[
\cO(\ol{\check{N}_P\backslash \check{G}})_{\on{Ran}}\otimes \cO(\ol{\check{N}_M\backslash\check{M}})_{\Ran}:=(\cO(\ol{\check{N}_P\backslash \check{G}})\otimes \cO(\ol{\check{N}_M\backslash \check{M}}))_{\on{Ran}}
\]

\noindent as an object of $\on{Rep}(\check{G}\times\check{M}\times\check{T})_{\on{Ran}}$ via the diagonal action of $\check{M}$

The $\cO(\ol{\check{N}_P\backslash \check{G}})_{\Ran}$ module structure on ${_{\Gr_M}}{\delta_{\Gr_G,\Ran}}$ and the $\cO(\ol{\check{N}_M\backslash\check{M}})_{\on{Ran}}$ module structure on ${_{\Gr_T}}{\delta_{\Gr_M,\Ran}}$ give an action of 
\[\cO(\ol{\check{N}_P\backslash \check{G}})_{\Ran} \otimes \cO(\ol{\check{N}_M\backslash \check{M}})_{\Ran}\]

\noindent on the sheaf

\[
{_{\Gr_M}}{\delta_{\Gr_G,\Ran}}\overset{!}{\otimes} {_{\Gr_T}}{\delta_{\Gr_M,\Ran}}\in D(\Gr_{G,\Ran} \underset{\Ran}{\times} \Gr_{M,\Ran} \underset{\Ran}{\times} \Gr_{T,\Ran}),
\]

\noindent and therefore a similar action on 
\[p_{G,T,*}({_{\Gr_M}}{\delta_{\Gr_G,\Ran}} \otimes {_{\Gr_T}}{\delta_{\Gr_M,\Ran}}) \simeq {_{\Gr_T}}{\delta_{\Gr_{G,\Ran}}}.\]

Note that multiplication gives a $\check{G} \times \check{T}$ equivariant morphism 
\[\check{N}_P\backslash \check{G} \times \check{N}_M \backslash \check{M}\longrightarrow \check{N}\backslash \check{G}.\]

\noindent Hence we get a map
\begin{equation}\label{eq: map on functions from action}\cO(\ol{\check{N}\backslash \check{G}})_{\Ran} \longrightarrow \cO(\ol{\check{N}_P\backslash \check{G}})_{\Ran} \otimes \cO(\ol{\check{N}_M\backslash \check{M}})_{\Ran}\end{equation}

\noindent in $\on{Rep}(\check{G} \times\check{M}\times \check{T})_{\Ran}$, where we consider $\check{M}$ as acting trivially on $\cO(\check{N}\backslash \check{G})$.

Restricting along the morphism \eqref{eq: map on functions from action}, we view ${_{\Gr_T}}{\delta_{\Gr_{G,\Ran}}}$ as a module for $\cO(\ol{\check{N}\backslash \check{G}})$; i.e., as a Drinfeld-Pl\"{u}cker object. 

In order to conclude the proof, it is therefore enough to show that the above Drinfeld-Pl\"{u}cker structure on ${_{\Gr_T}}{\delta_{\Gr_{G,\Ran}}}$ coincides with the Drinfeld-Pl\"{u}cker structure constructed in Appendix \eqref{S:DrPLstructure}. The first step towards this is to note the commutativity of the diagram 
\[\begin{tikzcd}
	{\cO(\ol{\check{N}_P\backslash \check{G}})_{\Ran}\otimes\cO(\ol{\check{N}_M\backslash \check{M}})_{\Ran}\text{-mod}(D_{G,T})} & {\cO(\check{G})_{\Ran} \otimes \cO(\check{G})_{\Ran}\text{-mod}(D_{G,T})} \\
	{\cO(\ol{\check{N}\backslash \check{G}})_{\Ran}\text{-mod}(D_{G,T})} & {\cO(\check{G})_{\Ran}\text{-mod}(D_{G,T}),}
	\arrow[from=1-1, to=1-2]
	\arrow[from=1-1, to=2-1]
	\arrow[from=1-2, to=2-2]
	\arrow[from=2-1, to=2-2]
\end{tikzcd}\]

\noindent where we have used the notation
\[D_{G,T} \coloneqq D(\Gr_{G,\Ran} \underset{\Ran}{\times} \Gr_{T,\Ran}).\] 

\noindent Here, the horizontal maps are the induction functors from a Drinfeld-Plücker structure to a Hecke structure (see §\ref{S:factheckedrinf}), and the vertical maps are given by restriction. Commutativity of this diagram follows in turn from the fact that the square 
\[\begin{tikzcd}
	{\check{G} \times \check{G} } & {\check{G}} \\
	{\check{N}_P\backslash\check{G} \times \check{N}_M\backslash \check{M}} & {\check{G}/\check{N}}
	\arrow[from=1-1, to=1-2]
	\arrow[from=1-1, to=2-1]
	\arrow[from=1-2, to=2-2]
	\arrow[from=2-1, to=2-2]
\end{tikzcd}\]

\noindent is Cartesian. 

\step

To identify the Drinfeld-Pl\"{u}cker structures on ${_{\Gr_T}}{\delta_{\Gr_{G,\Ran}}}$, we note that it suffices to identify them over each power $X^I$ of the curve in a compatible way. For simplicity, we identify them when further restricted to a point $x\in X$.\footnote{As remarked in the footnote in §\ref{s:footnote}, all sheaves involved are in the heart of the relative perverse t-structure defined in \cite{hansen2023relative}, and hence it suffices to identify the $\cO(\ol{\check{G}/\check{N}})_{\Ran}$-module structures at a point.}

Let $\sC$ be a category acted on by $\on{Rep}(\check{G} \times \check{M} \times \check{T})$. For $H \in \{\check{G},\check{M},\check{T}\}$, denote by $\underset{H}{\star}$ the corresponding action of $\on{Rep}(H)$ on $\sC$. An action of the algebra $\cO(\ol{\check{N}_P\backslash \check{G}}) \otimes \cO(\ol{\check{N}_M\backslash \check{M}})$ on an object $c$ of $\sC$ amounts to the data of, for every $V \in \on{Rep}(\check{G})$, maps
\[V^{\check{N}_P} \underset{\check{M}}{\star} c \longrightarrow V \underset{\check{G}}{\star} c\]

\noindent satisfying the Pl\"{u}cker compatibilities, together with, for every representation $W \in \on{Rep}(\check{M})$, maps
\[W^{\check{T}} \underset{\check{T}}{\star} c \longrightarrow W \underset{\check{M}}{\star} c\]

\noindent satisfying the Pl\"{u}cker compatibilities. 

\noindent Restricting to an action of $\cO(\ol{\check{N}\backslash \check{G}})$ amounts to taking the composition of the maps:
\[V^{\check{N}} \underset{\check{T}}{\star} c = (V^{\check{N}_P})^{\check{N}_M} \underset{\check{T}}{\star} c \longrightarrow V^{\check{N}_P} \underset{\check{M}}{\star} c \longrightarrow V \underset{\check{G}}{\star} c.\]

The latter assertion can be seen by unwinding the definitions and from the claim that the map \eqref{eq: map on functions from action} identifies $\cO(\ol{\check{N}_P\backslash \check{G}})$ with $\check{M}$ invariants in $\cO(\ol{\check{N}_P\backslash \check{G}}) \otimes \cO(\ol{\check{N}_M\backslash \check{M}})$. 

Lastly, it remains to check that the maps in the Drinfeld-Pl\"{u}cker structures at a point from §\ref{sec: drinfeld-plucker at a point} for the pairs $(\check{G},\check{P})$ and $(\check{M},\check{B}_M)$ compose to the Drinfeld-Pl\"{u}cker structure for the pair $(\check{G},\check{B})$, but this is clear. 
\end{proof}

\subsection{The $!*$-Jacquet functor and restriction}\label{s:!*jac} Let $\on{Res}^{\check{G}}_{\check{M}}$ denote the symmetric monoidal functor 
\[\on{Res}^{\check{G}}_{\check{M}}: \on{Rep}(\check{G})_{\Ran} \longrightarrow \on{Rep}(\check{M})_{\Ran} \]

\noindent obtained from the symmetric monoidal functor of restriction of representations via twisted arrows.

Moreover, denote by $C^{\bullet}(\check{\mathfrak{n}}_P,-)$ the functor 
\[
C^{\bullet}(\check{\mathfrak{n}}_P,-):  \on{Rep}(\check{G})_{\Ran}\to  \on{Rep}(\check{M})_{\Ran}
\]

\noindent given by restricting along $\check{P}\to \check{G}$ and taking Lie algebra cohomology against $\check{\fn}_P$, cf. §\ref{s:inv}.

\subsubsection{} In this subsection we will prove the following theorem. 

\begin{thm}\label{thm: restriction and !* Jacquet are the same} The following diagram commutes:
\[\begin{tikzcd}
	{\on{Whit}(\Gr_{G,\Ran})} & {\on{Whit}(\Gr_{M,\Ran})} \\
	{\on{Rep}(\check{G})_{\Ran}} & {\on{Rep}(\check{M})_{\Ran}.}
	\arrow["{\on{Jac}^M_{!*}}", from=1-1, to=1-2]
	\arrow["{\on{CS}_G}"', from=1-1, to=2-1]
	\arrow["{\on{CS}_M}", from=1-2, to=2-2]
	\arrow["{\on{Res}^{\check{G}}_{\check{M}}}"', from=2-1, to=2-2]
\end{tikzcd}\]
\end{thm}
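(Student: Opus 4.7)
The plan is to leverage the Hecke structure on $\sIC$ constructed in §\ref{s:heckeandenhdrplofkernels} to reduce the theorem to a single object identification. Since $\sIC \in \on{Hecke}_{\check{M},\check{G}}(\on{SI}_{P,\Ran})$ by construction, the kernel formalism makes $\on{Jac}^M_{!*}$ into a $\on{Rep}(\check{G})_{\Ran}$-linear functor where the target $\on{Whit}(\Gr_{M,\Ran})$ carries the $\on{Rep}(\check{G})_{\Ran}$-module structure obtained by pullback along $\on{Res}^{\check{G}}_{\check{M}}$. The Casselman-Shalika equivalence $\on{CS}_G$ further identifies $\on{Whit}(\Gr_{G,\Ran})$ with $\on{Rep}(\check{G})_{\Ran}$ as a $\on{Rep}(\check{G})_{\Ran}$-module category, which is free of rank one on the Whittaker vacuum $\Vac_G := \on{CS}_G^{-1}(\triv_{\check{G}})$. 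Consequently, both $\on{CS}_M \circ \on{Jac}^M_{!*}$ and $\on{Res}^{\check{G}}_{\check{M}} \circ \on{CS}_G$ are $\on{Rep}(\check{G})_{\Ran}$-linear functors into $\on{Rep}(\check{M})_{\Ran}$, and are therefore determined by their common value on $\Vac_G$. The theorem thus reduces to the single identification $\on{Jac}^M_{!*}(\Vac_G) \simeq \Vac_M$.

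To establish this identification, I would proceed by induction on the semisimple rank of $M$, the base case being $M = T$ (i.e., $P = B$ a Borel). For the inductive step, the composition identity $\on{Jac}^{T_M}_{!*} \circ \on{Jac}^M_{!*} \simeq \on{Jac}^T_{!*}$ from Theorem \ref{thm: composing eisenstein}, combined with the Borel case for $G$, yields $\on{CS}_T(\on{Jac}^T_{!*}(\Vac_G)) \simeq \triv_{\check{T}}$. Applying the inductive hypothesis to the chain $T_M \subset B_M \subset M$ (valid since $T_M$ has strictly smaller semisimple rank than $M$) rewrites this same expression as $\on{Res}^{\check{M}}_{\check{T}}(\on{CS}_M(\on{Jac}^M_{!*}(\Vac_G)))$. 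Since $\on{Res}^{\check{M}}_{\check{T}}$ is conservative on the heart of the natural t-structure on $\on{Rep}(\check{M})_{\Ran}$ and distinguishes isomorphism classes through character data, we conclude $\on{CS}_M(\on{Jac}^M_{!*}(\Vac_G)) \simeq \triv_{\check{M}}$, provided one verifies the requisite t-exactness for $\on{Jac}^M_{!*}$ on $\Vac_G$.

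The main obstacle is the base case: proving $\on{Jac}^T_{!*}(\Vac_G) \simeq \Vac_T$ directly for arbitrary $G$. Here I would employ the stratification of $\ot{S}^0_{B,\Ran}$ by the orbits $S^\theta_{B,\Ran}$ indexed by $\Lambda_{G,B}^{\on{neg}}$, together with the explicit description $j^{\theta,*}(\sIC) \simeq p^{\theta,!}(\cO(\check{N})_{X^\theta})[-\langle 2\rho_G,\theta\rangle]$ from Proposition \ref{p:strata}. The $\theta = 0$ contribution recovers $\Vac_T$ cleanly, reducing the problem to showing that the contributions from $\theta \neq 0$ vanish after being paired with the Whittaker structure on $\Vac_G$. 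This vanishing is the input of \cite[§3]{raskin2021chiral}. Alternatively, as the paper indicates, one can circumvent Raskin's vanishing via the Koszul duality $\on{Inv}_{\cO(\check{N})}(\sIC) \simeq \mathbf{j}_!$ established in Proposition \ref{p:restostrata!}, transferring the $!*$-computation to a parallel analysis on the $!$-side, where the enhanced Drinfeld-Plücker structure on $\mathbf{j}_!$ and the cleanness of $\Vac_G$ under Whittaker averaging make the vanishing more directly accessible.
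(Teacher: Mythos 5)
Your overall architecture coincides with the paper's: reduce, via the Hecke structure on $\sIC$ and the Casselman--Shalika equivalence, to the single identification $\on{Jac}^M_{!*}(\psi_{G,\on{Ran}})\simeq \psi_{M,\on{Ran}}$ (this is Lemma \ref{l:impimplication}); reduce that to the principal case by composing Jacquet functors via Theorem \ref{thm: composing eisenstein} together with conservativity of $\on{Jac}^{T_M}_{!*}$; and settle the principal case by the vanishing theorem of \cite[§3]{raskin2021chiral}. Your ``induction on the semisimple rank of $M$'' is, unwound, exactly this two-step reduction, since the only instance of the inductive hypothesis you actually invoke is the principal case for the group $M$ itself.

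The one genuine soft spot is the descent from $T$ back up to $M$. Knowing $\on{Jac}^{T_M}_{!*}(\on{Jac}^M_{!*}(\psi_{G,\on{Ran}}))\simeq \delta_{\Gr_{T,\Ran}}\simeq \on{Jac}^{T_M}_{!*}(\psi_{M,\on{Ran}})$ as an abstract isomorphism of objects is not enough: conservativity of $\on{Jac}^{T_M}_{!*}$ only detects whether a given \emph{map} is an isomorphism. The paper therefore first manufactures the map: the fiber-product computation of Corollary \ref{c:lastcanequiv} identifies the $!$-restriction of $\on{Jac}^M_{!*}(\psi_{G,\on{Ran}})$ to $S^{-,0}_{B_M,\Ran}$ with that of $\psi_{M,\on{Ran}}$, producing the counit map (\ref{eq:counit map}) $\psi_{M,\on{Ran}}\to \on{Jac}^M_{!*}(\psi_{G,\on{Ran}})$, and then checks that this specific map becomes an isomorphism after applying the conservative functor $\on{Jac}^{T_M}_{!*}$. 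Your substitute---that $\on{Res}^{\check{M}}_{\check{T}}$ separates isomorphism classes on the heart, so character data pins down the object---requires knowing that $\on{Jac}^M_{!*}(\psi_{G,\on{Ran}})$ lies in the heart of an appropriate factorizable t-structure; you flag this t-exactness but do not verify it, and it is not obviously easier than the vanishing you are ultimately trying to prove. Either supply that t-exactness, or (better) insert the Corollary \ref{c:lastcanequiv}-type computation to obtain the comparison map; with that repair the argument closes and agrees with the paper's.
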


By Koszul duality we obtain the following as a corollary to the above theorem. 

\begin{cor}\label{cor: Koszul dual functor} The following diagram commutes:
\[\begin{tikzcd}
	{\on{Whit}(\Gr_{G,\Ran})} & {\on{Whit}(\Gr_{M,\Ran})} \\
	{\on{Rep}(\check{G})_{\Ran}} & {\on{Rep}(\check{M})_{\Ran}.}
	\arrow["{\on{Jac}^M_!}", from=1-1, to=1-2]
	\arrow["{\on{CS}_G}"', from=1-1, to=2-1]
	\arrow["{\on{CS}_M}", from=1-2, to=2-2]
	\arrow["{C^{\bullet}(\check{\mathfrak{n}}_P,-)}"', from=2-1, to=2-2]
\end{tikzcd}\]
\end{cor}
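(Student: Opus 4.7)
The plan is to deduce the corollary from Theorem \ref{thm: restriction and !* Jacquet are the same} by taking $\cO(\check{N}_P)_{\on{Ran}}$-invariants on both sides of the commutative diagram. The key input is Proposition \ref{p:restostrata!}, which identifies $\mathbf{j}_! \simeq \on{Inv}_{\cO(\check{N}_P)_{\on{Ran}}}(\sIC)$, together with the observation that the natural Koszul duality pattern appears on both the automorphic and spectral sides.

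On the automorphic side, the Whittaker pairing (\ref{eq:whitpairing}) is $\on{Sph}_{M,\on{Ran}}$-linear on the right (through the action on $\on{SI}_{P,\on{Ran}}$), so the coaction of $\cO(\check{N}_P)_{\on{Ran}}$ on $\sIC$ (see §\ref{s:coaction}) induces a $\cO(\check{N}_P)_{\on{Ran}}$-coaction on the functor $\on{Jac}^M_{!*}$, viewed as an object of $\on{Hom}_{D(\on{Ran})}(\on{Whit}(\on{Gr}_{G,\on{Ran}}), \on{Whit}(\on{Gr}_{M,\on{Ran}}))$. Taking $\on{Inv}_{\cO(\check{N}_P)_{\on{Ran}}}$ and using Proposition \ref{p:restostrata!} identifies the resulting functor with $\on{Jac}^M_!$. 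On the spectral side, the functor $\on{Res}^{\check{G}}_{\check{M}}$ factors through $\on{Rep}(\check{P})_{\on{Ran}} \simeq \Omega(\check{\fn}_P)_{\on{Ran}}\on{-mod}(\on{Rep}(\check{M})_{\on{Ran}})$, so by Koszul duality (Lemma \ref{l:KD}) it carries a natural $\cO(\check{N}_P)_{\on{Ran}}$-coaction, whose invariants precisely recover $C^{\bullet}(\check{\fn}_P, -)$. Indeed, these invariants can be computed as $\cO(\check{N}_P)_{\on{Ran}}\on{-comod}$-maps from the trivial comodule, which yields derived $\check{\fn}_P$-invariants.

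The main step is therefore to upgrade the equivalence from Theorem \ref{thm: restriction and !* Jacquet are the same} to an equivalence of $\cO(\check{N}_P)_{\on{Ran}}$-comodules in functors. Both comodule structures arise from the Hecke structure on $\sIC$ (respectively, the evident $\cO(\check{N}_P)_{\on{Ran}}$-coaction on $\on{Res}^{\check{G}}_{\check{M}}$ coming from restricting $\check{G}$-representations to $\check{N}_P$ and then to $\check{M}$). Since $\on{CS}_G$ and $\on{CS}_M$ are equivalences of factorization module categories compatible with Hecke/$\on{Rep}(\check{G})_{\on{Ran}}$-actions, the coaction structures on both sides of the Casselman-Shalika diagram are forced to match once one identifies the relevant algebra morphisms at the level of fibers. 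Concretely, one reduces to the point-wise statement that under $\on{CS}_G$, the Hecke structure on $\sIC$ corresponds to the tautological $\check{G}$-equivariance of restriction, and the induced coactions of $\cO(\check{N}_P)$ agree because both originate from the action $\check{N}_P \curvearrowright \check{G}$ on $\cO(\check{G})$.

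Having established the upgraded equivalence at the level of $\cO(\check{N}_P)_{\on{Ran}}$-comodules, applying $\on{Inv}_{\cO(\check{N}_P)_{\on{Ran}}}$ yields the desired commutative square. I expect the main technical obstacle to be checking full higher coherence of the comodule compatibility, but this should follow formally from the construction of $\sIC$ in §\ref{sec: def of semiinfinite ic} as induced from the Drinfeld-Plücker structure on $\delta_{\on{Gr}_{G,\on{Ran}}}$, together with the compatibility discussed in §\ref{s:comp1} between $\on{Ind}^{\on{Hecke}_{\check{M},\check{G}}}_{\on{DrPl}_{\check{M},\check{G}}}$ followed by $\on{Inv}_{\cO(\check{N}_P)}$ and the functor $\on{Oblv} \circ \on{Ind}^{\on{EnhDrPl}_{\check{M},\check{G}}}_{\on{DrPl}_{\check{M},\check{G}}}$.
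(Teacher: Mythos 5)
Your proposal is correct and follows essentially the same route as the paper: take $\cO(\check{N}_P)_{\on{Ran}}$-invariants of both sides of the square in Theorem \ref{thm: restriction and !* Jacquet are the same}, identify the invariants of the kernel $\IC^{\frac{\infty}{2}}_{P,\Ran}$ with $\mathbf{j}_!$ via Proposition \ref{p:restostrata!}, and observe that $\on{Inv}_{\cO(\check{N}_P)_{\Ran}}\circ\on{Res}^{\check{G}}_{\check{M}}\simeq C^{\bullet}(\check{\fn}_P,-)$. The comodule compatibility you single out as the main technical step is exactly the point the paper treats as implicit in the constructions (the coactions on both sides arising from the Hecke structures and the action $\check{N}_P\curvearrowright\check{G}$), so your elaboration is consistent with, rather than divergent from, the published argument.
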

\begin{proof}
    Since $\IC^{\frac{\infty}{2}}_{P,\Ran}$ has a coaction of $\mathcal{O}(\check{N}_P)_{\Ran}$, so does the functor $\on{Jac}^M_{!*}$.

    The functor $\on{Inv}_{\mathcal{O}(\check{N}_P)_{\Ran}}(\on{Jac}^M_{!*})$ given by post-composing with the functor of taking $\mathcal{O}(\check{N}_P)$-invariants is therefore given by the kernel $\on{Inv}_{\mathcal{O}(\check{N}_P)_{\Ran}}(\IC^{\frac{\infty}{2}}_{P,\Ran})$ under the pairing (\ref{eq:whitpairing}). Now the result follows from Theorem \ref{thm: restriction and !* Jacquet are the same} and Proposition \ref{p:restostrata!}, observing that we have an isomorphism of functors:
    \[\on{Inv}_{\mathcal{O}(\check{N}_P)_{\Ran}}\circ \on{Res}^{\check{G}}_{\check{M}} \simeq C^{\bullet}(\check{\mathfrak{n}}_P,-). \] 
\end{proof}

\subsubsection{} It remains to prove Theorem \ref{thm: restriction and !* Jacquet are the same}. We will reduce this theorem to a certain vanishing assertion when $P=B$ where we can invoke results of Raskin \cite{raskin2021chiral}.

Consider the commutative diagram:

\[\begin{tikzcd}[column sep=0.0001]
	&& {\ot{\Gr}_{P,B^-,B^-_M,\Ran}} \\
	& {S^{-,0}_{B,\Ran} \underset{\on{Gr}_{G,\on{Ran}}}{\times}\ot\Gr_{P,\Ran}} && {\ot\Gr_{P,\Ran} \underset{\on{Gr}_{M,\on{Ran}}}{\times} S^{-,0}_{B_M,\Ran}} \\
	{S^{-,0}_{B,\Ran}} && {\ot\Gr_{P,\Ran}} && {S^{-,0}_{B_M,\Ran}} \\
	& {\Gr_{G,\Ran}} && {\Gr_{M,\Ran}}
	\arrow[from=1-3, to=2-2]
	\arrow[from=1-3, to=2-4]
	\arrow[from=2-2, to=3-1]
	\arrow[from=2-2, to=3-3]
	\arrow[from=2-4, to=3-3]
	\arrow[from=2-4, to=3-5]
	\arrow[from=3-1, to=4-2]
	\arrow[from=3-3, to=4-2]
	\arrow[from=3-3, to=4-4]
	\arrow[from=3-5, to=4-4]
\end{tikzcd}\]

\noindent all of whose squares are Cartesian. For convenience, we have denoted the fiber product
\[S^{-,0}_{B,\Ran} \underset{\on{Gr}_{G,\on{Ran}}}{\times} \ot\Gr_{P,\Ran} \underset{\on{Gr}_{M,\on{Ran}}}{\times} S^{-,0}_{B_M,\Ran}\]

\noindent by $\ot{\Gr}_{P,B^-,B^-_M,\Ran}$.

\subsubsection{}
We have the vacuum Whittaker object $\psi_{G,\on{Ran}}$, which is cleanly extended to $\Gr_{G,\Ran}$ from $S^{-,0}_{B,\Ran}=\on{Gr}_{N^-,\on{Ran}}$. We would like to understand the sheaf
\begin{equation}\label{eq:calcsheaf}
\mathscr{F}_{\psi} \coloneqq \iota_{B_M}^!(\on{Jac}_{!*}^M(\psi_{G,\on{Ran}}))
\end{equation}

\noindent as an object of the category $\on{Whit}(S^{-,0}_{B_M,\Ran})$. Here $\iota_{B_M}$ denotes the inclusion $S^{-,0}_{B_M,\Ran}\to \on{Gr}_{M,\on{Ran}}$.

We claim that in the diagram above, we can replace $\ot\Gr_{P,\Ran}$ with the open locus
\[
\Gr_{P,\Ran}\subset \ot\Gr_{P,\Ran}.
\]

\noindent That is, we claim that the open embedding
\[\Gr_{P,B^-,B^-_M,\Ran} \coloneqq S^{-,0}_{B,\Ran} \underset{\on{Gr}_{G,\on{Ran}}}{\times} \Gr_{P,\Ran} \underset{\on{Gr}_{M,\on{Ran}}}{\times} S^{-,0}_{B_M,\Ran}\into \ot{\Gr}_{P,B^-,B^-_M,\Ran}\]

\noindent is an isomorphism. Indeed, that we are taking the fiber over $S^{-,0}_{B_M,\Ran}$ forces the $P$-reduction over the disc to be non-degenerate. In more detail, the data of an $S$-point of the fiber product
\[S^{-,0}_{B,\Ran} \underset{\on{Gr}_{G,\on{Ran}}}{\times} \ot\Gr_{P,\Ran}\]

\noindent is the data, for every irreducible representation $V$ of $G$, of a point $(x_I,\sP_G,\alpha)$ of $\Gr_{G,\Ran}$ and a point $(x_I,\sP_M, \sP_G)$ of $\ot{\Gr}_{P,\Ran}$ such that the corresponding meromorphic maps
\[V^{N_P}_{\sP_M} \longrightarrow V_{\sP_G} \longrightarrow \mathcal{O}_{X \times S} = V^{N_B^-}_{\sP^0_T}\]

\noindent are regular and the second map is non-degenerate. Note we are assuming $V$ is irreducible for the last equality. Requiring that the corresponding point of $\Gr_{M,\Ran}$ lands in $S^{-,0}_{B_M,\Ran}$ is the condition that the composition of the above maps is non-degenerate for every representation $V$. Whenever $V^{N_P}$ is one-dimensional, non-degeneracy of the composition implies non-degeneracy of the first map. Moreover, non-degeneracy of the generalized $P$-reduction can be checked using only representations $V$ such that $V^{N_P}$ is one-dimensional.

It follows that $\mathscr{F}_{\psi}$ can be computed by base-change along the above diagram, replacing $\ot\Gr_{P,\Ran}$ by $\Gr_{P,\Ran}$.

\begin{lem}\label{l: triple fiber product lemma}
    The composition \[\Gr_{P,B^-,B^-_M,\Ran} \to \Gr_{P,\Ran} \underset{\Gr_{M,\Ran}}{\times} S^{-,0}_{B_M,\Ran} \to S^{-,0}_{B_M,\Ran}\] is an isomorphism. 
\end{lem}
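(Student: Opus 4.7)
The plan is to exhibit an explicit inverse, bootstrapping from the single algebraic identity $N^- \cap P = N^-_M$ inside $G$. Since $P = M \ltimes N_P$ with $N_P$ spanned by positive root subgroups not in $M$ while $N^-$ is spanned by negative root subgroups, one has $N^- \cap N_P = 1$ and $N^- \cap M = N^-_M$. As these are all closed subgroups, this identity lifts to an equality of subfunctors $\mathfrak{L}_{\Ran} N^- \cap \mathfrak{L}_{\Ran} P = \mathfrak{L}_{\Ran} N^-_M$ inside $\mathfrak{L}_{\Ran} G$. This is the only combinatorial input I will use.

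First I would construct a section $\sigma: S^{-,0}_{B_M,\Ran} \to \Gr_{P,B^-,B^-_M,\Ran}$. Given a point $(x_I, \sP_M, \phi_M)$ of $S^{-,0}_{B_M,\Ran}$, represent $\phi_M$ by an element of $\mathfrak{L}_{\Ran} N^-_M$. Using the splitting $M \hookrightarrow P$, lift $\sP_M$ to the corresponding $P$-bundle $\sP_P$, and using $N^-_M \hookrightarrow M \hookrightarrow P$ lift $\phi_M$ to a $P$-valued trivialization $\phi_P$ of $\sP_P$ away from $x_I$. The resulting $P$-bundle is trivial, so in particular it lies in the non-degenerate locus $\Gr_{P,\Ran}$, and since $N^-_M \subseteq N^-$ the induced $G$-trivialization lies in $\mathfrak{L}_{\Ran} N^-$, i.e.\ the induced $G$-point belongs to $S^{-,0}_{B,\Ran}$. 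By construction the composition $S^{-,0}_{B_M,\Ran} \xrightarrow{\sigma} \Gr_{P,B^-,B^-_M,\Ran} \to S^{-,0}_{B_M,\Ran}$ is the identity, so $\sigma$ is a right inverse.

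Second I would show that $\sigma$ is also a left inverse, which is the content of the lemma. Given an $S$-point of $\Gr_{P,B^-,B^-_M,\Ran}$ projecting to some $(x_I, \sP_M, \phi_M)$, the $P$-trivialization $\phi_P \in \mathfrak{L}_{\Ran} P$ is a lift of $\phi_M \in \mathfrak{L}_{\Ran} N^-_M \subset \mathfrak{L}_{\Ran} M$. The requirement that the induced $G$-trivialization lies in $\mathfrak{L}_{\Ran} N^-$ forces $\phi_P \in \mathfrak{L}_{\Ran} P \cap \mathfrak{L}_{\Ran} N^- = \mathfrak{L}_{\Ran} N^-_M$ by the identity from the first paragraph. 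Because the projection $\mathfrak{L}_{\Ran} N^-_M \to \mathfrak{L}_{\Ran} M$ is injective, $\phi_P$ is uniquely determined as the canonical lift of $\phi_M$ coming from $\sigma$, and the entire $P$- and $G$-data must coincide with that produced by $\sigma$.

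The main obstacle is really just bookkeeping: one must correctly unwind the definition of $\Gr_{P,\Ran}$ as the fiber product in \S\ref{s:Grptilderan}, keep track of the quotients by $\mathfrak{L}^+_{\Ran} M$, and ensure that the arguments above extend factorizably over $\Ran$. Once this setup is carefully handled and the identity $N^- \cap P = N^-_M$ has been lifted to loop groups, the rest of the proof is formal; in particular, no input from the Langlands dual side or from geometric Satake is required here.
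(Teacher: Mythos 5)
Your section $\sigma$ in the first step is fine (it is implicitly the ``canonical section to the factorization morphism'' that the paper also uses), but the second step has a genuine gap, and it is exactly where the content of the lemma lives. A point of $\Gr_{P,\Ran}$ is represented by $\phi_P\in\fL_{\Ran}P$ only modulo right multiplication by $\fL^+_{\Ran}P$, and the condition that the induced $G$-point lies in $S^{-,0}_{B,\Ran}=\Gr_{N^-,\Ran}$ says only that $\phi_P\in\fL_{\Ran}N^-\cdot\fL^+_{\Ran}G$, \emph{not} that $\phi_P\in\fL_{\Ran}N^-$. So the intersection you must control is $\fL_{\Ran}P\cap\fL_{\Ran}N^-\cdot\fL^+_{\Ran}G$ modulo $\fL^+_{\Ran}P$, and this does not follow from the abstract identity $N^-\cap P=N^-_M$. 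Unwinding (use the $M$-condition to write $\phi_P=u\cdot n^-_M$ with $u\in\fL_{\Ran}N_P$, $n^-_M\in\fL_{\Ran}N^-_M$, and conjugate $u$ past $n^-_M$), what you actually need is the containment $\fL_{\Ran}N_P\cap\fL_{\Ran}N^-\cdot\fL^+_{\Ran}G\subseteq\fL^+_{\Ran}N_P$, i.e.\ that the orbits $S^0_{B,\Ran}$ (or rather its $N_P$-analogue) and $S^{-,0}_{B,\Ran}$ through the unit intersect only in the unit section. Already for $G=SL_2$ this is a genuine computation in the affine Grassmannian (attractor meets repeller only in the fixed locus), not a formal consequence of $N\cap N^-=1$: the group-theoretic identity is trivially true for any pair of opposite unipotents, whereas $\fL N\cap\fL N^-\fL^+G=\fL^+N$ is the nontrivial statement that opposite semi-infinite orbits meet in a point.

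This missing input is precisely what the paper's proof supplies, by a slightly different route: it observes that $\on{pr}_{B_M}$ is equivariant for the transitive action of $\fL_{\Ran}N^-_M$ on $S^{-,0}_{B_M,\Ran}$, hence is a locally trivial fibration, and then identifies the fiber over the unit section with $S^{-,0}_{B,\Ran}\underset{\Gr_{G,\Ran}}{\times}S^0_{B,\Ran}\simeq\Ran$. If you add the statement $S^{-,0}_{B,\Ran}\underset{\Gr_{G,\Ran}}{\times}S^0_{B,\Ran}\simeq\Ran$ (or its $N_P$-variant) as an explicit input, your explicit-inverse argument can be completed; without it, the assertion that $\phi_P$ is forced into $\fL_{\Ran}N^-_M$ is unjustified.
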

\begin{proof}
    The group $\mathfrak{L}_{\Ran}N_M^-$ acts on $\Gr_{P,B^-,B^-_M,\Ran}$ in such a way that the projection 
    \[\on{pr}_{B_M}:\Gr_{P,B^-,B^-_M,\Ran} \to S^{-,0}_{B_M,\Ran}\]

    \noindent is equivariant for the usual action of $\mathfrak{L}_{\Ran}N^-_M$ on $S^{-,0}_{B_M,\Ran}$. Since the latter action is transitive, it follows that $\on{pr}_{B_M}$ is an fppf locally trivial fiber bundle with typical fiber 
    \[\Gr_{P,B^-,B^-_M,\Ran} \underset{S^{-,0}_{B_M,\Ran}}{\times} \Ran \simeq S^{-,0}_{B,\Ran} \underset{\Gr_{G,\Ran}}{\times} \Gr_{P,\Ran} \underset{\Gr_{M,\Ran}}{\times} \Ran\]

    \noindent where the map $\Ran \to S^{-,0}_{B_M,\Ran}$ is the canonical section to the factorization morphism. 

    We have
    \[S^{-,0}_{B,\Ran} \underset{\Gr_{G,\Ran}}{\times} \Gr_{P,\Ran} \underset{\Gr_{M,\Ran}}{\times} \Ran \simeq S^{-,0}_{B,\Ran} \underset{\Gr_{G,\Ran}}{\times} S_{B,\on{Ran}}^0\simeq\on{Ran}, \]

    \noindent and the result follows. 
\end{proof}

\begin{cor}\label{c:lastcanequiv}
    There is a canonical isomorphism:
    \[\mathscr{F}_{\psi} \overset{\sim}{\longrightarrow} \iota_{B_M}^!(\psi_{M,\Ran})\]
\end{cor}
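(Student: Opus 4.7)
The plan is to compute $\mathscr{F}_\psi$ by tracing through the Cartesian diagram just constructed, using the kernel presentation of $\on{Jac}^M_{!*}$ together with base change. Recall that
\[\on{Jac}^M_{!*}(\psi_{G,\Ran}) = \widetilde{q}_{\Ran,*}\bigl(\pi^!(\sIC)\overset{!}{\otimes}\widetilde{p}_{\Ran}^!(\psi_{G,\Ran})\bigr).\]
Since $\psi_{G,\Ran}$ is cleanly extended from $S^{-,0}_{B,\Ran}$, base change along the squares of the big diagram above Lemma \ref{l: triple fiber product lemma} rewrites $\iota_{B_M}^!(\on{Jac}^M_{!*}(\psi_{G,\Ran}))$ as the $*$-pushforward along $\ot{\Gr}_{P,B^-,B^-_M,\Ran} \to S^{-,0}_{B_M,\Ran}$ of the $!$-tensor product of the pullbacks of $\sIC$ and of (the non-zero restriction of) $\psi_{G,\Ran}$.

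Next, I would invoke the observation already made right above Lemma \ref{l: triple fiber product lemma}: the fiber product over $S^{-,0}_{B_M,\Ran}$ forces the generalized $P$-reduction to be non-degenerate, so the open embedding
\[\Gr_{P,B^-,B^-_M,\Ran} \hookrightarrow \ot{\Gr}_{P,B^-,B^-_M,\Ran}\]
is an isomorphism. On the open locus $\Gr_{P,\Ran} \subset \ot{\Gr}_{P,\Ran}$ (i.e. the stratum $\theta = 0$), Proposition \ref{p:strata} specializes to the statement that $\sIC$ restricts to the dualizing sheaf, since $\cO(\check{N}_P)_{X^0}$ is the unit and the cohomological shift $\langle 2(\rho_G-\rho_M),0\rangle$ is trivial. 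Combining this with the fact that $\psi_{G,\Ran}$ restricts to the dualizing sheaf on $S^{-,0}_{B,\Ran}$, the $!$-tensor product on $\Gr_{P,B^-,B^-_M,\Ran}$ is simply the dualizing sheaf.

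Finally, Lemma \ref{l: triple fiber product lemma} identifies the projection $\Gr_{P,B^-,B^-_M,\Ran} \to S^{-,0}_{B_M,\Ran}$ as an isomorphism, so the pushforward of the dualizing sheaf is just $\omega_{S^{-,0}_{B_M,\Ran}}$. By the definition of $\psi_{M,\Ran}$ as the cleanly extended dualizing sheaf from $S^{-,0}_{B_M,\Ran}$, this is precisely $\iota_{B_M}^!(\psi_{M,\Ran})$, which yields the claimed canonical isomorphism. The one point requiring some care is to verify that the isomorphism produced this way is natural in the factorizable sense and matches up with the Whittaker equivariance on both sides, but since every step is either a base change, a clean extension identification, or an application of Lemma \ref{l: triple fiber product lemma}, all of which are canonical, no hard combinatorial input is needed; this is the main reason a vanishing statement is not required at this step, and in fact is what allows one to bypass (or to reduce to $P=B$ and invoke) the vanishing result of \cite{raskin2021chiral} alluded to in \S\ref{s:!*jac}.
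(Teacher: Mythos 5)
Your argument is correct and is essentially the paper's own proof: base change along the displayed diagram (with $\ot{\Gr}_{P,\Ran}$ replaced by $\Gr_{P,\Ran}$ via the non-degeneracy observation), the identification of the restrictions of $\sIC$ and $\psi_{G,\Ran}$ with dualizing sheaves on the relevant loci, and the equivalence $D(S^{-,0}_{B_M,\Ran})\simeq D(\Gr_{P,B^-,B^-_M,\Ran})$ from Lemma \ref{l: triple fiber product lemma}. One caveat on your closing remark: this corollary does not let one bypass the vanishing input of \cite{raskin2021chiral} --- it only computes the $!$-restriction of $\on{Jac}^M_{!*}(\psi_{G,\Ran})$ to $S^{-,0}_{B_M,\Ran}$, and the separate assertion that the counit map (\ref{eq:counit map}) is an isomorphism (i.e.\ that nothing lives off this orbit) is precisely where the paper reduces to $P=B$ and invokes Raskin's vanishing theorem.
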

\begin{proof}
    This follows by diagram chase along
    \[\begin{tikzcd}[column sep=0.0001]
	&& {\Gr_{P,B^-,B^-_M,\Ran}} \\
	& {S^{-,0}_{B,\Ran} \underset{\on{Gr}_{G,\on{Ran}}}{\times}\Gr_{P,\Ran}} && {\Gr_{P,\Ran} \underset{\on{Gr}_{M,\on{Ran}}}{\times} S^{-,0}_{B_M,\Ran}} \\
	{S^{-,0}_{B,\Ran}} && {\Gr_{P,\Ran}} && {S^{-,0}_{B_M,\Ran}} \\
	& {\Gr_{G,\Ran}} && {\Gr_{M,\Ran}}
	\arrow[from=1-3, to=2-2]
	\arrow[from=1-3, to=2-4]
	\arrow[from=2-2, to=3-1]
	\arrow[from=2-2, to=3-3]
	\arrow[from=2-4, to=3-3]
	\arrow[from=2-4, to=3-5]
	\arrow[from=3-1, to=4-2]
	\arrow[from=3-3, to=4-2]
	\arrow[from=3-3, to=4-4]
	\arrow[from=3-5, to=4-4]
\end{tikzcd}\]

\noindent using base change, taking into consideration the equivalence
\[D(S^{-,0}_{B_M,\Ran}) \overset{\sim}{\longrightarrow} D(\Gr_{P,B^-,B_M^-,\Ran})\]

\noindent afforded by Lemma \ref{l: triple fiber product lemma}.
\end{proof}

\subsubsection{} Consider the counit map:
\begin{equation}\label{eq:counit map}
\psi_{M,\Ran}\simeq \iota_{B_M,!}\circ\iota_{B_M}^!(\on{Jac}_{!*}^M(\psi_{G,\Ran}))\to \on{Jac}_{!*}^M(\psi_{G,\Ran}).
\end{equation}

\noindent Here the first isomorphism comes from Corollary \ref{c:lastcanequiv}. In the remaining part of this subsection, we will prove:
\begin{thm}\label{t:counit map}
The counit map (\ref{eq:counit map}) is an isomorphism.
\end{thm}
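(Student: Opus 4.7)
The plan is to show that the cofiber $\mathscr{C}$ of the counit map $(\ref{eq:counit map})$ vanishes. Since both $\psi_{M,\Ran}$ and $\on{Jac}^M_{!*}(\psi_{G,\Ran})$ lie in $\on{Whit}(\Gr_{M,\Ran})$, so does $\mathscr{C}$. An object of $\on{Whit}(\Gr_{M,\Ran})$ is zero if and only if its $!$-restriction along $\iota_{B_M,\mu}: S^{-,\mu}_{B_M,\Ran}\to \Gr_{M,\Ran}$ vanishes for every antidominant coweight $\mu$ of $M$, since Whittaker sheaves are supported on the union of these semi-infinite orbits and are determined by their restrictions to them. For $\mu = 0$, the vanishing $\iota_{B_M}^!(\mathscr{C}) = 0$ is exactly the content of Corollary \ref{c:lastcanequiv}. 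For $\mu \neq 0$, cleanness of $\psi_{M,\Ran}$ gives $\iota_{B_M,\mu}^!(\psi_{M,\Ran}) = 0$, so it suffices to prove
\[
\iota_{B_M,\mu}^!\bigl(\on{Jac}^M_{!*}(\psi_{G,\Ran})\bigr) = 0 \qquad \text{for all antidominant } \mu\neq 0.
\]

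To compute this $!$-restriction I will carry out the base change argument preceding Corollary \ref{c:lastcanequiv}, but with $S^{-,\mu}_{B_M,\Ran}$ in place of $S^{-,0}_{B_M,\Ran}$. After using the cleanness of $\psi_{G,\Ran}$ from $S^{-,0}_{B,\Ran}\subset \Gr_{G,\Ran}$, this expresses the restriction as a $*$-pushforward along
\[
S^{-,0}_{B,\Ran} \underset{\Gr_{G,\Ran}}{\times} \ot{\Gr}_{P,\Ran} \underset{\Gr_{M,\Ran}}{\times} S^{-,\mu}_{B_M,\Ran} \;\longrightarrow\; S^{-,\mu}_{B_M,\Ran}
\]
of a sheaf built from $\pi^!(\sIC)$ together with the Whittaker character and dualizing sheaves. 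In contrast to the $\mu = 0$ case handled by Lemma \ref{l: triple fiber product lemma}, this fiber product is no longer an $\fL_{\Ran}N_M^-$-torsor over $S^{-,\mu}_{B_M,\Ran}$, so the pushforward does not collapse for geometric reasons alone and the required vanishing is a genuine cohomological statement.

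To dispose of this vanishing I will reduce to the Borel case by post-composing with the Jacquet functor $\on{Jac}^{T_M}_{!*}: \on{Whit}(\Gr_{M,\Ran})\to D(\Gr_{T,\Ran})$ for the pair $(M,B_M)$ and invoking the composition identity $\on{Jac}^{T_M}_{!*}\circ \on{Jac}^M_{!*}\simeq \on{Jac}^T_{!*}$ of Theorem \ref{thm: composing eisenstein}. The Borel analogue of Theorem \ref{t:counit map} for the pairs $(G,B)$ and $(M,B_M)$ is proved in \cite[§3]{raskin2021chiral}, and its geometric content is precisely the vanishing of certain $*$-pushforwards over fiber products of opposing semi-infinite orbits. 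The hope is that the fiber product above decomposes, stratum-by-stratum on $\ot{\Gr}_{P,\Ran}$, into assemblies of Borel-type fiber products, to which Raskin's vanishing applies directly; the support of $\sIC$ on $\ot{S}^0_{P,\Ran}$ together with Proposition \ref{p:restostrata!} controls this decomposition.

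The main obstacle is precisely this final reduction step: verifying that the contributions from the stratification of $\ot{\Gr}_{P,\Ran}$ by the $S^{\theta}_{P,\Ran}$ cleanly combine with the $B$- and $B_M$-semi-infinite stratifications so that Raskin's vanishing applies term by term, and ensuring that $\on{Jac}^{T_M}_{!*}$ transports the required vanishing back from $D(\Gr_{T,\Ran})$ to $\on{Whit}(\Gr_{M,\Ran})$. Once this bookkeeping is carried out, the cofiber $\mathscr{C}$ must vanish on every $S^{-,\mu}_{B_M,\Ran}$ and hence vanishes entirely, completing the proof.
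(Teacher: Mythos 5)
Your overall skeleton is close to the paper's: check the counit on antidominant semi-infinite orbits, use Corollary \ref{c:lastcanequiv} for $\mu=0$, and funnel the remaining vanishing through $\on{Jac}^{T_M}_{!*}$, Theorem \ref{thm: composing eisenstein}, and Raskin's result. But there is a genuine gap at the step you yourself flag as "the main obstacle": to conclude anything about $\on{Jac}^M_{!*}(\psi_{G,\Ran})$ from the computation of $\on{Jac}^{T_M}_{!*}\circ\on{Jac}^M_{!*}(\psi_{G,\Ran})$ you need $\on{Jac}^{T_M}_{!*}$ to be \emph{conservative} on $\on{Whit}(\Gr_{M,\Ran})$, and you give no argument for this. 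Without it, the composition identity only tells you that the cofiber $\mathscr{C}$ dies after applying $\on{Jac}^{T_M}_{!*}$, not that it dies. The paper's resolution is a bootstrap: one first proves Theorem \ref{t:counit map} in the principal case for an \emph{arbitrary} reductive group; by Lemma \ref{l:impimplication} (rigidity of $\on{Rep}(\check{G})_{\Ran}$-linear functors out of the Whittaker category via Casselman--Shalika) this yields Theorem \ref{thm: restriction and !* Jacquet are the same} in the principal case, so $\on{Jac}^{T_M}_{!*}$ is identified with $\on{Res}^{\check{M}}_{\check{T}}$ and is therefore conservative. Only then does Theorem \ref{thm: composing eisenstein}, together with $\on{Jac}^T_{!*}(\psi_{G,\Ran})\simeq\delta_{\Gr_{T,\Ran}}\simeq\on{Jac}^{T_M}_{!*}(\psi_{M,\Ran})$, finish the general case. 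Note that with conservativity in hand, no orbit-by-orbit analysis on $\Gr_{M}$ is needed at all, so your preliminary reduction to the $S^{-,\mu}_{B_M,\Ran}$ becomes superfluous.

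Your proposed mechanism for the Borel-type input is also not viable as stated. Decomposing the triple fiber product over $\ot{\Gr}_{P,\Ran}$ "stratum-by-stratum into assemblies of Borel-type fiber products" is not how Raskin's vanishing enters: the strata $S^{\theta}_{P,\Ran}$ of the parabolic semi-infinite space do not reduce to the configuration Raskin treats, and Proposition \ref{p:restostrata!} controls $*$-restrictions of $\sIC$, not the $!$-restrictions appearing in a $!$-tensor/pushforward computation against $\psi_{G,\Ran}$. In the paper, Raskin's Theorem 3.4.1 is invoked \emph{only} in the principal case, and only after identifying $\IC^{\frac{\infty}{2}}_{B,x}$ restricted to $\overline{S}^{\lambda}_x\cap S^{-,0}_x$ with the (shifted) IC sheaf of the Zastava space via \cite[Prop.~3.8.3]{gaitsgory2021semi}; that identification is an essential intermediary your proposal omits. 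So the proof needs to be restructured: prove the principal case first (via the Zastava comparison and Raskin's vanishing, or the small-FLE argument), extract conservativity of $\on{Jac}^{T_M}_{!*}$ from it, and then apply the composition theorem.
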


We start by noting that the above theorem is sufficient for our purposes:

\begin{lem}\label{l:impimplication}
Suppose the counit map (\ref{eq:counit map}) is an isomorphism. Then the assertion of Theorem \ref{thm: restriction and !* Jacquet are the same} holds.
\end{lem}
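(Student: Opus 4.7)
The plan is to exploit the $\on{Rep}(\check{G})_{\Ran}$-linearity available on both sides in order to reduce the statement to a single identification on the vacuum Whittaker sheaf. By the Hecke structure on $\sIC$ recorded in §\ref{s:heckeandenhdrplofkernels}, the functor $\on{Jac}_{!*}^M$ is $\on{Rep}(\check{G})_{\Ran}$-linear, where $\on{Whit}(\Gr_{M,\Ran})$ is regarded as a $\on{Rep}(\check{G})_{\Ran}$-module via the symmetric monoidal functor $\on{Res}_{\check{M}}^{\check{G}}$ together with its native $\on{Rep}(\check{M})_{\Ran}$-action. By the factorizable Casselman--Shalika equivalence (Theorem \ref{Casselman-Shalika}), both $\on{CS}_G$ and $\on{CS}_M$ are equivalences of $\on{Rep}(\check{G})_{\Ran}$-module categories (for $M$, again via restriction). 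Consequently the two compositions $\on{CS}_M \circ \on{Jac}_{!*}^M$ and $\on{Res}_{\check{M}}^{\check{G}} \circ \on{CS}_G$ define continuous $\on{Rep}(\check{G})_{\Ran}$-linear functors $\on{Whit}(\Gr_{G,\Ran}) \to \on{Rep}(\check{M})_{\Ran}$.

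Next I would invoke the fact that, under $\on{CS}_G$, the vacuum Whittaker sheaf $\psi_{G,\Ran}$ corresponds to the unit object of $\on{Rep}(\check{G})_{\Ran}$ (the $\mu = 0$ case of Theorem \ref{Casselman-Shalika}). The unit generates $\on{Rep}(\check{G})_{\Ran}$ as a module over itself under colimits, so $\psi_{G,\Ran}$ generates $\on{Whit}(\Gr_{G,\Ran})$ as a $\on{Rep}(\check{G})_{\Ran}$-module. It therefore suffices to produce an isomorphism of the two compositions on $\psi_{G,\Ran}$ that is compatible with the $\on{Rep}(\check{G})_{\Ran}$-linear structures.

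On $\psi_{G,\Ran}$ the right-hand composition yields $\on{Res}_{\check{M}}^{\check{G}}(\mathbf{1}_{\check{G}}) = \mathbf{1}_{\check{M}}$, which under $\on{CS}_M^{-1}$ corresponds to $\psi_{M,\Ran}$. On the left-hand side, Corollary \ref{c:lastcanequiv} together with the hypothesis that the counit map (\ref{eq:counit map}) is an isomorphism gives $\on{Jac}_{!*}^M(\psi_{G,\Ran}) \simeq \psi_{M,\Ran}$. Combining these yields the desired identification on the generator.

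The only remaining point, and what I expect to be the main (but very mild) technical step, is to verify that this isomorphism on the generator is compatible with the $\on{Rep}(\check{G})_{\Ran}$-linear structures on the two functors, so that it extends to a natural isomorphism of functors on all of $\on{Whit}(\Gr_{G,\Ran})$. Since the identifications $\on{CS}_G(\psi_{G,\Ran}) \simeq \mathbf{1}_{\check{G}}$ and $\on{CS}_M(\psi_{M,\Ran}) \simeq \mathbf{1}_{\check{M}}$ are unit-preserving, and $\on{Res}_{\check{M}}^{\check{G}}$ is symmetric monoidal, this compatibility amounts to checking that the counit arrow constructed in Corollary \ref{c:lastcanequiv} intertwines the canonical unit maps on both sides, which is essentially forced by the construction of the Hecke structure on $\sIC$ via induction from the Drinfeld-Plücker structure on $\delta_{\Gr_{G,\Ran}}$.
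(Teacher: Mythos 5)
Your argument is correct and is essentially the paper's proof: both reduce, via the $\on{Rep}(\check{G})_{\Ran}$-linearity of $\on{Jac}^M_{!*}$ supplied by the Hecke structure on $\sIC$ together with the Casselman--Shalika equivalences for $G$ and $M$, to checking that both functors send $\psi_{G,\on{Ran}}$ to $\psi_{M,\on{Ran}}$. The compatibility check you flag in your last paragraph is not actually needed: since evaluation at the generator gives an equivalence $\Hom_{\on{Rep}(\check{G})_{\Ran}}(\on{Whit}(\Gr_{G,\Ran}),\on{Whit}(\Gr_{M,\Ran}))\simeq \on{Rep}(\check{M})_{\Ran}$ of categories (not merely a conservative functor), any isomorphism between the values of the two $\on{Rep}(\check{G})_{\Ran}$-linear functors at $\psi_{G,\on{Ran}}$ already forces the functors themselves to be isomorphic.
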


\begin{proof}
    Recall that the Hecke structure on the semi-infinite IC sheaf $\IC^{\frac{\infty}{2}}_{P,\Ran}$ gives $\on{Jac}^M_{!*}$ the structure of a $\on{Rep}(\check{G})_{\Ran}$ linear functor, see §\ref{s:heckeandenhdrplofkernels}.

    By the Casselman-Shalika formula for $G$ and $M$, we have
    \begin{equation}\label{eq: hom equivalence cass sha}\Hom_{\on{Rep}(\check{G})_{\Ran}}(\on{Whit}(\Gr_{G,\Ran}), \on{Whit}(\Gr_{M,\Ran})) \simeq \Hom_{\on{Rep}(\check{G})_{\Ran}}(\on{Rep}(\check{G})_{\Ran}, \on{Rep}(\check{M})_{\Ran}),\end{equation}

\noindent while the right-hand side is nothing other than $\on{Rep}(\check{M})_{\Ran}$. It follows that any $\on{Rep}(\check{G})_{\on{Ran}}$-linear functor $\on{Whit}(\Gr_{G,\Ran})\to \on{Whit}(\Gr_{M,\Ran})$ is determined by where it sends $\psi_{G,\on{Ran}}$. Thus, we may check that $\on{Jac}_{!*}^M$ and $\on{Res}^{\check{G}}_{\check{M}}$ coincide by checking that they both send $\psi_{G,\on{Ran}}$ to $\psi_{M,\on{Ran}}$.
\end{proof}

\subsubsection{} Hence we need to prove Theorem \ref{t:counit map}. To do this, we will use Theorem \ref{thm: composing eisenstein} to reduce to the principal case; i.e., when $P=B$. As such, let us assume that Theorem \ref{t:counit map} holds for an arbitrary reductive group (e.g., $G$ or $M$) whenever the corresponding Levi is a torus. Under this assumption, let us prove Theorem \ref{t:counit map} for an arbitrary Levi.

We write $\on{Jac}_{!*}^{T_M}: \on{Whit}(\on{Gr}_{M,\on{Ran}})\to \on{Whit}(\on{Gr}_{T,\on{Ran}})$ for the $!*$-Jacquet functor from the Levi $M$ to the maximal torus $T$. It suffices to check that (\ref{eq:counit map}) is an isomorphism after applying the functor $\on{Jac}_{!*}^{T_M}$. Indeed, having assumed Theorem \ref{t:counit map} is true in the principal case, Lemma \ref{l:impimplication} implies that Theorem \ref{thm: restriction and !* Jacquet are the same} is true in the principal case. This in turn implies that $\on{Jac}^{T_M}_{!*}$ is conservative.

However, by Theorem \ref{thm: composing eisenstein} we have: 
\[\on{Jac}^{T_M}_{!*}(\on{Jac}^M_{!*}(\psi_{G,\Ran})) \simeq \on{Jac}^T_{!*}(\psi_{G,\Ran}) \simeq \delta_{\on{Gr}_{T,\Ran}}\simeq \on{Jac}^{T_M}_{!*}(\psi_{M,\on{Ran}}).\]

\noindent Hence we are reduced to showing that Theorem \ref{t:counit map} holds in the principal case. 

\subsubsection{} We will produce two proofs in the principal case. The first argument uses a result of Raskin from \cite{raskin2021chiral}, and the second uses techniques from \cite{gaitsgory2019smallfle}. Since the first argument will be rigorous, we will allow ourselves to only give a sketch of the second. 

The reason we will not provide full details for the second argument is that in \cite{gaitsgory2019smallfle} the authors work in a twisted setting with a non-degeneracy assumption on the twisting. As such, we cannot cite their results directly. However, one can show that their arguments can be adapted to the non-twisted setting and that the main difficulty in doing so is notational. Although we allow ourselves to forego complete rigor, we still believe an indication of the second proof is valuable in that it avoids the use of perverse t-structures.

\begin{proof}[First proof of Theorem \ref{t:counit map} in the principal case] We need to show that for any $k$-point 
\[ a: \on{pt} \longrightarrow \Gr_{T,\Ran}\]

\noindent that does not lie in the image of the unit section 
\[\Ran \longrightarrow \Gr_{T,\Ran},\]

\noindent the vector space $a^!(\on{Jac}^T_{!*}(\psi_{G,\Ran}))$ vanishes. Since we are in the principal case, we write $\overline{\on{Gr}}_{B,\on{Ran}}$ instead of $\widetilde{\on{Gr}}_{B,\on{Ran}}$.

Consider the diagram 
\[\begin{tikzcd}
	& {\overline{\Gr}_{B,x}} \\
	{\Gr_{G,x}} && {\Gr_{T,x}} \\
	& {\overline{\Gr}_{B,\Ran}} \\
	{\Gr_{G,\Ran}} && {\Gr_{T,\Ran},}
	\arrow["{\overline{p}_{B,x}}"', from=1-2, to=2-1]
	\arrow["{\overline{q}_{B,x}}", from=1-2, to=2-3]
	\arrow["{i_{B,x}}"', from=1-2, to=3-2]
	\arrow["{i_{G,x}}"', from=2-1, to=4-1]
	\arrow["{i_{T,x}}", from=2-3, to=4-3]
	\arrow["{\overline{p}_{B,\Ran}}", from=3-2, to=4-1]
	\arrow["{\overline{q}_{B,\Ran}}"', from=3-2, to=4-3]
\end{tikzcd}\]

\noindent where $x$ is the composition of $a$ with the projection $\Gr_{T,\Ran} \to \Ran$. By factorization, we may assume that $x$ factors through the image of the main diagonal $X \to \Ran$. Base change along this diagram shows that the functor $i^!_{T,x}\circ \on{Jac}^T_{!*}: \on{Whit}(\on{Gr}_{G,\on{Ran}})\to D(\on{Gr}_{T,x})$ is given by:
\[\cF \longmapsto \overline{q}_{B,x,*}(i^!_{B,x}({_{\Gr_{T,\Ran}}}{\IC^{\frac{\infty}{2}}_{B,\Ran}}) \overset{!}{\otimes} \overline{p}^!_{B,x}(i^!_{G,x}(\cF))).\]

\noindent Here, we have denoted by ${_{\Gr_{T,\Ran}}}{\IC^{\frac{\infty}{2}}_{B,\Ran}}$ the $!$-pullback of $\on{IC}_{B,\on{Ran}}^{\frac{\infty}{2}}$ along $\overline{\on{Gr}}_{B,\on{Ran}}\to \fL^+_{\on{Ran}}T\backslash \widetilde{S}_{B,\on{Ran}}^0$.

Applying this to $\cF = \psi_{G,\Ran}$ gives 
\[i^!_{T,x}\circ \on{Jac}^T_{!*}(\psi_{G,\on{Ran}})\simeq \overline{q}_{B,x,*}(i^!_{B,x}({_{\Gr_{T,\Ran}}}{\IC^{\frac{\infty}{2}}_{B,\Ran}}) \overset{!}{\otimes} \overline{p}^!_{B,x}(\psi_{G,x})),\]

\noindent and we want to show that the right hand side is supported only in the $0$'th component of $\Gr_{T,x}$. 

Let us now further base change along the inclusion of a connected component 
\[t^{\lambda} \longrightarrow \Gr_{T,x}\]

\noindent with $\lambda \neq 0$, where we recall that $\Gr_{T,x}$ is (on the level of reduced schemes) the disjoint union of $t^{\lambda}$'s, with $\lambda \in \Lambda$. 

The fiber of $\overline{\Gr}_{B,x}$ over $t^{\lambda}$ is the closure $\overline{S}^{\lambda}_x$ of the semi-infinite orbit $S^{\lambda}_x$ in $\Gr_{G,x}$ indexed by $\lambda$. Unwinding the definitions, we have note the $!$-pullback of ${_{\Gr_{T,\Ran}}}{\IC^{\frac{\infty}{2}}_{B,\Ran}}$ to $\overline{S}^{\lambda}_x$ is equivalent to the translation $t^{\lambda}\IC^{\frac{\infty}{2}}_{B,x}$ of $\IC^{\frac{\infty}{2}}_{B,x}=i_{G,x}^!(\on{IC}_{B,\on{Ran}}^{\frac{\infty}{2}})$ by the automorphism induced by multiplying by $t^{\lambda}$. We therefore want to show that
\begin{equation}\label{eq: cohomology that vanishes}H^*_{dR}(\overline{S}^{\lambda}_x \cap S^{-,0}_x, t^{\lambda}\IC^{\frac{\infty}{2}}_{B,x} \overset{!}{\otimes} \psi_{G,x})= 0\end{equation} 

\noindent for each $\lambda \in \Lambda\setminus 0$. 

If $\overline{S}^{\lambda}_x \cap S^{-,0}_x$ is empty there is nothing to show. Otherwise, it is a closed subscheme of a finite-dimensional scheme $\mathcal{Z}$ called the \emph{Zastava space}. By \cite[Prop. 3.8.3]{gaitsgory2021semi}, the semi-infinite IC sheaf restricted to the intersection $\overline{S}^{\lambda}_x \cap S^{-,0}_x$ is equivalent to the perverse intersection cohomology sheaf of $\mathcal{Z}$ restricted to the same intersection, up to a cohomological shift.\footnote{We remark that given Theorem \ref{t:IC-to-hecke}, the proof of Proposition 3.8.3 in \cite{gaitsgory2021semi} reduces to a relative version of \cite[Prop. 3.6.5(a)]{gaitsgory2018semi}, which compares the IC-sheaf on $\overline{\on{Bun}}_N$ and the Zastava space.} Hence the vanishing of \eqref{eq: cohomology that vanishes} is exactly Theorem $3.4.1$ of \cite{raskin2021chiral}.
\end{proof}

\begin{proof}[Second proof of vanishing in the principal case] The Hecke structure on $\IC^{\frac{\infty}{2}}_{B,\Ran}$ gives a canonical commutative triangle:
\[\begin{tikzcd}
	{\on{Whit}(\Gr_{G,\Ran})} & {\on{Whit}(\Gr_{G,\Ran}) \underset{\on{Rep}(\check{G})_{\Ran}}{\otimes} \on{Rep}(\check{T})_{\Ran}} \\
	& {D(\Gr_{T,\Ran}).}
	\arrow["{\on{Res}^{\check{G}}_{\check{T}}}", from=1-1, to=1-2]
	\arrow["{\on{Jac}^T_{!*}}"', from=1-1, to=2-2]
	\arrow["{\on{Jac}^{T,\on{Hecke}}_{!*}}", from=1-2, to=2-2]
\end{tikzcd}\]

By construction, under geometric Casselman-Shalika, the functor $\Res^{\check{G}}_{\check{T}}$ corresponds to the same named functor $\on{Rep}(\check{G})_{\Ran} \to \on{Rep}(\check{T})_{\Ran}$. Hence it suffices to show that taking the fiber of $\on{Jac}^{T,\on{Hecke}}_{!*}$ at a point $x \in X$ coincides with the identity functor 
\[\on{id}_{\on{Rep}(\check{T})}: \on{Rep}(\check{T}) \longrightarrow \on{Rep}(\check{T}).\]

Note the functor $\on{Jac}^{T,\on{Hecke}}_{!*,x}$ given by $\on{Jac}^{T,\on{Hecke}}_{!*}$ followed by taking the fiber at $x$ is given by the kernel $\IC^{\frac{\infty}{2}}_{B,x}\in D(\on{Gr}_{G,x})$.

We will carry out the following construction. For each $\lambda \in \Lambda$, we will produce an object $\mathcal{M}^{\lambda}$ in $\on{Whit}(\Gr_{G,x}) \underset{\on{Rep}(\check{G})}{\otimes} \on{Rep}(\check{T})$. The collection of $\mathcal{M}^{\lambda}$'s will satisfy the following:
\begin{enumerate}
    %\item We have $\mathcal{M}^0 \simeq \Res^{\check{G}}_{\check{T}}(\Av_*^{\mathfrak{L}_x(N^-)}(\on{gr}\IC^{\frac{\infty}{2}}_{B,x}))$,
    \item Each $\mathcal{M}^{\lambda}$ belongs to the heart of the natural t-structure of the category
    \[\on{Whit}(\Gr_{G,x}) \underset{\on{Rep}(\check{G})}{\otimes} \on{Rep}(\check{T}) \simeq \on{Rep}(\check{T}).\]
    \item For each $\lambda$, there is a surjective map 
    \[\mathcal{M}^{\lambda} \longrightarrow k^{\lambda},\]
    
    \noindent where $k^{\lambda}$ is the $1$-dimensional representation of $\check{T}$ given by $\lambda$. 
    \item The functor $\Phi^{\lambda}$ given by $\on{Jac}^{T,\on{Hecke}}_{!*,x}$ followed by taking the $\lambda$-component in $\on{Rep}(\check{T})$ is corepresented by $\mathcal{M}^{\lambda}$. 
    \item We have that
    \[\Hom_{\on{Rep}(\check{T})}(\mathcal{M}^{\lambda},\mathcal{M}^{\mu}) = k\]
    
    \noindent if $\mu = \lambda$, and is $0$ otherwise. 
\end{enumerate}

\noindent Assuming the existence of the $\mathcal{M}^{\lambda}$'s, let us finish the proof. First, $(1)$ and $(4)$ together imply that $\mathcal{M}^{\lambda}$ is irreducible, and hence that the map $\mathcal{M}^{\lambda} \to k^{\lambda}$ from $(2)$ is an isomorphism. By $(3)$, for every 
\[V \in \on{Rep}(\check{T}),\]

\noindent we therefore have 
\[\Phi^{\lambda}(V) = \Hom_{\on{Rep}(\check{T})}(\mathcal{M}^{\lambda},V) \simeq \Hom_{\on{Rep}(\check{T})}(k^{\lambda},V) = V(\lambda),\]

\noindent where $V(\lambda)$ is the $\lambda$-component of the $\check{T}$-representation $V$. But 
\[\on{Jac}^{T,\on{Hecke}}_{!*,x} = \bigoplus_{\lambda \in \Lambda} \Phi^{\lambda},\]

\noindent from which we conclude that $\on{Jac}^{T,\on{Hecke}}_{!*,x}$ is the identity. 

Let us now define the $\mathcal{M}^{\lambda}$'s. First, define 
\[\mathcal{M}^0 \coloneqq \Av_*^{\mathfrak{L}_x N^-,\psi}(\on{gr}\IC^{\frac{\infty}{2}}_{B,x}).\]

\noindent Here, $\on{gr}\IC^{\frac{\infty}{2}}_{B,x}$ is the graded Hecke object corresponding to $\IC^{\frac{\infty}{2}}_{B,x}$. That is, 
\[
\on{gr}\IC^{\frac{\infty}{2}}_{B,x}\in D(\on{Gr}_{G,x})\underset{\on{Rep}(\check{G})}{\otimes} \on{Rep}(\check{T})
\]

\noindent is the image of $\on{IC}^{\frac{\infty}{2}}_{B,x}$ under the functor
\[
\on{Hecke}_{\check{M},\check{G}}(\on{SI}_{P,x})=\on{SI}_{P,x}\underset{\on{Rep}(\check{T})\otimes \on{Rep}(\check{G})}{\otimes} \on{Rep}(\check{T})\to \on{SI}_{P,x}\underset{\on{Rep}(\check{G})}{\otimes} \on{Rep}(\check{T})\to D(\on{Gr}_{G,x})\underset{\on{Rep}(\check{G})}{\otimes} \on{Rep}(\check{T}).
\]

\noindent Here, the first functor is right adjoint to the projection:
\[
\on{SI}_{P,x}\underset{\on{Rep}(\check{G})}{\otimes} \on{Rep}(\check{T})\to \on{SI}_{P,x}\underset{\on{Rep}(\check{T})\otimes \on{Rep}(\check{G})}{\otimes} \on{Rep}(\check{T}).
\]

\noindent The second functor is induced by the forgetful functor $\on{SI}_{P,x}\to D(\on{Gr}_{G,x})$.

For an arbitrary $\lambda\in \Lambda$, let $\mathcal{M}^{\lambda} \coloneqq \mathcal{M}^0 \otimes k^{\lambda}$.

The properties $(1)-(4)$ can be verified following \cite{gaitsgory2019smallfle}. To direct the reader to the relevant arguments: Property $(1)$ is proved in the same way as Corollary $25.2.3$ in \emph{loc.cit.}, Property $(2)$ is Corollary $25.4.4$, Property $(3)$ is the content of Section $26$, and Property $(4)$ is Theorem $25.5.2$.
\end{proof}

\newpage

%\section{Compatibility of Eisenstein series and the geometric Langlands functor}

%\[\begin{tikzcd}
	%{\on{QCoh}(\on{LS}_{\check{G}})} & {\on{QCoh}(\on{LS}_{\check{M}})} \\
	%{\on{Rep}(\check{G})_{\Ran}} & {\on{Rep}(\check{M})_{\Ran}} \\
	%{\on{Whit}(\Gr_{G,\Ran})} & {\on{Whit}(\Gr_{M,\Ran})} \\
	%{D(\Bun_G)} & {D(\Bun_M)}
	%\arrow["{\on{Eis}^{\on{spec},\vee}_{!*}}", from=1-1, to=1-2]
	%\arrow["{\on{Loc}_{\check{G}}}"', from=1-1, to=2-1]
	%\arrow["{\on{Loc}_{\check{M}}}", from=1-2, to=2-2]
	%\arrow["{\on{Res}^{\check{G}}_{\check{M}}}", from=2-1, to=2-2]
	%\arrow["{CS_G^{-1}}"', from=2-1, to=3-1]
	%\arrow["{CS_M^{-1}}", from=2-2, to=3-2]
	%\arrow["{\on{Jac}_{!*}}", from=3-1, to=3-2]
	%\arrow["{\on{poinc}_{!,G}}"', from=3-1, to=4-1]
	%\arrow["{\on{poinc}_{!,M}}", from=3-2, to=4-2]
	%\arrow["{\on{Eis}_{!*}^{\vee}}", from=4-1, to=4-2]
%\end{tikzcd}\]

%\newpage

\appendix

\section{Colimit description of intersection cohomology sheaf}\label{S:APPB}

The goal of this Appendix is to prove lemma \ref{l:ICisSI} and lemma \ref{l:rescomp}. To do so, we will realize (the fibers of) $\sIC$ as a suitable colimit, following \cite{gaitsgory2018semi}.

\subsection{Proof of Lemma \ref{l:ICisSI}}
\subsubsection{}

Let $i_x: x\to X\to \on{Ran}$ be a $k$-point of $X$. Let $\on{IC}_{P,x}^{\frac{\infty}{2}}:=i_x^!(\sIC)$. We have
\[
\on{IC}_{P,x}^{\frac{\infty}{2}}=\on{Ind}_{\on{DrPl}_{\check{M},\check{G}}}^{\Hecke_{\check{M},\check{G}}}(\delta_{\on{Gr}_{G,x}}),
\]

\noindent where we now consider $D(\mathfrak{L}^+_{x}M\backslash \on{Gr}_{G,x})$ as acted on by $\on{Rep}(\check{M})\otimes \on{Rep}(\check{G})$,\footnote{We remind that the action of $\on{Rep}(\check{M})$ is always twisted by the functor (\ref{eq:shift2}).} and with $\delta_{\on{Gr}_{G,x}}$ defining an object of $\on{DrPl}(D(\mathfrak{L}^+_{x}M\backslash \on{Gr}_{G,x}))$. That is:
\[
\on{IC}_{P,x}^{\frac{\infty}{2}}\simeq \on{Fun}(\check{N}_P\backslash \check{G})\underset{\on{Fun}(\overline{\check{N}_P\backslash \check{G})}}{\otimes}\delta_{\on{Gr}_{G,x}}.
\]

\subsubsection{}

Let $\Lambda_M^+\subset \Lambda$ be the set of dominant cocharacters that are orthogonal to the roots of $M$. That is, $\lambda\in \Lambda$ lies in $\Lambda_M^+$ if and only if it is dominant and saitisfies $\langle \alpha_i,\lambda\rangle=0$ for all $\alpha_i\in \sJ_M$.

Note that such a $\lambda$ defines a character of $\check{M}$, and so the corresponding highest weight representation $V_M^{\lambda}$ of $M$ is $1$-dimensional. In this case, we write $e^{\lambda}:=V_M^{\lambda}$. We continue to write $V^{\lambda}$ for the highest weight representation of $\check{G}$ corresponding to $\lambda$.

\subsubsection{} We consider $\Lambda_M^+$ as a poset via the relation $\lambda_1\leq \lambda_2\Leftrightarrow \lambda_2-\lambda_1\in \Lambda_M^+$. We note that this poset is filtered due to the assumption that $G$ is semisimple and simply connected.

\subsubsection{} Let $\sC$ be a category acted on by $\on{Rep}(\check{M})\otimes \on{Rep}(\check{G})$, and let $c\in \on{DrPl}(\sC)$ be an object of $c$ equipped with a Drinfeld-Plücker structure. We remind (cf. §\ref{s:whatdrplactuallymeans}) that these correspond in particular to a coherent family of maps
\begin{equation}\label{eq:restricteddrpl}
e^{\lambda}\star c\to c\star V^{\lambda},\;\; \lambda\in \Lambda_M^+.
\end{equation}

\noindent In this case, we get a natural functor $\Lambda_M^+\to \sC$ given by
\[
\lambda\mapsto e^{-\lambda}\star c\star V^{\lambda},
\]

\noindent and for $\lambda_2=\lambda_1+\lambda$, the corresponding transition map is:
\[
e^{-\lambda_1}\star c\star V^{\lambda_1}\to e^{-\lambda_1}\star e^{-\lambda} \star c\star V^{\lambda}\star V^{\lambda_1}\to e^{-\lambda_2}\star c\star V^{\lambda_2}.
\]

\noindent Here, the first map is given by $c\to e^{-\lambda}\star c\star V^{\lambda}$ induced by (\ref{eq:restricteddrpl}), and the last map is induced by the Plücker map $V^{\lambda}\otimes V^{\lambda_1}\to V^{\lambda+\lambda_1}$

As such, we may form the corresponding colimit:
\[
\underset{\lambda\in \Lambda_M^+}{\on{colim}}\; e^{-\lambda}\star c\star V^{\lambda}.
\]

\subsubsection{} We have the following generalization of \cite[Prop. 6.2.4]{gaitsgory2018semi} to the parabolic setting. The proof follows that of \emph{loc.cit}.

\begin{lem}\label{l:colimdescrip}
The functor
\[
\on{DrPl}_{\check{M},\check{G}}(\sC)\xrightarrow{\on{Ind}_{\on{DrPl}_{\check{M},\check{G}}}^{\Hecke_{\check{M},\check{G}}}} \on{Hecke}_{\check{M},\check{G}}(\sC)\xrightarrow{\on{oblv}} \sC
\]

\noindent coincides with the functor
\[
c\mapsto \underset{\lambda\in \Lambda_M^+}{\on{colim}}\; e^{-\lambda}\star c\star V^{\lambda}.
\]
\end{lem}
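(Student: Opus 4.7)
The plan is to adapt the argument of \cite[Prop.~6.2.4]{gaitsgory2018semi} (which treats the Borel case) to the parabolic setting, in three stages. First, I would show $\lambda \mapsto e^{-\lambda}\star c \star V^{\lambda}$ extends to a functor $\Lambda_M^+ \to \sC$: for $\lambda_1 \leq \lambda_2$ with $\lambda = \lambda_2 - \lambda_1 \in \Lambda_M^+$, build the transition map as
\[
e^{-\lambda_1}\star c \star V^{\lambda_1} \to e^{-\lambda_2}\star c \star V^{\lambda}\star V^{\lambda_1} \to e^{-\lambda_2}\star c \star V^{\lambda_2},
\]
where the first arrow uses the Drinfeld-Plücker map $c \to e^{-\lambda}\star c \star V^{\lambda}$ (adjoint to $e^{\lambda}\star c \to c\star V^{\lambda}$) and the second uses the Cartan (highest-weight) projection $V^{\lambda}\otimes V^{\lambda_1} \twoheadrightarrow V^{\lambda_2}$. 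The Plücker identities for $c$ provide the higher coherences.

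Second, I would construct a cocone from this diagram to $\on{oblv}\circ\on{Ind}_{\on{DrPl}_{\check{M},\check{G}}}^{\Hecke_{\check{M},\check{G}}}(c)$. For each $\lambda \in \Lambda_M^+$, define the map
\[
e^{-\lambda}\star c \star V^{\lambda} \xrightarrow{\on{unit}} e^{-\lambda}\star \on{Ind}(c)\star V^{\lambda} \xrightarrow{\sim} e^{-\lambda}\star \on{Res}^{\check{G}}_{\check{M}}(V^{\lambda})\star \on{Ind}(c) \twoheadrightarrow e^{-\lambda}\star e^{\lambda}\star \on{Ind}(c) = \on{Ind}(c),
\]
using the Hecke isomorphism on $\on{Ind}(c)$ and the canonical $\check{M}$-equivariant projection $\on{Res}^{\check{G}}_{\check{M}}(V^{\lambda}) \twoheadrightarrow e^{\lambda}$ onto the $e^{\lambda}$-isotypic summand (well-defined because $\on{Rep}(\check{M})$ is semisimple and, for $\lambda\in\Lambda_M^+$, $e^{\lambda}$ is the character of a $1$-dimensional $\check{M}$-representation). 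Compatibility with the transition maps of Stage~1 follows from the identification of the DrPl structure underlying the Hecke structure on $\on{Ind}(c)$ with the inclusion $e^{\lambda}\hookrightarrow \on{Res}^{\check{G}}_{\check{M}}(V^{\lambda})$, together with the factorization of the Cartan product through $e^{\lambda}\otimes e^{\lambda_1} \hookrightarrow V^{\lambda}\otimes V^{\lambda_1} \twoheadrightarrow V^{\lambda+\lambda_1}$.

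Third, I would show the induced map $\underset{\lambda}{\on{colim}}\, e^{-\lambda}\star c\star V^{\lambda} \to \on{oblv}(\on{Ind}(c))$ is an equivalence. Both sides preserve colimits in $c$, so it suffices to check on generators, namely free Drinfeld-Plücker objects $c = \cO(\overline{\check{N}_P\backslash \check{G}})\star x$, $x\in\sC$. For these $\on{Ind}(c) = \cO(\check{G})\star x$, and the problem reduces to the universal identity
\[
\cO(\check{G}) \simeq \underset{\lambda \in \Lambda_M^+}{\on{colim}}\, e^{-\lambda}\otimes \cO(\overline{\check{N}_P\backslash \check{G}})\otimes V^{\lambda}
\]
of right $\cO(\overline{\check{N}_P\backslash \check{G}})$-modules in $\on{Rep}(\check{M})\otimes \on{Rep}(\check{G})$. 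The main obstacle is verifying this identity. I would do so via the Peter-Weyl decomposition $\cO(\check{G}) = \bigoplus_{\mu\in\Lambda^+} V^{\mu}|_{\check{M}}\otimes (V^{\mu})^*$: for each $\mu$, one must check that the $(V^{\mu})^*$-isotypic right-$\check{G}$ component of the colimit equals $V^{\mu}|_{\check{M}}$. Concretely, the level-$\lambda$ term contributes $\bigoplus_{\nu}e^{-\lambda}\otimes V_M^{\nu}\otimes \on{Hom}_{\check{G}}(V^{\mu}, V^{\nu}\otimes V^{\lambda})$; the transition maps combine the inclusions $V_M^{\nu}\hookrightarrow V^{\nu}|_{\check{M}}$ with the surjective Cartan products $V^{\nu}\otimes V^{\lambda}\twoheadrightarrow V^{\nu+\lambda}$, and a weight-by-weight analysis shows that as $\lambda \in \Lambda_M^+$ grows (which is filtered because $G/[G,G]$ contains $Z(M)^{\circ}/Z(G)$), every $\check{M}$-weight of $V^{\mu}$ is captured in the stabilized colimit. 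The geometric intuition is that $\cO(\check{G})$ is obtained from $\cO(\overline{\check{N}_P\backslash \check{G}})$ by inverting the highest-weight functions $\phi_{\lambda}\in e^{\lambda}\otimes (V^{\lambda})^*$ for $\lambda\in\Lambda_M^+$, which exhausts the big cell of $\overline{\check{N}_P\backslash\check{G}}$.
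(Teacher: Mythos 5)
Your proposal is correct and follows essentially the same route as the paper's proof (itself modeled on \cite[Prop.~6.2.4]{gaitsgory2018semi}): reduce to the universal case $c=\cO(\overline{\check{N}_P\backslash\check{G}})$ in $\on{Rep}(\check{M})\otimes\on{Rep}(\check{G})$ and verify $\cO(\check{G})\simeq\on{colim}_{\lambda}\,e^{-\lambda}\otimes\cO(\overline{\check{N}_P\backslash\check{G}})\otimes V^{\lambda}$ by computing $(V^{\mu})^*$-isotypic components and observing stabilization of the filtered colimit. The only difference is that the paper packages your ``weight-by-weight analysis'' into a clean representation-theoretic lemma, namely $V\otimes V^{\lambda}\simeq\bigoplus_{\nu}V^{\lambda+\nu}\otimes\on{Hom}_{\check{M}}(V_{\check{M}}^{\nu},V)$ for $\lambda\in\Lambda_M^+$ sufficiently large, which makes the stabilization immediate.
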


\begin{proof}

It suffices to consider the universal case where $\sC=\on{Rep}(\check{M})\otimes \on{Rep}(\check{G})$ and $c=\on{Fun}(\overline{\check{N}_P\backslash \check{G}})$.

We have a natural map of $\check{M}\times\check{G}$-representations:
\[
\underset{\lambda\in \Lambda_M^+}{\on{colim}}\; e^{-\lambda}\otimes \on{Fun}(\overline{\check{N}_P\backslash \check{G}})\otimes V^{\lambda}\to \on{Fun}(\check{G})
\]

\noindent induced by the algebra map $\on{Fun}(\overline{\check{N}_P\backslash \check{G}})\to \on{Fun}(\check{G})$ and then multiplying by the function $g\mapsto \langle \phi, g\cdot v\rangle$. Here, $\phi\in (V^{\lambda})^*$ is the lowest weight vector $e^{-\lambda}\to (V^{\lambda})^*$ and $v\in V^{\lambda}$.

We need to verify that this map is an isomorphism. Since $\Lambda_M^+$ is filtered, the colimit lies in the heart of the $t$-structure of $\on{Rep}(\check{M})\otimes \on{Rep}(\check{G})$. As such, it suffices to show that the map is an isomorphism after applying the functor $\on{Hom}_G(V^{\mu}, -)$ for each dominant coweight $\mu$.

On the one hand, we have $\on{Hom}_{\check{G}}(V^{\mu},\on{Fun}(\check{G}))\simeq (V^{\mu})^*$.

On the other hand, suppose $\lambda\in \Lambda_M^+$ is sufficiently large compared to $\mu$. That is, $\lambda+\nu$ is dominant for all $M$-dominant coweights $\nu$ appearing as highest weights in $(V^{\mu})^*$ considered as a $\check{M}$-representation. For such an $M$-dominant coweight, we write $V_{\check{M}}^{\nu}$ for the corresponding highest weight representation of $\check{M}$.

In this case, we have:
\[
\on{Hom}_{\check{G}}(V^{\mu}, e^{-\lambda}\otimes \on{Fun}(\overline{\check{N}_P\backslash \check{G}})\otimes V^{\lambda})\simeq e^{-\lambda}\otimes \on{Hom}_{\check{G}}(k, \on{Fun}(\overline{\check{N}_P\backslash \check{G}})\otimes (V^{\mu})^*\otimes V^{\lambda}).
\]

\noindent By Lemma \ref{l:reptheoryl} below, we may further rewrite this as:
\[
= \underset{\nu}{\bigoplus}\; e^{-\lambda} \otimes \on{Hom}_{\check{G}}(k, \on{Fun}(\overline{\check{N}_P\backslash \check{G}})\otimes V^{\lambda+\nu}\otimes \on{Hom}_{\check{M}}(V_{\check{M}}^{\nu}, (V^{\mu})^*))
\]
\[
\simeq \underset{\nu}{\bigoplus}\; e^{-\lambda} \otimes \on{Hom}_{\check{M}}(V_{\check{M}}^{\nu}, (V^{\mu})^*)\otimes \on{Hom}_{\check{G}}((V^{\lambda+\nu})^*, \on{Fun}(\overline{\check{N}_P\backslash \check{G}}))
\]
\[
\simeq \underset{\nu}{\bigoplus}\; e^{-\lambda} \otimes \on{Hom}_{\check{M}}(V_{\check{M}}^{\nu}, (V^{\mu})^*)\otimes V_{\check{M}}^{\lambda+\nu}
\]
\[
\simeq \underset{\nu}{\bigoplus}\; \on{Hom}_{\check{M}}(V_{\check{M}}^{\nu}, (V^{\mu})^*)\otimes V_{\check{M}}^{\nu}\simeq (V^{\mu})^*.
\]

\noindent By construction, the induced map
\[
(V^{\mu})^*\simeq \on{Hom}_{\check{G}}(V^{\mu}, e^{-\lambda}\otimes \on{Fun}(\overline{\check{N}_P\backslash \check{G}})\otimes V^{\lambda})\to \on{Hom}_{\check{G}}(V^{\mu}, \on{Fun}(\check{G}))\simeq (V^{\mu})^*
\]

\noindent is the identity, as required.
\end{proof}

\begin{lem}\label{l:reptheoryl}
Let $\lambda\in \Lambda_M^+$ and let $V$ be a $\check{G}$-representation. Suppose that for all $M$-dominant coweights $\nu$ that appear as highest weights in $\on{Res}^{\check{G}}_{\check{M}}(V)$, the coweight $\lambda+\nu$ is dominant (for $\check{G}$). Then we have a canonical decomposition
\[
V\otimes V^{\lambda}\simeq \underset{\nu}{\bigoplus}\; V^{\lambda+\nu}\otimes \on{Hom}_{\check{M}}(V_{\check{M}}^{\nu}, V)
\]

\noindent as $\check{G}$-representations. Here, the direct sum is over all $M$-dominant coweights $\nu$.
\end{lem}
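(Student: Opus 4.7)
The strategy is to realize $V^\lambda$ via parabolic Borel-Weil, express $V\otimes V^\lambda$ as global sections of a $\check{G}$-equivariant vector bundle on $\check{G}/\check{P}$, and then use the canonical $\check{N}_P$-weight filtration together with the stability hypothesis to force degeneration of the associated spectral sequence.

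Since $\lambda \in \Lambda_M^+$ is orthogonal to the roots of $\check{M}$, it defines a character $e^\lambda: \check{P} \twoheadrightarrow \check{M} \to \bG_m$ and hence a $\check{G}$-equivariant line bundle $L_\lambda$ on $\check{G}/\check{P}$. Parabolic Borel-Weil gives $V^\lambda \cong H^0(\check{G}/\check{P}, L_\lambda)$. By the projection formula,
\[
V \otimes V^\lambda \;\cong\; H^0(\check{G}/\check{P}, \cV),
\]
where $\cV$ is the $\check{G}$-equivariant vector bundle on $\check{G}/\check{P}$ associated to the $\check{P}$-representation $V|_{\check{P}} \otimes e^\lambda$. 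Now $V|_{\check{P}}$ carries a canonical $\check{P}$-stable filtration given by $V^i := \fn_P^i \cdot V$, whose associated graded is, as an $\check{M}$-representation with $\check{N}_P$ acting trivially, identified with $V|_{\check{M}}$. Decomposing into $\check{M}$-isotypic components and using the centrality of $\lambda$ on $\check{M}$,
\[
\on{gr}(V|_{\check{P}} \otimes e^\lambda) \;\cong\; \bigoplus_\nu V_{\check{M}}^{\lambda+\nu} \otimes \on{Hom}_{\check{M}}(V_{\check{M}}^\nu, V)
\]
as $\check{P}$-representations with $\check{N}_P$ trivial. Correspondingly, $\cV$ acquires a $\check{G}$-equivariant filtration whose associated graded is the direct sum $\bigoplus_\nu \cV_{\lambda+\nu} \otimes \on{Hom}_{\check{M}}(V_{\check{M}}^\nu, V)$, where $\cV_{\lambda+\nu}$ is the bundle associated to the $\check{P}$-representation $V_{\check{M}}^{\lambda+\nu}$.

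Under the stability hypothesis, every $\lambda + \nu$ appearing is $\check{G}$-dominant (and is automatically $\check{M}$-dominant since $\nu$ is $\check{M}$-dominant and $\lambda$ is central on $\check{M}$), so parabolic Borel-Weil-Bott gives $H^i(\check{G}/\check{P}, \cV_{\lambda+\nu}) = 0$ for $i > 0$ and $H^0(\check{G}/\check{P}, \cV_{\lambda+\nu}) \cong V^{\lambda+\nu}$. Hence the spectral sequence computing $H^*(\check{G}/\check{P}, \cV)$ from the $\check{N}_P$-filtration degenerates at $E_1$, yielding
\[
V \otimes V^\lambda \;\cong\; \bigoplus_\nu V^{\lambda+\nu} \otimes \on{Hom}_{\check{M}}(V_{\check{M}}^\nu, V).
\]
The induced filtration on the left splits canonically by semisimplicity of $\on{Rep}(\check{G})$; moreover, because stability forces the different $\check{G}$-irreducibles $V^{\lambda+\nu}$ to have distinct highest weights, the $\check{G}$-isotypic decomposition identifies the multiplicity spaces canonically with $\on{Hom}_{\check{M}}(V_{\check{M}}^\nu, V)$, as desired.

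The main technical input is parabolic Borel-Weil-Bott, and the crucial role of the stability hypothesis is to keep each $\lambda + \nu$ in the strictly dominant chamber so that higher cohomology vanishes and no Weyl-reflected contributions in higher cohomological degrees enter the spectral sequence and produce cancellations that would spoil the decomposition.
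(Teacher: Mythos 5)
Your argument is correct and is essentially the paper's proof in geometric clothing: the paper realizes $V^{\lambda}\simeq \on{coInd}_{\check{P}}^{\check{G}}(e^{\lambda})$ and applies the tensor identity plus the $\check{M}$-isotypic decomposition of $\on{Res}^{\check{G}}_{\check{P}}(V)$, which is precisely your Borel--Weil description of $L_\lambda$, the projection formula, and the $\check{\fn}_P$-filtration, with the dominance hypothesis killing higher coinduction exactly where you invoke Bott vanishing. If anything, your explicit treatment of the $\check{N}_P$-filtration and the degeneration step is slightly more careful than the paper's, which tacitly replaces $\on{Res}^{\check{G}}_{\check{P}}(V)$ by its associated graded.
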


\begin{proof}
Denote by
\[
\on{Res}_{\check{P}}^{\check{G}}: \on{Rep}(\check{G})\to \on{Rep}(\check{P})
\]

\noindent the restriction functor, and denote by 
\[
\on{coInd}_{\check{P}}^{\check{G}}: \on{Rep}(\check{P})\to \on{Rep}(\check{G})
\]

\noindent its right adjoint given by coinduction. Then we have:
\[
V\otimes V^{\lambda}\simeq V\otimes \on{coInd}_{\check{P}}^{\check{G}}(e^{\lambda})\simeq \on{coInd}_{\check{P}}^{\check{G}}(\on{Res}_{\check{P}}^{\check{G}}(V)\otimes e^{\lambda})
\]
\[
\simeq \underset{\nu}{\bigoplus}\; \on{coInd}_{\check{P}}^{\check{G}}(V_{\check{M}}^{\nu}\otimes e^{\lambda})\otimes \on{Hom}_{\check{M}}(V_{\check{M}}^{\nu}, V)
\]
\[
\simeq \underset{\nu}{\bigoplus}\; \on{coInd}_{\check{P}}^{\check{G}}(V_{\check{M}}^{\lambda+\nu})\otimes \on{Hom}_{\check{M}}(V_{\check{M}}^{\nu}, V)
\]
\[
\simeq \underset{\nu}{\bigoplus}\; V^{\lambda+\nu}\otimes \on{Hom}_{\check{M}}(V_{\check{M}}^{\nu}, V).
\]
\end{proof}

\subsubsection{} We proceed with the proof of Lemma \ref{l:ICisSI}.
Recall the semi-infinite IC sheaf:
\[
\sIC\in D(\mathfrak{L}^+_{\on{Ran}}M\backslash \on{Gr}_{G,\on{Ran}}).
\]

\noindent First, we need to verify that $\sIC$ is equivariant for the group $\mathfrak{L}_{\on{Ran}}N_P$. That is, we need to verify that $\sIC$ lies in the subcategory
\[
\on{SI}_{M,\on{Ran}}=D(\mathfrak{L}_{\on{Ran}}N_P\mathfrak{L}^+_{\on{Ran}}M\backslash \on{Gr}_{G,\on{Ran}})\subset D(\mathfrak{L}^+_{\on{Ran}}M\backslash \on{Gr}_{G,\on{Ran}}).
\]

\subsubsection{} For a category $\sC$ and a field-extension $k\subset K$, we write $\sC_K:= \sC\underset{\on{Vect}_k}{\otimes} \on{Vect}_K$ for its base change to $K$.

The following lemma allows us to check equivariance on field-valued points:
\begin{lem}\label{l:redtopts}
Let $\sC, \sD$ be $D(\on{Ran})$-module categories and suppose we are given a fully faithful embedding $\sD\into \sC$. Then an object $c\in \sC$ lies in $\sD$ if and only if for all finite sets $I$ and field-valued points $j:\on{Spec}(K)\to X^I$, the image of $c$ under the functor:
\[
\sC\xrightarrow{-\underset{k}{\otimes}K}\sC_K\simeq \sC_K\underset{D(\on{Ran})_K}{\otimes} D(\on{Ran})_K\xrightarrow{\on{id}\otimes j^!} \sC_K\underset{D(\on{Ran})_K}{\otimes} \on{Vect}_K
\]

\noindent lies in $\sD_K\underset{D(\on{Ran})_K}{\otimes} \on{Vect}_K$.
\end{lem}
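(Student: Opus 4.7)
The ``only if'' direction will be immediate: both $-\otimes_k K$ and $\on{id}\otimes j^!$ are $D(\on{Ran})$-linear operations, so they carry the $D(\on{Ran})$-linear embedding $\sD\hookrightarrow\sC$ into the corresponding embedding of base changes, landing an object of $\sD$ into $\sD_K\otimes_{D(\on{Ran})_K}\on{Vect}_K$.

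For the ``if'' direction, the plan is to reduce to a general conservativity statement for module categories over $D(\on{Ran})$. First, since $\sD\hookrightarrow\sC$ is a fully faithful colimit-preserving $D(\on{Ran})$-linear embedding, it is reflective (its right adjoint is automatically continuous in the presentable setting), and the resulting Verdier quotient $\pi:\sC\to\sC/\sD$ inherits a $D(\on{Ran})$-module structure with the property that $c\in\sD$ iff $\pi(c)=0$. By the hypothesis of the lemma together with the ``only if'' direction applied to $\pi$, the image $\pi(c)$ becomes zero after applying
\[
\sC/\sD \longrightarrow (\sC/\sD)_K\underset{D(\on{Ran})_K}{\otimes}\on{Vect}_K
\]
for every finite set $I$ and every field-valued point $j:\on{Spec}(K)\to X^I$. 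Thus it suffices to prove the following general claim: \emph{for any $D(\on{Ran})$-module category $\sE$ and any $e\in\sE$, we have $e=0$ if and only if the image of $e$ in $\sE_K\otimes_{D(\on{Ran})_K}\on{Vect}_K$ vanishes for every such $(K,j)$.}

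To prove this claim, we will use the presentation $\on{Ran}\simeq\on{colim}_{I\in\on{fSet}^{\on{op}}}X^I$, which induces an equivalence $D(\on{Ran})\simeq\lim_{I\in\on{fSet}}D(X^I)$ with respect to $!$-pullbacks; passing to modules, this yields
\[
\sE \simeq \lim_{I\in\on{fSet}}\,\sE\underset{D(\on{Ran})}{\otimes}D(X^I),
\]
so $e=0$ iff each image $e_I$ in $\sE_I:=\sE\otimes_{D(\on{Ran})}D(X^I)$ vanishes. Finally, each $\sE_I$ is a $D(X^I)$-module category where $X^I$ is a scheme of finite type over $k$, for which $!$-pullback to field-valued points is jointly conservative on $D(X^I)$; by $D(X^I)$-linearity this conservativity transfers to $\sE_I$, completing the argument.

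The step I expect to be the main technical obstacle is the limit identification $\sE\simeq\lim_I\,\sE\otimes_{D(\on{Ran})}D(X^I)$, which in general demands a mild dualizability hypothesis on $\sE$ over $D(\on{Ran})$. In our setting this is harmless: the module categories that we apply the lemma to arise from D-modules on well-behaved factorizable prestacks over $\on{Ran}$, so the required dualizability follows from standard properties of the unital factorizable formalism developed in \cite{raskin2015chiral}.
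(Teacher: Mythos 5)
The ``only if'' direction and the reduction to each $X^I$ are fine, and the Verdier-quotient repackaging (replacing ``lies in $\sD$'' by ``dies in $\sC/\sD$'') is legitimate in the presentable setting. The problem is the final step: the claim that joint conservativity of the $!$-fibers $j^!$ on $D(X^I)$ ``transfers to $\sE_I$ by $D(X^I)$-linearity'' is not a valid inference, and it is exactly where the content of the lemma sits. Joint conservativity of a family of $\sA$-linear functors out of $\sA$ does not in general imply joint conservativity of the induced functors $\sE\otimes_{\sA}(-)$ out of an arbitrary $\sA$-module category $\sE$: tensoring up can kill objects, and nothing in ``linearity'' prevents this. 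What \emph{does} transfer to arbitrary module categories is a generation statement about the monoidal unit: the Cousin-complex presentation of $\omega_{X^I}$ as a colimit of pushforwards $j_*(K)$ from field-valued points (this is the citation to \cite[Lemma 9.2.8]{arinkin2020stack} in the paper). Given that, for any $e\in\sE_I$ one writes $e\simeq\omega_{X^I}\star e\simeq\on{colim}\,\bigl(j_*(K)\star e\bigr)$, and by the projection formula each term is induced from the fiber of $e$ at $j$; this yields both the generation statement and, a fortiori, the conservativity you want. So your outline is salvageable, but filling the asserted step requires precisely the argument the paper gives, applied directly to $\sC$ rather than to the quotient (the paper concludes $c_I\in\sD_I$ because each term $j_*(K)\star c_I$ lies in $\sD_I$ by hypothesis and $\sD_I$ is closed under colimits).

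Two smaller remarks. First, your closing worry about dualizability is misplaced: the identification $\sE\simeq\lim_I\sE\otimes_{D(\on{Ran})}D(X^I)$ needs no dualizability of $\sE$ — it follows from writing $D(\on{Ran})$ as a colimit of the $D(X^I)$ along $*$-pushforwards, commuting the relative tensor product through the colimit, and swapping to the limit along the continuous linear right adjoints $\Delta_{\phi}^!$. Second, and relatedly, you should not appeal to special features of the module categories arising ``in the application'': the lemma is stated for arbitrary $D(\on{Ran})$-module categories and is used as such, so any proof that imports hypotheses from the intended examples proves a weaker statement than the one needed.
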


\begin{proof}
It suffices to check that $c$ lies in $\sD$ when restricted to $X^I\to \on{Ran}$ for each finite set $I$. By a standard Cousin complex argument (see e.g. \cite[Lemma 9.2.8]{arinkin2020stack}), we may write the dualizing sheaf $\omega_{X^I}$ as a colimit of sheaves of the form $j_*(K)$, where $j: \on{Spec}(K)\to X_{\on{dR}}^I$ is a field-valued point. Since $\omega_{X^I}$ acts as the identity on $\sC\underset{D(\on{Ran})}{\otimes} D(X^I)$, any object in the latter category admits a similar colimit description.
\end{proof}

\begin{lem}\label{l:ICisSI0}
The sheaf $\sIC$ is $\mathfrak{L}_{\on{Ran}}N_P$-equivariant.
\end{lem}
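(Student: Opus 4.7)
The plan is to combine the reduction principle of Lemma \ref{l:redtopts} with the filtered colimit description of Lemma \ref{l:colimdescrip}, and then check $\mathfrak{L}N_P$-equivariance term-by-term along the filtration.

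First, I would invoke Lemma \ref{l:redtopts} with $\sC = D(\mathfrak{L}^+_{\on{Ran}}M\backslash \on{Gr}_{G,\on{Ran}})$ and $\sD = \on{SI}_{P,\on{Ran}}$ to reduce to verifying $\mathfrak{L}_{\underline{x}}N_P$-equivariance of the $!$-fiber of $\sIC$ at an arbitrary field-valued point $\underline{x} : \Spec(K) \to X^I \to \on{Ran}$. Replacing $k$ by $K$, I may assume $\underline{x}$ is a $k$-point, so working essentially with $\sIC$ restricted over $\underline{x}$ and up to base change one may proceed as in the case of a single $k$-point; for notational clarity I will describe the argument when $\underline{x}$ is a point $x \in X$, the general case being identical.

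Next, Lemma \ref{l:colimdescrip} applied to $c = \delta_{\on{Gr}_{G,x}}$ in $\sC = D(\mathfrak{L}^+_x M\backslash \on{Gr}_{G,x})$ identifies
\[
\on{IC}_{P,x}^{\frac{\infty}{2}} \;\simeq\; \underset{\lambda \in \Lambda_M^+}{\on{colim}}\; e^{-\lambda} \star \delta_{\on{Gr}_{G,x}} \star V^{\lambda},
\]
where the colimit is filtered. Since $\on{SI}_{P,x} \subset D(\mathfrak{L}^+_xM\backslash \on{Gr}_{G,x})$ is closed under colimits, it suffices to show that each term in this colimit becomes $\mathfrak{L}_x N_P$-equivariant in a compatible way as $\lambda$ grows. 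More precisely, writing $\mathfrak{L}_x N_P = \on{colim}_H H$ as a filtered colimit of its finite-type closed subgroup schemes, I will show that for every such $H$ there exists $\lambda_0 \in \Lambda_M^+$ with the property that, for every $\lambda \geq \lambda_0$, the sheaf $e^{-\lambda} \star \delta_{\on{Gr}_{G,x}} \star V^{\lambda}$ lies in the full subcategory $D(H \mathfrak{L}^+_x M \backslash \on{Gr}_{G,x}) \subset D(\mathfrak{L}^+_xM\backslash \on{Gr}_{G,x})$.

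The key geometric input, which I regard as the main obstacle, is the following: $\delta_{\on{Gr}_{G,x}} \star V^{\lambda}$ is supported on the finite-dimensional closed subscheme $\on{Gr}_G^{\leq \lambda}$, and left-convolution by $e^{-\lambda}$ implements translation by $t^{-\lambda}$. Because $\lambda \in \Lambda_M^+$ is dominant and orthogonal to the coroots of $M$, conjugation by $t^{-\lambda}$ contracts $\mathfrak{L}^+_xN_P$ strictly inside itself and expands the $\mathfrak{L}_xN_P$-orbit of the identity coset in $\on{Gr}_{G,x}$; equivalently, for any given $H$, one has $t^{-\lambda} \cdot \on{Gr}_G^{\leq \lambda} \cdot t^{\lambda} \subset $ an $H\mathfrak{L}^+_xM$-stable neighborhood, provided $\langle \check{\alpha}, \lambda \rangle$ is sufficiently large for every simple root $\check{\alpha}$ not in $\sJ_M$. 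This is a straightforward calculation with the Iwahori decomposition and root subgroups of $\mathfrak{L}_xG$, analogous to the principal case \cite[§2.2]{gaitsgory2018semi}. Compatibility of the equivariance structures across the filtered diagram follows from the explicit form of the transition maps in Lemma \ref{l:colimdescrip}, namely that they are induced by the Drinfeld--Plücker maps and the Plücker maps $V^{\lambda}\otimes V^{\lambda_1}\to V^{\lambda+\lambda_1}$, both of which are manifestly $\mathfrak{L}^+_xM$-equivariant and hence preserve whatever $\mathfrak{L}_xN_P$-equivariance structure has been produced on the terms.

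Finally, the support statement---that $\sIC$ is supported on $\ot{S}^0_{P,\on{Ran}}$---is established along the same lines: each term $e^{-\lambda}\star \delta_{\on{Gr}_{G,x}}\star V^{\lambda}$ in the colimit is, by construction and the support properties of Satake sheaves, supported on the translate $t^{-\lambda}\cdot \on{Gr}_G^{\leq\lambda}$, and for $\lambda \in \Lambda_M^+$ this translate is contained in $\ot{S}^0_{P,x}$; passing to the filtered colimit preserves this containment. Having checked both properties at all geometric points, Lemma \ref{l:redtopts} produces the required lift of $\sIC$ to $\on{SI}^{\leq 0}_{P,\on{Ran}}$.
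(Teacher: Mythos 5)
Your proposal follows essentially the same route as the paper's proof: reduce to field-valued points via Lemma \ref{l:redtopts} (the paper additionally invokes factorization to pass from $X^I$ down to a single point of $X$), apply the filtered colimit description of Lemma \ref{l:colimdescrip}, write $\mathfrak{L}_xN_P$ as a filtered colimit of group schemes $N_{\alpha}$, and show each term $e^{-\lambda}\star\delta_{\on{Gr}_{G,x}}\star V^{\lambda}$ is $N_{\alpha}$-equivariant for $\lambda$ large. One small correction to your key step: the relevant containment is $\on{Ad}_{t^{\lambda}}(N_{\alpha})\subset \mathfrak{L}^+_xN_P$, i.e.\ conjugation by $t^{-\lambda}$ \emph{expands} $\mathfrak{L}^+_xN_P$ so that it eventually contains any fixed $N_{\alpha}$ (not contracts it), and the equivariance of $e^{-\lambda}\star\cF$ then follows directly by conjugating the equivariance group of the $\mathfrak{L}^+_xN_P$-equivariant sheaf $\cF$, with no need for the support-based formulation.
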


\begin{proof}

\step
By Lemma \ref{l:redtopts}, it suffices to check equivariance when restricting to each field-valued point $\on{Spec}(K)\to X^I$. For ease of notation, we assume that $K=k$. Since $\sIC$ is a factorization algebra, we may further assume that the point $\on{Spec}(k)\to X^I$ factors through $X$. In other words, it suffices to show that $\on{IC}_{P,x}^{\frac{\infty}{2}}\in D(\mathfrak{L}^+_{x}M\backslash \on{Gr}_{G,x})$ is $\mathfrak{L}_{x}N_P$-equivariant.

\step 
Since $N_P$ is unipotent, the loop group $\mathfrak{L}_{x}N_P$ is an ind-group scheme. That is, we may write $\mathfrak{L}_{x}N_P$ as a filtered colimit of group schemes $\lbrace N_{\alpha}\rbrace _{\alpha\in A}$, where $A$ is a filtered category. As such, it suffices to establish equivariance against each $N_{\alpha}$.

The subset $\Lambda_{M,\alpha}^+\subset \Lambda_M^+$ of those $\lambda$ such that $\on{Ad}_{t^{\lambda}}(N_{\alpha})\subset N(O_x)$ is cofinal. For such a $\lambda$ and $\cF\in D(\mathfrak{L}^+_{x}N_P\mathfrak{L}^+_{x}M\backslash \on{Gr}_{G,x})$, note that $e^{-\lambda}\star \cF$ is $N_{\alpha}$-equivariant. This shows that the colimit
\[
\underset{\lambda\in \Lambda_M^+}{\on{colim}}\; e^{-\lambda}\star \delta_{\on{Gr}_{G,x}}\star V^{\lambda}
\]

\noindent is $N_{\alpha}$-equivariant. By Lemma \ref{l:colimdescrip}, the above colimit identifies with $\on{IC}_{P,x}^{\frac{\infty}{2}}$.
\end{proof}

\subsubsection{} The above lemma together with the following finish the proof of Lemma \ref{l:ICisSI}:
\begin{lem}\label{l:ICsuppd}
The sheaf $\sIC$ is supported on $\tildePloc$.
\end{lem}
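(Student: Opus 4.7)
First I would reduce to working at a single $k$-point $x$ of $X$. Since $\mathfrak{L}^+_{\Ran}M\backslash \ot S^0_{P,\Ran} \hookrightarrow \mathfrak{L}^+_{\Ran}M\backslash \on{Gr}_{G,\Ran}$ is a closed embedding, the property of being supported on $\ot S^0_{P,\Ran}$ is the property of lying in a certain full subcategory. An analog of Lemma \ref{l:redtopts}, together with factorizability of $\sIC$, then reduces the claim to showing that the fiber $\on{IC}^{\frac{\infty}{2}}_{P,x}\in D(\mathfrak{L}^+_xM\backslash \on{Gr}_{G,x})$ is supported on $\ot S^0_{P,x}$.

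Next I would invoke Lemma \ref{l:colimdescrip}, which expresses $\on{IC}^{\frac{\infty}{2}}_{P,x}$ as the filtered colimit of the sheaves $F_\lambda := e^{-\lambda}\star \delta_{\on{Gr}_{G,x}}\star V^\lambda$ over $\lambda\in \Lambda_M^+$. Since the support of a colimit is contained in the union of supports, it suffices to show each $F_\lambda$ is supported on $\ot S^0_{P,x}$. Geometric Satake gives that $\delta_{\on{Gr}_{G,x}}\star V^\lambda$ is supported on $\overline{\on{Gr}_G^\lambda}$; since $\lambda\in \Lambda_M^+$ is central in $\check M$, the (shifted) Satake image of $e^{-\lambda}$ is the delta sheaf at $t^{-\lambda}\in \on{Gr}_{M,x}$, and left convolution by it translates support by $t^{-\lambda}$. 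Hence $F_\lambda$ is supported on $t^{-\lambda}\cdot \overline{\on{Gr}_G^\lambda}$.

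The core geometric claim is therefore $t^{-\lambda}\cdot \overline{\on{Gr}_G^\lambda}\subset \ot S^0_{P,x}$. I would verify this via the Pl\"ucker characterization of §\ref{s:tildeploc}: a point $[h]\in \on{Gr}_{G,x}$ lies in $\ot S^0_{P,x}$ iff for every $V\in \on{Rep}(G)$ the image $h^{-1}\cdot V^{N_P}$ lies in the lattice $V\otimes \mathcal{O}_{D_x}\subset V\otimes\mathcal{O}_{D_x^*}$. For $h = t^{-\lambda}g t^\mu$ with $g\in \mathfrak{L}^+_xG$ and $\mu\leq \lambda$ dominant, the key observation is that because $\lambda$ is orthogonal to all $M$-roots, $t^\lambda$ acts on $V^{N_P}$ (for $V$ irreducible of highest weight $\nu$) by the single scalar $t^{\langle \lambda,\nu\rangle}$; then $g^{-1}$ contributes no poles, and on the weight-$\kappa$ component of $V$ the remaining action of $t^{-\mu}$ shifts by $t^{-\langle \mu,\kappa\rangle}$. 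The total exponent $\langle \lambda,\nu\rangle-\langle \mu,\kappa\rangle = \langle \lambda-\mu,\nu\rangle+\langle \mu,\nu-\kappa\rangle$ is non-negative, using dominance of $\lambda-\mu$ and $\mu$ and the fact that $\nu-\kappa$ is a sum of positive roots. The main obstacle is simply keeping track of sign conventions in the three interacting actions (the $\check M$-Satake on $D(\mathfrak{L}^+_xM\backslash \on{Gr}_{G,x})$, right convolution by $V^\lambda$, and the Pl\"ucker maps), and recognising that the hypothesis $\lambda\in \Lambda_M^+$ is exactly what makes the scalar computation work.
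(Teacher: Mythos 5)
Your proof is correct, and its skeleton (reduce to a $k$-point via factorization, invoke the colimit description of Lemma \ref{l:colimdescrip}, and bound the support of each term $e^{-\lambda}\star\delta_{\on{Gr}_{G,x}}\star V^{\lambda}$) is the same as the paper's. The two arguments diverge only in the final support computation. The paper does not verify the containment $t^{-\lambda}\cdot\overline{\on{Gr}_G^{\lambda}}\subset \widetilde{S}^0_{P,x}$ directly; instead it observes that the sheaf, being $\fL_xN_P$-equivariant, is supported on a union of strata $S^{\theta}_{P,x}$, and for $\theta\notin\Lambda_{G,P}^{\on{neg}}$ it kills the restriction to an enclosing $\fL_xN$-orbit $S_x^{\theta'}$ by citing the standard fact that $S_x^{\theta'}\cap\overline{\on{Gr}_G^{\lambda}}\neq\emptyset$ forces $\lambda-\theta'$ to be a non-negative combination of positive simple coroots. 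Your route replaces that black-box intersection combinatorics with a hands-on Pl\"ucker-lattice computation, which is more self-contained and makes the role of the hypothesis $\lambda\in\Lambda_M^+$ transparent; the paper's route is shorter given that it is willing to quote the Mirkovi\'c--Vilonen-type fact. Your weight estimate is the right one; the only slip is the phrase ``dominance of $\lambda-\mu$'': for $\mu\leq\lambda$ the difference $\lambda-\mu$ is a non-negative integral combination of positive coroots, not necessarily dominant, but that is exactly what is needed for $\langle\lambda-\mu,\nu\rangle\geq 0$ against the dominant weight $\nu$, so the inequality stands. Your sign convention ($h^{-1}\cdot V^{N_P}$ landing in the lattice) is the one consistent with $S^{\theta}_{P,x}\subset\widetilde{S}^0_{P,x}$ for $\theta\in\Lambda_{G,P}^{\on{neg}}$, as you anticipated.
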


\begin{proof}
As in Step 1 of the proof of Lemma \ref{l:ICisSI0}, we may check this after restricting to points $x\in X(k)$. Let $\theta\in \Lambda_{G,P}\setminus \Lambda_{G,P}^{\on{neg}}$ and choose a lift $\theta'$ of $\theta$ to $\Lambda_{G,P}$. Let $S_x^{\theta'}$ be the corresponding semi-infinite orbit (i.e., the $\mathfrak{L}_{x}N$-orbit through $t^{\theta'}$). Since $S_x^{\theta'}$ contains $S^{\theta}_{P,x}$, it suffices to show that the restriction of $\on{IC}_{P,x}^{\frac{\infty}{2}}$ to $S_x^{\theta'}$ is zero. However this follows from the colimit description of $\on{IC}_{P,x}^{\frac{\infty}{2}}$ and the fact that if $S_x^{\theta'}\cap\overline{\on{Gr}^{\lambda}}_{G,x}\neq\emptyset$, then $\lambda-\theta'$ is a sum of positive simple roots with non-negative integral coefficients.
\end{proof}

\subsection{Proof of Lemma \ref{l:rescomp}}\label{s:APPB2}

\subsubsection{}
Recall the notation of \S \ref{s:!rescomp}. We need need to show that the sheaf $\cF^{\theta}$ lies in perverse degrees $\geq 1+\langle 2(\rho_G-\rho_M),\theta\rangle$ whenever $\theta\neq 0$.

Since the sheaf $\sIC$ factorizes over $\on{Ran}$, the sheaf $\cF=\underset{\theta\in\Lambda_{G,P}^{\on{neg}}}{\bigoplus}\; \cF^{\theta}$ factorizes over $\on{Conf}_{G,P}$. As such, it suffices to show that $\Delta^!(\cF^{\theta})\in D(\on{Gr}_{M,\on{Conf}}^{+,\theta}\underset{X^{\theta}}{\times} X)$ lies in perverse degrees $\geq 1+\langle 2(\rho_G-\rho_M),\theta\rangle$, where
\[
\Delta: \on{Gr}_{M,\on{Conf}}^{+,\theta}\underset{X^{\theta}}{\times} X\to \on{Gr}_{M,\on{Conf}}^{+,\theta}
\]

\noindent is induced by the diagonal map:
\[
\Delta: X\to X^{\theta}\;\; x\mapsto \theta\cdot x.
\]

\begin{lem}\label{l:ULA}
$\Delta^!(\cF^{\theta})$ is ULA over $X$.
\end{lem}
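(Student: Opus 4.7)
The plan is to exploit the filtered colimit presentation of $\sIC$ from Lemma \ref{l:colimdescrip}, spread factorizably over $\on{Ran}$. Namely, by the same argument as in \emph{loc.cit.} applied to the factorization category $\on{SI}_{P,\on{Ran}}$ acted on by $\on{Rep}(\check{M})_{\on{Ran}}\otimes\on{Rep}(\check{G})_{\on{Ran}}$, we have
\[
\sIC \simeq \underset{\lambda\in\Lambda_M^+}{\on{colim}} \; e^{-\lambda}_{\on{Ran}}\star \delta_{\on{Gr}_{G,\on{Ran}}}\star V^{\lambda}_{\on{Ran}},
\]
where $e^{-\lambda}_{\on{Ran}}$ and $V^{\lambda}_{\on{Ran}}$ denote the images of the corresponding representations under the factorizable naive Satake functor of \S\ref{S:geometric satake and factorization algebras}. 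Applying $j^{\theta,!}$, descending along the gerbe $p^\theta$, and pulling back along the diagonal $\Delta: X\to X^{\theta}$, we obtain a corresponding filtered colimit presentation
\[
\Delta^!(\cF^{\theta}) \simeq \underset{\lambda\in\Lambda_M^+}{\on{colim}} \; \cG^{\theta}_{\lambda},
\]
for explicit sheaves $\cG^{\theta}_\lambda\in D(\on{Gr}^{+,\theta}_{M,X^{\theta}}\underset{X^{\theta}}{\times}X)$ built from the factorizable Satake images of $e^{-\lambda}\otimes V^{\lambda}$.

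The ULA property is local over $X$ and local over $\on{Gr}^{+,\theta}_{M,X^{\theta}}\underset{X^{\theta}}{\times}X$, so we may restrict to a quasi-compact open substack $U$. First, I claim that each term $\cG^{\theta}_\lambda|_U$ is ULA over $X$: this follows from the well-known fact that the factorizable naive Satake sheaves $V^\lambda_{X^I}$ are ULA over $X^I$, together with the stability of the ULA property under $!$-pullback along smooth morphisms, along closed embeddings of strata, and under the translation action of $e^{-\lambda}$ (which is an automorphism of the ambient ind-stack). Second, I claim that on any such quasi-compact $U$, only finitely many of the transition maps $\cG^{\theta}_{\lambda_1}|_U \to \cG^{\theta}_{\lambda_2}|_U$ are non-isomorphisms: once $\lambda$ is sufficiently dominant, the region of $\on{Gr}_{G,x}$ relevant to the convolution $e^{-\lambda}\star\delta\star V^{\lambda}$ stabilizes over $U$, using that the sub--ind-stack of $\on{Gr}^{+,\theta}_{M,X^{\theta}}$ cut out by $U$ is bounded by a dominant cocharacter. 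Thus the colimit defining $\Delta^!(\cF^{\theta})|_U$ stabilizes at a finite stage. Since ULA is preserved under finite colimits, $\Delta^!(\cF^{\theta})|_U$ is ULA over $X$.

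The main obstacle will be making the stabilization statement in the second step precise, i.e.\ pinning down quantitatively for which $\lambda$ the transition map $\cG^{\theta}_\lambda|_U \to \cG^{\theta}_{\lambda+\mu}|_U$ becomes an isomorphism on a given bounded region $U$. This reduces to a combinatorial analysis involving the Cartan decomposition of $\on{Gr}_G$, the intersection pattern of semi-infinite orbits with bounded Schubert varieties, and the behavior of highest weight vectors $e^{-\lambda}\hookrightarrow (V^{\lambda})^*$ under the Plücker maps $V^{\lambda_1}\otimes V^{\lambda_2}\to V^{\lambda_1+\lambda_2}$ governing the transition morphisms in Lemma \ref{l:colimdescrip}.
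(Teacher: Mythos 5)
Your strategy is genuinely different from the paper's, and as written it has a real gap at exactly the step you defer. The argument stands or falls on the claim that, over a quasi-compact open $U$, the filtered colimit $\underset{\lambda}{\on{colim}}\; e^{-\lambda}\star\delta\star V^{\lambda}$ stabilizes at a finite stage. ULA is \emph{not} preserved under general filtered colimits (it is a constructibility/perversity condition that can degenerate in the limit), so without the stabilization statement the reduction to "finitely many non-isomorphism transition maps" is the entire content of the lemma, not a technical afterthought. Proving it requires precisely the kind of control you allude to at the end — intersections of $t^{-\lambda}\overline{\on{Gr}{}^{\lambda}_G}$ with bounded regions of $\widetilde{S}^0_{P}$, parity/dimension estimates, and the effect of the Plücker maps on lowest-weight lines — i.e.\ a nontrivial geometric input that is nowhere supplied. (There is also a smaller unaddressed point: Lemma \ref{l:colimdescrip} is proved pointwise, and the factorizable colimit presentation over $\on{Ran}$, together with its compatibility with $!$-restriction to the diagonal and with descent along $p^{\theta}$, would need to be set up before the first display of your proof is licensed.)

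The paper's proof sidesteps all of this. Working étale-locally one may take $X=\mathbb{A}^1$, so that $\on{Gr}_{G,X}\simeq\on{Gr}_G\times X$; and by the very construction of $\on{Fact}^{\on{alg}}(-)$ via twisted arrows, the restriction of $\cO(\overline{\check{N}_P\backslash\check{G}})_{\on{Ran}}$ and $\cO(\check{G})_{\on{Ran}}$ to the main diagonal $X\subset\on{Ran}$ is an external product $\sA\boxtimes\omega_X$. Hence $\sIC|_{\on{Gr}_G\times X}$ is itself of the form $\cK\boxtimes\omega_X$, i.e.\ constant along $X$, and the same then holds for $\Delta^!(\cF^{\theta})$; such a sheaf is tautologically ULA over the curve. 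If you want to salvage your approach, the cleanest fix is to import this observation: rather than proving term-by-term stabilization of the colimit, show that the \emph{entire colimit} restricted to the diagonal is an external product with $\omega_X$, at which point ULA is free and no finiteness is needed.
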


\begin{proof}
By working étale-locally on $X$, we may assume that $X=\mathbb{A}^1$. In this case, we have:
\[
\on{Gr}_{G,X}\simeq \on{Gr}_G\times X,\;\; \on{Gr}_{M,X}\simeq \on{Gr}_M\times X,\;\; S^{\theta}_{P,X}\simeq S^{\theta}_P\times X.
\]

\noindent Moreover by construction, the restriction of the factorization algebra $\cO(\overline{\check{N}_P\backslash \check{G}})$ (resp. $\cO(\check{G})$) along
\[
(\on{Rep}(\check{M})\otimes \on{Rep}(\check{G}))_{\on{Ran}}\to (\on{Rep}(\check{M})\otimes \on{Rep}(\check{G}))_{\on{Ran}}\underset{D(\on{Ran})}{\otimes} D(X)\simeq \on{Rep}(\check{M})\otimes \on{Rep}(\check{G})\otimes D(X)
\]

\noindent is isomorphic to $\on{Fun}(\overline{\check{N}_P\backslash \check{G}})\boxtimes \omega_X$ (resp. $\on{Fun}(\check{G})\boxtimes \omega_X$). As such, the restriction of $\sIC$ to $\on{Gr}_{G,X}\simeq \on{Gr}_G\times X$ is isomorphic to $(\on{Fun}(\check{G})\underset{\on{Fun}(\overline{\check{N}_P\backslash \check{G}})}{\otimes} \delta_{\on{Gr}_G})\boxtimes \omega_X$. Since $\on{Gr}^+_{M,X}$ maps isomorphically onto $\on{Gr}_{M,\on{Conf}}^{+}\underset{X^{\theta}}{\times} X$, the result follows.
\end{proof}

\begin{proof}[Proof of Lemma \ref{l:rescomp}]
Let $x\in X$. Recall that $\on{IC}_{P,x}^{\frac{\infty}{2}}$ denotes the $!$-restriction of $\sIC$ to $x\in\on{Ran}$.

Let $i^{\theta}_x: \on{Gr}_{M,x}^{+,\theta}\to \on{Gr}_{G,x}$ be the natural embedding. By factorization of $\cF^{\theta}$ and Lemma \ref{l:ULA}, it suffices to show that the $!$-restriction of $\cF^{\theta}$ under
\[
\on{Gr}_{M,x}^{+,\theta}\simeq\on{Gr}_{M,\on{Conf}}^{+,\theta}\underset{X^{\theta}}{\times} \lbrace x\rbrace \to \on{Gr}_{M,\on{Conf}}^{+,\theta}
\]

\noindent lies in perverse degrees $\geq 2+\langle 2(\rho_G-\rho_M),\theta\rangle$. However, this restriction coincides with $i^{\theta,!}_x(\on{IC}_{P,x}^{\frac{\infty}{2}})$. Recall the isomorphism (see Lemma \ref{l:colimdescrip}):
\[
\on{IC}_{P,x}^{\frac{\infty}{2}}\simeq\underset{\lambda\in \Lambda_M^+}{\on{colim}}\; e^{-\lambda}\star \delta_{\on{Gr}_{G,x}}\star V^{\lambda}.
\]

\noindent The colimit is filtered, and since the t-structure on $D(\on{Gr}^+_M)$ is compatible with filtered colimits, it suffices to show that $i^{\theta,!}_x(e^{-\lambda}\star \delta_{\on{Gr}_{G,x}}\star V^{\lambda})$ lies in perverse cohomological degrees $\geq 2+\langle 2(\rho_G-\rho_M),\theta\rangle$ for any $\lambda$.

Let $\on{IC}^{\lambda}$ be the IC-sheaf on $\overline{\on{Gr}_{G,x}^{\lambda}}$ corresponding to $V^{\lambda}$. By definition, the assertion exactly amounts to showing that $i^{\lambda+\theta}(\on{IC}^{\lambda})$ is in perverse degrees $\geq 2$.

Note that when $\theta\neq 0$, the intersection $\on{Gr}_{M,x}^{+,\lambda+\theta}\cap \on{Gr}_{G,x}^{\lambda}$ is empty. Thus, the result follows from the parity vanishing of $\on{IC}^{\lambda}$.
 \end{proof}
 
\section{Drinfeld-Plücker structure on delta sheaf}\label{S:DrPLstructure}

In this section, we construct a natural Drinfeld-Plücker structure on $\delta_{\on{Gr}_G,\on{Ran}}$.

\subsubsection{External convolution}\label{s:extconvo}

Recall that the category $\on{Rep}(\check{G})_{\on{Ran}}$ comes equipped with an \emph{external convolution} monoidal structure.

For a finite set $I$, let $\on{Rep}(\check{G})_{X^I}:=\on{Rep}(\check{G})_{\on{Ran}}\underset{D(\on{Ran})}{\otimes} D(X^I)$. Let $\star$ denotes the pointwise convolution monoidal structure on $\on{Rep}(\check{G})_{\on{Ran}}$ defined in §\ref{s:pointwise}. Then external convolution is defined as the composition
\[
\on{Rep}(\check{G})_{X^I}\otimes \on{Rep}(\check{G})_{X^J}\to \on{Rep}(\check{G})_{X^{I\sqcup J}}\otimes \on{Rep}(\check{G})_{X^{I\sqcup J}}\xrightarrow{\star} \on{Rep}(\check{G})_{X^{I\sqcup J}}.
\]

\noindent Here, the first functor comes inserting the unit along $I\to I\sqcup J$ and $J\to I\sqcup J$, cf. the unital structure on $\on{Rep}(\check{G})_{\on{Ran}}$, see §\ref{s:unitality}. This defines an external convolution product:
\[
\overset{\on{ext}}{\star}: \on{Rep}(\check{G})_{\on{Ran}}\otimes \on{Rep}(\check{G})_{\on{Ran}}\to \on{Rep}(\check{G})_{\on{Ran}}.
\]

\noindent By construction, applying external convolution to an objects supported over $x$ and $y$, respectively, the result is supported over $x\sqcup y$.

\subsubsection{Naive geometric Satake}

In addition to the usual (pointwise) convolution structure on the spherical category $\on{Sph}_{G,\on{Ran}}$, this category similarly comes equipped with an external convolution product defined in the same way as for $\on{Rep}(\check{G})_{\on{Ran}}$. By construction, the (naive) geometric Satake functor
\[
\on{Sat}^{\on{nv}}: \on{Rep}(\check{G})_{\on{Ran}}\to \on{Sph}_{G,\on{Ran}}
\]

\noindent is monoidal for both the pointwise and external convolution structures.

\subsubsection{} We will apply the above setup for $\check{G}$ replaced by $\check{M}\times \check{G}$.

We need to construct an action
\[
\cO(\overline{\check{N}_P\backslash \check{G}})\curvearrowright \delta_{\on{Gr}_G,\on{Ran}},
\]

\noindent where $\cO(\overline{\check{N}_P\backslash \check{G}})$ is considered as an algebra object of $\on{Rep}(\check{M}\times \check{G})_{\on{Ran}}\simeq \on{Rep}(\check{M})\underset{D(\on{Ran})}{\otimes} \on{Rep}(\check{G})_{\on{Ran}}$ with its pointwise convolution product.

By definition of $\cO(\overline{\check{N}_P\backslash \check{G}})$, cf. §\ref{s:algsinquestion}, this amounts to a family of compatible maps
\begin{equation}\label{eq:familyofmaps}
\on{Sat}^{\on{nv}}(\on{Fun}(\overline{\check{N}_P\backslash \check{G}})^{\otimes I}\boxtimes \omega_{X^J})\star \delta_{\on{Gr}_G, X^I}\to \delta_{\on{Gr}_G, X^I}
\end{equation}

\noindent for each $(I\onto J)\in \on{TwArr}$.

\subsubsection{}\label{sec: drinfeld-plucker at a point}

First suppose that $I=J=*$ is a singleton. Then a map
\begin{equation}\label{eq:I=*}
\on{Sat}^{\on{nv}}(\on{Fun}(\overline{\check{N}_P\backslash \check{G}})\boxtimes \omega_{X})\star \delta_{\on{Gr}_G, X}\to \delta_{\on{Gr}_G, X}
\end{equation}

\noindent amounts to a compatible family of maps
\begin{equation}\label{eq:/X}
V_{\check{M},X}^{\lambda}\star \delta_{\on{Gr}_G, X}\to \delta_{\on{Gr}_G, X}\star V^{\lambda}_X
\end{equation}

\noindent for each dominant coweight $\lambda$. As usual, $V_{\check{M}}^{\lambda}$ denotes the irreducible representation of $\check{M}$ with highest weight $\lambda$, and $V^{\lambda}$ is the irreducible representation of $\check{G}$ with highest weight $\lambda$. The sheaves $V_{\check{M},X}^{\lambda}\in \on{Sph}_M, V^{\lambda}_X\in \on{Sph}_G$ denote the corresponding spherical sheaves over $X$ constructed in the usual way (see e.g. \cite[App. B]{gaitsgory2007jong}).

We have a canonical such map. Indeed, denote by $\on{IC}_M^{\lambda}\in D(\on{Gr}_M),\on{IC}^{\lambda}\in D(\on{Gr}_G)$ the IC-sheaves on $\overline{\on{Gr}_M^{\lambda}},\overline{\on{Gr}_G^{\lambda}}$, respectively. We consider $\on{IC}_M^{\lambda}$ as a sheaf on $\on{Gr}_G$ via the embedding $\on{Gr}_M\to \on{Gr}_G$. From the canonical identification
\[
\on{IC}^{\lambda}[2\langle \rho_G-\rho_M,\lambda\rangle]\vert_{\on{Gr}_M^{\lambda}}\simeq \on{IC}_M^{\lambda}\vert_{\on{Gr}_M^{\lambda}},
\]

\noindent we get a canonical map
\[
\on{IC}_M^{\lambda}\to \on{IC}^{\lambda}.
\]

\noindent This gives the desired map (\ref{eq:/X}).

\subsubsection{} Note that (\ref{eq:I=*}) gives a map
\begin{equation}\label{eq:I=*ext}
\on{Sat}^{\on{nv}}(\on{Fun}(\overline{\check{N}_P\backslash \check{G}})\boxtimes \omega_{X})\overset{\on{ext}}{\star}\delta_{\on{Gr}_G,X^I}\to \delta_{\on{Gr}_G,X^{I\sqcup *}}
\end{equation}

\noindent for each finite set $I$. Indeed, (\ref{eq:I=*ext}) is obtained from (\ref{eq:I=*}) by applying $\on{ins}_*(\omega_{X^I}\boxtimes -): D(\on{Gr}_{G,X})\to D(\on{Gr}_{G,X^{I\sqcup *}})$, where $\on{ins}: X^I\times\on{Gr}_{G,X}\to \on{Gr}_{G,X^{I\sqcup *}}$ inserts an $I$-tuple of points from which we further restrict the trivialization of the point $(x,\sP_G,\alpha)\in \on{Gr}_{G,X}$.

\subsubsection{} Next, suppose $I=J$. By definition of $\on{Sat}^{\on{nv}}$, we have:
\[
\on{Sat}^{\on{nv}}(\on{Fun}(\overline{\check{N}_P\backslash \check{G}})^{\otimes I}\boxtimes \omega_{X^I})\simeq \underset{I}{\overset{\on{ext}}{\star}} \on{Sat}^{\on{nv}}(\on{Fun}(\overline{\check{N}_P\backslash \check{G}})\boxtimes \omega_X).
\]

\noindent By iteratively applying (\ref{eq:I=*ext}), we thus obtain a map
\[
\on{Sat}^{\on{nv}}(\on{Fun}(\overline{\check{N}_P\backslash \check{G}})^{\otimes I}\boxtimes \omega_{X^I})\overset{\on{ext}}{\star} \delta_{\on{Gr}_G,X^I}\to \delta_{\on{Gr}_G,X^{I\sqcup I}}.
\]

\noindent $!$-pulling back along the diagonal $X^I\to X^{I\sqcup I}$, we obtain the map
\begin{equation}\label{eq:I=J}
\on{Sat}^{\on{nv}}(\on{Fun}(\overline{\check{N}_P\backslash \check{G}})^{\otimes I}\boxtimes \omega_{X^I})\star \delta_{\on{Gr}_G,X^I}\to \delta_{\on{Gr}_G,X^I}.
\end{equation}

\subsubsection{} Finally, let $(I\onto J)\in \on{TwArr}$ be arbitrary. Let $\Delta_{I\onto J}: X^J\to X^I$ be the corresponding diagonal embedding. We now get a map:
\[
\on{Sat}^{\on{nv}}(\on{Fun}(\overline{\check{N}_P\backslash \check{G}})^{\otimes I}\boxtimes \omega_{X^J})\star \delta_{\on{Gr}_G,X^I}\simeq\on{Sat}^{\on{nv}}(\on{Fun}(\overline{\check{N}_P\backslash \check{G}})^{\otimes I}\boxtimes \Delta_{I\onto J,_*}(\omega_{X^J}))\star \delta_{\on{Gr}_G,X^I}\to
\]
\[
\to \on{Sat}^{\on{nv}}(\on{Fun}(\overline{\check{N}_P\backslash \check{G}})^{\otimes I}\boxtimes \omega_{X^I})\star \delta_{\on{Gr}_G,X^I}\xrightarrow{(\ref{eq:I=J})} \delta_{\on{Gr}_G,X^I}.
\]

\noindent Here, all maps take place in $D(M(O)_{\on{Ran}}\backslash \on{Gr}_{G,\on{Ran}})$. Note that the above factors as:
\[
\on{Sat}^{\on{nv}}(\on{Fun}(\overline{\check{N}_P\backslash \check{G}})^{\otimes I}\boxtimes \omega_{X^J})\star \delta_{\on{Gr}_G,X^I}\to \Delta_{I\onto J_*}(\delta_{\on{Gr}_{G,X^J}})\to \delta_{\on{Gr}_{G,X^I}}.
\]

\subsubsection{}\label{s:footnote} As such, we have constructed the necessary family of maps (\ref{eq:familyofmaps}). They naturally come equipped with the higher coherence data required to give an action of $\cO(\overline{\check{N}_P\backslash \check{G}})$ on $\delta_{\on{Gr}_{G,\on{Ran}}}$.\footnote{An alternative way to see this is to use the relative perverse t-structure of Hansen-Scholze \cite{hansen2023relative}. Indeed, both $\cO(\overline{\check{N}_P\backslash \check{G}})_{X^I}$ and $\delta_{\on{Gr}_{G,X^I}}$ naturally lie in the heart of the relative perverse t-structure. Moreover, pointwise convolution on $\on{Sph}_{G,X^I}$ is exact for this t-structure, so the higher compatibilities are automatic.}

\newpage

\bibliographystyle{alpha}
\bibliography{Bip}
\end{document}